\pgfplotsset{compat=newest}
\providecommand{\customgenericname}{}
\newcommand{\newcustomtheorem}[2]{%
  \newenvironment{#1}[1]
  {%
   \renewcommand\customgenericname{#2}%
   \renewcommand\theinnercustomgeneric{##1}%
   \innercustomgeneric
  }
  {\endinnercustomgeneric}
}
\theoremstyle{plain}
\newtheorem{thm}{Theorem}[section]
\newtheorem{lem}[thm]{Lemma}
\newtheorem{prop}[thm]{Proposition}
\newtheorem{cor}[thm]{Corollary}
\newtheorem{thm-def}[thm]{Theorem-Definition}
\theoremstyle{definition}
\newtheorem{defn}[thm]{Definition}
\newtheorem{cor-def}[thm]{Corollary-Definition}
\newtheorem{example}[thm]{Example}
\theoremstyle{remark}
\newtheorem{rem}[thm]{Remark}
\numberwithin{equation}{section}
\renewcommand{\paragraph}{%
  \@startsection{paragraph}{4}%
  {\z@}{1.25ex \@plus 1ex \@minus .2ex}{-1em}%
  {\normalfont\normalsize\bfseries}%
}
\newcommand{\BigWedge}{\mathord{\adjustbox{valign=B,totalheight=.6\baselineskip}{$\bigwedge$}}}
\newcommand{\gl}{\mathrm{GL}}
\newcommand{\ggl}{\mathfrak{gl}}
\newcommand{\Sp}{\mathrm{Sp}}
\newcommand{\bbg}{\mathbb{G}}
\newcommand{\bbv}{\mathbb{V}}
\newcommand{\bbf}{\mathbb{F}}
\newcommand{\bbd}{\mathbb{D}}
\newcommand{\cf}{\mathcal{F}}
\newcommand{\co}{\mathcal{O}}
\newcommand{\cp}{\mathcal{P}}
\newcommand{\cl}{\mathcal{L}}
\newcommand{\ck}{\mathcal{K}}
\newcommand{\ch}{\mathcal{H}}
\newcommand{\cd}{\mathcal{D}}
\newcommand{\ce}{\mathcal{E}}
\newcommand{\cs}{\mathcal{S}}
\newcommand{\crr}{\mathcal{R}}
\newcommand{\scp}{\mathscr{P}}
\newcommand{\xx}{\mathcal{X}}
\newcommand{\ka}{\mathfrak{a}}
\newcommand{\kg}{\mathfrak{g}}
\newcommand{\km}{\mathfrak{m}}
\newcommand{\kss}{\mathfrak{S}}
\newcommand{\bn}{\mathbf{N}}
\newcommand{\bz}{\mathbf{Z}}
\newcommand{\br}{\mathbf{R}}
\newcommand{\bp}{\mathbf{P}}
\newcommand{\bq}{\mathbf{Q}}
\newcommand{\bc}{\mathbf{C}}
\newcommand{\ba}{\mathbf{A}}
\newcommand{\rd}{\mathrm{d}}
\newcommand{\rff}{\mathrm{F}}
\newcommand{\rtt}{\mathrm{T}}
\newcommand{\rT}{\mathrm{T}}
\newcommand{\rS}{\mathrm{S}}
\newcommand{\val}{\mathrm{val}}
\newcommand{\Ad}{\mathrm{Ad}}
\newcommand{\codim}{\mathrm{Codim}}
\newcommand{\diag}{\mathrm{diag}}
\newcommand{\Id}{\mathrm{Id}}
\newcommand{\reg}{\mathrm{reg}}
\newcommand{\spec}{\mathrm{Spec}}
\newcommand{\pic}{\mathrm{Pic}}
\newcommand{\aff}{\mathrm{aff}}
\newcommand{\ic}{\mathrm{IC}}
\newcommand{\amp}{\mathrm{Amp}}
\newcommand{\uni}{\mathrm{uni}}
\newcommand{\spf}{\mathrm{Spf}}
\newcommand{\pr}{\mathrm{pr}}
\newcommand{\Hom}{\mathrm{Hom}}
\newcommand{\aut}{\mathrm{Aut}}
\newcommand{\ep}{\epsilon}
\newcommand{\varep}{\varepsilon}
\newcommand{\sm}{\mathrm{sm}} 
\newcommand{\red}{\mathrm{red}} 
\newcommand{\defm}{\mathrm{Def}}
\newcommand{\act}{\mathrm{act}}
\newcommand{\ql}{\mathbf{Q}_{\ell}}  
\newcommand{\zl}{\mathbf{Z}_{\ell}}
\newcommand{\fq}{\mathbf{F}_{q}}
\newcommand{\cm}{{\mathcal{M}}}
\newcommand{\ct}{\mathcal{T}}
\newcommand{\cc}{\mathcal{C}}
\def\cb{{\mathcal B}}
\def\ka{{\mathfrak{a}}}
\def\kg{{\mathfrak{g}}}
\def\km{{\mathfrak{m}}}
\def\kff{{\mathfrak{F}}}
\newcommand{\Tor}{\operatorname{Tor}}
\newcommand{\supp}{\operatorname{supp}}
\newcommand{\pair}[1]{\langle #1\rangle}
\newcommand{\rhom}{R\mathcal{H}om}
\newcommand{\bsm}{\begin{smallmatrix}}
\newcommand{\esm}{\end{smallmatrix}}
\def\cqfd{\hfill $\Box$}
\def\<{\langle\,}
\def\>{\,\rangle}
\def\kss{\mathfrak S}
\def\kg{\mathfrak g}
\def\<{\langle}
\def\>{\rangle}
\def\={\equiv}
\def\le{\leqslant}
\def\ge{\geqslant}
\def\mod{\;{\rm mod}\;}
\def\Hom{{\rm Hom}}
\def\fq{\mathbb{F}_{q}}
\def\rd{{\rm d}}
\def\val{{\rm val}}
\def\aut{{\rm Aut}}
\def\fr{{\rm Fr}}
\def\Id{{\rm Id}}
\def\Ker{\mathrm{Ker\,}}
\author{Zongbin \textsc{Chen}\thanks{School of mathematics, Shandong University, Shandong, P. R. China. Email: \texttt{zongbin.chen@email.sdu.edu.cn}}}
\title{\bf A decomposition theorem for the affine Springer fibers}
\begin{document}
\maketitle

\begin{abstract}

According to Laumon, an affine Springer fiber is homeomorphic to the universal abelian covering of the compactified Jacobian of a spectral curve.
We construct equivariant deformations $f_{n}:\overline{\mathcal{P}}_{n}\to \mathcal{B}_{n}$ of the finite abelian coverings of this compactified Jacobian, and decompose the complex $Rf_{n,*}\mathbf{Q}_{\ell}$ as direct sum of intersection complexes. 
Pass to the limit, we obtain a similar expression for the homology of the affine Springer fibers.  
A quite surprising consequence is that we can reduce the homology to its $\Lambda^{0}$-invariant subspace.
As an application, we get a sheaf-theoretic reformulation of the purity hypothesis of Goresky, Kottwitz and MacPherson. 
In an attempt to solve it, we propose a conjecture about the punctural weight of the intermediate extension of a smooth $\ell$-adic sheaf of pure weight.  

\end{abstract}

\tableofcontents

\section{Introduction}\label{main results}

Let $k$ be the field $\bc$ of complex numbers or a finite field $\fq$ of characteristic $p$, {$p$ sufficiently large}. We set $p=0$ if $k=\bc$. 
Let $F=k(\!(\varep)\!)$ be the field of Laurent series, $\co=k[\![\varep]\!]$ its ring of integers.
Let $G=\gl_{d}$, and $\kg$ its Lie algebra. 
Let $\gamma\in \kg[\![\varep]\!]$ be a regular semisimple element. Recall that the \emph{affine Springer fiber} at $\gamma$ is the closed sub ind-$k$-scheme of the \emph{affine grassmannian} $\xx=G(\!(\varep)\!)/G[\![\varep]\!]$ defined by
$$
\xx_{\gamma}=\big\{g\in \xx\,\big|\,\Ad(g^{-1})\gamma \in \kg[\![\varep]\!]\big\}.
$$ 
We have $\xx_{\gamma}\cong \xx_{\gamma}^{0}\times \bz$ with $\xx_{\gamma}^{0}$ being  the central connected component of $\xx_{\gamma}$, and $\xx_{\gamma}^{0}$ admits the action of a discrete free abelian group $\Lambda^{0}$. 
According to Laumon \cite{laumon springer}, the quotient $\Lambda^{0}\backslash \xx_{\gamma}^{0}$ is homeomorphic to the compactified Jacobian $\overline{P}_{C_{\gamma}}$ of a spectral curve $C_{\gamma}$ over $k$, and $\xx_{\gamma}^{0}$ is homeomorphic to a $\Lambda^{0}$-torsor $\overline{P}{}_{C_{\gamma}}^{\natural}$ over $\overline{P}_{C_{\gamma}}$, which can be seen as the universal abelian covering of $\overline{P}_{C_{\gamma}}$.

Let $\pi:(\cc, C_{\gamma})\to (\cb,0)$ be an algebraization of a miniversal deformation of $C_{\gamma}$. Note that $\cb$ is smooth at $0$, and we can even take it to be the affine space  $\ba^{\tau_{\gamma}}$ with $\tau_{\gamma}$ being the Tjurina number of the singularity $\spf(\co[\gamma])$.
Then $\overline{P}_{C_{\gamma}}$ fits in the projective flat family of relative compactified Jacobians 
$$
f:\overline{\cp}=\overline{\pic}{}^{\,0}_{\cc/ \cb}\to \cb.
$$ 
Let $j:\cb^{\rm sm}\to \cb$ be the open subscheme over which $f$ is smooth, and let $f^{\rm sm}$ be the restriction of $f$ to the inverse image of $\cb^{\rm sm}$. A fundamental result of Ng\^o \cite{ngo} states that
\begin{equation}\tag{1}\label{ngo support gl}
Rf_{*}\ql=\bigoplus_{i=0}^{2\delta_{\gamma}} j_{!*}(R^{\,i}f^{\sm}_{*}\ql)[-i],
\end{equation}
where $\delta_{\gamma}$ is the $\delta$-invariant of $C_{\gamma}$.
As $\Lambda^{0}\backslash\xx_{\gamma}^{0}$ is homeomorphic to $\overline{P}_{C_{\gamma}}=f^{-1}(0)$, this implies 
\begin{equation*}
H^{i}\big(\Lambda^{0}\backslash\xx_{\gamma}^{0}, \ql\big)=\bigoplus_{i'=0}^{i}\ch^{i-i'}\big(j_{!*}R^{i'}f^{\sm}_{*}\ql\big)_{0},\quad i=0,\cdots,2\delta_{\gamma}.
\end{equation*}

We would like to extend the above results to the affine Springer fibers. 
But the deformation theory of $\overline{P}{}_{C_{\gamma}}^{\natural}$ doesn't work well \cite{laumon springer}. Indeed, the deformation doesn't extend over the whole base $\cb$, and the total space is singular. 
As an alternate, we consider the deformation of the $\Lambda^{0}/n$-covering $\overline{P}_{n}$ of $\overline{P}_{C_{\gamma}}$, $(n,p)=1$. The homology of $\xx_{\gamma}^{0}$ can be recovered  as the limit
\begin{equation}\tag{2}\label{springer as limit} 
H_{*}(\xx^{0}_{\gamma},\zl)=H_{*}(\overline{P}{}_{C_{\gamma}}^{\natural}, \zl)=\varprojlim_{(n,p)=1} H_{*}\big(\overline{P}_{n},\bz_{\ell}\big).
\end{equation}
The idea is to extend the $\Lambda^{0}/n$-covering $\overline{P}_{n}\to \overline{P}_{C_{\gamma}}$ over the family $f:\overline{\cp}\to \cb$.
By the general theory of finite abelian coverings, this is equivalent to the extension of the finite \'etale group scheme $\pic_{\overline{P}_{C_{\gamma}}/k}[n]$ along the quasi-finite \'etale group scheme $\pic_{\overline{\cp}/\cb}[n]$ over $\cb$. 
By the autoduality of the compactified Jacobian, we have canonical isomorphisms
$$
\pic_{\overline{P}_{C_{\gamma}}/k}=\pic_{C_{\gamma}/k},\quad \pic_{\overline{\cp}/\cb}=\pic_{\cc/\cb},
$$
and the question can be reduced to that of the family $\pi:\cc\to \cb$.
By the canonical isomorphisms
$$
\pic_{C_{\gamma}/k}[n]=H^{1}(C_{\gamma},\mu_{n}),\quad \pic_{\cc/\cb}[n]=R^{1}\pi_{*}\mu_{n},
$$
and a support theorem for the complex $R\pi_{*}\ql$, we can confirm the existence of a quasi-finite \'etale covering $\cb_{n}$ of an open subscheme $ \cb^{\circ}$ of $\cb$ such that $\pic_{C_{\gamma}/k}[n]$ extends to a constant finite group scheme $\ct_{n}$ of $\pic_{\cc\times_{\cb}\cb_{n}/\cb_{n}}[n]$ over $\cb_{n}$. 
We obtain then $\Lambda^{0}/n$-coverings 
$$
\Psi_{n}:\cc_{n}\to \cc\times_{\cb}\cb_{n},\quad
\Phi_{n}: \overline{\cp}_{n}\to \overline{\cp}\times_{\cb}\cb_{n}.
$$ 
Compose them with the natural structural morphism to $\cb_{n}$, we get families 
$$
\pi_{n}:\cc_{n}\to \cb_{n}
\quad \text{and}\quad 
f_{n}:\overline{\cp}_{n}\to \cb_{n}.
$$  

\begin{customthm}{1}[Theorem \ref{support variant} and \ref{reduction theorem}]\label{main support variant}

For the family $f_{n}:\overline{\cp}_{n}\to \cb_{n}$, let $j_{n}:\cb_{n}^{\sm}\to \cb_{n}$ be the inclusion of the open subscheme over which $f_{n}$ is smooth, then 
$$
Rf_{n,*}\ql=\bigoplus_{i=0}^{2\delta_{\gamma}} j_{n,!*}(R^{\,i}f^{\sm}_{n,*}\ql)[-i]
\oplus 
\bigoplus \text{Similar terms from Levi subgroups}.
$$
Moreover, the remaining terms indexed by the Levi subgroups admits factorizations into simple factors resembling the main term.  
 
\end{customthm}

The proof follows the strategy of Ng\^{o} \cite{ngo}. 
But contrary to (\ref{ngo support gl}), there are quite some extra summands besides the main term in the decomposition of $Rf_{n,*}\ql$. 
This is due to the fact that the geometric fibers of $f_{n}$ can have multiple irreducible components, and one of the key technical difficulties is to determine them explicitly. 
By theorem \ref{irreducible components}, this is controlled by the finite group scheme $\ct_{n}$. To make it explicit, we relate $\ct_{n}$ to global sections of the sheaf $\varpi_{n}^{*}R^{1}\pi_{*}\bz/n$ via the isomorphism
$$
\varpi_{n}^{*}R^{1}\pi_{*}\bz/n \cong \pic_{\cc\times_{\cb}\cb_{n}/\cb_{n}}[n]\otimes \mu_{n}^{-1},
$$ 
where $\varpi_{n}:\cb_{n}\to \cb$ is the structural morphism. 
By the support theorem for the complex $R\pi_{*}\ql$, it is enough to determine these sections  over a generic point of the $\delta_{\gamma}$-stratum. A quasi-explicit expression of these sections is then obtained in proposition \ref{ci express}.
With it, we can determine the locus where the geometric fibers of $\pi_{n}:\cc_{n}\to \cb_{n}$ can have multiple irreducible components, and we can compute explicitly the number of irreducible components, see theorem \ref{pi 0 curve}. The result is then transferred to the family $f_{n}:\overline{\cp}_{n}\to \cb_{n}$ and we get theorem \ref{pi 0 picard}.
For the reduction of the remaining terms in theorem \ref{main support variant}, we use the idea of Laumon in \cite{laumon springer}, \S A.3, to identify the base change of the subfamilies of $\pi:\cc\to \cb$ over the relevant strata to their normalization with a versal deformation of a partial resolution of $C_{\gamma}$, see theorem \ref{strata}.

With theorem \ref{main support variant} and the equation (\ref{springer as limit}), we can deduce a similar expression for the homology of $\xx_{\gamma}^{0}$. Let $\cf=R^{1}\pi_{*}^{\sm}\bq_{\ell}$ and $\cf^{\vee}=\ch om(\cf, \bq_{\ell})$. Let ${\ce}\subset \cf$ be the local system of vanishing cycles on $\cb_{\{0\}}-\Delta$, let $\ce^{\perp}$ be the orthogonal complement of $\ce$ with respect to the cup product on $\cf$. Let $(\cf/\ce^{\perp})^{\vee}= \ch om(\cf/\ce^{\perp}, \bq_{\ell})\subset \cf^{\vee}$.

\begin{customthm}{2}[Theorem \ref{decomposition}]\label{main decomposition}

For any $i\in \bn_{0}$, we have
\begin{align*}
H_{i}\big({\xx}_{\gamma}^{0},\bq_{\ell}\big)=&
\bigoplus_{i'=0}^{i} 
{\rm Im}\Big\{\ch^{2\tau_{\gamma}-(i-i')}_{\{0\}}\big(j_{!*}\BigWedge^{i'}(\cf/\ce^{\perp})^{\vee}\big)\to  \ch^{2\tau_{\gamma}-(i-i')}_{\{0\}}\big(j_{!*}\BigWedge^{i'}\cf^{\vee}\big)\Big\}
\oplus 
\\
&
\bigoplus\text{Similar terms from Levi subgroups}.
\end{align*}

\end{customthm}

The decomposition is very similar in structure to  the Arthur-Kottwitz reduction of the affine Springer fibers \cite{chen fundamental}, but it is not true that the main term calculates the homology of the fundamental domain $F_{\gamma}$. In fact, it calculates the ${\Lambda^{0}}$-invariant subspace of $H_{i}\big({\xx}_{\gamma}^{0},\bq_{\ell}\big)$. 
Similar results hold for the remaining terms. This leads to the quite surprising result that we can  reduce the study of $H_{*}(\xx_{\gamma}^{0}, \bq_{\ell})$ to its $\Lambda^{0}$-invariant subspace, and that the latter has a sheaf-theoretic interpretation.

Theorem \ref{main decomposition} provides a natural deformation-theoretic framework to study the affine Springer fibers. As an application, we get a sheaf-theoretic reformulation of the purity hypothesis of Goresky, Kottwitz and MacPherson.

\begin{customcor}{3}\label{reduce purity int}

The homology of the affine Springer fiber is pure in the sense of Grothendieck-Deligne if and only if the group
$$
\bigoplus_{i'=0}^{i}{\rm Im}\Big\{\ch^{2\tau_{\gamma}-(i-i')}_{\{0\}}\big(j_{!*}\BigWedge^{i'}(\cf/\ce^{\perp})^{\vee}\big)\to  \ch^{2\tau_{\gamma}-(i-i')}_{\{0\}}\big(j_{!*}\BigWedge^{i'}\cf^{\vee}\big)\Big\}
$$
is pure of weight $-i$ for any $i\in \bn_{0}$.
\end{customcor}


With this sheaf-theoretic reformulation of the purity hypothesis, a natural question arises: how to determine the punctual weight of an intermediate extension? We propose the following conjecture:  
Let $X_{0}$ be an irreducible {smooth} quasi-projective algebraic variety over a finite field $\fq$, let $j:U_{0}\to X_{0}$ be the inclusion of an open subvariety,  let $\cf_{0}$ be a smooth $\ell$-adic sheaf on $U_{0}$, $\ell\neq p$. 
Let $x$ be a geometric point of $X$ lying over a closed point $x_{0}\in |X_{0}|$. We are interested in the weight of the fiber $(j_{!*}\cf)_{x}$, here $(j_{!*}\cf)_{x}$ is a complex of vector spaces endowed with the action of $\fr_{x_{0}}$.
Let $X_{\{x\}}$ be the strict Henselization of $X$ at $x$. Let $\bar{\xi}$ be a geometric generic point of $X_{\{x\}}$, the morphism $\rho_{x}:\pi_{1}(X_{\{x\}}\times_{X} U,\bar{\xi})\to \gl(\cf_{\bar{\xi}})$ is called the \emph{geometric local monodromy} of $\cf$ at $x$.  


\begin{customconj}{4}\label{monodromy-weight}

Under the above setting, suppose that $\cf_{0}$ is puncturally pure of weight $\beta\in \bz$, if the geometric local monodromy at $x$ is semisimple, then the fiber $(j_{!*}\cf)_{x}$ is pure of weight $\beta$.

\end{customconj}

Note that in case $\dim(X)=1$ the conjecture is a corollary of the monodromy-weight  theorem of Deligne \cite{weil2}, th\'eor\`eme 1.8.4.

Note that the purity hypothesis of Goresky, Kottwitz and MacPherson, under the form of corollary \ref{reduce purity int}, is not an immediate consequence of the above conjecture. 
Indeed, the sheaf $(\cf/\ce^{\perp})^{\vee}$ in the corollary doesn't globalize to a smooth $\ell$-adic sheaf on any dense open subvariety of $\cb$. Nonetheless, $\cf/\ce^{\perp}$ admits a filtration $0\subset \ce/\ce^{\perp}\subset \cf/\ce^{\perp}$ over $\cb_{\{0\}}$, with the grading $\ce/\ce^{\perp}$ irreducible and $\cf/\ce$ trivial.  
We hope that a proof of conjecture \ref{monodromy-weight} will shed some light on the purity hypothesis of Goresky, Kottwitz and MacPherson.

To finish, we want to mention that this work has been inspired by conjecture \ref{monodromy-weight}. In fact, the projective system $f_{n}:\overline{\cp}_{n}\to \cb_{n}, {(n,p)=1},$ has been constructed to kill the unipotent part of the geometric local monodromy of the sheaf $Rf_{*}^{\sm}\ql$ at $0$.

\subsection*{Notations, conventions and useful facts}

\paragraph{(1)} 

We consider only schemes which are separated and of finite type over $k$, unless stated otherwise. We fix an algebraic closure $\bar{k}$ of $k$. For a scheme $X$ over $k$, we denote by $|X|$ be the set of closed points of $X$. For $x\in |X|$, we use the notation $\bar{x}$ to denote a geometric point of $X$ lying over $x$, and vice versa. 
We denote by $k(x)$ the residue field at $x$.
We denote by 
$X_{\{\bar{x}\}}$ the strict Henselization of $X$ at $\bar{x}$.

\paragraph{(2)}

Let $X$ be a locally noetherian scheme, we denote by $X^{\reg}$ its regular locus and $X^{\rm sing}=X-X^{\reg}$ the singular locus. 
Let $f:X\to S$ be a morphism of schemes, we will denote by $\Delta_{f}$ the discriminant locus of $f$, and by $S^{\sm}$ the open subscheme $S-\Delta_{f}$ over which $f$ is smooth, and $f^{\sm}:X^{\rm sm}\to S^{\sm}$  the  restriction of $f$ to the inverse image of $S^{\sm}$. For any point $s\in S$, we denote by $X_{s}$ the fiber of $f$ at $s$. 

\paragraph{(3)}

Let $X$ be a locally noetherian scheme over $k$, we denote by $\cd_{c}(X,\ql)$ the derived category of $\ql$-constructible sheaves on $X$ defined by Deligne \cite{weil2}, \S1.1.2 and \S1.1.3. 
We denote by $\cp(X,\ql)$ the strict full subcategory of $\cd_{c}^{b}(X,\ql)$ consisting of the perverse $\ql$-sheaves. It is the heart of a $t$-structure on $\cd_{c}^{b}(X,\ql)$ associated to the autodual perversity. It can be shown that $\cp(X,\ql)$ is a notherian and artinian abelian category (cf. \cite{bbdg}). 
We denote by ${}^{p}\ch^{0}$ the cohomological functor $\tau_{\ge 0}\tau_{\le 0}$ from $\cd_{c}(X,\ql)$ to its heart $\cp(X,\ql)$. 
For $n\in \bz$, we denote by $[n]$ the shift to the left by $n$ in $\cd_{c}(X,\ql)$ and ${}^{p}\ch^{n}$ the composite ${}^{p}\ch^{0}\circ [n]$, we denote also by $(n)$ the Tate twist by $\mathbf{Q}_{\ell}(n)=\varprojlim_{m} \mu_{\ell^{m}}^{\otimes n}$.

\paragraph{(4)}

Let $X$ be a locally noetherian scheme over $k$, we denote by $\cd_{qc}(X)$ the derived category of  quasi-coherent $\co_{X}$-modules on $X$, and by $\cd_{coh}(X)$ the derived category of  coherent $\co_{X}$-modules on $X$.

\paragraph{(5)} 

Let $C^{\circ}=\spf(k[\![x,y]\!]/(f(x,y)))$ be a germ of plane curve singularity. The \emph{Milnor number} and \emph{Tjurina number} of $C^{\circ}$ are defined respectively by
$$
\mu=\dim(k[\![x,y]\!]/(\partial_{x}f, \partial_{y}f)), \quad \tau=\dim(k[\![x,y]\!]/(f, \partial_{x}f, \partial_{y}f)).
$$
Let $A=k[\![x,y]\!]/(f(x,y))$ and $\widetilde{A}$ the normalization of $A$, the \emph{conductor} of $A$ is the ideal $$\ka=\big\{a\in A\mid a\widetilde{A}\subset A\big\}.$$ It is known that $\dim(\widetilde{A}/A)=\dim(A/\ka)$, we call it the \emph{$\delta$-invariant} of $C^{\circ}$, denoted $\delta(C^{\circ})$ or $\delta(A)$. 

\paragraph{(6)} Let $C^{\circ}$ be a germ of plane curve singularity over $\bc$. Its irreducible components are called the \emph{branches}.  
It is known that every branch admits a \emph{Puiseux parametrization}
$$
\begin{cases}
X=t^{n},&\\
Y=\sum_{i=m}^{\infty} a_{i}t_{i}, &\text{for some }m>n.
\end{cases}
$$ 

\paragraph{(7)}

Two germs of plane curve singularities $(C^{\circ}_{1}, 0), (C^{\circ}_{2},0)$ are said to be \emph{of the same topological type} or \emph{equisingular} if there exist representatives $C_{i}\subset U_{i}$ of $C^{\circ}_{i}$ inside open neighbourhood $U_{i}\subset \bc^{2}$ of $0$, $i=1,2$, such that there exists homeomorphism $U_{1}\sim U_{2}$ sending $C_{1}$ to $C_{2}$. 
According to Zariski \cite{zariski equisingular}, they are equisingular if and only if there is a bijection between the branches of the singularities such that the datum consisting of the Puiseux characteristics of the branches and the intersection numbers between pairs of branches are the same under the bijection.

\paragraph{(8)}

Let $C^{\circ}=\spf(A)$ be a germ of plane curve singularity over $k$. We denote by $\defm^{\rm top}_{C^{\circ}}$ or $\defm^{\rm top}_{A}$ the deformation functor of $C^{\circ}$. It is known to be representable and we have a miniversal deformation of $C^{\circ}$. 
Moreover, the tangent space $\defm^{\rm top}_{C^{\circ}}(\bc[\![\varep]\!]/\varep^{2})$ can be identified with $\bc[\![x,y]\!]/(f, \partial_{x}f, \partial_{y}f)$.

\paragraph{(9)}

Let $C_{0}$ be a projective algebraic curve over $k$ with planar singularities, its \emph{$\delta$-invariant} $\delta(C_{0})$ is defined as the sum of the $\delta$-invariants of its singularities. Let $\psi: C\to S$ be a deformation of $C_{0}$ over a smooth base $S$, then the function $s\in S\mapsto \delta(C_{s})$ is upper semi-continuous (Teissier \cite{teissier resolution}, I-1.3.2). The subscheme $S_{\delta}$ parametrizing curves with $\delta$-invariant $\delta$ is then locally closed, and we obtain a stratification $S=\bigsqcup_{\delta=0}^{\delta(C_{0})}S_{\delta}$, called the \emph{$\delta$-stratification} of $S$.

\paragraph{(10)}

We denote by $\bn$ the set of natural numbers and $\bn_{0}=\bn\cup \{0\}$.

{\small
\paragraph*{Acknowledgement} 

We want to thank Prof. G\'erard Laumon for his constant support and encouragements during the years. Our work owes its birth to the long time effort to understand his ground breaking paper  \cite{laumon springer}.
Part of the work is done during the author's visit to the Research Centre for Mathematics and Interdisciplinary Sciences at Shandong University, we want to thank the institute for the wonderful hospitality.

}

\section{The affine Springer fibers and the compactified Jacobians}\label{springer as cj}

Let $\gamma\in \kg[\![\varep]\!]$ be a regular semisimple element. Let $\rtt$ be the centralizer of $\gamma$ in $G$, then $\rtt$ acts on the affine Springer fiber $\xx_{\gamma}$ by left translation. 
Let $\rS$ be the maximal $F$-split subtorus of $\rtt$.
Let $\Lambda\subset \rS(F)$ be the subgroup generated by $\chi(\varep),\,\chi\in X_{*}(\rS)$, then $\Lambda$ acts simply transitively on the irreducible components of $\xx_{\gamma}$. 
The connected components of $\xx_{\gamma}$ is naturally parametrized by $\bz$ with the morphism
$$
\xx_{\gamma}\to \bz,\quad [g]\mapsto \val(\det(g)).
$$
Let $\xx_{\gamma}^{0}$ be the central connected component of $\xx_{\gamma}$, then $\xx_{\gamma}\cong \xx_{\gamma}^{0}\times \bz$. 
Let $$
\Lambda^{0}=\{\lambda\in \Lambda\mid \val(\det(\lambda))=0\},
$$ 
then $\Lambda^{0}$ acts naturally on $\xx_{\gamma}^{0}$.
The quotient $\Lambda^{0}\backslash \xx_{\gamma}^{0}$ is a projective algebraic variety over $k$ and $\xx_{\gamma}^{0}\to \Lambda^{0}\backslash \xx_{\gamma}^{0}$ is an \'etale Galois covering of Galois group $\Lambda^{0}$.

\subsection{Laumon's work}\label{review laumon}

In an attempt to deform the affine Springer fibers, Laumon \cite{laumon springer} discovered a remarkable relationship between the affine Springer fibers and the compactified Jacobians of certain algebraic curves. 
By definition, the affine Springer fiber $\xx_{\gamma}$ parametrizes the lattices $L$ in $F^{n}$ such that $\gamma L\subset L$. Such lattices can be naturally identified with sub-$\co[\gamma]$-modules of finite type in $E:=F[\gamma]\cong F^{d}$. Moreover, such $\co[\gamma]$-modules can be globalized.   
Let $C_\gamma$ be an irreducible projective algebraic curve over $\bc$, with two points $c$ and $\infty$ such that
\begin{enumerate}[nosep, label=(\roman*)]
\item
$C_\gamma$ has unique singularity at $c$ and $\widehat{\co}_{C_\gamma,\,c}\cong \co[\gamma]$,

\item

the normalization of $C_\gamma$ is isomorphic to $\bp^{1}$ by an isomorphism sending $\infty\in \bp^{1}$ to $\infty\in C_\gamma$.

\end{enumerate}

\noindent The curve $C_{\gamma}$ will be called the \emph{spectral curve} of $\gamma$.
Let $M$ be a sub-$\co[\gamma]$-modules of finite type in $F[\gamma]$. 
Similar to the obvious fact that $\co_{C_{\gamma}}$ can be obtained by gluing $\widehat{\co}_{C_{\gamma},c}=\co[\gamma]$ and ${\co}_{C_{\gamma}\backslash\{c\}}$,  we can glue $M$ with the sheaf $\co_{C_{\gamma}\backslash\{c\}}$ along $(C_{\gamma}\backslash\{c\})\times_{C_{\gamma}} \spec(\widehat{\co}_{C_{\gamma},c})=\spec(E)$, to get a torsion-free coherent $\co_{C_{\gamma}}$-module $\cm$ of generic rank $1$. 
Recall that the compactified Jacobian $\overline{P}_{C_{\gamma}}:=\overline{\pic}{}^{0}_{C_{\gamma}/\bc}$ parametrizes the isomorphism class of the torsion-free coherent $\co_{C_{\gamma}}$-modules of generic rank $1$ and degree $0$.
Consider the functor $\overline{P}{}_{C_\gamma}^{\natural}$ which associates to an affine $k$-scheme $S$ the groupo\"id of couples $(M,\iota)$, where $M$ is a torsion free coherent $\co_{C_\gamma\times S}$-module of generic rank $1$ and degree $0$, and $\iota:M|_{(C_\gamma\backslash c)\times S}\cong \co_{(C_\gamma\backslash c)\times S}$ is a trivialization of $M$ over $(C_\gamma\backslash c)\times S$. One verifies that $\overline{P}{}_{C_\gamma}^{\natural}$ is representable by a $k$-scheme. With the glueing construction, we have a natural morphism
$$
\xx_{\gamma}^{0}\to \overline{P}_{C_\gamma}^{\natural}, 
$$
which sends a lattice $L\subset E$ satisfying $\gamma L\subset L$ to the torsion-free $\co_{C_\gamma}$-module obtained by gluing $L$ and $\co_{C_\gamma\backslash \{c\}}$ along $\spec(E)=(C_\gamma\backslash \{c\})\times_{C_\gamma}\spec(A)$, together with the obvious trivialization. Let $\overline{P}_{C_\gamma}^{\natural}\to \overline{P}_{C_\gamma}$ be the morphism forgetting the trivialization $\iota$. For the affine Springer fiber, this corresponds to the quotient $\xx_{\gamma}^{0}\to \Lambda^{0}\backslash \xx_{\gamma}^{0}$.

\begin{thm}[Laumon \cite{laumon springer}]\label{homeo SJ}

The morphism $\xx_{\gamma}^{0}\to \overline{P}_{C_\gamma}^{\natural}$ is finite radical and surjective, hence so is the induced morphism $\Lambda^{0}\backslash \xx_{\gamma}^{0}\to \overline{P}_{C_\gamma}$. In particular, both morphisms are universal homeomorphisms. 

\end{thm}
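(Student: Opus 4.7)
The plan is first to exhibit a pointwise inverse to $\varphi:\xx_\gamma^0\to \overline{P}_{C_\gamma}^\natural$, yielding bijectivity on $K$-valued points for every field extension $K/k$, and then to descend along the natural $\Lambda^0$-action to a proper setting in order to convert proper plus quasi-finite into finite. Given $(M,\iota)\in \overline{P}_{C_\gamma}^\natural(K)$, I would form the completion $\widehat{M}_c$ of $M$ at the singular point $c$, a torsion-free rank-one module over $\widehat{\co}_{C_\gamma,c}\otimes_k K=\co[\gamma]\otimes_k K$. Restricting the trivialization $\iota$ to $\spec(E)=(C_\gamma\setminus\{c\})\times_{C_\gamma}\spec(\widehat{\co}_{C_\gamma,c})$ identifies $\widehat{M}_c\otimes_\co F$ with $E\otimes_k K$, which lets one view $\widehat{M}_c$ as a $\gamma$-stable lattice $L\subset E\otimes_k K$; the degree-zero condition on $M$ corresponds, under the gluing dictionary used to define $\varphi$, to the central-component normalization cutting out $\xx_\gamma^0$. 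Since gluing and completion are inverse to one another on $K$-points, $\varphi$ is bijective on $K$-points for every field $K$, hence surjective and universally injective (radical).

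For finiteness, the morphism $\varphi$ is $\Lambda^0$-equivariant for the natural actions on source and target, and descends to $\bar\varphi:\Lambda^0\backslash\xx_\gamma^0\to \overline{P}_{C_\gamma}$. The source is projective by the Kazhdan--Lusztig projectivity of the central component of the affine Springer fiber, and the target is projective as a compactified Jacobian of a projective curve; any $k$-morphism between projective $k$-schemes is automatically proper, and the bijectivity on geometric points established above makes $\bar\varphi$ quasi-finite, hence finite. Since $\varphi$ is the pullback of $\bar\varphi$ along the $\Lambda^0$-torsor $\overline{P}_{C_\gamma}^\natural\to \overline{P}_{C_\gamma}$, finiteness of $\varphi$ follows by base change. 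A finite surjective radical morphism is by definition a universal homeomorphism, yielding both conclusions of the theorem.

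The main technical obstacle is to verify that the pointwise inverse constructed above upgrades to a morphism of functors on arbitrary test schemes $S$, which is what makes the radical property literal rather than merely set-theoretic. This amounts to showing that a torsion-free coherent sheaf of generic rank one on $C_\gamma\times S$ is reconstructed from its restriction to $(C_\gamma\setminus\{c\})\times S$ and its completion along $\{c\}\times S$ together with a compatibility isomorphism over the overlap, and that this equivalence commutes with base change in $S$. For the planar singularity at hand this is a standard Beauville--Laszlo style gluing statement, but the bookkeeping around the connected-component numerics and the $\Lambda^0$-action needs to be done with some care.
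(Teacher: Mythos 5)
This theorem is stated in the paper as Laumon's and is not proved there, so there is no ``paper's own proof'' to compare against; the relevant reference is \cite{laumon springer}. That said, your outline follows the same route as Laumon's argument and the overall structure is sound: establish bijectivity on $K$-valued points by exhibiting a pointwise inverse via formal gluing (Beauville--Laszlo) at the singular point $c$, deduce universal injectivity (radicality) and surjectivity, then descend to the projective quotient $\Lambda^{0}\backslash\xx_{\gamma}^{0}\to \overline{P}_{C_\gamma}$ to get properness, convert proper plus quasi-finite into finite, and pull the finiteness back along the $\Lambda^{0}$-torsor. This is a correct strategy.

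Your closing paragraph, however, misidentifies where the real work lies. You write that upgrading the pointwise inverse to a morphism of functors on arbitrary test schemes $S$ ``is what makes the radical property literal rather than merely set-theoretic.'' This is not so: a morphism of schemes is universally injective (radical) precisely when it is injective on $K$-valued points for every field $K$, so the field-level argument you already gave is exactly what radicality requires. No functorial inverse is needed. If the inverse \emph{did} extend to a morphism of functors on all $S$, the conclusion would be that $\varphi$ is an isomorphism --- a strictly stronger statement than the theorem, and one that is in general false (this is precisely why Laumon only asserts ``finite radical surjective''). The genuine delicate point is rather the one you pass over quickly: checking that the degree-zero normalization on $\overline{P}_{C_\gamma}^{\natural}$ matches the cut-out of the central component $\xx_{\gamma}^{0}$, and checking that the formal-gluing correspondence takes torsion-free rank-one sheaves on $C_{\gamma,K}$ with trivialization on $(C_\gamma\setminus c)_K$ bijectively to $\gamma$-stable $\co[\gamma]\otimes_k K$-lattices in $E\otimes_k K$. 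Both are true, but neither is merely bookkeeping. Finally, a small attribution point: Kazhdan--Lusztig proved projectivity of the quotient $\Lambda^{0}\backslash\xx_{\gamma}^{0}$, not of the central component $\xx_{\gamma}^{0}$ itself, which is only locally of finite type; your sentence is phrased so that ``the source'' is the quotient, which is correct, but the parenthetical credit should make this explicit.
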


The Jacobian $J_{C_\gamma}:=\pic_{C_{\gamma}/k}^{0}$ can be described similarly. An invertible $\co_{C_\gamma}$-module is obtained by gluing a principal $A$-module $(a)$ with $a\in E^{\times}$ and $\co_{C_\gamma\backslash \{c\}}$ along $\spec(E)=(C_\gamma\backslash \{c\})\times_{C_\gamma}\spec(A)$, hence we get a morphism
\begin{equation}\label{describe pic}
\Lambda^{0}\backslash E^{\times}/A^{\times}\to \pic_{C_\gamma/k}.
\end{equation}
As before, it is finite radical and surjective. Let $\widetilde{A}$ be the normalization of $A$ in $E$, the above morphism restricts to 
\begin{equation}\label{describe jac}
\widetilde{A}^{\times}/A^{\times}\to J_{C_\gamma}.
\end{equation}
Let $\phi:\bp^{1}\to C_\gamma$ be the normalization map. The above description is compatible with the long exact sequence
\begin{align*}
1\to H^{0}(C_\gamma, \bbg_{m})\to H^{0}(\bp^{1}, \bbg_{m})&\to H^{0}(C_\gamma, \phi_{*}\bbg_{m}/\bbg_{m})\cong \widetilde{A}^{\times}/A^{\times}\\
& \to H^{1}(C_\gamma, \bbg_{m})\to H^{1}(\bp^{1}, \bbg_{m})\to 1. 
\end{align*}

In general, let $C$ be a geometrically integral projective curve over $k$ with planar singularities at $c_{j}, j\in J$. 
Let $\phi:\widetilde{C}\to C$ be the normalization of $C$, let $\phi^{-1}(c_{j})=\{\tilde{c}_{i}\mid i\in I_{j}\}$. 
For each $j\in J$, let $A_{I_{j}}$ be the completed local ring of $C$ at $c_{j}$, let $E_{I_{j}}$ be the ring of fractions of $A_{I_{j}}$.  
It is clear that the local branches of $C$ at $c_{j}$ are naturally parametrized by $I_{j}$. 
For simplicity, we assume that all the points $c_{j}, \tilde{c}_{i}$ are rational over $k$.

For each $j\in J$, let $\xx_{I_{j}}$ be the affine Springer fiber which parametrizes all the $A_{I_{j}}$-fractional ideals in $E_{I_{j}}$. It is equipped with the natural action of the $k$-group scheme $G_{I_{j}}$ ``defined" by $G_{I_{j}}(k)=E_{I_{j}}^{\times}/A_{I_{j}}^{\times}$. Let
$$
\xx(C)=\prod_{j\in J} \xx_{I_{j}},\quad G(C)=\prod_{j\in J} G_{I_{j}},\quad I=\bigsqcup_{j\in J} I_{j}, \quad \Lambda(C)=\bz^{I}.
$$
The group $G(C)$ admits a d\'evissage
$$
1\to G^{0}(C)\to G(C)\to \Lambda(C)\to 0,
$$
which at the level of $k$-points is
$$
1\to \prod_{j\in J} \co_{E_{I_{j}}}^{\times}/A_{I_{j}}^{\times} \to 
\prod_{j\in J} E_{I_{j}}^{\times}/A_{I_{j}}^{\times}\to \prod_{j\in J}\bz^{I_{j}}\to 1.
$$
The group scheme $G^{0}(C)$ appears as the maximal connected affine subgroup of the Jacobian $J_{C}$ of $C$, we have the d\'evissage of Chevalley
\begin{equation}\label{chevalley cj curve}
1\to G^{0}(C)\to J_{C}\to J_{\widetilde{C}}\to 1.
\end{equation}

As before, we have a $k$-morphism
\begin{equation}\label{uniform cj by x}
\xx(C)\to \overline{\pic}_{C/k}
\end{equation}
which sends $(M_{j}\subset E_{I_{j}})_{j\in J}$ to the torsion-free coherent $\co_{C}$-module $\cm$ of generic rank $1$ obtained by gluing $\co_{C\backslash \{c_{j}\}_{j\in J}}$ and the $M_{j}$'s. It is clear that we can twist the construction by an invertible sheaf on $C$, and  get a morphism
\begin{equation}\label{uniform cj general}
\big[\xx(C)\times J_{C}\big]/G^{0}(C)\to \overline{\pic}_{C/k}. 
\end{equation}
With the d\'evissage (\ref{chevalley cj curve}), we get a natural fibration
\begin{equation}\label{cj quotient j}
\big[\xx(C)\times J_{C}\big]/G^{0}(C)\to \pic_{\widetilde{C}/k}
\end{equation} 
with fiber $\xx(C)$.

Note that we have rigidified the invertible sheaf $\co_{C\backslash \{c_{j}\}_{j\in J}}$ in the construction of the morphism (\ref{uniform cj by x}). To get rid of it, consider the action of the group
$$
H^{0}(C\backslash \{c_{j}\}_{j\in J},\bbg_{m})/H^{0}(C, \bbg_{m})\subset \prod_{j\in J}E_{I_{j}}^{\times}/A_{I_{j}}^{\times}=G(C)(k)
$$
on $\xx(C)$. 
It is isomorphic to the image of a section $\sigma$ of the morphism $G(C)\to \Lambda(C)=\bz^{I}$ over the subgroup $\Lambda^{0}(C)=\Ker\big\{\bz^{I}\to \bz\big\}$. 
The morphism (\ref{uniform cj by x}) descends to 
$$
\sigma(\Lambda^{0}(C))\backslash\xx(C)\to \overline{\pic}_{C/k}.
$$
Similarly, the morphism (\ref{uniform cj general}) descends to
\begin{equation}\label{uniform cj general descent}
\sigma(\Lambda^{0}(C))\backslash \big[\xx(C)\times J_{C}\big]/G^{0}(C)\to \overline{\pic}_{C/k}, 
\end{equation}
and the fibration (\ref{cj quotient j}) descends to
\begin{equation}\label{cj quotient j descent}
\sigma(\Lambda^{0}(C))\backslash\big[\xx(C)\times J_{C}\big]/G^{0}(C)\to \pic_{\widetilde{C}/k}
\end{equation}
with fiber $\sigma(\Lambda^{0}(C))\backslash\xx(C)$.

\begin{prop}[Laumon \cite{laumon springer}]
\label{cj general descrip}

The $k$-morphism (\ref{uniform cj general descent}) is a universal homeomorphism.

\end{prop}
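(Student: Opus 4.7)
The plan is to globalize the strategy of Theorem \ref{homeo SJ} from the single-singularity setting to the curve $C$. Since both source and target of the morphism (\ref{uniform cj general descent}) are proper over $k$, it suffices to show that the morphism is finite, radicial and surjective. I would organize the work into three steps: surjectivity on geometric points, analysis of the fibers via the two quotient group actions, and finiteness together with radiciality (the last being reduced to the local assertions at each $c_j$).

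For surjectivity, let $\cm$ be a torsion-free coherent $\co_C$-module of generic rank $1$ representing a geometric point of $\overline{\pic}_{C/k}$. Since $C$ is smooth away from $\{c_j\}_{j \in J}$, the restriction $\cm|_{C\backslash\{c_j\}_{j\in J}}$ is a line bundle, while the completed stalk at each $c_j$ is a finitely generated torsion-free $A_{I_j}$-module which becomes free of rank $1$ over $E_{I_j}$, hence identifies with some $M_j \in \xx_{I_j}$ after choosing a generator of its generic fiber. Choosing an invertible sheaf $\cl$ on $C$ whose restriction agrees (after adjusting degrees via the action of $J_C$) with $\cm$ on the smooth locus, the gluing construction applied to $((M_j)_j, \cl)$ recovers $\cm$ up to isomorphism.

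For the fiber analysis, I would compare two pairs $((M_j)_j, \cl)$ and $((M'_j)_j, \cl')$ whose gluings produce isomorphic global sheaves. An isomorphism between these sheaves restricts on each formal neighborhood of $c_j$ to an element of $\co_{E_{I_j}}^{\times}/A_{I_j}^{\times}$ acting on $M_j$, which is exactly the $G^0(C)$-action, together with a section of $\bbg_m$ over $C\backslash\{c_j\}_{j \in J}$ modifying $\cl$ on the smooth locus. The group $H^0(C\backslash\{c_j\}_{j \in J}, \bbg_m)/H^0(C, \bbg_m)$, identified in the excerpt with $\sigma(\Lambda^0(C))\subset G(C)$, captures precisely this remaining ambiguity, so that the two-step quotient in the statement yields injectivity on geometric points. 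For finiteness and radiciality, each local factor $\xx_{I_j}\to \overline{\pic}_{\spec(A_{I_j})}$ is finite, radicial and surjective by Theorem \ref{homeo SJ} applied to the singularity $c_j$; taking products over $j \in J$, twisting by $J_C$ via the Chevalley d\'evissage (\ref{chevalley cj curve}), and then passing to the quotient by $G^0(C)$ and $\sigma(\Lambda^0(C))$ preserves these properties since both quotients are by flat group schemes acting through their prescribed actions. Combined with surjectivity, this yields the universal homeomorphism.

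The main obstacle is the second step: matching precisely the ambiguity in the gluing construction with the combined action of $G^0(C)$ and $\sigma(\Lambda^0(C))$. This requires the Mayer--Vietoris type exact sequence relating $H^*(C, \bbg_m)$, $H^*(\widetilde{C}, \bbg_m)$ and the local data at the $c_j$, in parallel with the Chevalley d\'evissage (\ref{chevalley cj curve}) of $J_C$. The fact that $\sigma(\Lambda^0(C))$, rather than $\Lambda(C)$ itself, is the correct group to quotient by is exactly what forces the image to lie in the given connected component of $\overline{\pic}_{C/k}$, and tracking this degree constraint consistently throughout the construction is the delicate combinatorial point of the proof.
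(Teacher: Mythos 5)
The paper itself supplies no proof of this proposition, instead citing Laumon's \emph{Fibres de Springer et jacobiennes compactifi\'ees}; so the comparison is against Laumon's argument. Your overall strategy — globalize Theorem~\ref{homeo SJ}, reduce to finite $+$ radicial $+$ surjective, and control the fiber ambiguity via the Chevalley d\'evissage — is the right one, and your identification of the Mayer--Vietoris sequence for $\bbg_m$ as the essential tool for the fiber analysis is on target. But two steps as written contain genuine inaccuracies.

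First, in the fiber analysis you assert that an isomorphism between two glued sheaves ``restricts on each formal neighborhood of $c_j$ to an element of $\co_{E_{I_j}}^{\times}/A_{I_j}^{\times}$.'' That is too restrictive: an isomorphism of torsion-free rank-one $A_{I_j}$-modules $M_j \cong M'_j$ inside $E_{I_j}$ is multiplication by an arbitrary $e_j \in E_{I_j}^{\times}$, giving a class in $E_{I_j}^{\times}/A_{I_j}^{\times}=G_{I_j}(k)$, not in $G_{I_j}^0(k)=\co_{E_{I_j}}^{\times}/A_{I_j}^{\times}$. The valuation vector $(\val_i(e_j))_i$ need not vanish at each $c_j$ individually; only the \emph{total} valuation is constrained by the degree, and it is precisely the quotient by the global unit group $H^0(C\backslash\{c_j\}_{j\in J},\bbg_m)/H^0(C,\bbg_m)\cong\sigma(\Lambda^0(C))$ that absorbs the remaining $\Lambda^0(C)$-worth of ambiguity. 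Your own closing paragraph hints at this, but the way the step is phrased would, if taken literally, make the $\sigma(\Lambda^0(C))$-quotient superfluous.

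Second, the reduction of finiteness and radiciality to ``$\xx_{I_j}\to \overline{\pic}_{\spec(A_{I_j})}$'' is not well posed: $\spec(A_{I_j})$ is the formal germ of the singularity, not a projective curve, so $\overline{\pic}_{\spec(A_{I_j})}$ is undefined, and Theorem~\ref{homeo SJ} concerns the morphism $\Lambda^0\backslash\xx^0_\gamma\to \overline{P}_{C_\gamma}$ for a \emph{compact} spectral curve, not a local germ. Moreover the last sentence of your third step, that the quotients ``preserve these properties since both quotients are by flat group schemes,'' is not a correct general principle: the group actions are on the \emph{source}, and passing to a quotient by a free action of a discrete infinite group such as $\sigma(\Lambda^0(C))$ can (and here does) change whether the morphism is quasi-compact. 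The correct reduction should instead use the two compatible fibrations: on the source, the fibration (\ref{cj quotient j descent}) with fiber $\sigma(\Lambda^0(C))\backslash\xx(C)$ over $\pic_{\widetilde{C}/k}$; on the target, the stratification of $\overline{\pic}_{C/k}$ by the combinatorial type of the torsion-free sheaf near each $c_j$, which is again indexed by data on $\widetilde{C}$. Finiteness and radiciality are then checked fiberwise over $\pic_{\widetilde{C}/k}$ by reduction to a single-singularity rational curve where Theorem~\ref{homeo SJ} genuinely applies. As written, your proof skips this bookkeeping, and that is where the real content of Laumon's argument lies.
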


As before, we can construct a $\sigma(\Lambda^{0}(C))$-torsor $\overline{P}_{C}^{\natural}$ over the central connected component $\overline{P}_{C}$ of $\overline{\pic}_{C/k}$ by including the rigidification of $\co_{C\backslash \{c_{j}\}_{j\in J}}$ in the moduli problem. 
Let $\xx^{0}(C)$ be the central connected component of $\xx(C)$. We get then a Cartesian diagram
$$
\begin{tikzcd}
\big[\xx^{0}(C)\times J_{C}\big]/G^{0}(C)\arrow[r]\arrow[d]\arrow[rd, phantom, "\square"]& \overline{P}_{C}^{\natural}\arrow[d]\\
\sigma(\Lambda^{0}(C))\backslash\big[\xx^{0}(C)\times J_{C}\big]/G^{0}(C)\arrow[r]& \overline{P}_{C}.
\end{tikzcd}
$$

\begin{prop}[Laumon \cite{laumon springer}]
The morphism 
$$
\big[\xx^{0}(C)\times J_{C}\big]/G^{0}(C)\to \overline{P}_{C}^{\natural}
$$
is a universal homeomorphism.
\end{prop}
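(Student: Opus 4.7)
The plan is to exploit the Cartesian square placed just above the statement, together with the previous proposition (attributed to Laumon) that
$$
\sigma(\Lambda^{0}(C))\backslash\bigl[\xx(C)\times J_{C}\bigr]/G^{0}(C)\to \overline{\pic}_{C/k}
$$
is a universal homeomorphism. First I would restrict that proposition to the central connected components: since $\xx(C)\cong \xx^{0}(C)\times \Lambda(C)/\Lambda^{0}(C)$ and the action of $\sigma(\Lambda^{0}(C))$ respects the connected component decomposition, the restriction of the above morphism to $\sigma(\Lambda^{0}(C))\backslash[\xx^{0}(C)\times J_{C}]/G^{0}(C)$ lands in the central connected component $\overline{P}_{C}$ of $\overline{\pic}_{C/k}$, and the restricted morphism remains a universal homeomorphism (universal homeomorphisms are closed under restriction to open and closed subschemes of source and target).

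Once this is established, the key observation is that the class of universal homeomorphisms is stable under arbitrary base change. By construction, $\overline{P}_{C}^{\natural}\to \overline{P}_{C}$ is a $\sigma(\Lambda^{0}(C))$-torsor (obtained by remembering the rigidification of $\co_{C\backslash \{c_{j}\}_{j\in J}}$), and the same holds for the left-hand vertical morphism in the Cartesian square. Thus the top horizontal arrow
$$
\bigl[\xx^{0}(C)\times J_{C}\bigr]/G^{0}(C)\to \overline{P}_{C}^{\natural}
$$
is the pullback of the bottom universal homeomorphism along the \'etale covering $\overline{P}_{C}^{\natural}\to \overline{P}_{C}$, and hence is itself a universal homeomorphism.

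The only thing that genuinely needs care is checking that the diagram is indeed Cartesian with matching $\sigma(\Lambda^{0}(C))$-actions on the two vertical arrows. This amounts to tracing through the moduli interpretations: the fiber of $\overline{P}_{C}^{\natural}\to \overline{P}_{C}$ over a torsion-free sheaf $\cm$ is the set of trivializations of $\cm$ away from $\{c_{j}\}_{j\in J}$, which is a $\sigma(\Lambda^{0}(C))$-torsor, while the fiber of $[\xx^{0}(C)\times J_{C}]/G^{0}(C) \to \sigma(\Lambda^{0}(C))\backslash [\xx^{0}(C)\times J_{C}]/G^{0}(C)$ over a given point is exactly the $\sigma(\Lambda^{0}(C))$-orbit; on the glueing picture both torsors parametrize the same set of possible $G^{0}(C)$-compatible choices of the generic trivialization on $C\backslash \{c_{j}\}_{j\in J}$. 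The main (and really the only) obstacle is keeping this bookkeeping straight across the two descents, after which the proposition follows formally from the base change stability of universal homeomorphisms.
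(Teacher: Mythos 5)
The paper does not actually prove this proposition: it is recalled from Laumon's paper, stated immediately after the Cartesian square and the preceding Laumon proposition precisely so that the reader can fill in the deduction you carried out. Your argument is correct and is surely the intended one: restrict the previous universal homeomorphism $\sigma(\Lambda^{0}(C))\backslash\big[\xx(C)\times J_{C}\big]/G^{0}(C)\to \overline{\pic}_{C/k}$ to the preimage of the degree-zero connected component $\overline{P}_{C}$ (universal homeomorphisms are stable under base change, in particular under restriction to clopen subschemes), and then base change the resulting universal homeomorphism along the vertical arrow $\overline{P}_{C}^{\natural}\to \overline{P}_{C}$ of the Cartesian square; since universal homeomorphisms are stable under arbitrary base change, the top horizontal arrow is one. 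One small simplification you could make: nothing in the argument uses that the vertical arrows are \'etale or are $\sigma(\Lambda^{0}(C))$-torsors; stability of universal homeomorphisms under \emph{any} base change is enough once the square is accepted as Cartesian, so the bookkeeping you flag at the end is only needed to justify that the square is Cartesian, which the paper already asserts.
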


\subsection{Ng\^{o}'s support theorem}\label{old support}

Let $\pi:(\cc, C_{\gamma})\to (\cb, 0)$ be an algebraization of a miniversal deformation of $C_{\gamma}$, i.e. $\pi:\cc\to \cb$ is a proper flat family of curves with geometrically integral fibers such that the restriction of $\pi$ to the formal neighborhood $\widehat{\cb}_{0}$ of $0\in \cb$ is a miniversal deformation of  $C_{\gamma}$. 
The generic fiber of $\pi$ is then an irreducible smooth projective algebraic curve of genus $\delta_{\gamma}=\delta(C_{\gamma})$.
Let 
$$
f:\overline{\cp}=\overline{\pic}{}^{\,0}_{\cc/\cb}\to \cb
$$ 
be the relative compactified Jacobian of the family $\pi:\cc\to \cb$. The morphism $f$ is proper flat with geometrically integral fibers, and it is smooth whenever the fiber of $\pi$ is. As $f^{-1}(0)=\overline{P}_{C_\gamma}$, we get a natural deformation of $\overline{P}_{C_\gamma}$. 

According to Fantechi-G\"ottsche-van Straten \cite{fgv}, the total space $\overline{\cp}$ is smooth over $k$. 
As the morphism $
f:\overline{\cp}\to \cb
$
is proper, the complex $Rf_{*}\ql$ is pure by Deligne's Weil-II \cite{weil2}. The decomposition theorem of Beilinson, Bernstein, Deligne and Gabber \cite{bbdg} then implies that
$$
Rf_{*}\ql=\bigoplus_{n=0}^{2\delta_\gamma} {}^{p}\ch^{n}(Rf_{*}\ql)[-n].
$$

\begin{thm}[Ng\^o \cite{ngo}]\label{ngo support}

We have decomposition
$$
Rf_{*}\ql=\bigoplus_{n=0}^{2\delta_\gamma} j_{!*}(R^{n}f^{\sm}_{*}\ql)[-n],
$$
where $j:\cb^{\rm sm}\to \cb$ is the natural inclusion. In particular,
$$
H^{n}\big(\overline{P}_{C_\gamma}, \ql\big)=\bigoplus_{i=0}^{n}\ch^{n-i}\big(j_{!*}{\textstyle{\bigwedge^{i}}}R^{1}f^{\sm}_{*}\ql\big)_{0},\quad n=0,\cdots,2\delta_\gamma.
$$
 
\end{thm}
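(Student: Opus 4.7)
The plan is to apply Ng\^{o}'s support theorem for weak abelian fibrations, treating $f:\overline{\cp}\to\cb$ together with the relative Jacobian action as the fibration. First, because $\overline{\cp}$ is smooth over $k$ by the theorem of Fantechi--G\"ottsche--van Straten and $f$ is proper, Deligne's Weil II gives purity of $Rf_{*}\ql$, and the decomposition theorem of Beilinson--Bernstein--Deligne--Gabber yields
\begin{equation*}
Rf_{*}\ql \;=\; \bigoplus_{n=0}^{2\delta_{\gamma}} {}^{p}\ch^{n}(Rf_{*}\ql)[-n],
\end{equation*}
where each perverse cohomology further decomposes into simple perverse sheaves $j_{Z,!*}\cl$ indexed by closed irreducible $Z\subset\cb$ and simple local systems $\cl$ on a smooth dense open of $Z$. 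The task reduces to showing that the only support $Z$ that appears is $\cb$ itself.

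I would next verify Ng\^{o}'s hypotheses for $f$. The relative Jacobian $g=\pic^{0}_{\cc/\cb}\to\cb$ is a smooth commutative group scheme acting on $f$, and over each geometric point $s\in\cb$ the Chevalley d\'evissage (\ref{chevalley cj curve}) presents the fiber $g_{s}$ as an extension of an abelian variety by an affine group of dimension $\delta(C_{s})$. This furnishes the weak abelian fibration structure, while autoduality of the compactified Jacobian provides the polarization. For $\delta$-regularity: since $\pi:\cc\to\cb$ is a miniversal deformation of $C_{\gamma}$, the $\delta$-stratification $\cb=\bigsqcup_{\delta}\cb_{\delta}$ from paragraph (9) satisfies $\mathrm{codim}_{\cb}(\cb_{\delta})=\delta$ by the standard deformation theory of planar singularities (Teissier), so the family is $\delta$-regular in Ng\^{o}'s sense.

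Granted these hypotheses, Ng\^{o}'s support inequality asserts that any support $Z$ of a simple summand of $Rf_{*}\ql$ satisfies $\mathrm{codim}_{\cb}(Z)\leq\delta_{\mathrm{aff}}(Z)$, where $\delta_{\mathrm{aff}}(Z)$ is the generic dimension of the affine part of $g$ over $Z$. Combined with $\delta$-regularity and the freeness argument coming from the action of $g$ together with smoothness of the total space $\overline{\cp}$ (which rules out low-dimensional supports by matching the top-degree cohomology through Poincar\'e--Verdier duality), one concludes that every such $Z$ must equal $\cb$. Consequently each ${}^{p}\ch^{n}(Rf_{*}\ql)$ is $j_{!*}$ of its restriction to $\cb^{\sm}$. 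Over $\cb^{\sm}$ the smoothness of $f^{\sm}$ with geometrically connected fibers together with semisimplicity of the local systems $R^{n}f_{*}^{\sm}\ql$ identifies this restriction, in the correct perverse normalization, with $R^{n}f_{*}^{\sm}\ql$, yielding the decomposition
$$
Rf_{*}\ql=\bigoplus_{n=0}^{2\delta_{\gamma}} j_{!*}(R^{n}f_{*}^{\sm}\ql)[-n].
$$

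The cohomology formula follows by taking the stalk at $0\in\cb$: because $f^{\sm}$ is an abelian scheme, $R^{i}f_{*}^{\sm}\ql=\bigwedge^{i}R^{1}f_{*}^{\sm}\ql$, and $H^{n}(\overline{P}_{C_{\gamma}},\ql)=\ch^{n}(Rf_{*}\ql)_{0}$ decomposes by collecting contributions $\ch^{n-i}(j_{!*}\bigwedge^{i}R^{1}f_{*}^{\sm}\ql)_{0}$ for $i=0,\ldots,n$. The principal obstacle is the support inequality: this is the deep technical heart of Ng\^{o}'s theorem, and rests on a delicate interplay between the polarization on the Jacobian family, the Chevalley d\'evissage, and the codimension constraints forced by Poincar\'e--Verdier duality for the smooth proper morphism $f$; once it is established in the present setting, the remaining steps are essentially formal consequences of the decomposition theorem and the structure of the smooth locus $f^{\sm}$.
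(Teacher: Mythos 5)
Your proposal follows the same route as the paper: smoothness of $\overline{\cp}$ (Fantechi--G\"ottsche--van Straten) plus Weil II gives purity, the decomposition theorem gives semisimplicity, Ng\^o's amplitude inequalities bound the possible supports, and the Severi inequality ($\codim_{\cb}\{b\}\ge\delta(\cc_{b})$, cited in the paper from Diaz--Harris) pins the codimension against $\delta^{\aff}_{b}$. The only thing you leave implicit is the concrete fact that actually closes the argument: once both inequalities are forced to be equalities at a putative support $s$, Ng\^o's freeness statement places the extra local system (tensored with the top exterior power of the abelian part) inside $(R^{2\delta_{\gamma}}f_{*}\ql)_{s}$, and it is the \emph{geometric irreducibility of every fiber of $f$} that makes $R^{2\delta_{\gamma}}f_{*}\ql$ the constant sheaf $\ql(-\delta_{\gamma})$, leaving no room for a nontrivial summand; your phrase about ``matching the top-degree cohomology through Poincar\'e--Verdier duality'' gestures at this but never names the irreducibility, which is the key input and is precisely what will \emph{fail} for the covering families $f_{n}$ later in the paper.
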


We make a brief recall of Ng\^o's proof. Let $X$ be a scheme of finite type over $k$, then any simple perverse sheaf on $X$ is of the form
$$
i_{x,*}\ic_{\overline{\{x\}}}(\cl_{x})[\rd_{x}]
$$
for some point (not necessarily closed) $x\in X$, and some simple $\ell$-adic local system $\cl_{x}$ on $\{x\}$, here
$
i_{x}: \overline{\{x\}}\hookrightarrow X
$ is the natural inclusion and $\rd_{x}=\dim(\overline{\{x\}})$.
A complex $K\in \cd_{c}^{b}(X,\ql)$ is said to be \emph{semisimple} if
$$
K=\bigoplus_{i}{}^{p}\ch^{i}(K)[-i].
$$
Then
$$
K=\bigoplus_{i}\bigoplus_{x\in X} i_{x,*} \ic_{\overline{\{x\}}}(\cl_{x}^{i})[\rd_{x}-i]
$$
for some points $x\in X$ and some semisimple $\ell$-adic local system $\cl_{x}^{i}$ on $\{x\}$. The set 
$$
\supp(K):=\{x\in X\mid \cl_{x}^{i}\neq 0 \text{ for some }i\}
$$
is called the \emph{support} of $K$. Let
$$
{\rm occ}_{x}(K)=\{i\mid \cl_{x}^{i}\neq 0\},
$$
and let
$$
n_{x}^{+}(K)=\max\{i\mid \cl_{x}^{i}\neq 0\},\quad  n_{x}^{-}(K)=\min\{i\mid \cl_{x}^{i}\neq 0\}.
$$
The quantity
$$
\amp_{x}(K):=n_{x}^{+}(K)-n_{x}^{-}(K)
$$
is called the \emph{amplitude} of $K$ at $x$. If $K$ is auto-dual, then $n_{x}^{-}=-n_{x}^{+}$ and $\amp_{x}(K)=2n_{x}^{+}(K)$.


The relative Jacobian $\cp=\pic_{\cc/\cb}^{0}$ acts naturally on $\overline{\cp}$.
They form a \emph{weak abelian fibration} in the sense of Ng\^o \cite{ngo}.
Recall that such a fibration over a $k$-scheme $S$ consists of a proper morphism $f:M\to S$ and a smooth commutative group scheme $g:P\to S$, with an action
$
\act: P\times_{S}M\to M
$
satisfying the properties:
\begin{enumerate}[topsep=2pt, noitemsep, label=(\arabic*)]
\item
The morphism $f$ and $g$ have the same relative dimension $d$.

\item

The action of $P$ on $M$ has only \emph{affine} stabilizers. 

\item

Let $P^{0}$ be the open sub group scheme of the neutral connected components of the fibers of $P$ and let $g^{0}:P^{0}\to S$ be the restriction of $g$ to $P^{0}$, let
$$
\rT_{\ql}(P^{0})=H^{-1}\big(g^{0}_{!}\ql[2d](d)\big)
$$
be the sheaf of the Tate modules of $P^{0}$, then $\rT_{\ql}(P^{0})$ is \emph{polarizable}, i.e. there exists an alternating bilinear form
$$
\rT_{\ql}(P^{0})\times \rT_{\ql}(P^{0})\to \ql(1)
$$ 
such that over any geometric point $\bar{s}$ of $S$, the bilinear form has kernel $\rT_{\ql}(R_{\bar{s}})$ and induces a perfect pairing for $\rT_{\ql}(A_{\bar{s}})$, where $R_{\bar{s}}$ and $A_{\bar{s}}$ are factors of the d\'evissage of Chevalley
\begin{equation}\label{chevalley 0}
1\to R_{\bar{s}} \to P^{0}_{\bar{s}}\to A_{\bar{s}}\to 1.
\end{equation}

\end{enumerate}

\noindent The key observation is that the abelian part $A_{\bar{s}}$ of $P^{0}_{\bar{s}}$ doesn't contribute to the degeneration of the nearby fibers of $M_{\bar{s}}$ to $M_{\bar{s}}$. In other words, it behaves as $A_{\bar{s}}$ can be extended to an \'etale neighborhood of $\bar{s}$. More precisely, with the action $
\act: P\times_{S}M\to M,
$
Ng\^o constructs a canonical morphism
\begin{equation}\label{tate module action 0}
\rT_{\ql}(P^{0})\otimes {}^{p}\ch^{n}(Rf_{*}\ql)\to {}^{p}\ch^{n-1}(Rf_{*}\ql).
\end{equation}
Let $K_{s}^{n}$ be the direct sum of the simple perverse factors of ${}^{p}\ch^{n}(Rf_{*}\ql)$ with support $s\in S$, its restriction to an open subscheme $V_{s}$ of $\overline{\{s\}}$ is a \emph{pure} local system denoted $\ck^{n}_{s}[-n]$.
Let 
$$
\ck_{s}=\bigoplus_{n} \ck_{s}^{n}[-n].
$$ 
Moreover, after further shrinking $V_{s}$, we can find a finite radical base change $V_{s}'\to V_{s}$, such that the d\'evissage of Chevalley (\ref{chevalley 0}) at a geometric point $\bar{s}$ over $s$ descends to $V_{s}'$ as 
$$
1\to R_{s}\to P^{0}|_{V_{s}'}\to A_{s}\to 1,
$$ 
where $R_{s}$ is a fiberwise connected smooth affine group scheme on $V_{s}'$ and $A_{s}$ is an Abelian scheme on $V_{s}'$. This implies an exact sequence of Tate modules
$$
0\to \rT_{\ql}(R_{s})\to \rT_{\ql}(P^{0}|_{V_{s}'})\to \rT_{\ql}(A_{s})\to 0. 
$$ 
As $V_{s}'$ is finite radical over $V_{s}$, they have equivalent \'etale sites, and we can consider the above exact sequence as objects over $V_{s}$. 
Notice that the Tate module $\rT_{\ql}(A_{s})$ is pure of weight $-1$, while the Tate module $\rT_{\ql}(R_{s})$ relies only on the toric part of $R_{s}$, and is therefore pure of weight $-2$.
As $\ck_{s}^{n}[-n]$ is pure of weight $n$, comparing the weights, we find that the action (\ref{tate module action 0}) actually factorizes as
$$
\rT_{\ql}(A_{s})\otimes \ck_{s}^{n}[-n]\to \ck_{s}^{n-1}[-n+1]. 
$$
Let $\boldsymbol{\Lambda}_{s}^{\vee,\bullet}=\bigoplus_{i}\bigwedge^{i}\rT_{\ql}(A_{s})[i]$. We obtain then a morphism
$$
\boldsymbol{\Lambda}_{s}^{\vee,\bullet}\otimes \ck_{s}\to \ck_{s}.
$$

\begin{prop}[Ng\^o \cite{ngo}, prop. 7.4.10]\label{ngo freeness}
The local system $\ck_{s}$ is a free graded module over the graded algebra $\boldsymbol{\Lambda}_{s}^{\vee,\bullet}$. 

\end{prop}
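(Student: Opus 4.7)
The plan is to combine a geometric interpretation of the $\boldsymbol{\Lambda}_{s}^{\vee,\bullet}$-action with Hard Lefschetz for the abelian part $A_s$, and then deduce freeness from a general algebraic lemma about graded modules over exterior algebras.

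First, after the finite radical base change $V'_s\to V_s$ the Chevalley devissage globalizes, and the abelian quotient $A_s$ acts on $\overline{\cp}\times_{\cb}V'_s$ through $P^0|_{V'_s}\twoheadrightarrow A_s$; since $V'_s\to V_s$ is radicial, the \'etale sites agree and I may work on $V'_s$. For the abelian scheme $A_s$, the cohomology algebra is canonically the exterior algebra $\boldsymbol{\Lambda}_s^{\vee,\bullet}$. Pulling back along the action map and comparing with Ng\^o's construction, the morphism (\ref{tate module action 0}) is identified with the cap product by the tautological $A_s$-action; consequently the graded module structure on $\ck_s$ induced by (\ref{tate module action 0}) coincides with the one coming from the cohomological action of $A_s$.

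Second, the polarizability hypothesis on $\rT_\ql(P^0)$ restricts to a perfect alternating pairing on $\rT_\ql(A_s)$, hence yields a relative polarization $\eta$ on $A_s/V'_s$. This provides classical Hard Lefschetz isomorphisms for $g_A:A_s\to V'_s$. Combining with the Poincar\'e self-duality of $Rf_*\ql$ (inherited by $\ck_s$) and a weight-counting argument using the fact that $\rT_\ql(R_s)$ is pure of weight $-2$ whereas $\rT_\ql(A_s)$ is pure of weight $-1$, one shows that the amplitude of $\ck_s$ equals $2g$, where $g=\dim A_s$, and that the action of any generator $\omega\in \bigwedge^{2g}\rT_\ql(A_s)$ induces an isomorphism $\ck_s^{n_s^+}\xrightarrow{\sim} \ck_s^{n_s^-}$ up to a Tate twist.

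Third, I invoke the following algebraic lemma: let $E=\bigwedge^\bullet V$ be the exterior algebra on a finite-dimensional vector space $V$ placed in degree one, and let $N$ be a finite-dimensional graded $E$-module concentrated in an interval of length $\dim V$. If the top form of $E$ induces an isomorphism between the two extremal graded pieces of $N$, then $N$ is free over $E$. The proof is elementary: the natural map $E\otimes N^{n_s^+}\to N$ is surjective in top degree by hypothesis, and a downward induction on degree using the Poincar\'e symmetry of $E$ together with the extremal isomorphism forces it to be bijective. Applied fibrewise over the connected $V_s$ to $\ck_s$, this yields the proposition.

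The main obstacle is Step~2, specifically the amplitude equality $\amp_s=2\dim A_s$ and the identification of the top-form action with Poincar\'e duality on $\ck_s$. These are the true geometric inputs: they encode the fact that the affine part $R_s$, contributing weights $\le -2$ to $\rT_\ql(P^0)$, cannot mediate the top-degree interaction, so that $A_s$ fully accounts for the duality. Steps~1 and~3 are then essentially formal.
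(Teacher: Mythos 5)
Your Step~3 algebraic lemma is false, and the proof cannot be completed along these lines. Here is a counterexample. Take $V$ of dimension $g=2$, set $E=\bigwedge^{\bullet}V$ with $V$ placed in degree $-1$ (so that multiplication lowers degree, as the cap product does), and let
$$
N \;=\; E[2]\;\oplus\;\ql[1],
$$
where $E[2]$ denotes $E$ shifted to sit in degrees $\{0,1,2\}$ and $\ql[1]$ is a one\mbox{-}dimensional space placed in degree $1$ on which $V$ acts by zero. Then $N$ is concentrated in the interval $\{0,1,2\}$ of length $g$; its graded dimensions are $(1,3,1)$; the top form $\omega\in\bigwedge^{2}V$ sends $N^{2}=\ql$ isomorphically onto $N^{0}=\bigwedge^{2}V$; and $N$ even carries an $E$-equivariant perfect pairing $N\otimes N\to\ql[2]$, so the example is self-dual in the Poincar\'e sense. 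Yet $N$ is not free over $E$: any free module concentrated in $\{0,1,2\}$ would be $E[2]^{\oplus c}$ with dimensions $(c,2c,c)$, and no $c$ gives $(1,3,1)$. Moreover the map $E\otimes N^{2}\to N$, which you propose to show bijective by downward induction, is injective but \emph{not} surjective --- it misses the middle summand $\ql[1]$ --- so the induction fails already at the first step below the top. Thus duality between the extremal pieces plus the amplitude bound is simply not enough to force freeness over an exterior algebra of rank $\ge 2$, and no amount of Poincar\'e self-duality of $\ck_{s}$ repairs this.

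The structure that Ng\^o actually exploits is not a duality statement but a compatible \emph{coaction}. Pulling back cohomology along the action map $A_{s}\times_{V'_{s}}M|_{V'_{s}}\to M|_{V'_{s}}$ and applying K\"unneth endows $\ck_{s}$ with a comodule structure over the exterior Hopf algebra on $\rT_{\ql}(A_{s})^{\vee}$; together with the cap-product module structure you set up in Step~1, this makes $\ck_{s}$ a graded Hopf module, and the fundamental theorem of Hopf modules then gives $\ck_{s}\cong\boldsymbol{\Lambda}_{s}^{\vee,\bullet}\otimes(\ck_{s})^{\mathrm{co}}$, i.e.\ freeness. In the counterexample above no such coaction can exist, which is precisely what separates it from the geometric situation. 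Note also that the paper deduces the amplitude equality $\tfrac{1}{2}\amp_{s}=d-\delta^{\aff}_{s}$ as a \emph{consequence} of Proposition~\ref{ngo freeness}, not as an input to it, so your Step~2, even if it could be made rigorous, runs the logic backwards.
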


Let $\boldsymbol{\Lambda}_{s}^{\bullet}=\bigoplus_{i} H^{i}(A_{s}, \ql)[-i]$ be the ring of cohomology of $A_{s}$. The freeness property implies that the local system $\ck_{s}$ is of the form $\Lambda_{s}^{\bullet}[\dim(A_{s})]\otimes \cl_{s}$ for some local system $\cl_{s}$, whence the inequality
\begin{equation*}\label{amp inequality}
\amp_{s}(Rf_{*}\ql)\ge 2\dim(A_{s}).
\end{equation*}
Let $\delta_{s}^{\aff}=\dim(R_{s})$, then the above inequality can be rewritten as
\begin{equation}\label{amp inequality}
{\frac{1}{2}}\amp_{s}(Rf_{*}\ql[d_{M}])\ge d-\delta_{s}^{\aff}.
\end{equation}
On the other hand, assume that $M$ is smooth over $k$, we have the inequality of Goresky and MacPherson (\cite{ngo}, th\'eor\`eme 7.3.1),
\begin{equation}\label{gm inequality}
{\frac{1}{2}}\amp_{s}(Rf_{*}\ql[d_{M}])\le d-\codim_{S}(\{s\}).
\end{equation}
The inequality as stated is different from that of the theorem, but the proof is the same. 
Let $\cl'$ be an extra local system over $\{s\}$ with degree $n\in {\rm occ}_{s}\big(Rf_{*}\ql[d_{M}]\big), n\ge 0,$ i.e. $i_{s,*}\ic_{\overline{\{s\}}}\cl'[\rd_{s}-n]$ appears as a direct summand of $Rf_{*}\ql[d_{M}]$. 
As $Rf_{*}\ql$ is non-trivial only in cohomological degrees $[0,2d]$, this implies
\begin{equation*}
n-\rd_{s}\le 2d-d_{M}. 
\end{equation*}
With the equality $d_{M}=\dim(S)+d$, this simplifies to 
\begin{equation*}
n\le d-\codim_{S}(\{s\}).
\end{equation*}
As $M$ is smooth and $f$ is proper, the complex $Rf_{*}\ql[d_{M}]$ is autodual, and we have
$$
{\frac{1}{2}}\amp_{s}(Rf_{*}\ql[d_{M}])=\max\Big\{n\, \big |\,n\in {\rm occ}_{s}\big(Rf_{*}\ql[d_{M}]\big)\Big\},
$$ 
whence the inequality (\ref{gm inequality}).
We combine (\ref{amp inequality}) and (\ref{gm inequality}) as
$$
d-\delta_{s}^{\aff}\le \frac{1}{2}\amp_{s}(Rf_{*}\ql[\rd_{M}])\le d-\codim_{S}(\{s\}).
$$
In case that $\codim_{S}(\{s\})\ge \delta_{s}^{\aff}$, both equalities hold in the above inequality and we have
\begin{equation}\label{amph max}
\frac{1}{2}\amp_{s}(Rf_{*}\ql[\rd_{M}])= d-\delta_{s}^{\aff}=d-\codim_{S}(\{s\}).
\end{equation}
On the other hand, by proposition \ref{ngo freeness}, the extra local systems over $s$ always appear in group of the form $\boldsymbol{\Lambda}_{s}^{\bullet}[\dim(A_{s})]\otimes \cl$. 
As the complex $Rf_{*}\ql[\rd_{M}]$ is autodual, we can assume that $\cl$ is of degree $\ge 0$.  
Hence the top degree piece $\boldsymbol{\Lambda}_{s}^{\rm top}[\dim(A_{s})]\otimes \cl=\cl[-\dim(A_{s})]$ is of degree  
\begin{equation}\label{trivial n}
n\ge\dim(A_{s})=d-\delta_{s}^{\aff}.
\end{equation}
The equations (\ref{amph max}) and (\ref{trivial n}), together with the trivial $n\le \frac{1}{2}\amp_{s}(Rf_{*}\ql[\rd_{M}])$, implies that 
$$
n=\dim(A_{s})=d-\delta_{s}^{\aff}.
$$
Then $\cl$ will be of degree $0$, and $\boldsymbol{\Lambda}_{s}^{\rm top}\otimes \cl$ appears as a direct summand of $(R^{2d}f_{*}\ql)_{s}$.
We summarize the above discussions as:

\begin{thm}[Ng\^o \cite{ngo}, proposition 7.3.2] \label{ngo support main}

Let $S$ be a separated finite type $k$-scheme. Let $f:M\to S$ be a projective morphism of pure relative dimension $d$, equipped with the action of a smooth group scheme $g:P\to S$. Suppose that $M$ is smooth over $k$ and that $P$ acts on $M$ with affine stabilizers, and that the Tate module of $P$ is polarizable. Then any support $s$ of $Rf_{*}\ql$ satisfies the inequality
$$
d-\delta_{s}^{\aff}\le \frac{1}{2}\amp_{s}(Rf_{*}\ql)\le d-\codim_{S}(\{s\}).
$$
In case that both equalities hold at some support $\{s\}$, the extra local systems appears in the form $\boldsymbol{\Lambda}_{s}^{\bullet}\otimes \cl$, $\cl$ of degree $0$ such that $\boldsymbol{\Lambda}_{s}^{\rm top}\otimes \cl$ appears as direct summands of $({R^{2d}f_{*}\ql})_{s}$.

\end{thm}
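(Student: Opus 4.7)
The plan is to assemble the ingredients developed in the paragraphs preceding the statement into a clean proof, organized around the two inequalities and the equality case separately.

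First I would apply the Beilinson--Bernstein--Deligne--Gabber decomposition theorem to the pure complex $Rf_{*}\ql[d_{M}]$, collect the simple perverse summands according to their support $s\in S$, and restrict to a sufficiently small open $V_{s}\subset \overline{\{s\}}$ on which the contribution $\ck_{s}=\bigoplus_{n}\ck_{s}^{n}[-n]$ becomes a pure local system placed in the relevant perverse degrees. After a further shrinking I would pass to a finite radical cover $V_{s}'\to V_{s}$ on which the geometric Chevalley d\'evissage $1\to R_{s}\to P^{0}|_{V_{s}'}\to A_{s}\to 1$ descends; since a finite radical morphism induces an equivalence of \'etale sites, I can regard the resulting short exact sequence of Tate modules $0\to \rT_{\ql}(R_{s})\to \rT_{\ql}(P^{0}|_{V_{s}'})\to \rT_{\ql}(A_{s})\to 0$ as an exact sequence on $V_{s}$ itself.

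Next I would install Ng\^o's action morphism $\rT_{\ql}(P^{0})\otimes {}^{p}\ch^{n}(Rf_{*}\ql)\to {}^{p}\ch^{n-1}(Rf_{*}\ql)$ built from $\act$, and restrict it to $V_{s}$ to act on the graded local system $\ck_{s}$. The key weight check is that $\rT_{\ql}(A_{s})$ is pure of weight $-1$, while $\rT_{\ql}(R_{s})$ depends only on the toric part of $R_{s}$ and is pure of weight $-2$; since $\ck_{s}^{n}[-n]$ is pure of weight $n$, the component $\rT_{\ql}(R_{s})\otimes \ck_{s}^{n}[-n]\to \ck_{s}^{n-1}[-n+1]$ is forced to vanish. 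This factors the action through $\rT_{\ql}(A_{s})$ and thereby through the graded algebra $\boldsymbol{\Lambda}_{s}^{\vee,\bullet}=\bigoplus_{i}\bigwedge^{i}\rT_{\ql}(A_{s})[i]$.

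Now I would invoke Proposition~\ref{ngo freeness} to conclude that $\ck_{s}$ is a free graded $\boldsymbol{\Lambda}_{s}^{\vee,\bullet}$-module. Freeness produces a top nonzero generator, giving the amplitude lower bound $\tfrac{1}{2}\amp_{s}(Rf_{*}\ql[d_{M}])\ge \dim(A_{s})=d-\delta_{s}^{\aff}$. For the upper bound I would use the smoothness of $M$: the complex $Rf_{*}\ql[d_{M}]$ is then autodual, so $\tfrac{1}{2}\amp_{s}$ is computed by the largest $n\in \mathrm{occ}_{s}$, and the constraint that $Rf_{*}\ql$ lives in cohomological degrees $[0,2d]$, combined with $d_{M}=\dim(S)+d$, yields $n\le d-\codim_{S}(\{s\})$. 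Concatenating the two inequalities gives the announced chain.

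Finally, for the equality case $\codim_{S}(\{s\})=\delta_{s}^{\aff}$ I would read off, from freeness, that any extra direct summand comes from a summand $\boldsymbol{\Lambda}_{s}^{\bullet}[\dim A_{s}]\otimes \cl$ where by autoduality I may take $\cl$ of nonnegative degree; the top-degree piece forces degree $n\ge \dim A_{s}=d-\delta_{s}^{\aff}$, while the upper bound forces $n=\dim A_{s}$, hence $\cl$ is of degree $0$, and its top graded companion $\boldsymbol{\Lambda}_{s}^{\mathrm{top}}\otimes \cl$ lands in $(R^{2d}f_{*}\ql)_{s}$. The main obstacle in this plan is the weight-theoretic factorization step, which requires the polarizability hypothesis to guarantee that $\rT_{\ql}(R_{s})$ is strictly of weight $-2$ so that it cannot contribute to the action on the pure pieces $\ck_{s}^{n}[-n]$; everything else is a formal combination of BBD, the freeness result, and the Goresky--MacPherson amplitude bound under the smoothness of $M$.
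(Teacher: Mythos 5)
Your proposal reproduces the paper's recall of Ng\^o's argument essentially verbatim: BBD decomposition and restriction to $V_{s}$, descent of the Chevalley d\'evissage along a finite radical cover, the weight comparison forcing the Tate-module action to factor through $\boldsymbol{\Lambda}_{s}^{\vee,\bullet}$, the freeness proposition for the lower bound, the autoduality and cohomological-amplitude argument for the Goresky--MacPherson upper bound, and the degree analysis in the equality case. One small imprecision: the weight $-2$ purity of $\rT_{\ql}(R_{s})$ is not what the polarizability hypothesis buys you (it is the elementary fact that the Tate module of a connected smooth affine commutative group depends only on its toric part); polarizability is used rather to control the kernel of the pairing in the Chevalley d\'evissage, to pin down the weight $-1$ of $\rT_{\ql}(A_{s})$, and as an ingredient of the freeness proposition.
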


We then apply the theorem to the family $f:\overline{\cp}\to \cb$. 
Let $b\in \cb$ be a support of $Rf_{*}\ql$. 
According to Diaz and Harris \cite{dh}, we have the inequality of Severi
\begin{equation}\label{lower bound delta}
\codim_{\cb}(\{b\})\ge \delta(\cc_{b}).
\end{equation}
On the other hand, we have the equality 
$$
\delta_{b}^{\aff}=\delta(\cc_{b}),
$$ 
hence the inequality of theorem \ref{ngo support main} must be equalities, and the extra local systems $\cl_{b}$ will satisfy the condition that $\boldsymbol{\Lambda}_{s}^{\rm top}\otimes \cl_{b}$ appear as a direct summand of $(R^{2\delta_{\gamma}}f_{*}\ql)_{s}$. As the geometric fibers of $f$ are irreducible, there cannot be any extra support $b$ other than the generic point of $\cb$. This finishes the sketch of proof of theorem \ref{ngo support}.

\section{The monodromy group of plane curve singularities}\label{monodromy curve}

In this section, we recollect some facts about the monodromy group of plane curve singularities, with particular emphasis on the work of A'Campo \cite{ac}.

Let $C_{0}=\spf\big(\bc[\![x,y]\!]/f(x,y)\big)$ be a plane curve singularity with $f(x,y)\in \bc[x,y]$.
Let $\varphi:(C, C_{0})\to (S, 0)$ be a miniversal deformation of $C_{0}$. 
Let $\tau$ be the Tjurina number of $C_{0}$. Then $S$ can be taken to be $\widehat{\ba}^{\tau}$, the completion of ${\ba}^{\tau}$ at $0$.  
Let $\Delta_{\varphi}\subset S$ be the discriminant locus of $\varphi$, then $R^{1}\varphi_{*}\ql$ is smooth over $S-\Delta_{\varphi}$. Let $\bar{\xi}$ be a geometric generic point of $S$, let $V=(R^{1}\varphi_{*}\ql)_{\bar{\xi}}$. Consider the monodromy action 
$$
\rho: \pi_{1}(S-\Delta_{\varphi}, \bar{\xi})\to \gl(V).
$$ 
Let $G_{\rho}$ be the Zariski closure of ${\rm Im}(\rho)$.


Let $L\subset S$ be a line in general position and sufficiently close to $0$. 
As $S$ is smooth at $0$, by the local Lefschetz theorem for the fundamental groups (cf. \cite{hl} or \cite{gm morse} Part II, Chap. 5, \S5.3), the morphism 
$$
\pi_{1}(L-\Delta_{\varphi},\bar{\xi})\to \pi_{1}(S-\Delta_{\varphi},\bar{\xi})
$$   
induced by the inclusion is surjective. 
Let $L\cap \Delta_{\varphi}=\{s_{i}\}_{i\in I}$. 
Let $\delta_{i}\in V$ be the vanishing cycle at $s_{i}$, then $\pi_{1}(L-\Delta_{\varphi},\bar{\xi})$ is the free group generated by the Picard-Lefschetz transformations
\begin{equation}\label{picard-lef}
\gamma_{i}(\alpha)=\alpha\pm \langle \alpha, \delta_{i} \rangle \delta_{i}, \quad  \alpha\in V, i\in I. 
\end{equation}
Let $\pair{\, ,}: V\times V\to \ql$ be the cup product on $V$, we ignore the Tate twist $(-1)$ here. Then the monodromy action of $\pi_{1}(S-\Delta_{\varphi},\bar{\xi})$ respects the pairing, and $G_{\rho}$ is a closed subgroup of $\Sp(V)$. 
Let $E\subset V$ be the subspace generated by the $\delta_{i}, i\in I,$ and their conjugates under the action of $\pi_{1}(S-\Delta_{\varphi},\bar{\xi})$, let $E^{\perp}$ be its orthogonal complement with respect to the pairing $\pair{\,,}$. Then we have a non-degenerate alternating pairing
$$
\pair{\,,}:E/(E\cap E^{\perp})\times E/(E\cap E^{\perp})\to \ql.
$$ 
The group $\pi_{1}(S-\Delta_{\varphi},\bar{\xi})$ acts on the quotient $W:=E/(E\cap E^{\perp})$ and the action respects the pairing. Let
$$
\bar{\rho}: \pi_{1}(S-\Delta_{\varphi}, \bar{\xi})\to \gl(W)
$$ 
be the resulting monodromy action. 

\begin{thm}\label{monodromy}

The Zariski closure of ${\rm Im}(\bar{\rho})$ is $\Sp(W)$.

\end{thm}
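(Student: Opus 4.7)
The plan is to show that $\bar{\rho}(\pi_1(S-\Delta_\varphi, \bar{\xi}))$ contains enough symplectic transvections to generate a Zariski-dense (in fact closed) subgroup of $\Sp(W)$, via a classical density criterion going back to Deligne.

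First, since the cup product descends to a non-degenerate alternating form on $W = E/(E\cap E^{\perp})$, the representation $\bar{\rho}$ factors through $\Sp(W)$. Each Picard-Lefschetz transformation $\gamma_i$ from \eqref{picard-lef} preserves both $E$ and $E^{\perp}$, hence descends to a symplectic transvection $\bar{\gamma}_i(\alpha) = \alpha \pm \pair{\alpha, \bar{\delta}_i}\bar{\delta}_i$ in $\Sp(W)$, where $\bar{\delta}_i$ denotes the image of $\delta_i$ in $W$. By the very definition of $E$, the classes $\bar{\delta}_i$ together with their $\bar{\rho}$-conjugates span $W$; moreover a monodromy conjugate of a $\delta_i$ is itself a vanishing cycle of the family attached to some loop in $L-\Delta_\varphi$, so it suffices to treat the $\bar{\delta}_i$ themselves.

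The crucial and hardest ingredient is that the set $\mathcal{V}$ of (signed) images in $W$ of vanishing cycles of $\varphi$ forms a single orbit under $\bar{\rho}(\pi_1)$. For this I would invoke A'Campo's theorem \cite{ac}: a miniversal deformation of an isolated plane curve singularity admits a real morsification whose Milnor fiber carries a distinguished basis of vanishing cycles on which the classical geometric monodromy acts transitively (this is proved via the connectedness of A'Campo's divide associated to the singularity). Choosing the generic line $L \subset S$ so that the points of $L \cap \Delta_\varphi = \{s_i\}$ correspond to nodal degenerations of a morsification, the cycles $\delta_i$ are precisely such a distinguished basis, and transitivity of the classical monodromy transfers along the surjection $\pi_1(L - \Delta_\varphi, \bar{\xi}) \twoheadrightarrow \pi_1(S - \Delta_\varphi, \bar{\xi})$ to the action of $\bar{\rho}$.

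To conclude, I would apply the following density criterion: if a subgroup $H \subset \Sp(W)$ contains the symplectic transvections $\bar{\gamma}_{\bar{\delta}}$ for all $\bar{\delta}$ in a set $\mathcal{V}$ that (i) spans $W$ and (ii) forms a single $H$-orbit up to sign, then the Zariski closure of $H$ equals $\Sp(W)$. The standard proof computes the Lie algebra $\mathfrak{h}$ of the closure: each transvection contributes the rank-one nilpotent element $X_{\bar{\delta}} : \alpha \mapsto \pair{\alpha, \bar{\delta}}\bar{\delta}$ of $\ssp(W)$; the orbit assumption makes $\mathfrak{h}$ stable under conjugation by all $\bar{\gamma}_{\bar{\delta}'}$ and hence forces $X_{\bar{\delta}} \in \mathfrak{h}$ for every $\bar{\delta} \in \mathcal{V}$; and since the $X_{\bar{\delta}}$ with $\bar{\delta}$ ranging over a spanning set of $W$ generate $\ssp(W)$ as a Lie algebra, one obtains $\mathfrak{h} = \ssp(W)$. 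The main obstacle is the verification of the transitivity hypothesis; once A'Campo's theorem is in place, the rest of the argument is formal.
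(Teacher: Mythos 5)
Your overall strategy is the same as the paper's: both arguments reduce to showing that, up to sign, the vanishing cycles form a single orbit under the monodromy, and then both invoke the Kazhdan--Margulis type density criterion of Deligne (Weil I, \S 5) to conclude that a subgroup of $\Sp(W)$ generated by symplectic transvections along a spanning single orbit of cycles is Zariski-dense. The density criterion and the Lie-algebra sketch you give are fine.

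Where you diverge — and where there is a genuine gap — is in the justification of the transitivity step. You attribute it to ``A'Campo's theorem'' and claim it is ``proved via the connectedness of A'Campo's divide.'' This is a mis-attribution and, as stated, not a proof. A'Campo's paper \cite{ac} produces the real morsification, the distinguished basis, and the combinatorial description of the intersection form and of the monodromy operator; it does not assert or prove that the geometric monodromy acts transitively on the vanishing cycles up to sign. Moreover, connectedness of the divide (equivalently of the associated Dynkin diagram) does not by itself imply transitivity of the monodromy on the set of vanishing cycles — the implication ``connected diagram $\Rightarrow$ single orbit'' is false in general for groups generated by reflections or transvections. The correct and standard route, and the one the paper uses, is Deligne's argument (Weil I, Thm.\ 5.4): the discriminant $\Delta_\varphi$ is \emph{locally irreducible} at $0$ (a nontrivial theorem for plane-curve singularities, cf.\ \cite{ebeling}, \S 3.8, Prop.\ 3.21(iv)), so any two vanishing cycles, corresponding to smooth points of $\Delta_\varphi$ near $0$, can be joined by a path in the smooth locus of $\Delta_\varphi$, whence they are conjugate under the monodromy. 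You would need to replace your appeal to A'Campo by this irreducibility-of-the-discriminant argument (or cite Gabrielov/Lazzeri, who proved it in the hypersurface-singularity setting) for the proof to be complete.
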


\begin{proof}

The theorem is well-known and we don't know to whom it should be attributed. 
The proof follows the same lines of reasoning as Deligne \cite{weil1}, \S5. 
Similar to theorem 5.4 in \textit{loc.\ cit.}, we can show that, up to sign, the vanishing cycles $\{\delta_{i}\}_{i\in I}$ are conjugate under the action of $\pi_{1}(S-\Delta_{\varphi},\bar{\xi})$. This is due to the fact that $\Delta_{\varphi}$ is {locally irreducible} at $0$ (cf. \cite{ebeling}, \S3.8, prop. 3.21(iv)), and we can move in the smooth locus of $\Delta_{\varphi}$. 
As a corollary, we obtain that the action $\bar{\rho}$ is absolutely irreducible, from which we deduce analogue of the theorem of Kazhdan-Margulis. 

\end{proof}

A precise description of the monodromy action has been obtained by A'Campo \cite{ac} and Gusein-Zade \cite{gz1}, \cite{gz2}. 
A \emph{morsification} for the singularity $C_{0}$ is a one-parameter deformation $\tilde{f}(x,y,t)\in \bc[x,y,t]$ such that the polynomial $\tilde{f}(x,y,t_{0})$ for $|t_{0}|$ sufficiently small has $\mu$ critical points in $B_{\ep}$, where $\mu$ is the Milnor number of $C_{0}$ and  $B_{\ep}$ is a sufficiently small open ball in $\bc^{2}$ at $0$. 
It gives rise to the same Morse datum as the restriction of the miniversal family $\varphi:C\to S$ to the inverse image $\varphi^{-1}(L)$.
In particular, the monodromy group ${\rm Im}(\rho)$ is generated by the Picard-Lefschetz transformations for the vanishing cycles of the Morse function $\tilde{f}(x,y,t_{0})$ on $B_{\ep}$.

Assume that $f(x,y)$ can be decomposed into products of analytically irreducible factors 
\begin{equation}\label{factorize f}
f(x,y)={\textstyle\prod_{j\in J}}f_{j}(x,y)
\end{equation}
such that the factors are all \emph{real} polynomials\footnote{The assumption imposes no loss of generality, as all the plane curve singularities admit equisingular deformation to such singularities.  Indeed, the datum consisting of the number of branches of the singularity and the $\delta$-invariants of the branches characterizes the equisingular deformation of the plane curve singularities over a smooth base \cite{teissier resolution}, it suffices to find polynomial of the required form with the same such datum.}, A'Campo constructed a morsification $\tilde{f}$ for $f$. 
The construction begins with a tower of blowing-ups
$$
\pi_{i}: X_{i}\to X_{i-1},\quad i=1,\cdots, N,
$$
of $X_{0}=\bc^{2}$, such that for the composite $\pi_{N}\circ\cdots\circ \pi_{1}$ the strict transform $C_{0}^{(N)}$ of $C_{0}$ is smooth and the exceptional divisor is of normal crossing.
Let $B_{i}$ be the exceptional divisor of the blowing-up $\pi_{i}$ and let $E_{i}$ be the exceptional divisor for the composite $\pi_{i}\circ\cdots\circ \pi_{1}$, $i=1,\cdots,N$. We  move $C_{0}^{(N)}$ slightly such that it is in generic position with respect to $E_{N}$, then contract $B_{N}$ to go back to $X_{N-1}$, let $C_{0}^{(N-1)}$ be the contract of the displacement of $C_{0}^{(N)}$. Move $C_{0}^{(N-1)}$ to a generic position with respect to $E_{N-1}$, contract $B_{N-1}$ to go back to $X_{N-2}$, we get $C_{0}^{(N-2)}$. This process can be iterated, at the end we get a curve $C_{0}^{(0)}$ in $X_{0}=\bc^{2}$. It has the property that its singularities are analytically locally isomorphic to a union of lines, but it might happen that the singularities are not ordinary double. To achieve that, we make a final deformation $C'_{0}$ such that all the singularities are nodal. The construction can be put in a one parameter family and we get a one parameter deformation $\tilde{f}(x,y,t)\in \bc[x,y,t]$ which is a morsification of $f$. Moreover, we can arrange that $\tilde{f}$ is of \emph{real} coefficients and all its critical points in $B_{\ep}$ are of \emph{real} coordinates. Such a morsification is said to be \emph{real}.

For a generic $t_{0}\in \bc$ with $|t_{0}|$ sufficiently small, the curve $C_{t_{0}}$ defined by the polynomial $\tilde{f}(x,y,t_{0})=0$ is a $\delta$-invariant deformation of $C_{0}$ and it has $\delta(C_{0})$ ordinary double points. 
Moreover, as a function over $D_{\ep}=B_{\ep}\cap \br^{2}$, $\tilde{f}(x,y,t_{0})$ has $\mu$ critical points, 
and the Morse index at a critical point $a\in D_{\ep}$ equals 
$$
{\rm Ind}_{a}(\tilde{f}\,)=\begin{cases}
0,&\text{if }\tilde{f}(a)<0,\\
1,&\text{if }\tilde{f}(a)=0,\\
2,&\text{if }\tilde{f}(a)>0.\\\end{cases}
$$
The configuration of the critical points can be read off from the real locus $\overline{C}_{t_{0}}:=C_{t_{0}}\cap D_{\ep}$. The curve $\overline{C}_{t_{0}}$ has only ordinary double points, it cuts $D_{\ep}$ as union of several connected components. Consider the connected components which are disjoint from the boundary of $D_{\ep}$, we call them the \emph{regions}. 
At each region, $\tilde{f}$ attains an extreme value (a maximum if $\tilde{f}>0$ and a minimum if $\tilde{f}<0$), hence each region represents a critical point of non-zero critical value of $\tilde{f}$. The critical points with critical value $0$ are exactly the double points of $\overline{C}_{t_{0}}$. Let $R_{1}^{+},\cdots, R_{\mu_{+}}^{+}$ be regions where $\tilde{f}$ is positive, let $D_{1},\cdots, D_{\mu_{0}}$ be the double points of $\overline{C}_{t_{0}}$, let $R_{1}^{-},\cdots, R_{\mu_{-}}^{-}$ be regions where $\tilde{f}$ is negative. 
Then 
\begin{equation}\label{delta-mu}
\mu_{0}=\delta(C_{0})\quad \text{and}\quad \mu_{+}+\mu_{0}+\mu_{-}=\mu.
\end{equation}
Let $\alpha_{i}^{+}$ be the vanishing cycle at the critical point in the region labeled $R_{i}^{+}$, $i=1,\cdots,\mu_{+}$; Let $\alpha_{j}$ be the vanishing cycle at the double point $D_{j}$, $j=1,\cdots, \mu_{0}$; Let $\alpha_{k}^{-}$ be the vanishing cycle at the critical point in the region labeled $R_{k}^{-}$, $k=1,\cdots,\mu_{-}$. Their union forms a basis called the \emph{distinguished basis} of $H_{1}(F,\bz)$, where $F=\varphi^{-1}(t)\cap B_{\ep}$ is the Milnor fiber of the singularity $C_{0}$.   
The intersection number of the vanishing cycles can be calculated as:

\begin{table}[h]\label{Tab: inter}
\centering
\begin{tabular}{r  l }
$\pair{\delta_{i},\delta_{j}}=1$ & if \,(1) $\delta_{i}=\alpha_{i}^{+}, \delta_{j}=\alpha_{j}$  and  $D_{j}\in \overline{R_{i}^{+}},$\\
& \quad (2) $\delta_{i}=\alpha_{i}, \delta_{j}=\alpha_{j}^{-}$  and  $D_{i}\in \overline{R_{j}^{-}},$\\
& \quad (3) $\delta_{i}=\alpha_{i}^{+}, \delta_{j}=\alpha_{j}^{-}$  and  $\overline{R_{i}^{+}}\cap \overline{R_{j}^{-}}$ is a segment of $\overline{C}_{t_{0}}$,\\
=0 & at all the other cases.
\end{tabular}
\caption{Intersection number of vanishing cycles}
\end{table}

\noindent With these intersection numbers, we get a precise form of the Picard-Lefschetz transformations (\ref{picard-lef}), they generate the monodromy group ${\rm Im}(\rho)$.  
These intersection numbers can be encoded in a Dynkin diagram. The vertices are indexed by the critical points. We use the symbol $\oplus$ for the critical points in $R_{i}^{+}$, the symbol $\bullet$ for the critical points $D_{j}$ and the symbol $\ominus$ for the critical points in $R_{k}^{-}$.  
Moreover, we draw an edge between two vertices if the intersection number of the corresponding vanishing cycles is non-zero. 

\begin{example}[A'Campo \cite{ac}]\label{ac gl4}

Let $f(x,y)=xy(x^{2}-y^{2})$, then for any $t\in \br, t\ne 0$, $\tilde{f}(x,y)=(x-t)(y+2t)(x^{2}-y^{2})$ is a real morsification for $f$. The curve $\overline{C}_{t_{0}}$ and the regions looks like
\begin{figure}[h]
\centering
\begin{tikzpicture}
\draw (0,0) circle (2cm);
\draw (1.42, 1.42)--(-1.42,-1.42);
\draw (1.42, -1.42)--(-1.42,1.42);
\draw (1.93, 0.5)--(-1.93, 0.5);
\draw (-1, 1.75)--(-1, -1.75);

\node at (0,0) {$\bullet$};
\node at (0.5,0.5) {$\bullet$};
\node at (-0.5,0.5) {$\bullet$};
\node at (-1,-1) {$\bullet$};
\node at (-1,0.5) {$\bullet$};
\node at (-1,1) {$\bullet$};

\node at (-0.6,0) {$\oplus$};
\node at (0,0.3) {$\ominus$};
\node at (-0.86,0.67) {$\ominus$};

\end{tikzpicture}
\end{figure}

\noindent The Dynkin diagram is
$$
\begin{tikzcd}
\bullet \arrow[r, dash] &\ominus \arrow[r, dash] \arrow[rd, dash]\arrow[d, dash]&\bullet \arrow[r, dash] \arrow[d, dash] &\ominus \arrow[r, dash] \arrow[d, dash] \arrow[ld, dash] &\bullet\\
&\bullet \arrow[r, dash]& \oplus \arrow[r, dash] \arrow[d, dash] &\bullet\\
&&\bullet&&
\end{tikzcd}
$$

\cqfd

\end{example}

\begin{rem}\label{ac large char}

The construction of this section continues to work over a field $\kappa$ such that ${\rm char}(\kappa)$ equals $0$ or is large enough compared to $\deg(f)$. Indeed, in these cases we can still resolve the singularity by successive blowing-ups of $\ba^{2}$ and make the perturbations as A'Campo.  

\end{rem}

\section{Finite abelian coverings of the compactified Jacobians}

In this section, we construct finite abelian coverings $\overline{P}_{n}\to \overline{P}_{C_{\gamma}}$, and extend them \emph{equivariantly} over the family $f:\overline{\cp}\to \cb$, to get finite abelian coverings $\overline{\cp}_{n}\to \cp\times_{\cb}\cb_{n}$. 
We then make a detailed analysis of the irreducible components of the geometric fibers of the family $
f_{n}:\overline{\cp}_{n}\to \cb_{n}.
$

\subsection{Finite abelian coverings of curves and their compactified Jacobians}

Let $C$ be a projective geometrically irreducible algebraic curve over $k$ with planar singularities. We are interested in the finite abelian coverings of the curve $C$ and its compactified Jacobian $\overline{P}_{C}$. The question is closely related to the {autoduality of the compactified Jacobian}.

Without loss of generality, we can assume that $C^{\reg}(k)\neq \emptyset$.
For any closed point $c\in C$, its defining ideal $\km_{c}$ is a simple torsion-free coherent $\co_{C}$-module of generic rank $1$. Fix a line bundle $L$ on $C$ of degree $1$, we can define a closed embedding, called the \emph{Abel-Jacobi map}:
$$
A_{L}:C\to \overline{P}_{C},\quad c\mapsto \km_{c}\otimes L. 
$$ 
The pull-back then defines a morphism
$$
A_{L}^{*}:\pic^{0}_{\overline{P}_{C}}\to \pic^{0}_{C/k}.
$$ 
In case that $C$ is smooth, $A_{L}^{*}$ is an isomorphism, this is the well-known {autoduality of the Jacobian}. It is proven by a careful analysis of the intersection of $A_{L}(C)$ with translations of the theta divisor $\Theta$ of $J_{C}$. The inverse of $A_{L}^{*}$ in this case is
$$
\beta: J_{C}\to \pic^{0}_{J_{C}},\quad x\mapsto \co_{J_{C}}(x+\Theta)^{-1}\otimes \co_{J_{C}}(\Theta). 
$$ 
In case that $C$ is geometrically integral with planar singularities, Esteves, Gagn\'e and Kleiman \cite{egk} partially  generalized the autoduality. Let $\cm^{\uni}$ be the universal sheaf on $C\times \overline{P}_{C}$, they constructed a morphism generalizing the above one
\begin{equation}\label{egk beta}
\beta:J_{C}\to \pic_{\overline{P}_{C}}^{0},\quad \cl\mapsto \det Rq_{2,*}(q_{1}^{*}\cl\otimes \cm^{\uni})^{-1}\otimes \det Rq_{2,*}\cm^{\uni},
\end{equation}
where $q_{1},q_{2}$ are the projections of $C\times \overline{P}_{C}$ to the first and second factor, and showed that
$$
A_{L}^{*}\circ \beta=\Id_{J_{C}}.
$$
Their results are complemented and extended by Arinkin \cite{arinkin 1}, \cite{arinkin 2}. He showed that $\beta$ is exactly the inverse to $A_{L}^{*}$. Moreover, the work of Esteves, Gagn\'e and Kleiman actually implies the existence of a line bundle $\mathscr{P}^{\uni}$ on $J_{C}\times \overline{P}_{C}$: Let $\cl^{\uni}$ be the universal line bundle on $C\times J_{C}$, it is the restriction of $\cm^{\uni}$ to $C\times J_{C}$, let $\pr_{ij}$ be the projection of $C\times J_{C}\times \overline{P}_{C}$ to its $(i,j)$-th factor, $i,j=1,2,3$, we define
\begin{equation}\label{egk poincare}
\scp^{\uni}=\det R\pr_{23,*}(\pr_{12}^{*}\cl^{\uni}\otimes \pr_{13}^{*}\cm^{\uni})^{-1}\otimes \det R\pr_{23,*}\,\pr_{13}^{*}\cm^{\uni}.
\end{equation}
Then $\scp^{\uni}$ induces the morphism $\beta:J_{C}\to \pic^{0}_{\overline{P}_{C}}$.
Arinkin extended the line bundle $\scp^{\uni}$ on $J_{C}\times \overline{P}_{C}$ to a coherent sheaf $\overline{\scp}{}^{\uni}$ on $\overline{P}_{C}\times \overline{P}_{C}$, and defined a \emph{Fourier-Mukai} transform
$$
\kff: \cd_{qc}^{b}(\overline{P}_{C})\to \cd_{qc}^{b}(\overline{P}_{C}), \quad M\mapsto Rp_{2,*}(Rp_{1}^{*}M\otimes \overline{\scp}{}^{\uni}),
$$
where $p_{i}: \overline{P}_{C}\times \overline{P}_{C}\to \overline{P}_{C}$ is the projection to the $i$-th factor, $i=1,2$, and proved that $\kff$ defines an auto-equivalence of derived categories. We summarize the above discussion as:

\begin{thm}[Arinkin \cite{arinkin 1}, \cite{arinkin 2}]\label{autoduality main}

The morphism $
\beta:J_{C}\to \pic^{0}_{\overline{P}_{C}}
$ 
is an isomorphism, and the Fourier-Mukai transform 
$
\kff: \cd_{qc}^{b}(\overline{P}_{C})\to \cd_{qc}^{b}(\overline{P}_{C})
$ 
defines an auto-equivalence of derived categories. 

\end{thm}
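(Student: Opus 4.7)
The plan is to follow Arinkin's two-step strategy, starting from the partial autoduality of Esteves-Gagn\'e-Kleiman. First, I would upgrade the relation $A_L^{*}\circ \beta = \Id_{J_C}$ from \cite{egk} to a full isomorphism $\beta : J_C \to \pic^0_{\overline{P}_C}$. That identity already exhibits $\beta$ as a split monomorphism of connected commutative group schemes, so it suffices to prove surjectivity, and by connectedness this reduces to a comparison of tangent spaces at the identity. The source has dimension $\dim H^1(C,\co_C) = p_a(C)$, and the target has dimension $\dim H^1(\overline{P}_C, \co_{\overline{P}_C})$, which one computes using that $\overline{P}_C$ is projective, irreducible and Cohen-Macaulay of dimension $p_a(C)$ (compatible with the smoothness of the total space $\overline{\cp}$ recalled in the excerpt). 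The relation $A_L^* \circ \beta = \Id$ dualizes to show that $\beta$ induces an isomorphism on tangent spaces.

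For the Fourier-Mukai part, the central construction is to extend the Poincar\'e line bundle $\scp^{\uni}$ on $J_C\times \overline{P}_C$ to a coherent sheaf $\overline{\scp}^{\uni}$ on $\overline{P}_C\times \overline{P}_C$. Since $J_C\subset \overline{P}_C$ is the open locus of locally free torsion-free sheaves, with complement of codimension $\geq 1$, the natural candidate is the maximal Cohen-Macaulay extension of $\scp^{\uni}$; this exists because the ambient $\overline{P}_C\times\overline{P}_C$ is Cohen-Macaulay in view of the planar singularities hypothesis. With this kernel in hand, the equivalence property of $\kff$ can be proven by Mukai's classical strategy: one defines a tentative inverse kernel, involving pullback along the inversion $[-1]$ on $\pic^0$ and a dualizing shift by $p_a(C)$, and verifies that the composition is $\co_{\Delta}[-p_a(C)]$ on $\overline{P}_C\times \overline{P}_C$. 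On the dense open $J_C\times J_C$ this reduces to Mukai's theorem for the abelian variety $J_C$.

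The main obstacle will be the verification of this convolution identity along the singular strata of $\overline{P}_C$, where the parametrized sheaves are not locally free. Concretely, one must show that no spurious $Tor$ or $Ext$ sheaves contribute to the convolution, despite the failure of flatness. Arinkin's solution relies on the Cohen-Macaulay property of $\overline{\scp}^{\uni}$ together with an \'etale-local model of the singularities of $\overline{P}_C$ at its boundary strata, built from products of compactified Jacobians of nodal curves, where the Fourier transform admits an explicit description. The general case then follows from this local computation by gluing, combined with a Serre-duality-style argument for Cohen-Macaulay sheaves on singular varieties, which is where the planarity of the singularities of $C$ is essential.
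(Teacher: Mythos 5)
The paper does not prove this theorem; it is stated as a direct citation of Arinkin's two papers, and the paragraph preceding it only introduces the objects ($\beta$, $\scp^{\uni}$, $\overline{\scp}{}^{\uni}$, $\kff$) appearing in the statement. There is therefore no proof in the paper to compare against.

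Taken as a reconstruction of Arinkin's argument, your sketch has two real gaps. First, the boundary $\overline{P}_C\setminus J_C$ is a divisor (codimension exactly one in $\overline{P}_C$), so a ``maximal Cohen--Macaulay extension'' of the line bundle $\scp^{\uni}$ across $(\overline{P}_C\setminus J_C)\times\overline{P}_C$ is not a workable notion: $j_*\scp^{\uni}$ need not be coherent, and there is no uniqueness. Arinkin instead gives a direct construction of $\overline{\scp}{}^{\uni}$ (a push-forward along an Abel--Jacobi type map from a Hilbert scheme of $C$), and then proves, as a theorem rather than by definition, that the resulting sheaf is Cohen--Macaulay and flat over each projection; these properties then drive the equivalence argument, as you rightly emphasize. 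Second, your treatment of $\beta$ is internally inconsistent: you begin by reducing surjectivity to the dimension count $\dim H^1(\overline{P}_C,\co_{\overline{P}_C})=p_a(C)$, but then assert that $A_L^*\circ\beta=\Id$ ``dualizes to show $\beta$ induces an isomorphism on tangent spaces.'' That relation gives only that $T_0\beta$ is split injective. The missing and genuinely nontrivial input is precisely the computation $h^1(\overline{P}_C,\co_{\overline{P}_C})=p_a(C)$, which is the main content of Arinkin's first cited paper; Cohen--Macaulayness and irreducibility of $\overline{P}_C$ alone do not determine that cohomology group.
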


Let $X$ be a connected noetherian scheme, recall that the isomorphism classes of the finite abelian coverings of $X$ with Galois group $A$ is classified by
$$
\Hom_{\text{cont.}}\big(\pi_{1}(X,\bar{\xi}),\,A\big)\cong H^{1}(X,A),
$$
where $\bar{\xi}$ is a geometric generic point of $X$. 
In the essential case that $A=\bz/n$, the finite abelian coverings are classified by 
$$
H^{1}(X,\bz/n)=\pic_{X/k}[n]\otimes \mu_{n}^{-1}.
$$
This applies in particular to the curve $C$ and its Jacobian $J_{C}$. In case that the curve $C$ is smooth projective, with the autoduality
$$
\pic_{J_{C}}^{0}\cong J_{C},
$$ 
we deduce that for any finite abelian covering $C'\to C$ over $k$, there exists a unique finite abelian covering $J'\to J_{C}$ over $k$ which makes the diagram Cartesian
$$
\begin{tikzcd}
C'\arrow[d]\arrow[r]\arrow[dr, phantom, "\square"]&J'\arrow[d]\\
C\arrow[r,"A_{L}"]&J_{C}\,_{.}
\end{tikzcd}
$$   
This is the {geometric class field theory} of Rosenlicht and Lang (cf. \cite{serre class}). 
We would like to reformulate and generalize it with the autoduality of the compactified Jacobian.  

Let $S$ be an integral noetherian $k$-scheme, let $C$ be a flat projective curve over $S$ with  geometrically irreducible fibers having only planar singularities. Suppose that there exists a section $S\to C$ with image lying in the smooth locus of $C$. Let $L$ be a line bundle over $C$ of relative degree 1. 
For an $S$-morphism $\varphi: C'\to C$ which is a finite abelian covering with Galois group $A$ of order coprime to ${\rm char}(k)$, the group $A$ acts on $\varphi_{*}\co_{C'}$ and induces an eigen-sheaf decomposition 
$$
\varphi_{*}\co_{C'}=\bigoplus_{\chi\in \widehat{A}}\co_{{C'},\,\chi},
$$
where $\widehat{A}$ is the set of irreducible characters of $A$, and the sheaves 
$$
\co_{{C'},\,\chi}=\Big\{f\in \co_{C'}\,\big|\, f(a^{-1}x)=\chi(a)f(x) \text{ for any }a\in A, \,x\in C'   \Big\}
$$ 
are invertible $\co_{C}$-modules, we call them the eigensheaves of character $\chi$. 
Same result holds for the finite abelian coverings of $\overline{P}_{C/S}=\overline{\pic}{}^{\,0}_{C/S}$.

\begin{thm}\label{curve to jacobian}

Under the above setting, let $A$ be a finite abelian group of order coprime to ${\rm char}(k)$, then there exists a bijection between the set of finite abelian coverings $\varphi:C'\to C$ and the set of finite abelian coverings $\Phi: \overline{P}'\to \overline{P}_{C/S}$, both with Galois group $A$, characterized by the property 
$$
\co_{\overline{P}',\,{\chi}}=\beta\,(\co_{C',\,\chi}),\quad \forall\,\chi\in \widehat{A}.
$$ 
Here we are identifying $\co_{C',\,\chi}$ and $\co_{\overline{P}',\,{\chi}}$ with the points on $\pic_{C/S}$ and $\pic_{\overline{P}_{C/S}/S}$ corresponding to them. 
Moreover, we have the Cartesian diagram
$$
\begin{tikzcd}
C'\arrow[d, "\varphi"']\arrow[r]\arrow[dr, phantom, "\square"]&\overline{P}'\arrow[d, "\Phi"]
\\
C\arrow[r,"A_{L}"]&\overline{P}_{C/S}.
\end{tikzcd}
$$ 

\end{thm}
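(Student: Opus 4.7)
The plan is to derive the theorem from Arinkin's autoduality (Theorem \ref{autoduality main}) together with the Kummer-theoretic classification of finite abelian étale coverings, and then to verify the Cartesian diagram from the identity $A_L^{*}\circ \beta=\Id_{J_{C/S}}$ recalled just after equation (\ref{egk beta}).

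First I would handle the classification step. Since $|A|=n$ is coprime to $\mathrm{char}(k)$, every $A$-covering $\varphi:Y\to X$ (for $X=C$ or $X=\overline{P}_{C/S}$) is determined, up to isomorphism, by its eigensheaf decomposition $\varphi_{*}\co_{Y}=\bigoplus_{\chi\in \widehat{A}}\co_{Y,\chi}$, together with the $\widehat{A}$-graded $\co_{X}$-algebra structure on this sum. Each $\co_{Y,\chi}$ is a line bundle whose order divides $\mathrm{ord}(\chi)$, so that the map $\chi \mapsto \co_{Y,\chi}$ defines a homomorphism $\widehat{A}\to \pic_{X/S}[n]$. Conversely, étale-locally on $S$ (after trivializing $\mu_{n}$), such a homomorphism determines the covering uniquely up to isomorphism, since the multiplicative structure is rigidified by the vanishing of the relevant higher obstructions for $|A|$ invertible. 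Thus isomorphism classes of $A$-coverings of $X$ are in canonical bijection with $\Hom(\widehat{A}, \pic_{X/S}[n])$, up to a common Tate twist that will cancel between the two sides.

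Next I would apply Arinkin's autoduality. Theorem \ref{autoduality main} says that $\beta:J_{C/S}\to \pic^{0}_{\overline{P}_{C/S}/S}$ is an isomorphism of commutative $S$-group schemes. Restricting to $n$-torsion gives an isomorphism
\begin{equation*}
\beta:\pic_{C/S}[n]\xrightarrow{\sim}\pic_{\overline{P}_{C/S}/S}[n],
\end{equation*}
and composing with $\Hom(\widehat{A},-)$ produces the desired bijection between isomorphism classes of $A$-coverings of $C$ and of $\overline{P}_{C/S}$, by construction characterized by the eigensheaf identity $\co_{\overline{P}',\chi}=\beta(\co_{C',\chi})$ for every $\chi\in \widehat{A}$.

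Finally, to verify the Cartesian diagram, I would use the identity $A_{L}^{*}\circ \beta=\Id_{J_{C/S}}$. For each character $\chi\in \widehat{A}$, pulling back the eigen-line bundle $\co_{\overline{P}',\chi}=\beta(\co_{C',\chi})$ along the Abel--Jacobi embedding gives $A_{L}^{*}\beta(\co_{C',\chi})=\co_{C',\chi}$. Hence the fiber product $C\times_{\overline{P}_{C/S}}\overline{P}'$ has the same eigensheaf decomposition and algebra structure as $C'$, and by Step~1 the two $A$-coverings agree, yielding the Cartesian square.

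\textbf{Main obstacle.} The conceptual heart lies in Step~2, which rests squarely on Arinkin's autoduality in the relative setting; this is already available for families $C\to S$ with integral fibers having planar singularities and a section in the smooth locus, thanks to the construction of the universal Poincaré sheaf (\ref{egk poincare}). The only technical subtlety is in Step~1: the exact identification between $H^{1}_{\et}(X,A)$ and $\Hom(\widehat{A}, \pic_{X/S}[n])$ involves Tate twists by $\mu_{n}^{-1}$, which must be tracked to ensure the bijection is canonical. These twists, however, depend only on $n$ and cancel between $C$ and $\overline{P}_{C/S}$, so they do not affect the final statement.
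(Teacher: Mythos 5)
Your proposal is correct and follows essentially the same route as the paper: reduce the classification of $A$-coverings to data on $n$-torsion of the Picard scheme (you phrase this as a bijection with $\Hom(\widehat{A},\pic_{X/S}[n])$, while the paper phrases it as transferring the $\widehat{A}$-graded algebra structures on $\varphi_{*}\co_{C'}$ and $\Phi_{*}\co_{\overline{P}'}$, but the content is the same), apply Arinkin's autoduality isomorphism $\beta$ to transfer that data, and then use the identity $A_{L}^{*}\circ\beta=\Id_{J_{C/S}}$ to show the pullback of $\overline{P}'$ along $A_{L}$ recovers $C'$, giving the Cartesian square.
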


\begin{proof}

To set up the bijection between the finite abelian coverings of $C$ and $\overline{P}_{C/S}$, it is enough to show that the structures of algebra on 
\begin{equation}\label{eigen decomp}
\varphi_{*}\co_{C'}=\bigoplus_{\chi\in \widehat{A}}\co_{{C'},\,\chi}
\quad
\text{and}
\quad
\Phi_{*}\co_{\overline{P}'}=\bigoplus_{\chi\in \widehat{A}}\co_{\overline{P}',\,{\chi}}
\end{equation}
are transferred into each other under the morphism $\beta$. Note that the structure of algebra on $\varphi_{*}\co_{C'}$ together with the $A$-action is encoded in the collection of isomorphisms
\begin{equation}\label{prod structure}
\co_{{C'},\,\chi_{1}}\otimes \co_{{C'},\,\chi_{2}}\cong \co_{{C'},\,\chi_{1}+\chi_{2}},\quad \forall\,\chi_{1},\chi_{2}\in \widehat{A},
\end{equation} 
which is equivalent to the additions of the corresponding points in $\pic^{0}_{{C/S}}$. Similar results for $\Phi_{*}\co_{\overline{P}'}$. 
By theorem \ref{autoduality main}, 
{\small $
\beta:\pic^{0}_{{C/S}}\to \pic^{0}_{\overline{P}_{C/S}/S}
$} 
is an isomorphism of group schemes, hence such collections of isomorphisms are transferred into each other under $\beta$, and this sets up the bijection between the finite abelian coverings of $C$ and $\overline{P}_{C/S}$. 

For the second assertion, as $A_{L}^{*}$ is inverse to $\beta$, the pull-back via $A_{L}$ defines a morphism of algebras 
$$
A_{L}^{*}: \Phi_{*}\co_{\overline{P}'}=\bigoplus_{\chi\in \widehat{A}}\co_{\overline{P}',\,{\chi}}\to \varphi_{*}\co_{C'}=\bigoplus_{\chi\in \widehat{A}}\co_{{C'},\,\chi}.
$$
This defines the morphism $C'\to \overline{P}'$, and we get the Cartesian diagram in the theorem.

\end{proof}

For the finite abelian covering $\varphi: C'\to C$, it is clear that the eigensheaf  decomposition (\ref{eigen decomp}) and the multiplication law (\ref{prod structure}) determine completely the abelian covering. The eigensheaves correspond to a formal sum $\ct_{C'/C}=\sum n_{i}\ct_{i}$ of torsion subgroups of {\small$\pic^{0}_{C/S}$}, here the multiplicities $n_{i}$ are related to the multiplicities of the eigensheaves in the decomposition (\ref{eigen decomp}).  
Same result holds for the covering $\Phi:\overline{P}'\to \overline{P}_{C/S}$, and it is determined by the subgroup $\beta(\ct_{C'/C})$ of {\small $\pic^{0}_{\overline{P}_{C/S}/S}$}.

We proceed to analyze the irreducible components of the geometric fibers of the families $C'\to S$ and $\overline{P}'\to S$. For this, it is enough to treat the case $S=\spec(\bar{k})$. 
Let $\phi:\widetilde{C}\to C$ be the normalization of $C$. The pull back of invertible sheaves along $\phi$ defines a morphism
$$
\phi^{*}: J_{C}\to J_{\widetilde{C}}.
$$
For a formal sum $\ct=\sum n_{i}\tau_{i}$ of torsion points of $J_{C}$, we denote $\ct^{\red}=\sum\tau_{i}$ and $\phi^{*}(\ct)=\sum n_{i}\phi^{*}(\tau_{i})$. Same convention for the formal sum of torsion points of $J_{\overline{P}_{C}}$.

\begin{thm}\label{irreducible components}

Suppose that $S=\spec(\bar{k})$ and that $\ct_{C'/C}$ is a reduced sub group scheme of $J_{C}$, 
then the irreducible components of $C'$ and that of $\overline{P}'$ are both parametrized by $\Ker(\phi^{*}|_{\ct_{C'/C}})$, which is of cardinal $|A|\,\big/\,|\phi^{*}(\ct_{C'/C})^{\red}|$.

\end{thm}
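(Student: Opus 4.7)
The plan is to prove the statement for the curve $C'$ first by passing to the normalization $\widetilde{C}$, and then to transfer the argument to $\overline{P}'$ using the corresponding normalization of $\overline{P}_{C}$.

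Since the eigensheaves $\co_{C',\chi}$ are invertible, the cover $\varphi\colon C'\to C$ is finite étale, so étale base change along the normalization $\phi\colon \widetilde{C}\to C$ yields the normalization $\widetilde{C}':=C'\times_{C} \widetilde{C}$ of $C'$, which is itself a finite étale $A$-cover of $\widetilde{C}$ with eigensheaves $\phi^{*}\co_{C',\chi}$. Hence the irreducible components of $C'$ correspond bijectively to the connected components of $\widetilde{C}'$. A standard Galois-theoretic argument (realize each connected component as an induced cover from its stabilizer $H\subset A$, which gives a connected $H$-cover of $\widetilde{C}$ classified by an injection $\widehat{H}\hookrightarrow J_{\widetilde{C}}$) identifies the set of connected components with the kernel of the eigensheaf map $\widehat{A}\to J_{\widetilde{C}}$, $\chi\mapsto [\phi^{*}\co_{C',\chi}]$. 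The reducedness hypothesis on $\ct_{C'/C}$ means precisely that $\widehat{A}\to J_{C}$, $\chi\mapsto [\co_{C',\chi}]$, is injective with image $\ct_{C'/C}$; composing with $\phi^{*}$ then identifies this kernel canonically with $\ker(\phi^{*}|_{\ct_{C'/C}})$, which is of cardinal $|A|/|\phi^{*}(\ct_{C'/C})^{\red}|$.

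For $\overline{P}'$, since $\Phi\colon \overline{P}'\to \overline{P}_{C}$ is finite étale, the étale base change $\widetilde{\overline{P}}{}':=\overline{P}'\times_{\overline{P}_{C}} \widetilde{\overline{P}}{}_{C}$ along the normalization $\nu\colon \widetilde{\overline{P}}{}_{C}\to \overline{P}_{C}$ is the normalization of $\overline{P}'$, so the irreducible components of $\overline{P}'$ are the connected components of $\widetilde{\overline{P}}{}'$. The same Galois argument reduces the count to computing $|\ker(\widehat{A}\to \pic(\widetilde{\overline{P}}{}_{C}),\ \chi\mapsto [\nu^{*}\co_{\overline{P}',\chi}])|$. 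Using the eigensheaf correspondence $\co_{\overline{P}',\chi}=\beta(\co_{C',\chi})$ of Theorem \ref{curve to jacobian}, it remains to show that, at the level of Picard schemes, the composite $\nu^{*}\circ \beta\colon J_{C}\to \pic^{0}(\widetilde{\overline{P}}{}_{C})$ agrees, via a canonical isomorphism $\pic^{0}(\widetilde{\overline{P}}{}_{C})\cong J_{\widetilde{C}}$, with the usual normalization pullback $\phi^{*}\colon J_{C}\to J_{\widetilde{C}}$; granted this, the kernel in question is again $\ker(\phi^{*}|_{\ct_{C'/C}})$ and both the cardinality and the parametrization follow.

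The main obstacle is therefore establishing the compatibility between Arinkin's autoduality and the two normalizations. I would approach this by extending the Fourier-Mukai formalism of Theorem \ref{autoduality main} across the normalization $\nu$, using that $\nu$ is finite birational and that the Abel-Jacobi image $A_{L}(\widetilde{C})\subset \overline{P}_{C}$ is generically smooth. As a fallback, one can bypass the normalization of $\overline{P}_{C}$ altogether and argue directly with the $A$-action on the set of irreducible components of $\overline{P}'$ via its closed $A$-stable subscheme $C'\subset \overline{P}'$: each irreducible component $Z\subset \overline{P}'$ surjects onto the irreducible $\overline{P}_{C}$, so $Z\cap A_{L}(C) = Z\times_{\overline{P}_{C}} C\subset C'$ is non-empty and contains an irreducible component of $C'$, giving a surjection from the set of components of $C'$ to that of $\overline{P}'$ on which $A$ acts transitively in each fibre; the stabilizer in $A$ of such a $Z$ then equals the stabilizer of any component $C'_{0}\subset Z\cap C'$ because the local monodromy of $\Phi$ along $A_{L}(C^{\reg})\subset \overline{P}_{C}^{\reg}$ coincides with that of $\varphi$ along $C^{\reg}$, by the eigensheaf correspondence and Arinkin's autoduality.
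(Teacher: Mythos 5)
Your treatment of $C'$ is correct and matches the paper: reduce to the normalization $\widetilde{C}'\to\widetilde{C}$, use that a connected abelian cover of the smooth projective $\widetilde{C}$ is classified by an injection of its character group into $J_{\widetilde{C}}$, and identify the number of components with $|\Ker(\phi^*|_{\ct_{C'/C}})|$. The $\overline{P}'$ half is where you diverge from the paper and where the argument does not yet close. Your first route needs a description of $\pic^{0}$ of the \emph{normalization} of $\overline{P}_{C}$ compatible with $\beta$ and $\phi^*$; the normalization of a compactified Jacobian is not a standard object, there is no autoduality statement for it, and ``extending the Fourier--Mukai formalism across $\nu$'' is an open problem, not a step you get to take. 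Your fallback route has the right mechanism in mind --- $A_{L}$ should identify the components of $C'$ with those of $\overline{P}'$ --- but two steps are missing. Well-definedness of $C'_{0}\mapsto Z$ is not automatic, because once $\dim\overline{P}_{C}\ge 2$ two distinct irreducible components of the \'etale cover $\overline{P}'$ can meet in codimension one; you must observe that the generic point of $C'_{0}$ lands in $J_{C}\subset\overline{P}_{C}^{\,\reg}$ under $A_{L}$, so the local ring of $\overline{P}'$ there is regular, hence a domain, hence lies on a unique $Z$. And the stabilizer comparison should not be phrased as ``local monodromy'': what you need is the global statement that the monodromy images of $\pi_{1}(C)\to A$ and $\pi_{1}(\overline{P}_{C})\to A$ agree, which, since $A$ is abelian, is exactly the assertion that $A_{L}^{*}\colon\pic^{0}_{\overline{P}_{C}}[n]\to\pic^{0}_{C}[n]$ is an isomorphism for all $n$ --- Arinkin's autoduality on torsion. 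Your appeal to ``the eigensheaf correspondence'' gestures at this but does not prove it.

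The paper sidesteps both subtleties by restricting $\Phi$ to the preimage $J'$ of the dense open $J_{C}\subset\overline{P}_{C}$ (Altman--Iarrobino--Kleiman and Rego give $\overline{P}_{C}$ integral with $J_{C}$ dense open, so irreducible components of $\overline{P}'$ and of $J'$ agree), and then uses the Chevalley d\'evissage $1\to R\to J_{C}\xrightarrow{\phi^{*}}J_{\widetilde{C}}\to 1$ together with the triviality of $\pic(R)$ to obtain $\pic_{J_{C}/k}\cong\pic_{J_{\widetilde{C}}/k}$. Transporting $\ct_{\overline{P}'/\overline{P}_{C}}$ through $\iota^{*}$, $\phi_{\star}^{-1}$ and $A_{\phi^{*}L}^{*}$, and using $A_{L}^{*}\ct_{\overline{P}'/\overline{P}_{C}}=\ct_{C'/C}$ from Theorem~\ref{curve to jacobian}, yields $\phi^{*}(\ct_{C'/C})$ on $J_{\widetilde{C}}$, so the component count for $J'$, hence for $\overline{P}'$, is again $|\Ker(\phi^{*}|_{\ct_{C'/C}})|$. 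This is a self-contained Jacobian-side calculation parallel to the $C'$ case; your fallback, once completed, would give a geometric alternative via a canonical bijection of components, at the cost of the regularity observation and the monodromy identification. In any case, replace the normalization of $\overline{P}_{C}$ with the dense open $J_{C}$.
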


\begin{proof}

Let $\widetilde{C'}$ be the normalization of $C'$. The finite abelian covering $\varphi: C'\to C$ induces a finite abelian covering $\tilde{\varphi}:\widetilde{C'}\to \widetilde{C}$ with the same Galois group $A$. It is clear that $C'$ and $\widetilde{C'}$ have the same number of irreducible components. 
By construction, the eigen-sheaves of $\tilde{\varphi}_{*}\co_{\widetilde{C'}}$ under the action of $A$ are just ${\phi}^{*}(\co_{C',\,\chi}), \chi\in \widehat{A}$. 
This implies that $\ct_{\widetilde{C'}/\widetilde{C}}$ equals $\phi^{*}(\ct_{C'/C})^{\red}$,  with a multiplicity of $\big|\Ker(\phi^{*}|_{\ct_{C'/C}})\big|$. 
Hence $\widetilde{C'}$ is a trivial $\Ker(\phi^{*}|_{\ct_{C'/C}})$-covering of the finite abelian covering of $\widetilde{C}$ which is associated to the torsion subgroup $\phi^{*}(\ct_{C'/C})^{\red}$. 
As $\widetilde{C}$ is projective smooth, this finite abelian covering is irreducible.
Hence the irreducible components of $\widetilde{C'}$ are naturally parametrized by $\Ker(\phi^{*}|_{\ct_{C'/C}})$, and so is $C'$.

For the finite abelian covering $\Phi:\overline{P}'\to \overline{P}_{C}$, we restrict it to the preimage of $J_{C}$ and get a finite abelian covering $\Phi':J'\to J_{C}$. 
As the singularities of $C$ are planar, according to Altman-Iarrobino-Kleiman \cite{aik} and Rego \cite{rego}, $\overline{P}_{C}$ is geometrically integral and $J_{C}$ is dense open in $\overline{P}_{C}$. 
Hence by restriction we get a bijection between the set of irreducible components of $\overline{P}'$ and that of $J'$. 
By construction, the eigen-sheaves of $\Phi'_{*}\co_{J'}$ under the action of $A$ are just the restriction of $\co_{\overline{P}'_{C},\,\chi}$ to $J_{C}$, $\chi\in \widehat{A}$.
Let $\iota:J_{C}\to \overline{P}_{C}$ be the natural inclusion, it induces the morphism
$
\iota^{*}:\pic_{\overline{P}_{C}/k} \to \pic_{J_{C}/k},
$
then 
\begin{equation}\label{torsion 1}
\ct_{J'/J_{C}}=\iota^{*}\ct_{\overline{P}'/ \overline{P}_{C}}.
\end{equation}
To understand $\pic_{J_{C}/k}$, recall that $J_{C}$ admits the d\'evissage of Chevalley
$$
1\to R\to J_{C}\xrightarrow{\phi^{*}}J_{\widetilde{C}}\to 1,
$$
where $R$ is the maximal connected affine sub group scheme of $J_{C}$. A precise form of the d\'evissage can be found in (\ref{chevalley cj curve}).  
In particular, it is shown there that $R$ is commutative, being the product of a split  multiplicative subgroup and a commutative unipotent subgroup. 
Hence $
\phi^{*}: J_{C}\to J_{\widetilde{C}}
$ induces an isomorphism
$$
\phi_{\star}: \pic_{J_{\widetilde{C}}/k}\cong \pic_{J_{C}/k}. 
$$
We denote by $\iota^{\star}$ the composite 
$$
\iota^{\star}:\pic_{\overline{P}_{C}/k}\xrightarrow{\iota^{*}} \pic_{J_{C}/k}\stackrel{\phi_{\star}^{-1}}{\longrightarrow} \pic_{J_{\widetilde{C}}/k}.
$$
By construction, we have the commutative diagram
$$
\begin{tikzcd}
{\rm Pic}^{0}_{\overline{P}_{C}/k}\arrow[r, "\iota^{\star}"]\arrow[d,"A_{L}^{*}"'] &{\rm Pic}^{0}_{J_{\widetilde{C}}/k}\arrow[d, "A_{\phi^{*}L}^{*}"]
\\
J_{C}\arrow[r, "\phi^{*}"]& J_{\widetilde{C}}.
\end{tikzcd}
$$
Combining equation (\ref{torsion 1}), we obtain that
\begin{align*}
A_{\phi^{*}L}^{*}\circ \phi_{\star}^{-1}(\ct_{J'/J_{C}})&=A_{\phi^{*}L}^{*}\circ \iota^{\star}(\ct_{\overline{P}'/ \overline{P}_{C}})=\phi^{*}\circ A_{L}^{*}(\ct_{\overline{P}'/ \overline{P}_{C}})
=\phi^{*}(\ct_{C'/C})
\\
&=\big|\Ker(\phi^{*}|_{\ct_{C'/C}})\big|\cdot \phi^{*}(\ct_{C'/C})^{\red}.
\end{align*}
As $A_{\phi^{*}L}^{*}$ and $\phi_{\star}$ are isomorphisms, this means that $\Phi':J'\to J_{C}$ is a trivial $\Ker(\phi^{*}|_{\ct_{C'/C}})$-covering of a finite abelian covering of $J_{C}$, which restricts via the morphism $J_{C}\to J_{\widetilde{C}}$ to the finite abelian covering of $J_{\widetilde{C}}$ associated to the torsion subgroup $\phi^{*}(\ct_{C'/C})^{\red}$. As $\widetilde{C}$ is projective smooth, this last finite abelian covering is irreducible. Hence the irreducible components of $J'$ are naturally parametrized by $\Ker(\phi^{*}|_{\ct_{C'/C}})$, and so is $\overline{P}'$.

\end{proof}

\subsection{Applications to the spectral curve and its compactified Jacobian}
\label{finite abelian curve section}


Coming back to our spectral curve $C_{\gamma}$.
We apply the theory of the previous section to the deformation $\pi: \cc\to \cb$ of $C_{\gamma}$ and its relative compactified Jacobian $f:\overline{\cp}\to \cb$.

\paragraph{(1) Finite abelian coverings of deformations of the spectral curves}

Let $\gamma=\diag(\gamma_{i})_{i=1}^{r}$ with $\gamma_{i}\in \mathfrak{gl}_{d_{i}}(\co)$ such that the characteristic polynomial of $\gamma_{i}$ is irreducible over $F$ for all $i$. 
Let $A=\co[\gamma]$ and $\widetilde{A}$ the normalization of $A$ in $E=\mathrm{Frac}(A)$.
By theorem \ref{curve to jacobian} and its proof, a finite abelian covering $C'\to C_{\gamma}$ is  uniquely determined by a finite formal sum $\ct_{C'/C_{\gamma}}$ of torsion subgroups of $J_{C_{\gamma}}$. By the equation (\ref{describe jac}), we have
$J_{C_\gamma}\cong \widetilde{A}^{\times}/A^{\times}$. It is product of a unipotent subgroup and a split maximal torus $\rS_{\gamma}=\bbg_{m}^{r}/\bbg_{m}$. 
Hence the torsion subgroup of $J_{C_\gamma}$ coincides with that of $\rS_{\gamma}$. 
For $n\in \bn, (n,p)=1$, we denote by $\psi_{n}: C_{n}\to C_{\gamma}$ the finite abelian covering corresponding to the torsion subgroup $\rS_{\gamma}[n]=\mu_{n}^{r}/\mu_{n}$.
The Galois group of the covering is naturally identified with the dual 
$$
\Hom\big(\rS_{\gamma}[n], {\overline{\mathbf{Q}}}_{\ell}^{\times}\big)=
\Hom\big(\,{\textstyle\frac{1}{n}}X_{*}(\rS_{\gamma})/X_{*}(\rS_{\gamma}), \bq/\bz\big)
\cong \Lambda^{0}/n.
$$
Moreover, as $C_{\gamma}$ is rational, by theorem \ref{irreducible components}, the irreducible components of $C_{n}$ are naturally parametrized by $\rS_{\gamma}[n]$.

\begin{prop}\label{killed by finite covering}

$H^{1}(C_{\gamma}, \bz/n)$ is killed under the pull-back $\psi_{n}^{*}:H^{1}(C_{\gamma},\bz/n)\to H^{1}(C_{n},\bz/n)$. 

\end{prop}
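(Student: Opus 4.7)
The strategy is to translate the assertion via Kummer theory into a statement about the pullback of $n$-torsion line bundles, and then use the explicit eigensheaf construction of $\psi_{n}$ to exhibit the trivializations directly.

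First, since $C_{\gamma}$ is proper and geometrically integral over $\bar{k}$, we have $H^{0}(C_{\gamma},\bbg_{m})=\bar{k}^{\times}$, which is divisible, so the Kummer exact sequence yields an isomorphism $H^{1}(C_{\gamma},\bz/n)\cong \pic(C_{\gamma})[n]\otimes\mu_{n}^{-1}$, and likewise for $C_{n}$. Under this identification $\psi_{n}^{*}$ on $H^{1}$ corresponds to the usual pullback of line bundles. Because $C_{\gamma}$ is geometrically integral, $\pic(C_{\gamma})/\pic^{0}(C_{\gamma})=\bz$ is torsion free, so $\pic(C_{\gamma})[n]=J_{C_{\gamma}}[n]$. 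The assumption that the normalization $\widetilde{C}_{\gamma}$ is $\bp^{1}$ gives $J_{\widetilde{C}_{\gamma}}=0$, so the Chevalley d\'evissage (\ref{chevalley cj curve}) shows $J_{C_{\gamma}}$ is affine; combined with $(n,p)=1$, the unipotent radical contributes no $n$-torsion, and we obtain $J_{C_{\gamma}}[n]=\rS_{\gamma}[n]$.

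It therefore suffices to prove that $\psi_{n}^{*}\cl \cong \co_{C_{n}}$ in $\pic(C_{n})$ for every $\cl \in \rS_{\gamma}[n]$. By construction, $\psi_{n}$ is the finite abelian covering of Theorem \ref{curve to jacobian} associated to the reduced torsion subgroup $\ct_{C_{n}/C_{\gamma}}=\rS_{\gamma}[n]\subset J_{C_{\gamma}}$, so each such $\cl$ appears (with multiplicity one) as an eigensheaf $\co_{C_{n},\chi}$ of the decomposition $\psi_{n,*}\co_{C_{n}}=\bigoplus_{\chi'}\co_{C_{n},\chi'}$, and the algebra structure provides isomorphisms $\co_{C_{n},\chi}\otimes \co_{C_{n},\chi'} \cong \co_{C_{n},\chi+\chi'}$ compatible with the group law on $\rS_{\gamma}[n]$. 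The projection formula then yields
\begin{equation*}
\psi_{n,*}\psi_{n}^{*}\cl \cong \cl \otimes \psi_{n,*}\co_{C_{n}} \cong \bigoplus_{\chi'}\co_{C_{n},\chi+\chi'} \cong \psi_{n,*}\co_{C_{n}},
\end{equation*}
and ``translation by $\chi$'' makes this an isomorphism of $\psi_{n,*}\co_{C_{n}}$-modules. Since $\psi_{n}$ is affine, the pushforward $\psi_{n,*}$ is an equivalence between $\co_{C_{n}}$-modules and $\psi_{n,*}\co_{C_{n}}$-modules on $C_{\gamma}$, hence $\psi_{n}^{*}\cl \cong \co_{C_{n}}$.

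The main point to verify is that the eigensheaf multiplications in $\psi_{n,*}\co_{C_{n}}$ are genuine isomorphisms rather than merely nonzero maps of invertible sheaves of the same Picard class; this is precisely the content of Theorem \ref{curve to jacobian} in the reduced case, where the compatibility of the algebra structure with the group law on $\rS_{\gamma}[n]$ forces the multiplications to be isomorphisms. Conceptually, the proposition is the geometric class field-theoretic shadow of the fact that $\psi_{n}$ realizes the maximal exponent-$n$ abelian covering of $C_{\gamma}$, so every $\bz/n$-cover factors through it and becomes trivial after pullback.
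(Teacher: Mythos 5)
Your proof is correct, and it hinges on the same fact as the paper's — $\psi_n$ is an étale Galois cover and the eigensheaf multiplications $\co_{C_n,\chi}\otimes\co_{C_n,\chi'}\to\co_{C_n,\chi+\chi'}$ are isomorphisms — but runs the computation in the dual direction. The paper pulls back the whole algebra, observing $\psi_n^*\psi_{n,*}\co_{C_n}=\co_{C_n}\otimes_{\co_{C_\gamma}}\co_{C_n}\cong\co_{C_n}^{\oplus\deg\psi_n}$ since $\psi_n$ is étale Galois, and then reads off $\psi_n^*\co_{C_n,\chi}\cong\co_{C_n}$ by matching the $\Lambda^0/n$-isotypic pieces of both sides (its ``whence'' compresses this $\Lambda^0/n$-equivariant comparison). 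You instead push forward $\psi_n^*\cl$: via the projection formula and the eigensheaf multiplication you show $\psi_{n,*}\psi_n^*\cl\cong\psi_{n,*}\co_{C_n}$ as $\psi_{n,*}\co_{C_n}$-modules, and then invoke the equivalence of categories $\psi_{n,*}$ between quasi-coherent sheaves on $C_n$ and $\psi_{n,*}\co_{C_n}$-modules on $C_\gamma$ to conclude $\psi_n^*\cl\cong\co_{C_n}$. Both routes are valid; yours is longer but has the merit of isolating explicitly the point that the algebra multiplications are genuine isomorphisms, which is exactly what the paper's one-word ``whence'' relies upon. The extra preliminaries you supply (divisibility of $H^0(C_\gamma,\bbg_m)$, torsion-freeness of the degree quotient, vanishing of $J_{\bp^1}$ and of unipotent $n$-torsion) correctly justify the identifications $H^1(C_\gamma,\bz/n)=J_{C_\gamma}[n]\otimes\mu_n^{-1}=\rS_\gamma[n]\otimes\mu_n^{-1}$, which the paper takes for granted from the earlier discussion around (\ref{describe jac}) and (\ref{h1 curve copy}).
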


\begin{proof}

We have canonical isomorphism
\begin{equation*}
H^{1}(C_\gamma,\bz/n)=J_{C_\gamma}[n]\otimes \mu_{n}^{-1}=\rS_{\gamma}[n]\otimes \mu_{n}^{-1}.
\end{equation*} 
In terms of invertible sheaves, the torsion points in $\rS_{\gamma}[n]$ correspond bijectively to the eigen-sheaves in the decomposition
$$
\psi_{n,*}\co_{C_{n}}=\bigoplus_{\chi\in \widehat{\Lambda^{0}/n}} \co_{C_{n},\, \chi}.
$$
We need to show that their pull-back under $\psi_{n}$ are all trivial.  
As $\psi_{n}$ is \'etale Galois, we have 
$$
\psi_{n}^{*} \psi_{n,*}\co_{C_{n}}=\co_{C_{n}}\otimes_{\co_{C_{\gamma}}}\co_{C_{n}}\cong \co_{C_{n}}^{\oplus \deg(\psi_{n})},
$$ 
whence 
$$
\psi_{n}^{*}\co_{C_{n},\,\chi}=\co_{C_{n}},\quad \forall\, \chi\in \widehat{\Lambda^{0}/n}.
$$

\end{proof}

To extend the finite abelian covering $\psi_{n}:C_{n}\to C_{\gamma}$ across the family $\pi:\cc\to \cb$, we need to extend the finite subgroup scheme $\ct_{C_{n}/C_{\gamma}}=J_{C_{\gamma}}[n]$ over $\cb$. 
As $n$ is coprime to $p$, the morphism $\pic_{{\cc}/{\cb}}[n]\to {\cb}$ is quasi-finite and  \'etale (cf. \cite{blr}, \S7.3, lemma 2).  
We have canonical isomorphism
\begin{equation}
\label{h1 as jac tor}
R^{1}\pi_{*}\bz/n=\pic_{{\cc}/{\cb}}[n]\otimes \mu_{n}^{-1},
\end{equation}
hence the obstruction for the quasi-finite group scheme $\pic_{{\cc}/{\cb}}[n]$ over ${\cb}$ to be finite \'etale lies in the monodromy action
$$
\rho^{(n)}:\pi_{1}(\cb-\Delta_{\pi},\, \bar{\xi})\to \aut\big(H^{1}(\cc_{\bar{\xi}}, \bz/n)\big), 
$$
where $\bar{\xi}$ is a geometric generic point of $\cb$.

\begin{lem}\label{support curve}

For the family $\pi:\cc\to \cb$, we have
$$
R\pi_{*}\ql=\bigoplus_{i=0}^{2} j_{!*}(R^{i}\pi^{\rm sm}_{*}\ql)[-i].
$$

\end{lem}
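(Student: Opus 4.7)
The proof adapts Ng\^o's strategy from theorem \ref{ngo support}. Since $\pi$ is an algebraization of a miniversal deformation of $C_{\gamma}$, the total space $\cc$ is smooth over $k$ (cf.\ \cite{fgv}); hence $R\pi_{*}\ql$ is pure by Deligne's Weil II, and the decomposition theorem of \cite{bbdg} gives
$$
R\pi_{*}\ql = \bigoplus_{n} {}^{p}\ch^{n}(R\pi_{*}\ql)[-n]
$$
as a direct sum of shifted semisimple perverse sheaves. It suffices to show that no simple perverse summand has strict support, i.e.\ that each ${}^{p}\ch^{n}$ is the intermediate extension of its restriction to $\cb^{\sm}$. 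Since the fibers of $\pi$ are geometrically integral of dimension $1$, the sheaves $R^{0}\pi_{*}\ql = \ql_{\cb}$ and $R^{2}\pi_{*}\ql = \ql_{\cb}(-1)$ are constant and already coincide with the intermediate extensions of their generic restrictions; the content of the lemma lies in the analysis of $R^{1}\pi_{*}\ql$.

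The plan is to apply the Goresky--MacPherson amplitude inequality (\ref{gm inequality}), which for the smooth total space $\cc$ and the proper $\pi$ of relative dimension one yields
$$
\tfrac{1}{2}\amp_{b}(R\pi_{*}\ql[\dim \cc]) \le 1 - \codim_{\cb}(\{b\})
$$
for every support $b$ of $R\pi_{*}\ql$. This rules out supports of codimension $\ge 2$ outright, and forces any codimension-one strict support to contribute an extra simple summand concentrated in a single perverse cohomological degree. Such a support must be contained in the discriminant $\Delta_{\pi}$, and, since $R^{0}\pi_{*}\ql$ and $R^{2}\pi_{*}\ql$ are already pinned down, the extra summand can only affect $R^{1}\pi_{*}\ql$.

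To rule out such a codimension-one extra summand, let $Z$ be an irreducible component of $\Delta_{\pi}$ of codimension one, and $b$ its generic point. The versality of $\pi$ together with the generic structure of $\Delta_{\pi}$ ensures that $\cc_{b}$ has a single ordinary node, and that the local monodromy of $R^{1}\pi^{\sm}_{*}\ql$ around $Z$ is a Picard--Lefschetz transvection (as in section \ref{monodromy curve}). A direct computation with the normalization exact sequence gives $\dim H^{1}(\cc_{b}, \ql) = 2\delta_{\gamma} - 1$, which matches the dimension of the monodromy-invariant subspace, hence the stalk of $j_{!*}R^{1}\pi^{\sm}_{*}\ql$ at $b$. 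The stalk of $R^{1}\pi_{*}\ql$ at $b$ is thus already exhausted by the intermediate extension, leaving no room for an additional simple summand supported on $Z$.

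The main obstacle is making this stalk comparison precise in the higher-dimensional base $\cb$: one must verify that the stalks of both $R^{1}\pi_{*}\ql$ and $j_{!*}R^{1}\pi^{\sm}_{*}\ql$ at $b$ can be computed by restriction to a smooth transversal curve at $b$, where the Picard--Lefschetz degeneration theory applies directly. Once this is established, combining the codimension-$\ge 2$ exclusion with the codimension-one stalk matching rules out all strict supports, and the stated decomposition of $R\pi_{*}\ql$ follows.
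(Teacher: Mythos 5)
Your opening steps (smoothness of $\cc$, purity from Weil II, the decomposition theorem, and the Goresky--MacPherson amplitude bound ruling out supports of codimension $\ge 2$ and forcing amplitude $0$ for any codimension-one support) match the paper. However, the next step contains a genuine error that derails the argument. You claim that "since $R^0\pi_*\ql$ and $R^2\pi_*\ql$ are already pinned down, the extra summand can only affect $R^1\pi_*\ql$," and then you try to rule it out by a stalk-dimension count on $R^1$ at the generic point $b$ of a codimension-one component. But an amplitude-zero summand supported on a codimension-one subvariety $Z=\overline{\{s\}}$ is, in the self-dual normalization $R\pi_*\ql[d_M]$, the perverse sheaf $i_{s,*}\ic_{Z}(\cl')[\rd_s]$ placed at perverse degree $0$; as a summand of $R\pi_*\ql$ it is $i_{s,*}\ic_{Z}(\cl')[-2]$, and its stalk at the generic point of $Z$ is $\cl'$ concentrated in ordinary cohomological degree $2$, not degree $1$. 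In other words, the putative extra piece shows up in $(R^2\pi_*\ql)_b$, not in $(R^1\pi_*\ql)_b$. Your computation that $\dim H^1(\cc_b,\ql)=2\delta_\gamma-1$ equals the rank of the local invariants of $R^1\pi^{\sm}_*\ql$ is correct, but it verifies an identity that holds whether or not the extra support exists; it is therefore vacuous as a proof and does not eliminate the extra summand.

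The paper's argument closes the gap at exactly this point by looking at $R^2$: since the geometric fibers of $\pi$ are irreducible, $R^2\pi_*\ql$ is the constant sheaf of rank one, and this rank is already exhausted by $j_{!*}R^2\pi^{\sm}_*\ql=\ql(-1)$, leaving no room for a non-trivial extra local system $\cl'$ in degree $2$ over the generic point of $Z$. To repair your proof, replace the $R^1$-stalk comparison by exactly this observation on $R^2$; once you note that the amplitude-zero summand must land in ordinary degree $2$ over the generic point of its support, geometric irreducibility of the fibers immediately forces $\cl'=0$, and the lemma follows.
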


\begin{proof}

Since $\cc$ is smooth over $k$ and $\pi$ is proper, $R\pi_{*}\ql$ is pure by Weil-II \cite{weil2}, and so
$$
R\pi_{*}\ql=\bigoplus_{i=0}^{2} {}^{p}\ch^{i}(R\pi_{*}\ql)[-i]
$$
by the decomposition theorem. 
To determine the support of each ${}^{p}\ch^{i}(R\pi_{*}\ql)$, we apply the inequality of Goresky-MacPherson (\cite{ngo}, th\'eor\`eme 7.3.1). 
According to the theorem, as $\pi$ is proper of pure relative dimension $1$, any extra support of ${}^{p}\ch^{i}(R\pi_{*}\ql)$ will be of codimension $1$, and over its generic point the sheaf $R^{2}\pi_{*}\ql$ will admit a non-trivial local system as a direct summand. But the fibers of $\pi$ are all geometrically irreducible, hence there can not be any extra support.

\end{proof}

Let $V=(R^{1}\pi^{\rm sm}_{*}\ql)_{\bar{\xi}}$. For any geometric point $x$ of $\cb$, let $\bar{\xi}_{x}$ be a geometric generic point of $\cb_{\{x\}}$.
We fix an embedding $\bar{\xi}\hookrightarrow \bar{\xi}_{x}$, which allow us to identify $\pi_{1}(\cb_{\{x\}}-\Delta,\,\bar{\xi}_{x})$ as a subgroup of $\pi_{1}(\cb-\Delta,\,\bar{\xi})$ and to identify $(R^{1}\pi^{\rm sm}_{*}\ql)_{\bar{\xi}_{x}}$ with $V$. Geometrically, this can be done by fixing a smooth path connecting $\bar{\xi}$ and $\bar{\xi}_{x}$. We have then
\begin{equation*}
H^{1}(\cc_{x},\ql)=(R^{1}\pi_{*}\ql)_{x}=(j_{!*}R^{1}\pi_{*}^{\sm}\ql)_{x}=V^{\pi_{1}(\cb_{\{x\}}-\Delta,\,\bar{\xi}_{x})}.
\end{equation*}
The equation actually holds for $\bz_{\ell}$-coefficient, as both $H^{1}(C_{\gamma}, \bz_{\ell})$ and $H^{1}(\cc_{\bar{\xi}}, \bz_{\ell})$ are free $\bz_{\ell}$-modules. Let $\mathbb{V}_{\ell}=H^{1}(\cc_{\bar{\xi}}, \bz_{\ell})$, then
\begin{equation}\label{h1 curve as invariant}
H^{1}(\cc_{x},\zl)=\mathbb{V}_{\ell}^{\pi_{1}(\cb_{\{x\}}-\Delta,\,\bar{\xi}_{x})}.
\end{equation}
Let $\widehat{\bz}^{(p)}=\prod_{l\neq p \text{ prime}}\bz_{l}$ and ${\bbv}=\prod_{l\neq p \text{ prime}}\bbv_{l}$, then 
\begin{equation}\label{h1 curve full}
H^{1}(\cc_{x},\widehat{\bz}^{(p)})=\bbv^{\pi_{1}(\cb_{\{x\}}-\Delta,\,\bar{\xi}_{x})},
\end{equation}
and so
\begin{equation}\label{h1 curve mod n}
H^{1}(\cc_{x},\bz/n)=({\bbv}/n)^{\pi_{1}(\cb_{\{x\}}-\Delta,\,\bar{\xi}_{x})}=H^{1}\big(\cc_{\bar{\xi}}, \bz/n\big)^{\pi_{1}(\cb_{\{x\}}-\Delta,\,\bar{\xi}_{x})}.
\end{equation}

Consider the monodromy action $\rho^{(n)}$ and its local analogue
$$
\rho^{(n)}_{x}: \pi_{1}(\cb_{\{x\}}-\Delta,\,\bar{\xi}_{x})\to \aut\big(H^{1}(\cc_{\bar{\xi}_{x}}, \bz/n)\big).
$$
As ${\rm char}(k)$ is assumed to be large enough, both actions of the fundamental groups factor through their tame quotients. 
Let $G_{\rho}^{(n)}$ and $G_{\rho_{x}}^{(n)}$ be their images. 
With the embedding $\bar{\xi}\hookrightarrow \bar{\xi}_{x}$, $G_{\rho_{x}}^{(n)}$  can be naturally identified as a subgroup of $G_{\rho}^{(n)}$. 
Let $\Delta_{0}$ be the irreducible component of $\Delta$ containing $0$, and let $\Delta^{(0)}$ be the union of the other irreducible components of $\Delta$. Let $\cb^{\circ}=\cb-\Delta^{(0)}$.

\begin{lem}\label{move pi 1}

For any geometric point $x$ of $\Delta-\Delta^{(0)}$, the monodromy group $G_{\rho_{x}}^{(n)}$ is conjugate to a subgroup of $G_{\rho_{0}}^{(n)}$ by an element of $G_{\rho}^{(n)}$.

\end{lem}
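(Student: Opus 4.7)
My plan is to deduce the lemma from the stronger claim that $G_{\rho_0}^{(n)}$ already equals $G_{\rho}^{(n)}$, so that the containment $G_{\rho_x}^{(n)}\subset G_{\rho_0}^{(n)}$ holds trivially, with $g=e$. The key input is the local Lefschetz theorem for fundamental groups invoked in Section~\ref{monodromy curve}.

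First, I would pick a line $L\subset\cb$ through $0$ in general position and sufficiently close to $0$; since $\cb\cong\widehat{\ba}^{\tau_\gamma}$ is smooth at $0$, the local Lefschetz theorem yields a surjection $\pi_1(L-\Delta,\bar\xi)\twoheadrightarrow\pi_1(\cb-\Delta,\bar\xi)$. Next, one verifies that this surjection factors, after a choice of base points, through $\pi_1(\cb_{\{0\}}-\Delta,\bar\xi_0)$: as $L$ is scaled towards $0$ (which is possible precisely because $\cb$ is smooth and affine at $0$), the finite set $L\cap\Delta$ concentrates in an arbitrarily small étale neighborhood $V$ of $0$, so each tame generator of $\pi_1(L-\Delta)$ (a small loop around a point of $L\cap\Delta$) can be represented by a loop inside $V-\Delta$, which lies in the image of $\pi_1(\cb_{\{0\}}-\Delta,\bar\xi_0)$. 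Combining these two steps gives the surjection $\pi_1(\cb_{\{0\}}-\Delta,\bar\xi_0)\twoheadrightarrow\pi_1(\cb-\Delta,\bar\xi)$. Passing to images in $\mathrm{Aut}(H^1(\cc_{\bar\xi},\bz/n))$ yields $G_{\rho_0}^{(n)}=G_{\rho}^{(n)}$, and since $G_{\rho_x}^{(n)}\subset G_{\rho}^{(n)}$ by construction, the lemma follows.

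The hard part is the étale-theoretic justification of the factorization step: intuitively, loops around intersection points close to $0$ can be contracted into small neighborhoods of $0$, but algebraically one must argue that each free generator of $\pi_1(L-\Delta)$ lies in the image of $\pi_1(V-\Delta)$ for some étale neighborhood $V\to\cb$ of $0$, using the smoothness of $\cb$ at $0$ and that every intersection point of $L\cap\Delta$ can be placed inside an étale neighborhood of $0$. Should this direct route prove insufficient, a fallback argument proceeds by specialization: choose an étale neighborhood $p\colon W\to\cb$ of $0$ whose image contains $x$, and use the irreducibility of $\Delta_0$ together with the hypothesis $x\in\Delta-\Delta^{(0)}$ to arrange that a single irreducible component $\widetilde\Delta_0'\subset p^{-1}(\Delta_0)$ contains both lifts $w_0,w_x$; one then transports all local branches of $\widetilde\Delta_0'$ at $w_x$ onto branches of $\widetilde\Delta_0'$ at $w_0$ along a common path in $\widetilde\Delta_0'^{\,\sm}-p^{-1}(\Delta^{(0)})$, producing a single element $g\in\pi_1(\cb-\Delta,\bar\xi)$ which, after passing to the representation $\rho^{(n)}$, gives the required non-trivial conjugation.
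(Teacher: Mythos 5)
Your primary route is flawed, and the fallback, while closer in spirit to the paper's argument, omits the essential technical inputs.

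The claim $G_{\rho_0}^{(n)}=G_{\rho}^{(n)}$ is not true in general, and the "scaling $L$ toward $0$" argument does not establish it. The global monodromy $G_{\rho}^{(n)}$ is computed on $\cb-\Delta$ and hence receives Picard--Lefschetz contributions from \emph{all} irreducible components of $\Delta$, including the components $\Delta^{(0)}$ that do not pass through $0$. The local monodromy $G_{\rho_0}^{(n)}$, by contrast, only sees the branches of $\Delta_0$ near $0$, since $0\notin\Delta^{(0)}$. When you scale a line $L$ toward $0$, the points of $L\cap\Delta_0$ do concentrate near $0$, but the points of $L\cap\Delta^{(0)}$ stay bounded away from $0$; their small loops cannot be represented inside any small (\'etale or analytic) neighborhood of $0$, so the surjection $\pi_1(L-\Delta)\twoheadrightarrow\pi_1(\cb-\Delta)$ does not factor through $\pi_1(\cb_{\{0\}}-\Delta,\bar\xi_0)$. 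In fact, if $G_{\rho_0}^{(n)}=G_{\rho}^{(n)}$ held, the extension in proposition~\ref{ext to etale neigh} would work over all of $\cb$ and the restriction to $\cb^\circ=\cb-\Delta^{(0)}$ would be unnecessary; the whole point of $\cb^\circ$ is to excise precisely the extra monodromy that your claim would need to be absent.

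Your fallback has the right flavor (transport $x$ toward $0$ along a path inside $\Delta_0$ and compare local monodromies) but is missing the ingredients that make the comparison actually work. What the paper uses is not a path in $\Delta_0^{\sm}$ but a path in the \emph{equisingular stratum} containing $x$: this stratum is smooth by Teissier's theorem, and the union of strict $\delta$-strata is Whitney regular along its frontier strata, which is what guarantees that $G_{\rho_x}^{(n)}$ changes only by $G_{\rho}^{(n)}$-conjugation as $x$ moves. One then degenerates $x$ within its equisingular stratum to a point $x'$ arbitrarily close to the equisingular stratum through $0$ (using that the singular fiber can be degenerated to one with a single singular point of Milnor number $\mu_\gamma$, and that plane curve singularities with the same Milnor number are equisingular), and at that point the distinguished generators of $\pi_1(L_{x'}-\Delta)$ become a subset of those of $\pi_1(L_{0'}-\Delta)$, yielding $G_{\rho_{x'}}^{(n)}\subset G_{\rho_{0}}^{(n)}$ up to conjugation. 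Your "transport the branches of $\Delta_0$" sketch neither uses the equisingular stratification nor explains why the local monodromy groups (not merely the number of branches) are compatible along the path, nor addresses the fact that $x$ and $0$ typically lie on different strata, so the two local situations are not directly comparable without the degeneration step.
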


\begin{proof}

By assumption, ${\rm char}(k)$ is large enough, hence the local fundamental groups $\pi_{1}^{\rm tm}(\cb_{\{x\}}-\Delta,\,\bar{\xi}_{x})$ are isomorphic to their complex analytic  counter part. For this case, as $\cb$ is smooth, by the local Lefschetz theorem for the fundamental groups, we have surjections
$$\pi_{1}(L_{x}-\Delta,\,\bar{\xi}_{x})\twoheadrightarrow
\pi_{1}(\cb_{\{x\}}-\Delta,\,\bar{\xi}_{x}),
$$ 
where $L_{x}$ is a generic line sufficiently close to $x$ in a sufficiently small open ball $B_{x}(r_{x})$ with center $x$. 
The monodromy group $G_{\rho_{x}}^{(n)}$ is then generated by the image under $\rho_{x}^{(n)}$ of a set of generators for $\pi_{1}(L_{x}-\Delta,\,\bar{\xi}_{x})$.

Recall that the equisingular stratum containing $x$ is smooth (Teissier, \cite{zariski plane branch} appendix), and that the union of the strict $\delta$-strata is Whitney regular along the equisingular strata lying at their frontier \cite{chen root datum}. 
This implies that as we move $x$ along the equisingular stratum containing it, $G_{\rho_{x}}^{(n)}$ is changed to one of its conjugates under the action of $G_{\rho}^{(n)}$.

Note that we can move $x$ along its equisingular stratum to a point $x'$ which is as close as we want to the equisingular stratum containing $0$. Indeed, the curve $\cc_{x}$ can be degenerated until it has a unique singularity with Milnor number $\mu_{\gamma}$, the degenerated curve corresponds then to a point $0'$ on the equisingular stratum containing $0$, because plane curve singularities having the same Milnor number are equisingular \cite{teissier resolution}. 
It is then clear that the set of generators for $\pi_{1}(L_{x'}-\Delta,\,\bar{\xi}_{x'})$ becomes part of a set of generators for $\pi_{1}(L_{0'}-\Delta,\,\bar{\xi}_{0'})$.  
This implies that $G_{\rho_{x}}^{(n)}$ is conjugate to a subgroup of $G_{\rho_{0}}^{(n)}$ by an element of $G_{\rho}^{(n)}$ by what we have explained above. 

\end{proof}

\begin{prop}\label{ext to etale neigh}

There exists a quasi-finite \'etale covering of $\cb^{\circ}$ over which we can extend $J_{C_{\gamma}}[n]$ to a finite \'etale sub group scheme of $\pic_{\cc/\cb}[n]$.

\end{prop}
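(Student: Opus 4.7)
The plan is to combine the identification (\ref{h1 as jac tor}) with the support theorem (Lemma \ref{support curve}) and the local-to-global comparison of monodromy groups (Lemma \ref{move pi 1}) to build the desired cover directly out of the global monodromy representation $\rho^{(n)}$. After twisting by $\mu_n$, extending $J_{C_{\gamma}}[n]$ to a finite \'etale subgroup scheme of $\pic_{\cc/\cb}[n]$ amounts to extending the finite subgroup $\Sigma_0 := J_{C_{\gamma}}[n]\otimes \mu_n^{-1}$ of $H^{1}(C_{\gamma},\bz/n) = (\bbv/n)^{G_{\rho_0}^{(n)}}$ to a locally constant subsheaf of $R^{1}\pi_*\bz/n$, where by equation (\ref{h1 curve mod n}) the fiber at any geometric point $x$ is $(\bbv/n)^{G_{\rho_x}^{(n)}}$.

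First I would form the stabilizer
$$
H := \{\,g\in G_{\rho}^{(n)} \mid g\cdot \Sigma_0 = \Sigma_0\,\} \subset G_{\rho}^{(n)},
$$
which contains $G_{\rho_0}^{(n)}$ because the latter acts trivially (pointwise) on $\Sigma_0\subset(\bbv/n)^{G_{\rho_0}^{(n)}}$. Let $\cb_n^{\sm}\to \cb^{\circ}-\Delta$ be the finite \'etale cover associated to the subgroup $H$ via $\rho^{(n)}$, equivalently the cover with geometric fiber $G_{\rho}^{(n)}/H$. Over $\cb_n^{\sm}$ the monodromy of the pullback of $\bbv/n$ factors through $H$, so $\Sigma_0$ defines a monodromy-stable subsheaf, yielding a finite \'etale subgroup scheme of $\pic_{\cc/\cb}[n]\times_{\cb}\cb_n^{\sm}$ which restricts to $J_{C_{\gamma}}[n]$ above the distinguished lift of $0$.

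Next I would extend across the divisor $\Delta\cap \cb^{\circ}=\Delta-\Delta^{(0)}$. Let $\overline{\cb}_n$ be the normalization of $\cb^{\circ}$ in $\cb_n^{\sm}$; it is finite over $\cb^{\circ}$ and \'etale away from $\Delta$. For each geometric point $x\in \Delta\cap \cb^{\circ}$ the fiber of $\overline{\cb}_n\to \cb^{\circ}$ at $x$ is identified with the orbit set of $G_{\rho_x}^{(n)}$ on $G_{\rho}^{(n)}/H$. By Lemma \ref{move pi 1} there exists $g\in G_{\rho}^{(n)}$ with $g^{-1}G_{\rho_x}^{(n)}g\subset G_{\rho_0}^{(n)}\subset H$, so the coset $gH$ is a singleton orbit, producing a point $y$ of $\overline{\cb}_n$ above $x$ at which the map is \'etale. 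Define $\cb_n$ to be the maximal open subscheme of $\overline{\cb}_n$ on which the structure morphism to $\cb^{\circ}$ is \'etale; the preceding argument makes $\cb_n\to \cb^{\circ}$ quasi-finite \'etale and surjective. Moreover, since at such a point $y$ the local monodromy, computed via the path corresponding to $g$, lies in $G_{\rho_0}^{(n)}$ and hence fixes $\Sigma_0$ pointwise, the subsheaf $\underline{\Sigma}_0$ extends from $\cb_n^{\sm}$ to a locally constant subsheaf of $(R^{1}\pi_*\bz/n)|_{\cb_n}$, giving the required finite \'etale subgroup scheme after untwisting.

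The main obstacle, and the real content of the proposition, is ensuring surjectivity of the \'etale locus onto $\cb^{\circ}$; this is exactly what Lemma \ref{move pi 1} buys us, as without it the local monodromy $G_{\rho_x}^{(n)}$ at a boundary point $x\in\Delta-\Delta^{(0)}$ could act without fixed cosets on $G_{\rho}^{(n)}/H$, leaving no \'etale point above $x$ and thus no way to extend beyond $\cb^{\circ}-\Delta$.
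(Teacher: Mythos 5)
Your argument follows the same approach as the paper's (use \eqref{h1 as jac tor} and \eqref{h1 curve mod n} to reduce to a monodromy question, invoke Lemma~\ref{move pi 1} to control the local monodromy over $\Delta-\Delta^{(0)}$, and build a quasi-finite \'etale cover from the global monodromy), and you spell it out in much more detail than the paper does. However, there is a small gap in the last step. You define $H$ as the \emph{setwise} stabilizer $\{g\mid g\cdot\Sigma_0=\Sigma_0\}$, take $\cb_n$ to be the full \'etale locus of $\overline{\cb}_n\to\cb^{\circ}$, and then assert that $\underline{\Sigma}_0$ extends over all of $\cb_n$. But at an arbitrary unramified point $y'$ of $\cb_n$ over $x\in\Delta\cap\cb^{\circ}$, unramifiedness only gives $g'^{-1}G_{\rho_x}^{(n)}g'\subset H$, i.e.\ the local monodromy preserves the transported copy of $\Sigma_0$ \emph{as a set}; it need not fix it pointwise, and then the local system $\underline{\Sigma}_0$ has nontrivial monodromy around $y'$ and does not extend there. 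Your argument via Lemma~\ref{move pi 1} produces at least one good $y$ over each $x$, but says nothing about the other points of the \'etale locus over $x$.

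The fix is easy and does not change the structure of the argument: define $H$ to be the \emph{pointwise} stabilizer $\{g\in G_{\rho}^{(n)}\mid g|_{\Sigma_0}=\mathrm{id}\}$ (which still contains $G_{\rho_0}^{(n)}$, since $\Sigma_0\subset(\bbv/n)^{G_{\rho_0}^{(n)}}$). With this $H$, the subsheaf $\underline{\Sigma}_0$ over $\cb_n^{\sm}$ is constant, and unramifiedness of $y'$ over $x$ is now \emph{equivalent} to the local monodromy at $y'$ fixing the transported $\Sigma_0$ pointwise, so $\underline{\Sigma}_0$ extends across every point of the \'etale locus. Surjectivity of $\cb_n\to\cb^{\circ}$ still follows from Lemma~\ref{move pi 1} exactly as you argue. (Alternatively one could keep the setwise $H$ and simply shrink $\cb_n$ to the open locus where $\underline{\Sigma}_0$ extends, again using the lemma for surjectivity.) This pointwise choice also matches Definition~\ref{define bn}, which asks for a \emph{constant} subgroup scheme $\ct_n$.
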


\begin{proof}

From the isomorphism (\ref{h1 as jac tor}) and (\ref{h1 curve mod n}), we obtain that any element of $J_{\cc_{x}}[n]$ can be extended to a section of $\pic_{{\cc}/{\cb}}[n]$ over an \'etale neighborhood of $x$. 
The above lemma implies that in the process of extending $J_{C_{\gamma}}[n]$ to a finite \'etale sub group scheme of $\pic_{\cc/\cb}[n]$ the ramification appears only at the divisors $\Delta^{(0)}$. So we can go to a quasi-finite \'etale covering of $\cb^{\circ}$ which kills the ramifications to get the extension.

\end{proof}

\begin{defn}\label{define bn}

Let $\cb_{n}\to \cb^{\circ}$ be the minimal quasi-finite \'etale covering over which we can extend $J_{C_{\gamma}}[n]$ to a \emph{constant} finite \'etale sub group scheme, denoted $\ct_{n}$, of $\pic_{\cc/\cb}[n]$. Let $\varpi_{n}$ be the composite $\cb_{n}\to \cb^{\circ}\hookrightarrow \cb$. 
\end{defn}

By construction, $\ct_{n}$ is a constant finite sub group scheme of $\pic_{\cc\times_{\cb}\cb_{n}/\cb_{n}}[n]$.
By theorem \ref{curve to jacobian} and its proof, it corresponds to a $\Lambda^{0}/n$-covering
$$
\Psi_{n}:\cc_{n}\to \cc\times_{\cb}\cb_{n}.
$$
Let $0_{n}$ be the geometric point of $\cb_{n}$ lying over $0\in \cb$ such that the covering $\Psi_{n}$ at this point is exactly $\psi_{n}:C_{n}\to C_{\gamma}$. Consider the family of curves
$$
\pi_{n}: \cc_{n}\to \cc\times_{\cb}\cb_{n}\to \cb_{n}.
$$
The construction can be summarized in the commutative diagram 
\begin{equation}\label{cd curve}
\begin{tikzcd}
{\cc}_{n}\arrow[dr, "{\pi}_{n}"']\arrow[r, "\Psi_{n}"]&{\cc}\times_{\cb}\cb_{n}\arrow[d]\arrow[r]\arrow[dr, phantom, "\square"]&{\cc}\arrow[d, "{\pi}"]
\\
&{\cb}_{n}\arrow[r, "\varpi_{n}"]&{\cb}.
\end{tikzcd}
\end{equation}

\begin{prop}\label{finite cover smooth}

Both ${\cc}_{n}$ and ${\cb}_{n}$ are smooth over $k$.

\end{prop}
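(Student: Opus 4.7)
The strategy is to realize both $\cb_n$ and $\cc_n$ as étale over known smooth schemes.

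For $\cb_n$: by Definition \ref{define bn}, $\varpi_n \colon \cb_n \to \cb^\circ$ is quasi-finite étale, and $\cb$ can be taken to be $\ba^{\tau_\gamma}$, so $\cb^\circ$ is open in a smooth scheme; hence $\cb_n$ is smooth over $k$.

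For $\cc_n$: the first step is to observe that the total space $\cc$ is itself smooth over $k$. This is essentially local near the singular fibre $C_\gamma$: a miniversal deformation of the plane curve singularity $f(x,y)=0$ is (up to completion) cut out by a single equation $F(x,y,\mathbf{t}) = f(x,y) + \sum_{i=1}^{\tau_\gamma} t_i m_i(x,y) = 0$ in $\ba^2 \times \cb$, where $(m_i)$ is a $k$-basis of the Tjurina algebra; one may include $m_1 = 1$, so that $\partial_{t_1} F$ is a unit at the central point and the total space is smooth there. Base change then gives $\cc \times_\cb \cb_n \to \cc$ étale, hence $\cc \times_\cb \cb_n$ is smooth; it remains to show that $\Psi_n \colon \cc_n \to \cc \times_\cb \cb_n$ is étale, for then $\cc_n \to \cc$ is a composition of étale morphisms and $\cc_n$ is smooth over $k$.

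Étaleness of $\Psi_n$ follows from the Kummer-type nature of the finite abelian cover in Theorem \ref{curve to jacobian}. Locally on $\cc \times_\cb \cb_n$ the cover takes the form $\mathrm{Spec}\bigl(\bigoplus_\chi L_\chi^{-1}\bigr)$ with each $L_\chi$ a torsion line bundle of order dividing $n$ and the algebra structure encoded by isomorphisms $L_{\chi_1}\otimes L_{\chi_2} \cong L_{\chi_1+\chi_2}$; trivializing locally reduces this to a Kummer algebra $A[t_1,\dots,t_r]/(t_i^{n_i}-u_i)$ with $u_i \in A^\times$ and $n_i$ invertible in $A$, which is étale over $A$. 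This is exactly the étaleness already invoked implicitly in the proof of Proposition \ref{killed by finite covering} when $\psi_n$ is asserted to be étale Galois. The only genuine point is this local étaleness verification at the planar singular points of fibres of $\pi$; everything else is formal, and I do not foresee any obstacle.
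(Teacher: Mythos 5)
Your proof is correct and follows essentially the same route as the paper: $\cb_n$ is smooth because it is étale over the open $\cb^\circ\subset\cb=\ba^{\tau_\gamma}$, and $\cc_n$ is smooth because $\cc_n\xrightarrow{\Psi_n}\cc\times_\cb\cb_n\to\cc$ is a tower of étale morphisms over the smooth total space $\cc$. The paper simply asserts the smoothness of $\cc$ and the étaleness of $\Psi_n$, whereas you supply the standard justifications (the miniversal deformation is cut out by $F=f+\sum t_i m_i$ with $1$ in the Tjurina basis, and the cover is Kummer since $(n,p)=1$); these are correct and fill in exactly the steps the paper leaves implicit.
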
 

\begin{proof}

$\cb_{n}$ is smooth over $k$ because it is \'etale over the open subscheme $\cb^{\circ}$ of $\cb$, which is smooth over $k$.
For the smoothness of $\cc_{n}$, we have a tower of \'etale morphisms
$$
\cc_{n}\xrightarrow{\Psi_{n}} \cc\times_{\cb}\cb_{n}\to \cc.
$$
As $\cc$ is smooth over $k$, so will be $\cc_{n}$.

\end{proof}

The commutative diagram (\ref{cd curve}) induces the morphism $\Psi_{n}^{*}:\varpi_{n}^{*}R^{1}\pi_{*}\bz/n \to R^{1}\pi_{n,*}\bz/n$. 
Let $\crr_{n}$ be the smooth subsheaf of $\varpi_{n}^{*}R^{1}\pi_{*}\bz/n$ which corresponds to $\ct_{n}\otimes \mu_{n}^{-1}$ under the isomorphism
\begin{equation}
\label{h1 as jac tor rel}
\varpi_{n}^{*}R^{1}\pi_{*}\bz/n\cong \pic_{\cc\times_{\cb}\cb_{n}/\cb_{n}}[n]\otimes \mu_{n}^{-1}.
\end{equation}

\begin{prop}\label{killed by finite covering curve}

The morphism $\Psi_{n}^{*}:\varpi_{n}^{*}R^{1}\pi_{*}\bz/n \to R^{1}\pi_{n,*}\bz/n$ has kernel $\crr_{n}$. 

\end{prop}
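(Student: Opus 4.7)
The plan is to check the equality of subsheaves $\ker(\Psi_{n}^{*}) = \crr_{n}$ of $\varpi_{n}^{*} R^{1}\pi_{*}\bz/n$ on geometric stalks. By proper base change applied to the proper morphisms $\pi$ and $\pi_{n}$, at a geometric point $y\in \cb_{n}$ the morphism $\Psi_{n}^{*}$ on stalks becomes the pullback $\psi_{y}^{*}:H^{1}(\cc_{y},\bz/n)\to H^{1}((\cc_{n})_{y},\bz/n)$ along the finite étale Galois cover $\psi_{y}:(\cc_{n})_{y}\to \cc_{y}$ with group $G=\Lambda^{0}/n$; by definition \ref{define bn} combined with theorem \ref{curve to jacobian}, this cover is precisely the one classified by the subgroup $(\ct_{n})_{y}\otimes \mu_{n}^{-1}=(\crr_{n})_{y}$ of $H^{1}(\cc_{y},\bz/n)$.

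The containment $\crr_{n}\subset \ker(\Psi_{n}^{*})$ is obtained by replaying the eigen-sheaf argument of proposition \ref{killed by finite covering} in the relative setting: the decomposition $\Psi_{n,*}\co_{\cc_{n}}=\bigoplus_{\chi\in\widehat{G}}\co_{\cc_{n},\chi}$ identifies the nontrivial eigen-sheaves, via theorem \ref{curve to jacobian} and the isomorphism (\ref{h1 as jac tor rel}), with the sections of $\crr_{n}$, and since $\Psi_{n}^{*}\Psi_{n,*}\co_{\cc_{n}}\cong \co_{\cc_{n}}^{\oplus|G|}$ each eigen-sheaf pulls back to the trivial line bundle. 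For the reverse containment I would apply the Hochschild--Serre spectral sequence for the $G$-Galois cover $\psi_{y}$, giving
$$
0\to H^{1}\bigl(G, H^{0}((\cc_{n})_{y},\bz/n)\bigr)\to H^{1}(\cc_{y},\bz/n)\xrightarrow{\psi_{y}^{*}} H^{1}((\cc_{n})_{y},\bz/n)^{G};
$$
provided $(\cc_{n})_{y}$ is connected, $H^{0}((\cc_{n})_{y},\bz/n)=\bz/n$ with trivial $G$-action, whence $\ker(\psi_{y}^{*})=\Hom(G,\bz/n)$. Via the Cartier duality $G\cong \Hom((\ct_{n})_{y},\mu_{n})$ intrinsic to the Kummer-type construction of the cover, this group identifies canonically with $(\ct_{n})_{y}\otimes \mu_{n}^{-1}=(\crr_{n})_{y}$, concluding the proof on stalks.

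The main technical obstacle is verifying the uniform connectedness of $(\cc_{n})_{y}$ for every $y\in \cb_{n}$. At $y=0_{n}$ it is immediate from the construction of $C_{n}$ as the cover of $C_{\gamma}$ classified by the surjection $\pi_{1}(C_{\gamma})\twoheadrightarrow \Lambda^{0}/n$ arising from $\rS_{\gamma}[n]\subset J_{C_{\gamma}}$; to propagate this across the whole base I would invoke the constancy of $\ct_{n}$ on $\cb_{n}$ (definition \ref{define bn}), which forces the classifying homomorphism $\pi_{1}(\cc_{y})\to G$ to have image independent of $y$ and equal to $G$ everywhere. A possible fallback, should the connectedness analysis prove delicate at degenerate fibers, is to bypass it by working with the Leray spectral sequence for the composite $\pi_{n}=\pi'_{n}\circ\Psi_{n}$ (with $\pi'_{n}:\cc\times_{\cb}\cb_{n}\to\cb_{n}$) and identifying $\ker(\Psi_{n}^{*})$ directly with the image of the boundary map $\pi'_{n,*}(\Psi_{n,*}\bz/n\,/\,\bz/n)\to R^{1}\pi'_{n,*}\bz/n$.
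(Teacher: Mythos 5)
Your proof is essentially correct but takes a genuinely different route from the paper's. For the reverse inclusion $\ker(\Psi_n^*)\subset\crr_n$, the paper uses a clean projection-formula argument that never mentions fibers: for $\cl\in\Ker(\Psi_n^*)$, one has $\cl\otimes\Psi_{n,*}\co_{\cc_n}=\Psi_{n,*}\Psi_n^*\cl=\Psi_{n,*}\co_{\cc_n}$, so by Krull--Schmidt $\cl$ must coincide with one of the eigen-sheaves $\co_{\cc_n,\chi}$, i.e. $\cl\in\ct_n$. Your Hochschild--Serre approach on stalks is valid, but it does hinge on geometric connectedness of every fiber $(\cc_n)_y$, which is a real point here since, by theorem \ref{pi 0 curve}, these fibers can have many irreducible components over the strata $\cs_{I_\bullet,n}$. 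Your proposed fix does work, but the phrase ``constancy forces the image to be $G$'' hides the actual reason: $\ct_n$ is a \emph{sub} group scheme of $\pic_{\cc\times_\cb\cb_n/\cb_n}[n]$, i.e.\ an open immersion of \'etale group schemes, so the eigen-sheaves $(\co_{\cc_n,\chi})_y$ stay pairwise distinct on every geometric fiber; equivalently the dual map $\widehat G\to H^1(\cc_y,\bz/n)$ stays injective, which is what makes the classifying map $\pi_1(\cc_y)\to G$ surjective (connectedness). One more caution: your ``fallback'' via the boundary map $\pi'_{n,*}(\Psi_{n,*}\bz/n/\bz/n)\to R^1\pi'_{n,*}\bz/n$ does not obviously bypass the analysis, because $\bz/n[G]$ is not semisimple when $G\cong(\bz/n)^{r-1}$, so the local system $\Psi_{n,*}\bz/n$ does \emph{not} split into rank-one eigen-local-systems, and computing that boundary map would require essentially the same non-split group-cohomology bookkeeping. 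The Hochschild--Serre argument is a fine alternative to the paper's, but the paper's projection-formula trick is shorter precisely because it sidesteps the connectedness question entirely.
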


\begin{proof}

In terms of Picard, we need to show that the morphism
$$
\Psi_{n}^{*}: \pic_{\cc\times_{\cb}\cb_{n}/\cb_{n}}[n]\to \pic_{\cc_{n}/\cb_{n}}[n]
$$
has kernel $\ct_{n}$.
With the same reasoning as proposition \ref{killed by finite covering}, we have  $\ct_{n}\subset \Ker(\Psi_{n}^{*})$. On the other hand, as $\Psi_{n}$ is finite Galois, for any invertible sheaf $\cl$ on $\cc\times_{\cb}\cb_{n}$, the push-forward $\Psi_{n,*}\Psi_{n}^{*}\cl$ contains $\cl$ as a direct summand.
In case that $\cl\in \Ker(\Psi_{n}^{*})$, it appears as a direct summand of $\Psi_{n,*}\co_{\cc_{n}}$, hence $\cl\in \ct_{n}$.

\end{proof}

By construction, for $m,n$ coprime to $p$ and $n|m$, there exists an \'etale morphism $\varpi_{m,n}:\cb_{m}\to \cb_{n}$, such that the isogeny $[m/n]:\ct_{m}\to \ct_{m}$ factors through $\ct_{n}\times _{\cb_{n}}\cb_{m}$. 
Let $\widetilde{\varpi}_{m,n}: \ct_{m}\to \ct_{n}\times _{\cb_{n}}\cb_{m}$ be the resulting isogeny. It corresponds to a morphism of sheaves $\crr_{m}\to \varpi_{m,n}^{*}\crr_{n}$.
Let 
$$
\cb_{\infty}=\varprojlim_{(n,p)=1}\cb_{n},
\quad 
\widehat{\ct}=\varprojlim_{(n,p)=1} \ct_{n}
\quad \text{and}\quad
\widehat{\crr}=\varprojlim_{(n,p)=1} \crr_{n}.
$$
Then $\widehat{\crr}$ is a product of smooth $l$-adic sheaf over $\cb_{\infty}$ with $l$ running through all the prime numbers different from $p$. 

\paragraph{(2) Finite abelian coverings of deformations of the compactified Jacobian}

Consider the relative compactified Jacobian
$
f: \overline{\cp}\to \cb
$
of the family $\pi:\cc\to \cb$. Via the morphism $\varpi_{n}:{\cb_{n}}\to \cb$, we get the family
$$
\overline{\cp}\times_{\cb}\cb_{n}\to \cb_{n}.
$$
By theorem \ref{curve to jacobian}, the constant finite \'etale sub group scheme $\ct_{n}$ of $\pic_{\cc\times_{\cb}\cb_{n}/\cb_{n}}[n]$ determines a $\Lambda^{0}/n$-covering of $\cb_{n}$-schemes 
$$
\begin{tikzcd}[column sep={4.5em,between origins},row sep=1.5em]
\overline{\cp}_{n}\arrow[rr, "\Phi_{n}"]\arrow[rd, "f_{n}"']& &\overline{\cp}\times_{\cb}\cb_{n}\arrow[ld]\\
&\cb_{n}&
\end{tikzcd}.
$$
The family $f_{n}:\overline{\cp}_{n}\to \cb_{n}$ is projective flat of pure relative dimension $\delta_{\gamma}$. Let $\overline{P}_{n}$ be the fiber of $f_{n}$ at $0_{n}\in \cb_{n}$, then $\Phi_{n}$ specializes to the  $\Lambda^{0}/n$-covering $\overline{P}_{n}\to \overline{P}_{C_{\gamma}}$ at $0_{n}$.
In terms of the affine Springer fibers, it corresponds to the natural projection  $n\Lambda^{0}\backslash \xx_{\gamma}^{0}\to \Lambda^{0}\backslash \xx_{\gamma}^{0}$.
The construction can be summarized in a commutative diagram with the right one being Cartesian
\begin{equation}\label{cd picard}
\begin{tikzcd}
\overline{\cp}_{n}\arrow[dr, "f_{n}"']\arrow[r,"\Phi_{n}"]& \overline{\cp}\times_{\cb}\cb_{n}\arrow[d]\arrow[r]\arrow[dr, phantom, "\square"] & \overline{\cp}\arrow[d, "f"]
\\
&\cb_{n} \arrow[r, "\varpi_{n}"]&\cb,
\end{tikzcd}
\end{equation}
which is parallel to the commutative diagram (\ref{cd curve}). 
The commutative diagram induces the morphism 
$$
\Phi_{n}^{*}:\varpi_{n}^{*}R^{1}f_{*}\bz/n \to R^{1}f_{n,*}\bz/n.
$$ 
With the isomorphism 
$
R^{1}\pi_{*}\bz/n \cong R^{1}f_{*}\bz/n
$ 
and similarly for $\pi_{n}$ and $f_{n}$,
$\Phi_{n}^{*}$ is equivalent to the morphism $\Psi_{n}^{*}:\varpi_{n}^{*}R^{1}\pi_{*}\bz/n \to R^{1}\pi_{n,*}\bz/n$, and proposition \ref{killed by finite covering curve} can be reformulated as

\begin{prop}\label{killed by finite covering picard}

The morphism $\Phi_{n}^{*}:\varpi_{n}^{*}R^{1}f_{*}\bz/n \to R^{1}f_{n,*}\bz/n$ has kernel $\crr_{n}$. 

\end{prop}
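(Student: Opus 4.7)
The plan is to reduce this statement to Proposition \ref{killed by finite covering curve} via the autoduality of the compactified Jacobian, exactly as the paragraph preceding the statement previews. First, I would promote Arinkin's autoduality (Theorem \ref{autoduality main}) to the families $\pi:\cc\to\cb$ and $\pi_n:\cc_n\to\cb_n$, which works because the fibers are geometrically integral with planar singularities and the Poincar\'e sheaf $\scp^{\uni}$ of (\ref{egk poincare}) is constructed by cohomological operations compatible with base change. This yields canonical isomorphisms
$$
\pic_{\cc/\cb}[n]\cong \pic_{\overline{\cp}/\cb}[n],\qquad
\pic_{\cc_n/\cb_n}[n]\cong \pic_{\overline{\cp}_n/\cb_n}[n],
$$
which, after tensoring with $\mu_n^{-1}$ and using (\ref{h1 as jac tor rel}) and its analogue for $f$, become canonical isomorphisms
$$
\varpi_n^{*}R^1\pi_{*}\bz/n\cong \varpi_n^{*}R^1 f_{*}\bz/n,\qquad
R^1\pi_{n,*}\bz/n\cong R^1 f_{n,*}\bz/n.
$$
Under the first isomorphism, the subsheaf $\crr_n$ on the curve side corresponds to the subsheaf $\crr_n$ on the compactified Jacobian side, since by Theorem \ref{curve to jacobian} the coverings $\Psi_n$ and $\Phi_n$ are attached to the \emph{same} $n$-torsion subgroup $\ct_n$, matched under $\beta$.

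Second, I would check that $\Psi_n^{*}$ and $\Phi_n^{*}$ are intertwined by these identifications. By Theorem \ref{curve to jacobian}, the eigensheaf decompositions
$$
\Psi_{n,*}\co_{\cc_n}=\bigoplus_{\chi\in \widehat{\Lambda^{0}/n}}\co_{\cc_n,\,\chi},\qquad
\Phi_{n,*}\co_{\overline{\cp}_n}=\bigoplus_{\chi\in \widehat{\Lambda^{0}/n}}\co_{\overline{\cp}_n,\,\chi}
$$
correspond character by character under $\beta$. Since for a finite abelian Galois covering an eigensheaf of the push-forward pulls back to a trivial bundle (as in the proof of Proposition \ref{killed by finite covering}), and every line bundle in the kernel of the pullback arises this way, the character-wise matching immediately yields the commutative square relating $\Psi_n^{*}$ and $\Phi_n^{*}$ at the level of $n$-torsion Picard.

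Third, Proposition \ref{killed by finite covering curve} gives $\Ker\Psi_n^{*}=\crr_n$, so combining with the commutativity from Step 2 and the compatibility of subsheaves from Step 1 gives $\Ker\Phi_n^{*}=\crr_n$, as required. The main subtlety lies in Step 1: one must verify that the Arinkin autoduality is natural with respect to the base change along $\varpi_n$ and with respect to the finite abelian group actions on both sides. This is formal but is where the real content of the reduction resides.
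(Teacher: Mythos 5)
Your overall strategy --- identify $R^1\pi_*\bz/n$ with $R^1 f_*\bz/n$ (and the analogous pair for $\pi_n$, $f_n$), check that the two pullbacks match, and cite Proposition \ref{killed by finite covering curve} --- is exactly the reduction the paper sketches in the paragraph before the statement.  However, Step~1 as you have written it contains a genuine gap.  You assert that Arinkin's autoduality can be ``promoted to the family $\pi_n:\cc_n\to\cb_n$, which works because the fibers are geometrically integral with planar singularities,'' yielding $\pic_{\cc_n/\cb_n}[n]\cong\pic_{\overline{\cp}_n/\cb_n}[n]$.  Both points fail.  First, the geometric fibers of $\pi_n$ are generally \emph{not} integral: determining their irreducible components is precisely the content of \S4.3, and over a stratum $\cs^{\circ}_{I_\bullet,n}$ Theorem \ref{pi 0 curve} shows there are $|\bbv_{I_\bullet}/n|$ of them.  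Second, $\overline{\cp}_n$ is not the relative compactified Jacobian of $\cc_n$; it is by construction the $\Lambda^0/n$-covering of $\overline{\cp}\times_\cb\cb_n=\overline{\pic}^{\,0}_{\cc\times_\cb\cb_n/\cb_n}$ determined by $\ct_n$, a different object (and $\overline{\pic}^{\,0}_{\cc_n/\cb_n}$ would in any case be problematic over the non-irreducible fibers).  So Theorem \ref{autoduality main} simply does not apply to the pair $(\cc_n,\overline{\cp}_n)$, and the second displayed isomorphism of your Step~1 is unjustified.

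The route that actually does the work (and is what the paper implicitly appeals to) is the Cartesian square of Theorem \ref{curve to jacobian}, applied to the family $\cc\times_\cb\cb_n/\cb_n$: $\Psi_n$ is the base change of $\Phi_n$ along the Abel--Jacobi embedding $A_L:\cc\times_\cb\cb_n\hookrightarrow\overline{\cp}\times_\cb\cb_n$.  Hence $\Psi_{n,*}\bz/n = A_L^*\Phi_{n,*}\bz/n$, and after decomposing both push-forwards into $\Lambda^0/n$-eigensheaves (the character-by-character matching you correctly note in Step~2) the comparison of $R^1\pi_{n,*}\bz/n$ with $R^1 f_{n,*}\bz/n$ reduces to comparing $R^1$ of $\pi\times 1$ and $f\times 1$ on rank-one $\bz/n$-local systems $\cl_\chi = A_L^*\cm_\chi$ on the original family.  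This uses only the autoduality of the pair $(\cc,\overline{\cp})$ over $\cb$ (well covered by your Step~1 for that family), not an autoduality of $(\cc_n,\overline{\cp}_n)$.  With Step~1 patched in this way, Steps~2 and 3 go through.
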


We can restrict the $\Lambda^{0}/n$-covering $\Phi_{n}:\overline{\cp}_{n}\to \overline{\cp}\times_{\cb}\cb_{n}$ to the preimage of $\cp\times_{\cb}\cb_{n}$, and get a $\Lambda^{0}/n$-covering 
$
\Phi_{n}^{\circ}:\cp_{n}\to \cp\times_{\cb}\cb_{n}
$
and hence a fiberwise dense open subfamily $f_{n}^{\circ}:\cp_{n}\to \cb_{n}$ of $f_{n}$.
Then the action of $\cp\times_{\cb}\cb_{n}$ on $\overline{\cp}\times_{\cb}\cb_{n}$ can be lifted to an action of $\cp_{n}$ on $\overline{\cp}_{n}$, and they form a weak abelian fibration over $\cb_{n}$ in the sense of Ng\^o.


\begin{prop}\label{smooth picard total}

The total space $\overline{\cp}_{n}$ is smooth over $k$.

\end{prop}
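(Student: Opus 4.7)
The plan is to exhibit $\overline{\cp}_n \to \overline{\cp}$ as a composition of étale morphisms and then transfer smoothness, paralleling the strategy used in proposition \ref{finite cover smooth} for $\cc_n$.

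First, I would show that $\varpi_n: \cb_n \to \cb$ is étale. By definition \ref{define bn}, $\varpi_n$ factors as the quasi-finite étale covering $\cb_n \to \cb^{\circ}$ composed with the open immersion $\cb^{\circ} \hookrightarrow \cb$, and both are étale. Consequently, the base change $\overline{\cp} \times_{\cb} \cb_n \to \overline{\cp}$ is étale. Since $\overline{\cp}$ is smooth over $k$ by Fantechi--Göttsche--van Straten (as recalled at the beginning of \S2.2), it follows that $\overline{\cp} \times_{\cb} \cb_n$ is smooth over $k$.

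Next, I would verify that $\Phi_n: \overline{\cp}_n \to \overline{\cp} \times_{\cb} \cb_n$ is finite étale. By construction, $\Phi_n$ is the finite abelian covering associated via theorem \ref{curve to jacobian} to the sub group scheme $\beta(\ct_n) \subset \pic^{0}_{(\overline{\cp}\times_{\cb}\cb_n)/\cb_n}[n]$. Since $\ct_n$ is a constant finite sub group scheme of $\pic_{\cc\times_{\cb}\cb_n/\cb_n}[n]$, it is étale over $\cb_n$, and because $\beta$ is an isomorphism of group schemes (theorem \ref{autoduality main}), its image $\beta(\ct_n)$ is étale as well. With $(n,p)=1$, the resulting finite abelian covering is étale: locally in the étale topology the eigensheaves $\co_{\overline{\cp}_n,\chi}$ become trivial and $\overline{\cp}_n$ is exhibited as a trivial $(\Lambda^{0}/n)$-torsor in the Kummer-theoretic fashion.

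Combining the two steps, $\overline{\cp}_n \to \overline{\cp}$ is a composition of étale morphisms, hence étale. As smoothness over $k$ ascends along étale morphisms, $\overline{\cp}_n$ is smooth over $k$. The only subtlety worth stating explicitly is the verification that $\beta$ preserves étale $n$-torsion subgroup schemes; since $\beta$ is an isomorphism of group schemes, this is automatic and is the key input that distinguishes the Picard case from the curve case handled in proposition \ref{finite cover smooth}.
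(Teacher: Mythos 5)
Your proof takes essentially the same route as the paper: exhibit $\overline{\cp}_{n}\to \overline{\cp}\times_{\cb}\cb_{n}\to \overline{\cp}$ as a tower of \'etale morphisms and conclude from the smoothness of $\overline{\cp}$. The paper's version is terser and leaves the \'etaleness of $\Phi_{n}$ implicit in the construction, whereas you spell out (correctly) why $\varpi_{n}$ and the finite abelian covering $\Phi_{n}$ are \'etale.
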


\begin{proof}

As $\cb_{n}\to \cb^{\circ}\subset \cb$ is \'etale, we get a tower of \'etale morphism
$$
\overline{\cp}_{n}\xrightarrow{\Phi_{n}} \overline{\cp}\times_{\cb}\cb_{n}\to \overline{\cp}.
$$
The smoothness of $\overline{\cp}_{n}$ over $k$ then follows from that of $\overline{\cp}$.

\end{proof}

\subsection{Analysis of the irreducible components of the geometric fibers}

We proceed to analyse the irreducible components of the geometric fibers of $\pi_{n}:\cc_{n}\to \cb_{n}$ and $f_{n}:\overline{\cp}_{n}\to \cb_{n}$.
Recall that we have the canonical isomorphism
\begin{equation}\label{h1 curve copy}
H^{1}(C_\gamma,\bz/n)=J_{C_\gamma}[n]\otimes \mu_{n}^{-1}=\rS_{\gamma}[n]\otimes \mu_{n}^{-1}={\textstyle\frac{1}{n}}X_{*}(\rS_{\gamma})/X_{*}(\rS_{\gamma}),
\end{equation} 
and the isomorphisms (\ref{h1 curve as invariant})-(\ref{h1 curve mod n}) at $x=0$,
where $\rS_{\gamma}=\bbg_{m}^{r}/\bbg_{m}$ is the maximal torus of $J_{C_{\gamma}}=\widetilde{A}^{\times}/A^{\times}$.
For $i=1,\cdots, r$, let $\ep_{i}\in X_{*}(\rS_{\gamma})$ be the cocharacter sending $\bbg_{m}$ identically to the $i$-th factor of $\rS_{\gamma}$.

\begin{defn}

Let $c_{i}\in {\bbv}^{\pi_{1}\big(\cb_{\{0\}}-\Delta,\,\bar{\xi}_{0}\big)}, i=1,\cdots,r,$ be the element such that $c_{i}\mod n$ corresponds to $\frac{1}{n}\ep_{i}$ under the isomorphism (\ref{h1 curve mod n}) for $x=0$ and (\ref{h1 curve copy}) for all $n\in \bn, (n,p)=1$.

\end{defn}

\begin{prop}\label{h1 inv generator}

The invariant subgroup $\bbv^{\pi_{1}(\cb_{\{0\}}-\Delta, \,\bar{\xi}_{0})}$ is generated by the classes $c_{i}, i=1,\cdots,r$.

\end{prop}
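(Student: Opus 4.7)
The plan is to identify the invariant module $\bbv^{\pi_{1}(\cb_{\{0\}}-\Delta,\,\bar{\xi}_{0})}$ with the $(p')$-adic completion of $X_{*}(\rS_{\gamma})$ and then observe that the images of $\epsilon_{1},\dots,\epsilon_{r}$ generate the cocharacter lattice of $\rS_{\gamma}=\bbg_{m}^{r}/\bbg_{m}$ by definition of the quotient.

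First, I would invoke equation (\ref{h1 curve full}) at $x=0$ to rewrite the left-hand side as $H^{1}(C_{\gamma},\widehat{\bz}^{(p)})$. Decomposing $\bbv=\prod_{l\neq p}\bbv_{l}$ prime by prime, it suffices to show that for every prime $\ell\neq p$ the classes $c_{i}$ generate $H^{1}(C_{\gamma},\zl)$ as a $\zl$-module. By Nakayama's lemma for the local ring $\zl$, generation of a finitely generated $\zl$-module can be checked after reducing modulo $\ell$. Thus it is enough to prove that the reductions $c_{i}\bmod \ell$ generate $H^{1}(C_{\gamma},\bz/\ell)$ as a $\bz/\ell$-vector space.

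Next, I would appeal to the canonical isomorphism (\ref{h1 curve copy})
\[
H^{1}(C_{\gamma},\bz/\ell)=\tfrac{1}{\ell}X_{*}(\rS_{\gamma})/X_{*}(\rS_{\gamma}),
\]
under which, by the very definition of $c_{i}$, the class $c_{i}\bmod\ell$ corresponds to $\tfrac{1}{\ell}\epsilon_{i}$. The question is therefore reduced to a purely combinatorial statement about the cocharacter lattice: the classes $\tfrac{1}{\ell}\epsilon_{i}$, $i=1,\dots,r$, generate the $\ell$-torsion of the torus $\rS_{\gamma}=\bbg_{m}^{r}/\bbg_{m}$. This is immediate from the definition of the quotient: the cocharacters $\epsilon_{1},\dots,\epsilon_{r}$ generate $X_{*}(\bbg_{m}^{r})=\bz^{r}$, so their images generate the quotient lattice $X_{*}(\rS_{\gamma})=\bz^{r}/\bz\cdot(1,\dots,1)$, and a fortiori they generate $\tfrac{1}{\ell}X_{*}(\rS_{\gamma})/X_{*}(\rS_{\gamma})$.

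The only real subtlety lies in justifying that $\bbv^{\pi_{1}(\cb_{\{0\}}-\Delta,\,\bar{\xi}_{0})}=H^{1}(C_{\gamma},\widehat{\bz}^{(p)})$ is indeed a finitely generated $\widehat{\bz}^{(p)}$-module (so that the Nakayama reduction step is legitimate); this follows from the computation of $H^{1}(C_{\gamma},\zl)$ given by the devissage $1\to R\to J_{C_{\gamma}}\to J_{\widetilde{C}}\to 1$ of (\ref{chevalley cj curve}) applied to $C_{\gamma}$, for which $\widetilde{C}\cong\bp^{1}$ contributes nothing and the affine part $R$ has toric quotient $\rS_{\gamma}$, yielding $H^{1}(C_{\gamma},\zl)\cong X_{*}(\rS_{\gamma})\otimes\zl$, a free $\zl$-module of rank $r-1$. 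Once this identification is in place, the assertion of the proposition is an immediate consequence of the definition of the $c_{i}$.
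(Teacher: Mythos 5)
Your proof is correct and follows essentially the same route as the paper: reduce the generation question modulo $n$ (or $\ell$) via the identifications (\ref{h1 curve mod n}) and (\ref{h1 curve copy}), observe that the images of the $\epsilon_i$ generate $\tfrac{1}{n}X_*(\rS_\gamma)/X_*(\rS_\gamma)$, and then lift the statement to the full invariant module. The paper simply says ``pass to the projective limit,'' whereas you spell this out prime-by-prime via Nakayama's lemma together with an explicit finite-generation check using the Chevalley d\'evissage (\ref{chevalley cj curve}); your version is a slightly more detailed packaging of the same idea, not a different argument.
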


\begin{proof}

It is clear that $\frac{1}{n}X_{*}(\rS_{\gamma})/X_{*}(\rS_{\gamma})$ is generated by the cocharacters $\frac{1}{n}\ep_{i}, i=1,\cdots,r$. By definition, they correspond to the classes $c_{i} \mod n$. Hence these classes generate $(\bbv/n)^{\pi_{1}(\cb_{\{0\}}-\Delta, \,\bar{\xi}_{0})}$. Pass to the projective limit, we get the proposition. 

\end{proof}

We determine explicitly the elements $c_{i}$. We need the description of $\pi_{1}\big(\cb_{\{0\}}-\Delta,\,\bar{\xi}_{0}\big)$ in \S\ref{monodromy curve}, which carries over to the case that ${\rm char}(k)$ is large enough by remark \ref{ac large char}.  
Henceforth we switch to the complex analytic case and assume that the defining polynomial $P(x,y)=0$ of the singularity of $C_{\gamma}$ at $c$ is of real coefficients, and that it can be decomposed into products of analytically irreducible factors 
\begin{equation}\label{factorize p}
P(x,y)=P_{1}(x,y)\cdots P_{r}(x,y)
\end{equation}
such that the factors are all of real coefficients. For $i=1,\cdots,r$, let $C^{\circ}_{i}$ be the branch of the singularity of $C_{\gamma}$ at $c$ defined by $P_{i}(x,y)=0$. Let $B_{\ep}\subset \bc^{2}$ be a sufficiently small open ball centered at $c$. Let $\widetilde{P}(x,y,t)$ be a \emph{real}  morsification for $P(x,y)$. By construction, $\widetilde{P}(x,y,t)$ induces a real  morsification for each local branch of $C_{\gamma}$ at $c$. In other words, $\widetilde{P}(x,y,t)$ can be factorized as
\begin{equation}\label{factorize p tilde}
\widetilde{P}(x,y,t)=\widetilde{P}_{1}(x,y,t)\cdots \widetilde{P}_{r}(x,y,t),
\end{equation}
such that each factor $\widetilde{P}_{i}(x,y,t)$ is a real morsification for $P_{i}(x,y)$.
For $t\in \bc$ with $|t|$ sufficiently small, let $C_{t}$ be the deformation of $C_{\gamma}$ with the singularity at $c$ deformed into the singularities defined by $\widetilde{P}(x,y,t)=0$. 
Then $C_{t}$ defines a $\delta$-invariant deformation of $C_{\gamma}$.
Let $
C^{\circ}_{t}=C_{t}\cap B_{\ep}$  
and 
$$
C^{\circ}_{i,\,t}=\Big\{(x,y)\in B_{\ep}\,\big| \,\widetilde{P}_{i}(x,y,t)=0\Big\},\quad i=1,\cdots,r.
$$ 
Then $C^{\circ}_{t}$ defines a $\delta$-invariant deformation of the singularity of $C_{\gamma}$ at $c$, and similarly $C^{\circ}_{i,\,t}$ is a $\delta$-invariant deformation of $C_{i}^{\circ}$. Let $\overline{C}{}^{\circ}_{t}=C^{\circ}_{t}\cap \br^{2}$ and let $\overline{C}{}^{\circ}_{i,\,t}=C^{\circ}_{i,\,t}\cap \br^{2}$, then $\overline{C}{}^{\circ}_{i,\,t}$ are the irreducible components of $\overline{C}{}^{\circ}_{t}$. The curve $\overline{C}{}^{\circ}_{t}$ cuts $D_{\ep}=B_{\ep}\cap \br^{2}$ as union of disjoint connected components. The connected components which are disjoint from the boundary of $D_{\ep}$ are called regions, and they can be labelled $\oplus$ and $\ominus$ according to the sign of $\widetilde{P}(x,y,t)$ at the region. 
We use the same notation $R_{i}^{+}, R_{k}^{-}, i=1,\cdots, \mu_{+}, \,k=1,\cdots, \mu_{-},$ as in \S\ref{monodromy curve} for the regions labelled $\oplus$ and $\ominus$, and the same notation $D_{j},j=1,\cdots, \mu_{0}$ for the double points of $\overline{C}{}^{\circ}_{t}$. Note that $\mu_{0}=\delta_{\gamma}$. 
The notations for the vanishing cycles $\alpha_{i}^{+}, \alpha_{j}, \alpha_{k}^{-}$ will be kept as well. Notice that they are elements of the homology $H_{1}(\cc_{\bar{\xi}_{0}},\bz)$ and we need to pass to the cohomology $H^{1}(\cc_{\bar{\xi}_{0}},\bz)$ by the duality between them. 
We use the same notation $\alpha_{i}^{+}, \alpha_{j}, \alpha_{k}^{-}$ for the corresponding element in $H^{1}(\cc_{\bar{\xi}_{0}},\bz)\subset \bbv$. 

\begin{prop}\label{inters number}

The number of intersection points of any two branches $\overline{C}{}^{\circ}_{i,t}$ and $\overline{C}{}^{\circ}_{j,t}$ equals the intersection number $C_{i}^{\circ}\cdot C_{j}^{\circ}$. 

\end{prop}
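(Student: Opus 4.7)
The plan is to combine conservation of local intersection multiplicities under flat deformation with the specific structure of A'Campo's real morsification, which forces every intersection between two distinct components to be an ordinary real node.

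First, I would observe that $\widetilde{P}_i(x,y,t)$ is a flat deformation of $P_i(x,y)$ inside the Milnor ball $B_\epsilon$. By the principle of continuity for intersection numbers (concretely, constancy of $\dim_{\bc}\bc[[x,y]]/(\widetilde{P}_i(\cdot,\cdot,t), \widetilde{P}_j(\cdot,\cdot,t))$ summed over the supports in $B_\epsilon$), for $|t|$ sufficiently small we have
$$\sum_{p \in C_{i,t}^{\circ}\cap C_{j,t}^{\circ}\cap B_\epsilon}\bigl(C_{i,t}^{\circ}\cdot C_{j,t}^{\circ}\bigr)_{p}\;=\;C_{i}^{\circ}\cdot C_{j}^{\circ}.$$
Since $B_\epsilon$ is a Milnor ball for $P$, no intersection can escape to $\partial B_\epsilon$ for small $|t|$, so the sum is taken over a compact set.

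Next I would argue that every point $p\in C_{i,t}^{\circ}\cap C_{j,t}^{\circ}$ (\emph{a priori} in $\bc^2$) is in fact an ordinary real double point of the total curve $C_t^{\circ}$. Indeed, because $i\neq j$, two distinct factors of $\widetilde{P}$ vanish at $p$, so $p$ is a singular point of $\{\widetilde{P}(\cdot,\cdot,t)=0\}$, that is, a critical point of $\widetilde{P}(\cdot,\cdot,t)$ with critical value $0$. By the defining property of A'Campo's real morsification recalled in \S\ref{monodromy curve}, every critical point of $\widetilde{P}(\cdot,\cdot,t)$ in $B_\epsilon$ lies in $D_\epsilon=B_\epsilon\cap\br^{2}$, and those with critical value $0$ are exactly the ordinary double points $D_{1},\dots,D_{\mu_{0}}$ of $\overline{C}{}_{t}^{\circ}$. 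Hence $p\in\overline{C}_{i,t}^{\circ}\cap\overline{C}_{j,t}^{\circ}$ and the two local branches at $p$ meet transversally, so $(C_{i,t}^{\circ}\cdot C_{j,t}^{\circ})_{p}=1$.

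Combining the two steps,
$$\bigl|\overline{C}_{i,t}^{\circ}\cap\overline{C}_{j,t}^{\circ}\bigr|\;=\;\bigl|C_{i,t}^{\circ}\cap C_{j,t}^{\circ}\cap B_\epsilon\bigr|\;=\;\sum_{p}1\;=\;C_{i}^{\circ}\cdot C_{j}^{\circ},$$
which is the claim. The main obstacle I foresee is justifying that complex intersection points cannot appear off $\br^2$; this is exactly what the reality and morsification properties of A'Campo's construction buy us, since by construction \emph{all} critical points of $\widetilde{P}(\cdot,\cdot,t)$ are placed in $D_\epsilon$ with the prescribed Morse index. A secondary, purely bookkeeping check is that the self-intersections of each $\overline{C}_{i,t}^{\circ}$ account for exactly $\delta(C_{i}^{\circ})$ nodes, which is a consequence of $\widetilde{P}_{i}$ being itself a real morsification of $P_{i}$; combined with the formula $\delta(C_{\gamma})=\sum_{i}\delta(C_{i}^{\circ})+\sum_{i<j}C_{i}^{\circ}\cdot C_{j}^{\circ}$, this gives a consistency check but is not logically needed for the proof.
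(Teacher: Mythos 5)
Your proof is correct and is a careful expansion of the paper's one-line proof (``This is due to the fact that the intersection number is invariant under deformation''). You have correctly supplied the ingredient the paper leaves implicit: conservation of local intersection multiplicity only yields a \emph{count} of real intersection points once one knows that every intersection of two distinct branches is a real ordinary node, and this is precisely what A'Campo's real morsification guarantees, since all critical points of $\widetilde{P}(\cdot,\cdot,t)$ lie in $D_\epsilon$ and those with critical value $0$ are exactly the ordinary double points $D_1,\dots,D_{\mu_0}$.
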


\begin{proof}

This is due to the fact that the intersection number is invariant under deformation.

\end{proof}

For $i=1,\cdots, r$, let $J_{i}$ be the subset of $\{1,\cdots, \mu_{0}\}$ such that $D_{j}\in \overline{C}{}^{\circ}_{i, t}$ if and only if $j\in J_{i}$. 
Let $J_{i}^{\circ}\subset J_{i}$ be the subset of the index of the double points which lie on the other branches as well.  
Then the double points indexed by elements in $J_{i}\backslash J_{i}^{\circ}$ are exactly the points of self-intersection of the curve $\overline{C}{}^{\circ}_{i,t}$.

\begin{prop}\label{ci express}

For $i=1,\cdots, r$, we have 
$$
c_{i}=\sum_{j\in J_{i}^{\circ}}(\pm) \alpha_{j}.
$$
Moreover, for any $i\neq i'$ and $j\in J_{i}^{\circ}\cap J_{i'}^{\circ}$, the sign of $\alpha_{j}$ in the expression of $c_{i}$ and $c_{i'}$ are opposite. 

\end{prop}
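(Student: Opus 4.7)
The plan is to compute $c_{i}$ by specializing from $C_{\gamma}$ to the intermediate nodal curve $C_{t_{0}}$ of the morsification, whose cohomology has an explicit basis in terms of the vanishing cycles at the nodes. By Lemma~\ref{support curve} combined with (\ref{h1 curve mod n}), the inclusion $\pi_{1}(\cb_{\{t_{0}\}}-\Delta,\bar{\xi}_{t_{0}})\subset \pi_{1}(\cb_{\{0\}}-\Delta,\bar{\xi}_{0})$ yields $H^{1}(C_{\gamma},\bz/n)\hookrightarrow H^{1}(C_{t_{0}},\bz/n)$ inside $\bbv/n$. Since $C_{t_{0}}$ is irreducible with $\mu_{0}=\delta_{\gamma}$ ordinary nodes and rational normalization $\widetilde{C}_{t_{0}}=\bp^{1}$, its Jacobian $J_{C_{t_{0}}}$ is an algebraic torus of rank $\delta_{\gamma}$ with one $\bbg_{m}$-factor per node; hence $H^{1}(C_{t_{0}},\bz/n)=\bigoplus_{j=1}^{\mu_{0}}(\bz/n)\cdot\alpha_{j}$, with $\alpha_{j}$ the vanishing cycle at $D_{j}$ (consistently, Table~\ref{Tab: inter} gives $\langle\alpha_{j},\alpha_{k}\rangle=0$, so the $\alpha_{j}$ form a Lagrangian basis of the invariant subspace inside $\bbv/n$). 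The problem thus reduces to determining the coefficients of $c_{i}$ in this basis.

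I realize $c_{i}$ as the line bundle $\cl_{i}\in J_{C_{\gamma}}[n]$ which, under $\rS_{\gamma}[n]=\mu_{n}^{r}/\mu_{n}$, is given by the gluing multipliers $\lambda_{k}=\zeta_{n}^{\delta_{ki}}$ at the preimages $\tilde{c}_{k}$ of $c$ (for a fixed $\zeta_{n}\in\mu_{n}$) and trivial on $\widetilde{C}_{\gamma}=\bp^{1}$. Since $\overline{\cp}$ is smooth over $k$ (Fantechi--G\"ottsche--van Straten) and $\pic_{\cc/\cb}[n]$ is quasi-finite \'etale, $\cl_{i}$ deforms along the family to a line bundle $\cl_{i}(t_{0})\in J_{C_{t_{0}}}[n]$, trivial on $\widetilde{C}_{t_{0}}$ with gluing multipliers $\mu_{j}$ at each node $D_{j}$. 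The crucial formula is
\[
\mu_{j}=\lambda_{i^{-}(j)}\,\lambda_{i^{+}(j)}^{-1},
\]
where $i^{\pm}(j)$ designates the branch of the singularity at $c$ on whose smooth deformation $\overline{C}^{\circ}_{i^{\pm}(j),t_{0}}$ the preimage $p_{j}^{\pm}\in\widetilde{C}_{t_{0}}$ lies; this follows by trivializing $\cl_{i}(t_{0})$ on each branch-piece of $\widetilde{C}_{t_{0}}$ via the trivialization at $\tilde{c}_{i^{\pm}(j)}$ and reading off the discrepancy at the node. Substituting $\lambda_{k}=\zeta_{n}^{\delta_{ki}}$ gives $\mu_{j}=\zeta_{n}^{\delta_{i^{-}(j),i}-\delta_{i^{+}(j),i}}$, which is trivial unless \emph{exactly one} of $i^{\pm}(j)$ equals $i$ --- that is, unless $D_{j}$ lies on branch $i$ and on some other branch, namely $j\in J_{i}^{\circ}$. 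This yields $c_{i}=\sum_{j\in J_{i}^{\circ}}(\pm)\alpha_{j}$, with sign $+$ if $i^{-}(j)=i$ and $-$ if $i^{+}(j)=i$. The opposite-sign assertion at a shared node with $j\in J_{i}^{\circ}\cap J_{i'}^{\circ}$, $i\ne i'$, follows immediately: such a node has $\{i^{+}(j),i^{-}(j)\}=\{i,i'\}$, so the slot occupied by $i$ in the formula for $c_{i}$ is opposite to that occupied by $i'$ in the formula for $c_{i'}$.

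The main obstacle will be the rigorous justification of the gluing formula $\mu_{j}=\lambda_{i^{-}(j)}\lambda_{i^{+}(j)}^{-1}$: equivalently, that the specialization $J_{C_{\gamma}}[n]\hookrightarrow J_{C_{t_{0}}}[n]$ corresponds at the level of cocharacter lattices to the inclusion $X_{*}(\rS_{\gamma})\hookrightarrow X_{*}(J_{C_{t_{0}}})=\bz^{\mu_{0}}$ sending $\ep_{i}\mapsto\sum_{j\in J_{i}^{\circ}}(\pm)e_{j}$. Being a statement about the quasi-finite \'etale group scheme $\pic_{\cc/\cb}[n]$, this can be checked directly fibrewise, and the required combinatorial assignment $p_{j}^{\pm}\mapsto\tilde{c}_{i^{\pm}(j)}$ is supplied by A'Campo's morsification reviewed in \S\ref{monodromy curve}.
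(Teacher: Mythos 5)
Your proposal is correct and takes essentially the same route as the paper: follow the line bundle $\cl_{i}$ through the $\delta$-constant stratum to a nodal curve $C_{t_{0}}$, identify $H^{1}(C_{t_{0}},\bz/n)$ with the free $\bz/n$-module on the vanishing cycles at the nodes, read off the class of $\cl_{i}(t_{0})$ from its gluing data at the nodes, and then invoke injectivity of the specialization map (from Lemma~\ref{support curve}) to pull the answer back to $c_{i}$. The paper packages the "gluing multiplier" step as tracing through a commutative diagram built from the exact sequences $1\to\bz/n\to\phi_{t,*}\bz/n\to\bigoplus_{j}i_{D_{j},*}\bz/n\to0$ and $1\to\bbg_{m}\to\phi_{t,*}\bbg_{m}\to\phi_{t,*}\bbg_{m}/\bbg_{m}\to1$, which is exactly the rigorous form of the formula $\mu_{j}=\lambda_{i^{-}(j)}\lambda_{i^{+}(j)}^{-1}$ you flag as the remaining obstacle; your cocharacter-lattice reformulation and your explicit sign convention ($+$ if $i^{-}(j)=i$, $-$ if $i^{+}(j)=i$) are a tidy way to present the same computation and make the opposite-sign assertion transparent.
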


\begin{proof}


For $i=1,\cdots,r$, $a\in k^{\times}$, let $\cl_{i,a}$ be the invertible sheaf on $C_{\gamma}$ corresponding to the element $(\cdots, 1,a,1,\cdots)$ of $\rS_{\gamma}=\bbg_{m}^{r}/\bbg_{m}$, where $a$ sits at the $i$-th position. It can be constructed by gluing $\co_{C_{\gamma}\backslash \{c\}}$ and the $A$-submodule of $E=\prod_{i=1}^{r}E_{i}$ generated by $(\cdots, 1,a,1,\cdots)$  along $\spec(E)=(C_{\gamma}\backslash \{c\})\times_{C_{\gamma}}\spec(A)$.
Suppose that $\cl_{i,a}$ is of order $n$, $(n,p)=1$. As explained in the proof of proposition \ref{ext to etale neigh}, the point of $J_{C_{\gamma}}[n]$ corresponding to $\cl_{i,a}$ extends to a local section of $\pic_{\cc/\cb}[n]$ over an \'etale neighbourhood of $0\in \cb$. The local section corresponds to a flat family of invertible sheaves over the restriction of $\pi:\cc\to \cb$ to this \'etale neighbourhood. 
Let $\cl_{i,a,t}$ be the restriction of this family of invertible sheaves along the $\delta$-invariant deformation of $C_{\gamma}$ defined by $\widetilde{P}(x,y,t)$. Then $\cl_{i,a,t}$ can be obtained by gluing the invertible sheaf $\co_{C_{t}\backslash \{D_{j},\, j\in J_{i}^{\circ}\}}$ and the invertible sheaf $\prod_{j\in J_{i}^{\circ}}a\widehat{\co}_{C_{t},D_{j}}$.  
Let $\phi_{t}:\widetilde{C}_{t}\to C_{t}$ be the normalization. For $j=1,\cdots, \mu_{0}$, let $i_{D_{j}}:D_{j}\hookrightarrow C_{t}$ be the natural inclusion. 
From the exact sequences
$$
1\to \bz/n\to \phi_{t,*}\bz/n\to \bigoplus_{j=1}^{\mu_{0}}i_{D_{j}, *}\bz/n\to 0,
$$
and 
$$
1\to \bbg_{m}\to \phi_{t,*}\bbg_{m}\to \phi_{t,*}\bbg_{m}/\bbg_{m}\to 1, 
$$
we deduce the commutative diagram
$$
\begin{tikzcd}
\bigoplus_{j=1}^{\mu_{0}}\bz/n \arrow [r]\arrow[d]& H^{1}(C_{t}, \bz/n)\arrow[d]\\
\textstyle{\prod_{j=1}^{\mu_{0}}} \big(\widetilde{\co}_{C_{t}, D_{j}}^{\times}/{\co}_{C_{t}, D_{j}}^{\times}\big)[n]\otimes \mu_{n}^{-1}\arrow[r]& {\rm Pic}_{C_{t}/k(t)}[n]\otimes \mu_{n}^{-1},
\end{tikzcd}
$$
with all the arrows being isomorphisms. 
Under the isomorphism (\ref{h1 curve copy}), the element $\frac{1}{n}\ep_{i}$ corresponds to the invertible sheaf $\cl_{i,\zeta_{n}}$ for a primitive $n$-th root of unity $\zeta_{n}$. 
Consider its deformation $\cl_{i,\zeta_{n},t}$, which is an invertible sheave on the curve  $C_{t}$, let $[\cl_{i,\zeta_{n},t}]$ be the torsion point of $\pic_{C_{t}/k(t)}[n]$ corresponding to it.
With the gluing construction of $\cl_{i,\zeta_{n},t}$, we can trace back the commutative diagram, and obtain that the pre-image of $[\cl_{i,\zeta_{n},t}]$ under the right vertical arrow is  
\begin{equation}\label{c coh deform}
c_{i,t}=\sum_{j\in J_{i}^{\circ}}(\pm)\alpha_{j}\mod n\in H^{1}(C_{t}, \bz/n).
\end{equation}
Here we are using the fact that all the singularities of $C_{t}$ are ordinary double points. 
By construction, $c_{i}$ specializes to $c_{i, t}$ under the specialization
$$
H^{1}(C_{\gamma}, \bz/n)\cong H^{1}(\cc\times _{\cb}\cb_{\{0\}}, \bz/n) \to H^{1}(\cc_{t}, \bz/n).
$$
By lemma \ref{support curve}, the specialization map is injective, whence the expression of  $c_{i}$ in the proposition. 

For the second assertion, observe that in the glueing construction of $\cl_{i,a,t}$ and $\cl_{i',a,t}$ their restrictions to the formal neighborhood $\prod_{j\in J_{i}^{\circ}\cap J_{i'}^{\circ}}\spec\big(\widehat{\co}_{C_{t}, D_{j}}\big)$ are inverse to each other if we rigidify 
$$\cl_{i,a,t}|_{C_{t}\backslash \{D_{j},j\in J_{i}^{\circ}\}}=\co_{C_{t}\backslash \{D_{j},j\in J_{i}^{\circ}\}}
\quad\text{and}\quad 
\cl_{i',a,t}|_{C_{t}\backslash \{D_{j},j\in J_{i'}^{\circ}\}}=\co_{C_{t}\backslash \{D_{j},j\in J_{i'}^{\circ}\}}.
$$
Hence in the expression (\ref{c coh deform}) for $c_{i,t}$ and $c_{i',t}$ the elements $\alpha_{j}, j\in J_{i}^{\circ}\cap J_{i'}^{\circ},$ will have opposite signs. This finishes the proof of the second assertion.

\end{proof}

\begin{rem}

With the explicit description of the monodromy action by A'Campo and Gusein-Zade, we can determine the signs of $\alpha_{j}$ in the expression of $c_{i}$ uniquely up to a global $\pm$ sign by the condition that $c_{i}$ is invariant under the monodromy action of $\pi_{1}(\cb_{\{0\}}-\Delta, \bar{\xi})$. 
But this is not essential for our purpose.

\end{rem}

\begin{example}[Continuation of Example \ref{ac gl4}]

We label the irreducible components of $\overline{C}{}^{\circ}_{t_{0}}$ and the vanishing cycles $\alpha_{j}$ as indicated in figure \ref{gl4 example}.
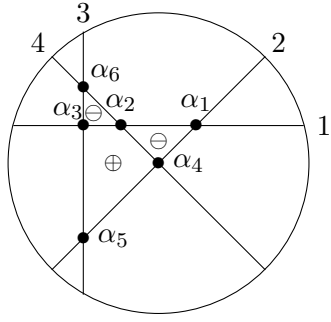
\begin{figure}[h]
\centering
\begin{tikzpicture}
\draw (0,0) circle (2cm);
\draw (1.42, 1.42)--(-1.42,-1.42);
\draw (1.42, -1.42)--(-1.42,1.42);
\draw (1.93, 0.5)--(-1.93, 0.5);
\draw (-1, 1.75)--(-1, -1.75);

\node at (0,0) {$\bullet$};
\node at (0.5,0.5) {$\bullet$};
\node at (-0.5,0.5) {$\bullet$};
\node at (-1,-1) {$\bullet$};
\node at (-1,0.5) {$\bullet$};
\node at (-1,1) {$\bullet$};

\node at (-0.6,0) {\footnotesize $\oplus$};
\node at (0,0.3) {\footnotesize $\ominus$};
\node at (-0.86,0.67) {\footnotesize $\ominus$};

\node at (2.2, 0.5) {$1$};
\node at (1.6, 1.6) {$2$};
\node at (-1, 2) {$3$};
\node at (-1.6, 1.6) {$4$};

\node at (0.5,0.8) {$\alpha_{1}$};
\node at (-0.5,0.8) {$\alpha_{2}$};
\node at (-1.2,0.7) {$\alpha_{3}$};

\node at (0.4,0) {$\alpha_{4}$};
\node at (-0.6,-1) {$\alpha_{5}$};

\node at (-0.7,1.2) {$\alpha_{6}$};

\end{tikzpicture}

\caption{Label of the vanishing cycles}
\label{gl4 example}

\end{figure}

\noindent Then the classes $c_{i}, i=1,\cdots,4$, have expression
\begin{align*}
c_{1}&=\alpha_{1}-\alpha_{2}+\alpha_{3},\\
c_{2}&=-\alpha_{1}+\alpha_{4}-\alpha_{5},\\
c_{3}&=\alpha_{5}-\alpha_{3}+\alpha_{6},\\
c_{4}&=-\alpha_{6}+\alpha_{2}-\alpha_{4},
\end{align*}
and the expression is unique up to a multiplication by $-1$ for all the classes $c_{i}, i=1,\cdots,4$.

\cqfd

\end{example}

\begin{cor-def}

Any element $c\in \bbv^{\pi_{1}(\cb_{\{0\}}-\Delta,\,\bar{\xi}_{0})}$ is a linear combination of the classes $\{c_{i}\}_{i=1}^{r}$, hence has a unique expansion
$
c=\sum_{j=1}^{\mu_{0}} a_{j}\alpha_{j} 
$
for some $a_{j}\in \widehat{\bz}^{(p)}$. 
We call the number of $\alpha_{j}$ with non-zero coefficient in the expression the \emph{height} of $c$. For $I\subset \{1,\cdots,r\}$, let $c_{I}=\sum_{i\in I}c_{i}$ and we denote its height by $h_{I}$.

\end{cor-def}

\begin{prop}\label{non zero coeff c}

Let $c_{I}=\sum_{j=1}^{\mu_{0}} a_{I,j}\alpha_{j}$, then $a_{I,j}\neq 0$ if and only if $j\in \bigsqcup_{i\in I, i'\in \bar{I}}J_{i}^{\circ}\cap J_{i'}^{\circ}$, where $\bar{I}$ is the complement of $I$ in $\{1,\cdots,r\}$. In particular, we have $h_{I}=\sum_{i\in I,\, i'\in \bar{I}}C_{i}^{\circ}\cdot C_{i'}^{\circ}$. 

\end{prop}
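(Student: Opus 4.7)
The plan is to compute $c_I=\sum_{i\in I}c_i$ directly from the expression provided by Proposition \ref{ci express} and to analyze, for each $j\in\{1,\dots,\mu_0\}$, the contribution of $\alpha_j$ to the sum. First, I would recall that by construction the curve $\overline{C}^\circ_t$ has only \emph{ordinary} double points, so each $D_j$ sits on exactly two local analytic branches of $\overline{C}^\circ_t$. Globally this leaves three possibilities for a fixed $j$: (a) both local branches belong to one and the same irreducible component $\overline{C}^\circ_{i,t}$ (a self-intersection); (b) the two local branches belong to distinct components $\overline{C}^\circ_{i,t}$ and $\overline{C}^\circ_{i',t}$ with $i\neq i'$. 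Case (a) means $j\in J_i\setminus J_i^\circ$, so $\alpha_j$ does not occur in $c_i$ nor in any other $c_{i''}$; in case (b) one has $j\in J_i^\circ\cap J_{i'}^\circ$ and $j\notin J_{i''}^\circ$ for all $i''\notin\{i,i'\}$.

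Combining this dichotomy with Proposition \ref{ci express}, I would read off the coefficient $a_{I,j}$ of $\alpha_j$ in $c_I$ as follows. In case (a) it is $0$ trivially. In case (b), with $\{i,i'\}$ the two branches through $D_j$, the coefficient equals $\varepsilon_{i,j}\mathbf{1}_{i\in I}+\varepsilon_{i',j}\mathbf{1}_{i'\in I}$ for certain signs $\varepsilon_{\bullet,j}\in\{\pm 1\}$. The second half of Proposition \ref{ci express} asserts $\varepsilon_{i,j}=-\varepsilon_{i',j}$, so this coefficient vanishes precisely when $\{i,i'\}\subset I$ or $\{i,i'\}\subset\bar I$ and is nonzero (equal to $\pm 1$) exactly when $\{i,i'\}$ straddles the partition $I\sqcup\bar I$. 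This is exactly the condition $j\in\bigsqcup_{i\in I,\,i'\in\bar I}J_i^\circ\cap J_{i'}^\circ$, proving the first assertion. Note that the disjoint union is well defined: for two distinct ordered pairs $(i,i'),(k,k')\in I\times\bar I$ the sets $J_i^\circ\cap J_{i'}^\circ$ and $J_k^\circ\cap J_{k'}^\circ$ are disjoint, because a fixed $D_j$ in case (b) lies on a unique unordered pair of branches.

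For the second statement, by the first part
\[
h_I=\#\Big\{j\,\big|\,a_{I,j}\neq 0\Big\}=\sum_{i\in I,\,i'\in\bar I}|J_i^\circ\cap J_{i'}^\circ|.
\]
For $i\neq i'$ the set $J_i^\circ\cap J_{i'}^\circ$ is the set of intersection points of $\overline{C}^\circ_{i,t}$ with $\overline{C}^\circ_{i',t}$, and Proposition \ref{inters number} identifies its cardinality with the intersection number $C_i^\circ\cdot C_{i'}^\circ$, giving $h_I=\sum_{i\in I,\,i'\in\bar I}C_i^\circ\cdot C_{i'}^\circ$.

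The main (minor) obstacle in this plan is the bookkeeping of signs: one has to verify that in case (b) the two contributions indeed cancel when $\{i,i'\}\subset I$, and this relies essentially on the sign assertion in Proposition \ref{ci express}. Once that is in hand, everything else is combinatorial.
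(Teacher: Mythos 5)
Your proposal is correct and follows essentially the same route as the paper: both hinge on the second assertion of Proposition \ref{ci express} (opposite signs at shared double points) together with the elementary observation that each ordinary double point $D_j$ lies on exactly two local branches, so that the coefficient of $\alpha_j$ in $c_I$ is a signed sum of at most two contributions that cancel precisely when both branches fall on the same side of $I\sqcup\bar I$. The paper packages this as a regrouping $c_i=\sum_{i'\neq i}\sum_{j\in J_i^\circ\cap J_{i'}^\circ}(-1)^{s_{i,j}}\alpha_j$ followed by a cancellation argument on $c_I=\sum_{i\in I}c_i$, whereas you track the coefficient of each fixed $\alpha_j$ directly; the content is identical.
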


\begin{proof}

As $J_{i}^{\circ}=\bigcup_{i'\in \{1,\cdots,r\}\backslash\{i\}} J_{i}^{\circ}\cap J_{i'}^{\circ}$ and the union is disjoint, we can write
\begin{equation}\label{regroup c}
c_{i}=\sum_{i'\in \{1,\cdots,r\}\backslash\{i\}} \sum_{j\in J_{i}^{\circ}\cap J_{i'}^{\circ}}
(-1)^{s_{i,j}}\alpha_{j}, \quad \text{with }s_{i,j}\in \{0,1\}, 
\end{equation}
Notice that the common terms in the expression of $c_{i}$ and $c_{i'}$ are exactly those indexed by $j\in J_{i}^{\circ}\cap J_{i'}^{\circ}$.
By the second assertion of proposition \ref{ci express}, the sign of these common terms are opposite, hence they will cancel out in $c_{i}+c_{i'}$. Moreover, only these common terms are cancelled out in $c_{i}+c_{i'}$. Applying this observation to $c_{I}=\sum_{i\in I}c_{i}$ with the expansion (\ref{regroup c}) for $c_{i}$, we obtain that $a_{I,j}\neq 0$ if and only if 
$$
j\in \bigcup_{i\in I}\Big(J_{i}^{\circ}-\bigcup_{i\neq i'\in I}(J_{i}^{\circ}\cap J_{i'}^{\circ})\Big)
=\bigsqcup_{i\in I, i''\in \bar{I}}J_{i}^{\circ}\cap J_{i''}^{\circ}.
$$  
The height of $c_{I}$ then equals
$$
\sum_{i\in I}\sum_{i''\in \bar{I}}\big|J_{i}^{\circ}\cap J_{i''}^{\circ}\big|=\sum_{i\in I,\, i'\in \bar{I}}C_{i}\cdot C_{i'},
$$
here the equality is due to proposition \ref{inters number}.

\end{proof}

For a partition $\{1,\cdots,r\}=\bigsqcup_{j=1}^{l}I_{j}$ of length $l$, denoted $I_{\bullet}\vdash \{1,\cdots,r\}$ of length $\ell(I_{\bullet})=l$, let $\bbv_{I_{\bullet}}$ be the subgroup of $\bbv$ generated by the elements $c_{I_{1}},\cdots,c_{I_{l}}$. Then any element of $\bbv_{I_{\bullet}}$ has a unique expansion in terms of $\{\alpha_{j}\}_{j=1}^{\mu_{0}}$.
Consider the set $S_{I_{\bullet}}$ of the index $j\in \{1,\cdots, \mu_{0}\}$ such that $\alpha_{j}$ appears with non-zero coefficient in the expansion of at least one element of $\bbv_{I_{\bullet}}$, the cardinal $|S_{I_{\bullet}}|$ will be called the \emph{height} of $\bbv_{I_{\bullet}}$.
In the following, we exclude the trivial partition $I=\{1,\cdots,r\}$ as $c_{I}=0$ in this case.

\begin{prop}\label{key dim 0}

The height of $\bbv_{I_{\bullet}}$ is 
$$
h_{I_{\bullet}}:=\sum_{j\neq j'=1}^{\ell(I_{\bullet})}\sum_{i\in I_{j},\, i'\in I_{j'}} C_{i}^{\circ}\cdot C_{i'}^{\circ}.
$$ 

\end{prop}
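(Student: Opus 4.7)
The plan is to reduce the claim to proposition \ref{non zero coeff c} by exploiting uniqueness of the expansion in the distinguished basis $\alpha_1, \ldots, \alpha_{\mu_0}$. First, since these classes are part of a basis of $\bbv$, any element of $\bbv_{I_\bullet}$ expands uniquely in them, and the coefficient of $\alpha_k$ in a combination $\sum_s \lambda_s c_{I_s}$ equals $\sum_s \lambda_s a_{I_s, k}$. Testing against the individual generators $c_{I_s}$ immediately gives ``$\supseteq$'', while the converse follows because if every $a_{I_s, k}$ vanishes then no combination can produce a non-zero coefficient at $\alpha_k$. Hence
$$
S_{I_\bullet} \;=\; \bigcup_{s=1}^{\ell(I_\bullet)} \{k : a_{I_s, k} \neq 0\}.
$$

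Next I would invoke proposition \ref{non zero coeff c} for each $I_s$; using $\bar{I}_s = \bigsqcup_{s' \neq s} I_{s'}$, the above unfolds to
$$
S_{I_\bullet} \;=\; \bigcup_{s \neq s'} \; \bigsqcup_{i \in I_s,\, i' \in I_{s'}} J_i^\circ \cap J_{i'}^\circ,
$$
which is precisely the set of double points of the real morsification $\overline{C}{}^{\circ}_{t}$ whose two local branches lie in distinct blocks of the partition $I_\bullet$.

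Finally, after morsification all singularities are ordinary double points, so each element of $S_{I_\bullet}$ determines a unique unordered pair $\{i, i'\}$ of distinct branches belonging to distinct blocks. Consequently the sets $J_i^\circ \cap J_{i'}^\circ$ indexed by such pairs are pairwise disjoint, and by proposition \ref{inters number} their cardinalities are given by the intersection numbers $C_i^\circ \cdot C_{i'}^\circ$. Summing over pairs of distinct blocks in the partition yields the asserted formula $h_{I_\bullet}$. The main conceptual point is the uniqueness step in the first paragraph; once that is in place, the remainder is a routine combinatorial count of the ``cross'' double points and no substantive obstacle is expected.
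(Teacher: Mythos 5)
Your proof is correct and takes essentially the same route as the paper: identify $S_{I_\bullet}$ with the union of the supports of the generators $c_{I_s}$, unfold via Proposition \ref{non zero coeff c}, and count the resulting disjoint collection of double-point sets via Proposition \ref{inters number}. Your linearity argument for the first step is a slightly more direct way of making the paper's observation (drawn from \eqref{regroup c}) that cancelations in an expansion $\sum_i a_i c_i$ only occur among terms with equal coefficients; both arguments establish the same identification of $S_{I_\bullet}$ and the rest proceeds identically.
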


\begin{proof}

From the expression (\ref{regroup c}), we see that in the expansion of an element $c=\sum_{i=1}^{r}a_{i}c_{i}$ into $\alpha_{j}$'s the cancelations happen only among the items $a_{i}c_{i}$ with the same coefficient $a_{i}$. 
So $S_{I_{\bullet}}$ is also the set of index $j\in \{1,\cdots, \mu_{0}\}$ such that $\alpha_{j}$ appears with non-zero coefficient in the expansion of at least one $c_{I_{i}}$. 
By proposition \ref{non zero coeff c}, we obtain that
$$
S_{I_{\bullet}}=\bigcup_{j=1}^{\ell(I_{\bullet})}\bigsqcup_{i\in I_{j},\,  i'\in \bar{I}_{j}} J_{i}^{\circ}\cap J_{i'}^{\circ}.
$$
As $I_{\bullet}$ is a partition of $\{1,\cdots, r\}$, we have
$
\bar{I}_{j}=\bigsqcup_{j'=1,j'\neq j}^{\ell(I_{\bullet})} I_{j'}, 
$
and the above set can be written
$$
S_{I_{\bullet}}=\bigcup_{j=1}^{\ell(I_{\bullet})}\bigsqcup_{i\in I_{j}} \bigsqcup_{\substack{j'=1\\j'\neq j}}^{\ell(I_{\bullet})} \bigsqcup_{i'\in I_{j'}} J_{i}^{\circ}\cap J_{i'}^{\circ}
=\bigsqcup_{j\neq j'=1}^{\ell(I_{\bullet})}\bigsqcup_{i\in I_{j}} \bigsqcup_{i'\in I_{j'}} J_{i}^{\circ}\cap J_{i'}^{\circ}. 
$$
Combine it with the equality $|J_{i}^{\circ}\cap J_{i'}^{\circ}|=C_{i}^{\circ}\cdot C_{i'}^{\circ}$ of proposition \ref{inters number}, we get the proposition.

\end{proof}

Recall that the finite abelian covering $\Psi_{n}:\cc_{n}\to \cc\times_{\cb}\cb_{n}$ corresponds to the constant finite sub group scheme $\ct_{n}$ of $\pic_{\cc\times_{\cb}\cb_{n}/\cb_{n}}[n]$. 
Let $s_{n}$ be a geometric point of $\cb_{n}$, let $s$ be its image in $\cb$.
Recall that $J_{\cc_{s}}$ admits a d\'evissage
\begin{equation}\label{chevalley jac exp}
1\to J_{\cc_{s}}^{\rm a}\to J_{\cc_{s}}\xrightarrow{\phi_{s}^{*}} J_{\widetilde{\cc}_{s}}\to 1,
\end{equation} 
where $\phi_{s}:\widetilde{\cc}_{s}\to \cc_{s}$ is the normalization of $\cc_{s}$, and $J_{\cc_{s}}^{\rm a}$ is the maximal affine sub group scheme over $k(s)$ of $J_{\cc_{s}}$. The d\'evissage induces an exact sequence
$$
1\to J_{\cc_{s}}^{\rm a}[n]\to J_{\cc_{s}}[n]\to J_{\widetilde{\cc}_{s}}[n]\to 1.
$$
Note that the fiber $\ct_{n,s_{n}}$ is a sub group scheme of $J_{\cc_{s}}[n]$ by construction. 
As a special case of theorem \ref{irreducible components}, we get

\begin{lem}\label{key irred}

The irreducible components of $\cc_{n,s_{n}}$ can be naturally parametrized by the intersection
$$
\ct_{n,s_{n}}\cdot J_{\cc_{s}}^{\rm a}[n].
$$

\end{lem}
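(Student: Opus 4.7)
The plan is to apply Theorem \ref{irreducible components} directly to the finite abelian covering $\cc_{n,s_n}\to \cc_s$ obtained by base change from $\Psi_n:\cc_n\to \cc\times_\cb \cb_n$. First I would check that the fiber is set up as a valid instance of that theorem: since $\Psi_n$ is a $\Lambda^0/n$-covering associated by Theorem \ref{curve to jacobian} to the constant finite \'etale sub group scheme $\ct_n\subset \pic_{\cc\times_\cb \cb_n/\cb_n}[n]$, base changing to the geometric point $s_n$ yields a finite abelian covering $\cc_{n,s_n}\to \cc_s$ with Galois group $\Lambda^0/n$ whose associated torsion subgroup of $J_{\cc_s}[n]$ is precisely the fiber $\ct_{n,s_n}$. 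Because $\ct_n$ is a constant finite \'etale sub group scheme of $\pic_{\cc\times_\cb \cb_n/\cb_n}[n]$ (Definition \ref{define bn}), the fiber $\ct_{n,s_n}$ is a reduced sub group scheme of $J_{\cc_s}[n]$, so the hypotheses of Theorem \ref{irreducible components} are met.

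Theorem \ref{irreducible components} then tells us that the irreducible components of $\cc_{n,s_n}$ are naturally parametrized by $\Ker(\phi_s^{*}|_{\ct_{n,s_n}})$, where $\phi_s:\widetilde{\cc}_s\to \cc_s$ is the normalization. It remains to identify this kernel. Here I would invoke the d\'evissage (\ref{chevalley jac exp}) together with the induced short exact sequence on $n$-torsion
\[
1\to J_{\cc_s}^{\rm a}[n]\to J_{\cc_s}[n]\xrightarrow{\phi_s^{*}} J_{\widetilde{\cc}_s}[n]\to 1,
\]
which holds because $n$ is coprime to $p$, $J_{\cc_s}^{\rm a}$ is a commutative smooth group scheme and $J_{\widetilde{\cc}_s}$ is an abelian variety, so multiplication by $n$ is surjective on both. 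From this exact sequence $\Ker(\phi_s^{*})=J_{\cc_s}^{\rm a}[n]$, and therefore
\[
\Ker(\phi_s^{*}|_{\ct_{n,s_n}})=\ct_{n,s_n}\cap J_{\cc_s}^{\rm a}[n],
\]
which is the parametrizing set claimed in the lemma.

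I expect no serious obstacle here; the content of the lemma is essentially a dictionary translation between the abstract statement of Theorem \ref{irreducible components} and the specific Chevalley decomposition of $J_{\cc_s}$. The only points deserving a brief verification are (i) the reducedness of $\ct_{n,s_n}$, which follows from the constancy of $\ct_n$ over $\cb_n$, and (ii) that the torsion subgroup classifying the base-changed covering coincides with $\ct_{n,s_n}$, which is immediate from the compatibility of the construction in Theorem \ref{curve to jacobian} with base change on $\cb_n$.
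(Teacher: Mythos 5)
Your proposal is correct and follows exactly the route the paper intends: the paper labels this lemma "a special case of theorem \ref{irreducible components}" and immediately precedes it with the short exact sequence on $n$-torsion coming from (\ref{chevalley jac exp}), so your identification $\Ker(\phi_s^{*}|_{\ct_{n,s_n}})=\ct_{n,s_n}\cap J_{\cc_s}^{\rm a}[n]$ together with the base-change and reducedness checks is precisely the dictionary translation the author has in mind (the notation $\ct_{n,s_{n}}\cdot J_{\cc_{s}}^{\rm a}[n]$ is indeed meant as the intersection, as the surrounding text confirms).
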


Recall that the finite group schemes $\ct_{n}$ over $\cb_{n}$ form a nice projective system with respect to the morphism $\widetilde{\varpi}_{m,n}:\ct_{m}\to \ct_{n}\times_{\cb_{n}}\cb_{m}$ for $n|m$, and that we have defined its projective limit $\widehat{\ct}$ over $\cb_{\infty}$. 
For a geometric point $s_{\infty}=(\cdots, s_{n},\cdots,s)$ of $\cb_{\infty}$ over $s$, we have then
$$
\ct_{n,s_{n}}\cong \widehat{\ct}_{s_{\infty}}/n.
$$
Similarly, it is well-known that the torsion points $J_{\cc_{s}}[n]$ forms a nice projective system with respect to the isogeny $[m/n]: J_{\cc_{s}}[m] \to  J_{\cc_{s}}[n]$ for $n|m$, and we can define 
$$
\widehat{\rT}(J_{\cc_{s}})
=\varprojlim_{(n,p)=1} J_{\cc_{s}}[n].
$$
Similar construction for $J_{\cc_{s}}^{\rm a}$. By construction, we can identify $\widehat{\ct}_{s_{\infty}}$ as a sub group scheme of $\widehat{\rT}(J_{\cc_{s}})$. It is then clear that 
\begin{equation}\label{tn jn infinite}
\ct_{n,s_{n}}\cdot J_{\cc_{s}}^{\rm a}[n]=
\Big[\widehat{\ct}_{s_{\infty}} \cdot \widehat{\rT}(J^{\rm a}_{\cc_{s}})\Big]\,\Big/\,n.
\end{equation}

The intersection $\widehat{\ct}_{s_{\infty}} \cdot \widehat{\rT}(J^{\rm a}_{\cc_{s}})$ can be reformulated more concretely in terms of the cohomological classes. 
Recall that we have constructed the sheaf $\crr_{n}$ on $\cb_{n}$ which correspond to $\ct_{n}\otimes \mu_{n}^{-1}$ under the isomorphism (\ref{h1 as jac tor rel}), and define their projective limit $\widehat{\crr}$ on $\cb_{\infty}$. 
By construction, the fiber $\widehat{\crr}_{s_{\infty}}$ is a subgroup of $H^{1}(\cc_{s},\widehat{\bz}^{(p)})=\prod_{l\neq p} H^{1}(\cc_{s},\bz_{l})$. 
For each $l$-adic component, we have the canonical weight filtration on $H^{1}(\cc_{s}, \bq_{l})$, which induces a weight filtration on the lattice $H^{1}(\cc_{s}, \bz_{l})$. 
As $l$ varies, we get the weight filtration on $H^{1}(\cc_{s}, \widehat{\bz}^{(p)})$. We denote by $\bbv_{s}^{0}$ its weight $0$ submodule. 
Under the isomorphism
$$
H^{1}(\cc_{s}, \widehat{\bz}^{(p)})\cong \widehat{\rT}(J_{\cc_{s}})\otimes \widehat{\bz}^{(p)}(-1),
$$
it is clear that $\bbv_{s}^{0}$ is sent to $\widehat{\rT}(J^{\rm a}_{\cc_{s}})\otimes \widehat{\bz}^{(p)}(-1)$.
Then we have canonical isomorphism
\begin{equation}\label{cr v0 infinite}
\widehat{\ct}_{s_{\infty}} \cdot \widehat{\rT}(J^{\rm a}_{\cc_{s}})
\cong
\widehat{\crr}_{s_{\infty}} \cap \bbv_{s}^{0}.
\end{equation}
Parallel to (\ref{tn jn infinite}), we have
\begin{equation}\label{rn v0 infinite}
\ct_{n,s_{n}}\cdot J_{\cc_{s}}^{\rm a}[n]
\cong
\Big(\widehat{\crr}_{s_{\infty}} \cap \bbv_{s}^{0}\Big)\,\Big/\,n.
\end{equation}


Let $R_{I_{\bullet},n}$ be the subspace of $H^{1}(C_{\gamma},\bz/n)$ generated by the classes $c_{I_{1}}\mod n,\cdots, c_{I_{l}}\mod n$.  
Let $\crr_{I_{\bullet},n}$ be the constant subsheaf of ${\crr}_{n}$ with fiber $R_{I_{\bullet},n}$ at $0_{n}$, and let 
$$
\widehat{\crr}_{I_{\bullet}}=\varprojlim_{(n,p)=1}\crr_{I_{\bullet},n}.
$$
Its fiber $\big(\widehat{\crr}_{I_{\bullet}}\big)_{s_{\infty}}$ can be naturally identified as a subgroup of $H^{1}(\cc_{s}, \widehat{\bz}^{(p)})$.

\begin{defn}\label{define si}

For a partition $I_{\bullet}$ of $\{1,\cdots,r\}$, let $\cs_{I_{\bullet}}$ be the closed subscheme of $\cb^{\circ}$ defined by
$$
\cs_{I_{\bullet}}=\Big\{s\in \cb^{\circ} \,\big|\, \big(\widehat{\crr}_{I_{\bullet}}\big)_{s_{\infty}} \subset \bbv_{s}^{0} \Big\}. 
$$ 
Let $\cs_{I_{\bullet}}^{\circ}$ be the subscheme of $\cs_{I_{\bullet}}$ consisting of the points $s$ such that all the singularities of $\cc_{\bar{s}}$ are ordinary double points and that $\dim(\bbv_{s}^{0})=h_{I_{\bullet}}$. Note that the second condition implies that $\cs_{I_{\bullet}}^{\circ}$ is contained in the $\delta$-stratum of $\cb^{\circ}$ with $\delta=h_{I_{\bullet}}$.

\end{defn}

\begin{rem}\phantomsection\label{strata closure}

\begin{enumerate}[itemsep=0pt, label=$(\arabic*)$, leftmargin=*]

\item

Over the strict Henselization $\cb_{\{0\}}$, in terms of the distinguished basis of A'Campo, the definition of $\cs_{I_{\bullet}}$ can be rewritten as
$$
\cs_{I_{\bullet}}=\Big\{s\in \cb^{\circ} \,\big|\, \alpha_{j} \in \bbv_{s}^{0} \text{ for } j\in S_{I_{\bullet}} \Big\}, 
$$ 
and $\cs_{I_{\bullet}}^{\circ}$ is the dense open subscheme of $\cs_{I_{\bullet}}$ consisting of the points $s$ such that all the singularities of $\cc_{\bar{s}}$ are ordinary double points and that the vanishing cycles at these points are exactly $\{\alpha_{j}\}_{j\in S_{I_{\bullet}}}$.



\item

Let $I_{\bullet}$ and $I_{\bullet}'$ be two partitions of $\{1,\cdots,r\}$, we say that $I_{\bullet}$ refines $I_{\bullet}'$, denoted $I_{\bullet}\vdash I_{\bullet}'$, if any $I_{j}'$ can be written as a union of several $I_{i}$'s. It is clear that $\cs_{I_{\bullet}}^{\circ}\subset \overline{\cs^{\circ}_{I_{\bullet}'}}$ if $I_{\bullet}$ refines $I_{\bullet}'$.
Let $\bbv_{I\bullet}^{\circ}$ be the complement in $\bbv_{I_{\bullet}}$ of the union $\bigcup_{I_{\bullet}\vdash I'_{\bullet}} \bbv_{I_{\bullet}'}$.
For any $n\in \bn$, let $(\bbv_{I\bullet}/n)^{\circ}$ be the complement in $\bbv_{I_{\bullet}}/n$ of the union $\bigcup_{I_{\bullet}\vdash I'_{\bullet}} \bbv_{I_{\bullet}'}/n$.

\end{enumerate}

\end{rem}

For $n\in \bn, (n,p)=1$, let $\cs_{I_{\bullet},n}=\cs_{I_{\bullet}}\times_{\cb^{\circ}}\cb_{n}$ and similarly for $\cs_{I_{\bullet},n}^{\circ}$.

\begin{thm}\label{pi 0 curve}

Let $s_{n}$ be a geometric point of $\cs_{I_{\bullet},n}^{\circ}$, then the irreducible components of the curve $\cc_{n,s_{n}}$  can be identified with $\bbv_{I_{\bullet}}/n$. Moreover, the union 
$
\bigcup_{I_{\bullet}\vdash \{1,\cdots,r\}} \cs_{I_{\bullet},n}
$ 
is the locus where the geometric fiber of $\pi_{n}:\cc_{n}\to \cb_{n}$ can have multiple irreducible components.

\end{thm}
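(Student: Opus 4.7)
The plan is to combine Lemma~\ref{key irred} with the cohomological translation (\ref{rn v0 infinite}) to identify the set of irreducible components of $\cc_{n,s_{n}}$ with the quotient $(\widehat{\crr}_{s_{\infty}} \cap \bbv_{s}^{0})/n$, where $s_{\infty}$ denotes a lift of $s_{n}$ to $\cb_{\infty}$. The theorem thereby reduces to two statements: $(a)$ for $s \in \cs_{I_{\bullet}}^{\circ}$, one has $\widehat{\crr}_{s_{\infty}} \cap \bbv_{s}^{0} = \widehat{\crr}_{I_{\bullet}, s_{\infty}}$, whose fiber is $\bbv_{I_{\bullet}}$; and $(b)$ for $s$ outside $\bigcup_{I_{\bullet}}\cs_{I_{\bullet}}$, the intersection $\widehat{\crr}_{s_{\infty}} \cap \bbv_{s}^{0}$ is trivial.

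For $(a)$, I work over the strict Henselization $\cb_{\{0\}}$ using A'Campo's distinguished basis. By Remark~\ref{strata closure}$(1)$, $\bbv_{s}^{0}$ is spanned by $\{\alpha_{j} : j \in S_{I_{\bullet}}\}$, and the inclusion $\widehat{\crr}_{I_{\bullet}, s_{\infty}} \subset \widehat{\crr}_{s_{\infty}} \cap \bbv_{s}^{0}$ is immediate from Definition~\ref{define si} together with Proposition~\ref{non zero coeff c}. For the reverse inclusion, let $c = \sum_{i} a_{i}c_{i} \in \widehat{\crr}_{s_{\infty}}$ lie in $\bbv_{s}^{0}$. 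By Proposition~\ref{ci express}, whenever $j \in J_{i}^{\circ} \cap J_{i'}^{\circ}$, the coefficient of $\alpha_{j}$ in $c$ equals $\pm(a_{i}-a_{i'})$. The crucial geometric input is that $C_{\gamma}$ has a unique singular point, so every pair of branches meets there, giving $C_{i}^{\circ}\cdot C_{i'}^{\circ} \geq 1$ for every $i \neq i'$ and hence $J_{i}^{\circ} \cap J_{i'}^{\circ} \neq \emptyset$ by Proposition~\ref{inters number}. If $i, i'$ lie in the same block of $I_{\bullet}$, such indices $j$ are not in $S_{I_{\bullet}}$, so the vanishing of the corresponding coefficients forces $a_{i} = a_{i'}$. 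Thus $(a_{i})$ is constant on each block and $c \in \widehat{\crr}_{I_{\bullet}, s_{\infty}}$.

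For $(b)$, I actually establish the stronger statement that $\widehat{\crr}_{s_\infty}\cap \bbv_s^0$ being non-trivial forces $s \in \cs_{I_\bullet}$ for some $I_\bullet$ with $\ell(I_\bullet) \geq 2$; the converse direction producing multiple components follows at once from the non-triviality of $\widehat{\crr}_{I_{\bullet},s_{\infty}}/n$ whenever $\ell(I_{\bullet}) \geq 2$. Given such a non-zero $c = \sum_{i}a_{i}c_{i}$, I partition $\{1,\ldots,r\}$ into the fibers $I_{v}$ of the map $i \mapsto a_{i}$ and set $c_{I_{v}} = \sum_{i \in I_{v}} c_{i}$, so that $c = \sum_{v} v\, c_{I_{v}}$ lies in $\bbv_{I_{\bullet}}$. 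By Proposition~\ref{non zero coeff c}, the support of each $c_{I_{v}}$ is $\bigsqcup_{v' \neq v}\bigsqcup_{i \in I_{v}, i' \in I_{v'}} (J_{i}^{\circ} \cap J_{i'}^{\circ})$; for each such $j$, the contribution $\pm(v-v') \neq 0$ to the coefficient of $\alpha_{j}$ in $c$ forces, thanks to $c \in \bbv_{s}^{0}$, the cycle $\alpha_{j}$ to be a vanishing cycle at a node of $\cc_{s}$. Hence each $c_{I_{v}} \in \bbv_{s}^{0}$, so $\widehat{\crr}_{I_{\bullet}, s_{\infty}} \subset \bbv_{s}^{0}$ and $s \in \cs_{I_{\bullet}}$.

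The main technical obstacle is the coefficient bookkeeping of Proposition~\ref{non zero coeff c} together with the observation---using that $C_\gamma$ has a unique singular point, so every pair of its branches meets there---that the intersection graph on each subset of branches is complete. It is this combinatorial input that pins down $\widehat{\crr}_{s_{\infty}}\cap \bbv_{s}^{0}$ exactly as $\widehat{\crr}_{I_{\bullet}, s_{\infty}}$ rather than as a strictly larger subgroup in part $(a)$, and that makes the parametrization of the irreducible components of $\cc_{n,s_{n}}$ by $\bbv_{I_{\bullet}}/n$ sharp.
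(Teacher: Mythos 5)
Your proof is correct and uses the same framework as the paper — Lemma~\ref{key irred}, the translation~(\ref{rn v0 infinite}), the specialization $\widehat{\crr}_{s_\infty}\cong\bbv^{\pi_1(\cb_{\{0\}}-\Delta,\,\bar{\xi}_0)}$, and the explicit expansions from Propositions~\ref{ci express} and~\ref{non zero coeff c} — but the combinatorial core is organized differently and, I think, more cleanly. The paper proceeds by taking an element $b_1$ of minimal height in $\bbv_s^0\cap\bbv^{\pi_1}$, showing it must be a multiple of some $c_{I_1}$, and then passing to a quotient and inducting on the number of branches $r$. You avoid the induction entirely: you partition $\{1,\ldots,r\}$ by the level sets of $i\mapsto a_i$ and verify in one stroke that each block sum $c_{I_v}$ already lies in the intersection. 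The observation that makes this work — made explicit in your write-up but left tacit in the paper — is that $C_\gamma$ has a single singular point, so every pair of branches meets there and every $J_i^\circ\cap J_{i'}^\circ$ is non-empty; this is what forces coefficients to be constant on blocks and rules out a strictly larger intersection. Your part~$(a)$ also supplies an actual proof of the identity $\bbv_{I_\bullet}=\bbv_s^0\cap\bbv^{\pi_1}$ for $s\in\cs_{I_\bullet}^\circ$, which the paper invokes without argument. One remark: both your proof and the paper's rest on the same unargued step, namely that if $c\in\bbv_s^0\cap\bbv^{\pi_1}$ has expansion $\sum_{j\in J}a_j\alpha_j$ with $a_j\neq 0$, then each $\alpha_j$, $j\in J$, lies in $\bbv_s^0$ (equivalently, that $\bbv_s^0$ is ``coordinate-saturated'' with respect to the distinguished basis when intersected with the invariants); you should not read your agreement with the paper here as discharging that point, but it is not a gap peculiar to your argument.
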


\begin{proof}

It is enough to treat the case that $s=\varpi_{n}(s_{n})$ is a geometric point of $\cb_{\{0\}}$. 
In this case, we can identify $\widehat{\crr}_{s_{\infty}}$ and $\bbv^{\pi_{1}(\cb_{\{0\}}-\Delta,\,\bar{\xi}_{0})}$, where $s_{\infty}$ is a geometric point of $\cb_{\infty}$ lying over $s$.

The first assertion follows from lemma \ref{key irred},  combined with the equation (\ref{rn v0 infinite}) and the identity $\bbv_{I_{\bullet}}=\bbv_{s}^{0}\cap \bbv^{\pi_{1}(\cb_{\{0\}}-\Delta,\,\bar{\xi}_{0})}$ for $s\in \cs_{I_{\bullet}}^{\circ}$. 
For the second assertion, what need to be shown is that $\bbv_{s}^{0}\cap \bbv^{\pi_{1}(\cb_{\{0\}}-\Delta,\,\bar{\xi}_{0})}$ for any geometric point $s$ of $\cb_{\{0\}}$, if non-empty, must be of the form $\bbv_{I_{\bullet}}$ for some partition $I_{\bullet}$ of $\{1,\cdots,r\}$. 
By proposition \ref{h1 inv generator}, $\bbv^{\pi_{1}(\cb_{\{0\}}-\Delta,\,\bar{\xi}_{0})}$ is generated by $c_{i}, i=1,\cdots,r$.
Note that for an element of $\bbv_{s}^{0}\cap \bbv^{\pi_{1}(\cb_{\{0\}}-\Delta,\,\bar{\xi}_{0})}$ with expansion $\sum_{j\in J}a_{j}\alpha_{j}$, $a_{j}\neq 0$, $J\subset\{1,\cdots, \mu_{0}\}$, the elements $\alpha_{j}$ will necessarily lie in $\bbv_{s}^{0}$.
This implies the following property of $\bbv_{s}^{0}\cap \bbv^{\pi_{1}(\cb_{\{0\}}-\Delta,\,\bar{\xi}_{0})}$:
Let $c',c''$ be two elements of $\bbv^{\pi_{1}(\cb_{\{0\}}-\Delta,\,\bar{\xi}_{0})}$ such that $c'+c''\in \bbv_{s}^{0}\cap \bbv^{\pi_{1}(\cb_{\{0\}}-\Delta,\,\bar{\xi}_{0})}$, suppose that the height of $c'+c''$ equals the sum of the height of $c'$ and $c''$, then both $c'$ and $c''$ lies in $\bbv_{s}^{0}\cap \bbv^{\pi_{1}(\cb_{\{0\}}-\Delta,\,\bar{\xi}_{0})}$.
Indeed, the hypothesis on the height means that there is no cancel-out of any terms of $c'$ or $c''$, hence all the terms in the expansion of $c'$ and $c''$ into $\alpha_{j}$'s belongs to $\bbv_{s}^{0}$, and so do $c'$ and $c''$ themselves.

Let $b_{1}$ be an element of $\bbv_{s}^{0}\cap \bbv^{\pi_{1}(\cb_{\{0\}}-\Delta,\,\bar{\xi}_{0})}$ with minimal height. We claim that it must be a multiple of some $c_{I_{1}}$. 
Indeed, we can expand $b_{1}=\sum_{i\in I_{1}} a_{i}c_{i}$ with non-zero coefficients, and then regroup the summands with the same coefficient in the expansion, to write it as $b_{1}=\sum_{j}a'_{j} c_{I'_{j}}$ such that the coefficients $a'_{j}$ are distinct to each other. Then there is no cancelation between different items $a'_{j} c_{I'_{j}}$. By the above property, we obtain that all the $c_{I'_{j}}$ will belong to $\bbv_{s}^{0}\cap \bbv^{\pi_{1}(\cb_{\{0\}}-\Delta,\,\bar{\xi}_{0})}$, contradiction to the assumption that $b_{1}$ is of minimal height. Hence $b_{1}=a_{i}c_{I_{1}}$ and so $c_{I_{1}}$ belongs to $\bbv_{s}^{0}\cap \bbv^{\pi_{1}(\cb_{\{0\}}-\Delta,\,\bar{\xi}_{0})}$.

Let $\bbv/I_{1}=\bbv/\pair{\alpha_{j}\mid j\in J_{i} \text{ and } i\in I_{1}}$, let $\bbv^{0}_{s}/I_{1}$ be the image of $\bbv^{0}_{s}$ in $\bbv/I_{1}$ and $\bbv^{\pi_{1}(\cb_{\{0\}}-\Delta,\,\bar{\xi}_{0})}/I_{1}$ the image of $\bbv^{\pi_{1}(\cb_{\{0\}}-\Delta,\,\bar{\xi}_{0})}$ in $\bbv/I_{1}$. For $i\in \{1,\cdots,r\}\backslash I_{1}$, let $\bar{c}_{i}$ be the image of $c_{i}$ in $\bbv/I_{1}$, they generate $\bbv^{\pi_{1}(\cb_{\{0\}}-\Delta,\,\bar{\xi}_{0})}/I_{1}$. 
Geometrically, we are considering the germ of singularity obtained by erasing from the germ of singularity of $C_{\gamma}$ at $c$ the branches defined by $P_{i}(x,y), i\in I_{1}$. 
By induction on the number of branches $r$, we can assume that the theorem holds for $(\bbv^{0}_{s}/I_{1})\cap (\bbv^{\pi_{1}(\cb_{\{0\}}-\Delta,\,\bar{\xi}_{0})}/I_{1})\subset \bbv/I_{1}$.
Hence there exists a partition $\{1,\cdots,r\}\backslash I_{1}=\bigsqcup_{i=2}^{l}I_{j}$ such that $\bar{c}_{I_{2}},\cdots, \bar{c}_{I_{l}}$ generates $(\bbv^{0}_{s}/I_{1})\cap (\bbv^{\pi_{1}(\cb_{\{0\}}-\Delta,\,\bar{\xi}_{0})}/I_{1})$, where $\bar{c}_{I_{j}}$ are linear combinations of $\bar{c}_{i}$ similar to $c_{I_{j}}$. We claim that $c_{I_{1}},\cdots, c_{I_{l}}$ belongs to $\bbv^{0}_{s}\cap \bbv^{\pi_{1}(\cb_{\{0\}}-\Delta,\,\bar{\xi}_{0})}$ and they generate it.

By proposition \ref{non zero coeff c}, the non-zero terms in the expansion of $c_{I_{1}}$ into $\alpha_{j}$'s are indexed by $j\in J_{i}^{\circ}\cap J_{i'}^{\circ}, i\in I_{1}, i'\in \bar{I}_{1}$. This implies that
$$
\bbv_{s}^{0}\supset \big\{\alpha_{j}\mid j\in J_{i}^{\circ}\cap J_{i'}^{\circ}, i\in I_{1}, i'\in \bar{I}_{1}\big\}.
$$
In particular, $\bbv_{s}^{0}$ contains the subgroup generated by these cycles. 
Note that $\bar{c}_{I_{j}}$ can be lifted to ${c}_{I_{j}}$ with elements from this subgroup   for $j=2,\cdots,l$, hence ${c}_{I_{j}}\in \bbv_{s}^{0}$ and so belongs to $\bbv^{0}_{s}\cap \bbv^{\pi_{1}(\cb_{\{0\}}-\Delta,\,\bar{\xi}_{0})}$. 
To show that $c_{I_{1}},\cdots, c_{I_{l}}$ generates the intersection. Let $c$ be an element of the intersection, its image in $\bbv/I_{1}$ is then a linear combination of $\bar{c}_{I_{2}},\cdots, \bar{c}_{I_{l}}$ by induction. Take the difference $c'$ of $c$ and the same linear combination of $c_{I_{2}},\cdots, c_{I_{l}}$. Then $c'\in \bbv_{s}^{0}\cap \bbv^{\pi_{1}(\cb_{\{0\}}-\Delta,\,\bar{\xi}_{0})}$ and in the expansion of $c'$ in terms of $\alpha_{j}$, only the terms with index $j\in J_{i}^{\circ}\cap J_{i'}^{\circ}, i\in I_{1}, i'\in \bar{I}_{1}$ appear. 
Compare with the expression of $c_{I_{1}}$ in proposition \ref{non zero coeff c}, we obtain that the height of $c'$ is less than or equal to that of $c_{I_{1}}$. But $c_{I_{1}}$ has been taken to be of minimal height, hence $c'$, if non-zero, will have the same height as that of $c_{I_{1}}$ and have an expansion $c'=\sum_{i\in I_{1}, i'\in \bar{I}_{1}}\sum_{j\in J_{i}^{\circ}\cap J_{i'}^{\circ}} a_{j}''\alpha_{j}$ with non-zero coefficients.  
In the latter case, $c'$ has to be a multiple of $c_{I_{1}}$, otherwise a suitable linear combination of $c'$ and $c_{I_{1}}$ will have smaller height than $c_{I_{1}}$, contradiction to the assumption about $c_{I_{1}}$. This finishes the proof.

\end{proof}

\begin{example}

Let $G=\gl_{2}$, $\gamma=\begin{bmatrix}\varep^{n}&\\&-\varep^{n}
\end{bmatrix}\in \ggl_{2}
$.
The spectral curve $C_{\gamma}$ can be taken to the Zariski closure in $\bp^{2}$ of the affine curve 
$$
y^{2}=x^{2n}(x-1)
$$ 
with the singularity at infinity resolved. Consider the family of curves
$$
y^{2}=(x-\lambda_{1})\cdots (x-\lambda_{2n})(x-1)
$$  
over the hyperplane $$
H=\big\{(\lambda_{1},\cdots,\lambda_{2n})\in \bc^{2n}\mid \lambda_{1}+\cdots+\lambda_{2n}=0\big\}.
$$
Take the Zariski closure of the family in $H\times \bp^{2}$ and resolve the singularity at infinity, we get a flat family of projective curves $\tilde{\pi}:\widetilde{\cc}\to H$.
It is clear that the symmetric group $\kss_{2n}$ acts on $H$ and the action lifts to the family $\tilde{\pi}$. Take quotients, we get the family 
$$
\pi: \cc:=\widetilde{\cc}/\kss_{2n}\to \cb:=H/\kss_{2n}.
$$
It is an algebraization of a miniversal deformation of $C_{\gamma}$, with generic geometric fiber an irreducible smooth projective algebraic curve of genus $n$.
The discriminant locus $\widetilde{\Delta}$ of of the family $\tilde{\pi}:\widetilde{\cc}\to H$ is the union of the hyperplanes
$$
H_{i,j}=\big\{(\lambda_{1},\cdots,\lambda_{2n})\in H\mid \lambda_{i}=\lambda_{j}\big\},\quad i\neq j=1,\cdots, 2n,
$$ 
and the hyperplanes
$$
H_{i,2n+1}=\big\{(\lambda_{1},\cdots,\lambda_{2n})\in H\mid \lambda_{i}=1\big\},\quad i=1,\cdots, 2n.
$$
The discriminant $\Delta$ of the family $\pi:\cc\to \cb$ is then equal to $\widetilde{\Delta}/\kss_{2n}$, and the irreducible component $\Delta_{0}$ of $\Delta$ containing $0$ is $\big(\bigcup_{i\neq j=1}^{2n} H_{i,j}\big)/\kss_{2n}$.  
Note that the union of hyperplanes $\bigcup_{i\neq j=1}^{2n} H_{i,j}$ coincides with the union of the walls in the root system $A_{2n-1}$.
Let $D_{0}$ be the closure of the Weyl chamber associated to the standard Borel subgroup of the  upper triangular matrices. It is clear that the natural projection $H\to H/\kss_{2n}$ remains surjective when restricted to $D_{0}$.  
Hence to understand the degeneration to $C_{\gamma}$ of the family of curves $\pi:\cc\to \cb$, it is enough to examine the situation over $D_{0}$. Take a generic point $z$ in $D_{0}$, consider the cycles as indicated in figure \ref{cycles} on the algebraic curve over it.
\begin{figure}[h]
\centering
\includegraphics[width=9cm]{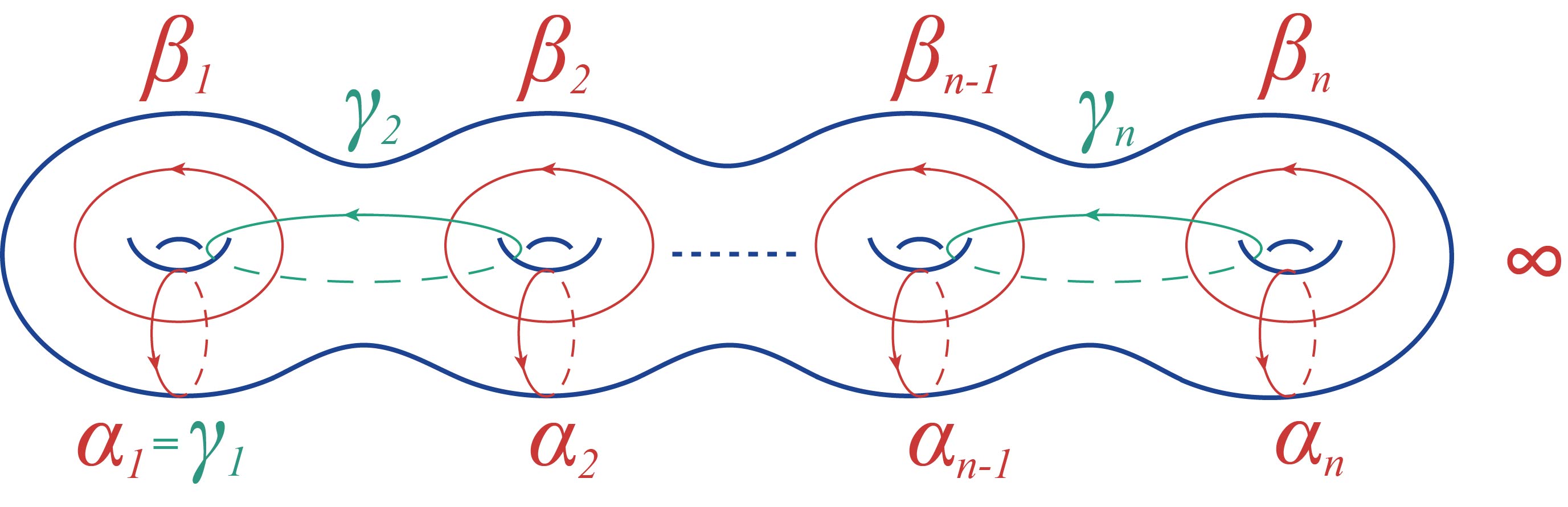}
\caption{Cycles on the algebraic curve.}
\label{cycles}
\end{figure}

 At a sufficiently small neighborhood of $0$, when the point $z$ hits one of the walls bounding $D_{0}$, one of the cycles 
$$
\gamma_{1}, \beta_{1}, \gamma_{2}, \beta_{2}, \cdots, \gamma_{n-1},\beta_{n-1}, \gamma_{n}
$$
will degenerate. Their union can then be taken as a set of distinguished basis of the vanishing cycles for the degeneration to $C_{\gamma}$.
As the local monodromy group $\pi_{1}(\cb_{\{0\}}-\Delta,\bar{\xi})$ is generated by the Picard-Lefschetz transformation for these vanishing cycles, the subspace of local invariant cycles is then generated by the cycle $\alpha_{n}$.
Note that we have the equality
\begin{align*}
\alpha_{n}&=\alpha_{n-1}+\gamma_{n}=\alpha_{n-2}+\gamma_{n-1}+\gamma_{n}=\cdots=\alpha_{1}+\gamma_{2}+\cdots+\gamma_{n}
\\
&=\gamma_{1}+\cdots+\gamma_{n}.
\end{align*} 
Hence to understand the finite abelian coverings of the family $\pi:\cc\to \cb$ over $\cb-\Delta^{(0)}$, we need to understand how the cycles $\gamma_{1},\cdots,\gamma_{n}$ degenerate.

As an illustration, for $n=2$, the figure \ref{degenerate} gives a picture of the family of curves $\tilde{\pi}:\widetilde{\cc}\to H$ over $D_{0}$ at a neighbourhood of $0$.
\begin{figure}[h]
\centering
\includegraphics[width=8cm]{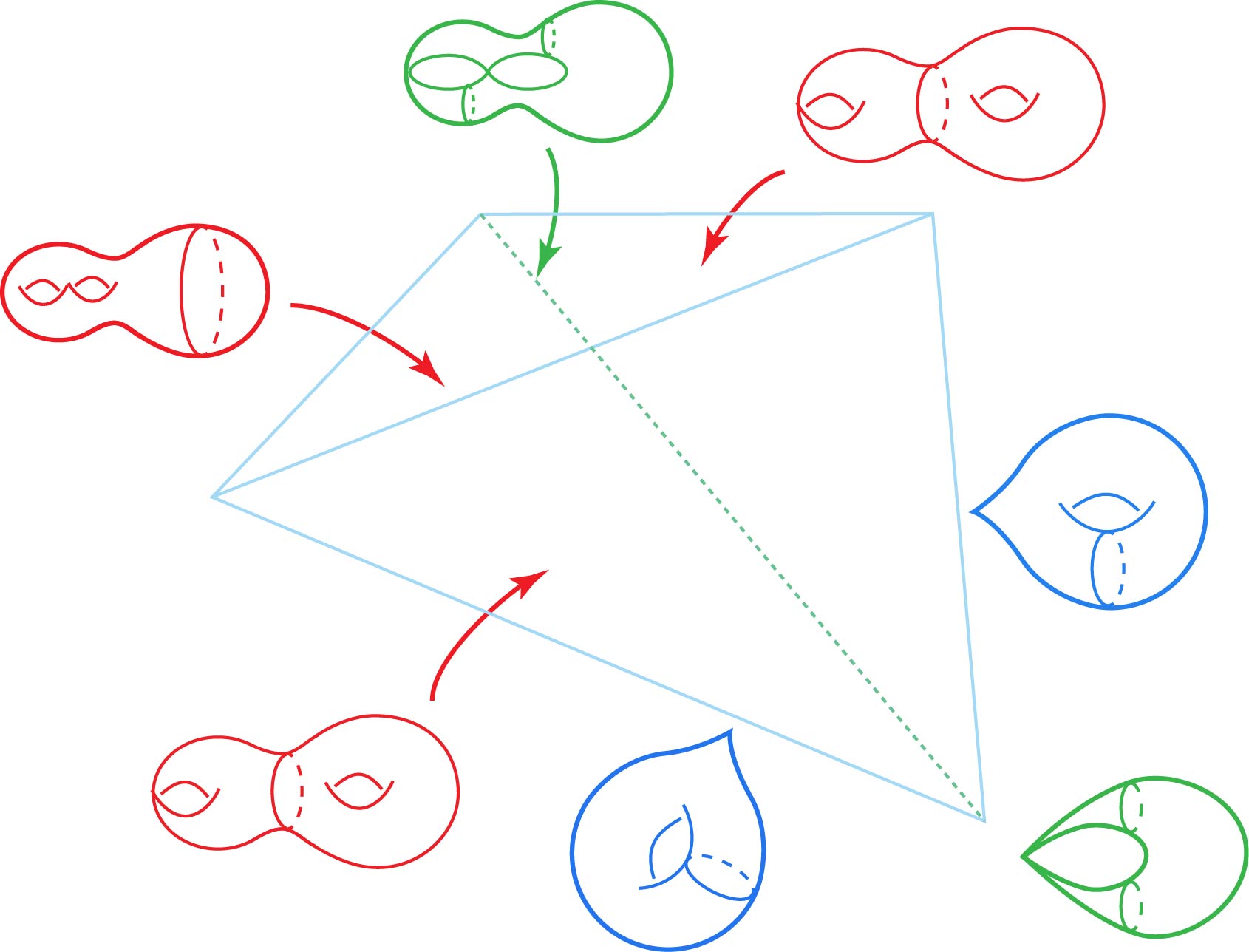}
\caption{Degeneration of the family $\tilde{\pi}$ over $D_{0}$.}
\label{degenerate}
\end{figure}
Examining the degeneration of the cycle $\alpha_{2}=\gamma_{1}+\gamma_{2}$ at the boundary of $D_{0}$, we can obtain a picture of the finite abelian covering $\cc_{m}\to \cc\times_{\cb}\cb_{m}$ (in this case a $\bz/m$-covering), hence also that of the family $\pi_{m}:\cc_{m}\to \cb_{m}$. The figure \ref{covering} gives a picture of the $\bz/3$-covering of the family $\tilde{\pi}:\widetilde{\cc}\to H$ over $D_{0}$.   
\begin{figure}[h]
\centering
\includegraphics[width=8cm]{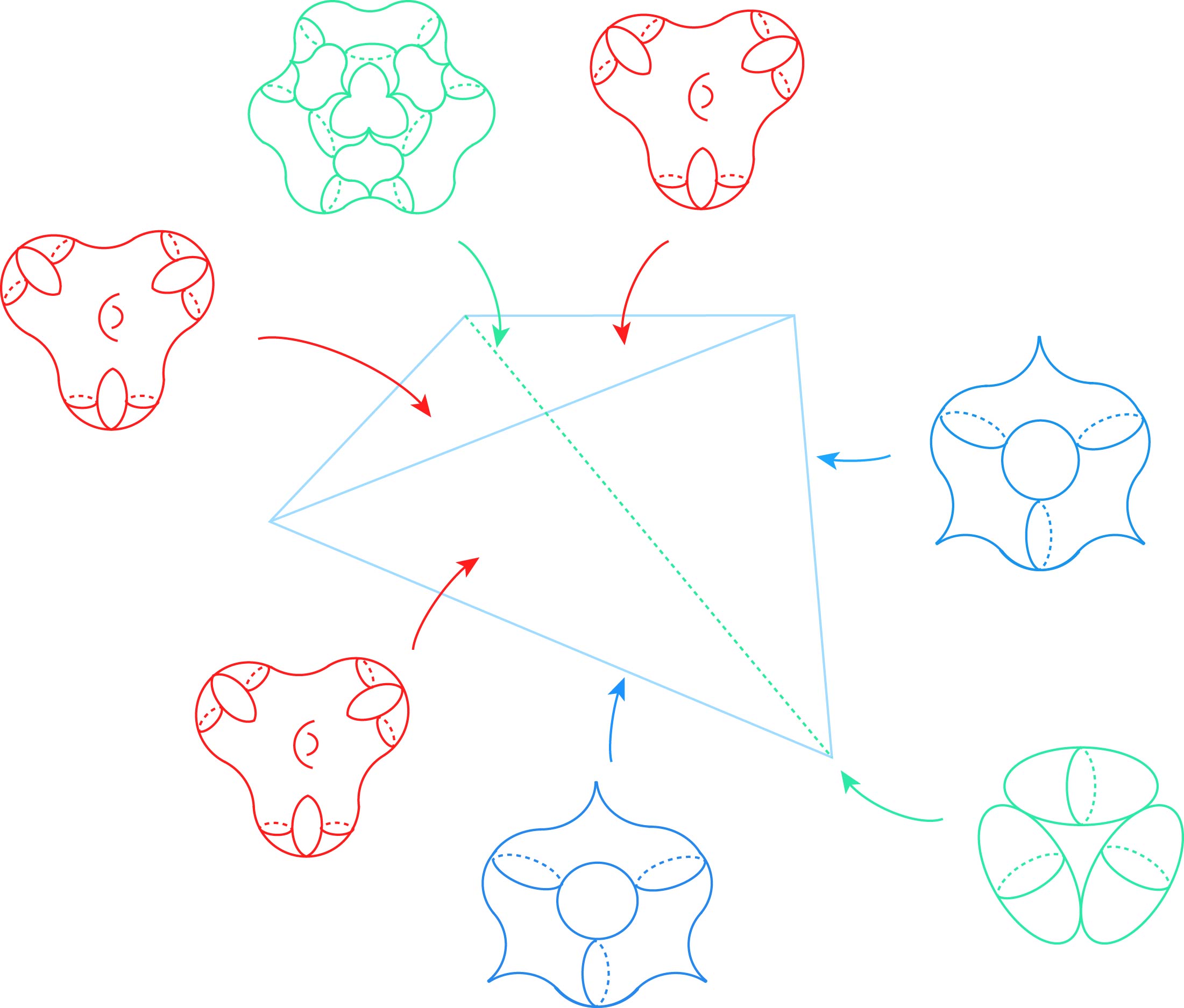}
\caption{$\bz/3$-covering of the family $\tilde{\pi}$ over $D_{0}$.}
\label{covering}
\end{figure}

\cqfd

\end{example}

By theorem \ref{irreducible components}, there is a bijection between the irreducible components of the geometric fibers of $f_{n}$ and $\pi_{n}$ over any geometric point of $\cb_{n}$. Theorem \ref{pi 0 curve} then translates to:

\begin{thm}\label{pi 0 picard}

Let $s_{n}$ be a geometric point of $\cs_{I_{\bullet},n}^{\circ}$, then the irreducible components of $\,\overline{\cp}_{n,s_{n}}$  can be identified with $\bbv_{I_{\bullet}}/n$. Moreover, the union $\bigcup_{I_{\bullet}\vdash \{1,\cdots,r\}} \cs_{I_{\bullet},n}$ is the locus where the geometric fibers of $f_{n}:\overline{\cp}_{n}\to \cb_{n}$ can have multiple irreducible components.

\end{thm}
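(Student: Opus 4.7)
The plan is to derive Theorem \ref{pi 0 picard} directly from Theorem \ref{pi 0 curve} by applying Theorem \ref{irreducible components} fiberwise. First I would fix a geometric point $s_{n}$ of $\cb_{n}$ with image $s$ in $\cb$, and observe that the fibers at $s_{n}$ of the finite abelian coverings $\Psi_{n}:\cc_{n}\to \cc\times_{\cb}\cb_{n}$ and $\Phi_{n}:\overline{\cp}_{n}\to \overline{\cp}\times_{\cb}\cb_{n}$ are $\Lambda^{0}/n$-coverings of $\cc_{s}$ and $\overline{P}_{\cc_{s}}$ respectively. By Theorem \ref{curve to jacobian}, these two coverings are matched by the autoduality isomorphism $\beta$: they correspond to the \emph{same} constant finite sub group scheme $\ct_{n,s_{n}}\subset J_{\cc_{s}}[n]$, and the theorem even supplies a Cartesian diagram between $\cc_{n,s_n}$ and $\overline{\cp}_{n,s_n}$ over the Abel-Jacobi map.

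Next I would invoke Theorem \ref{irreducible components}. By Definition \ref{define bn}, $\ct_{n}$ is a constant finite \'etale sub group scheme of $\pic_{\cc\times_{\cb}\cb_{n}/\cb_{n}}[n]$, so its fiber $\ct_{n,s_{n}}$ is automatically reduced, and the theorem identifies the sets of irreducible components of both $\cc_{n,s_{n}}$ and $\overline{\cp}_{n,s_{n}}$ with the \emph{common} group $\Ker(\phi_{s}^{*}|_{\ct_{n,s_{n}}})$, where $\phi_{s}:\widetilde{\cc}_{s}\to \cc_{s}$ is the normalization. Concretely, tracing through the proof of that theorem, the bijection sends a component of $\cc_{n,s_{n}}$ to the component of $\overline{\cp}_{n,s_{n}}$ whose preimage in the Jacobian $J_{\cc_s}$-covering matches under the Rosenlicht--Lang/autoduality correspondence. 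Thus the counting and the labelling of components are transported verbatim from the curve side to the compactified Jacobian side.

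Combining this with Theorem \ref{pi 0 curve}, for $s_{n}\in \cs_{I_{\bullet},n}^{\circ}$ the irreducible components of $\cc_{n,s_{n}}$ are canonically identified with $\bbv_{I_{\bullet}}/n$, hence the same identification holds for $\overline{\cp}_{n,s_{n}}$; and the locus where the geometric fibers of $\pi_{n}$, equivalently of $f_{n}$, can have multiple irreducible components is exactly $\bigcup_{I_{\bullet}\vdash \{1,\cdots,r\}} \cs_{I_{\bullet},n}$. Since every step is a direct invocation of an already-established result, I do not anticipate a serious obstacle; the only thing that really needs checking is that Theorem \ref{irreducible components} is applicable at \emph{every} geometric point of $\cb_{n}$ (not only over $\cs_{I_{\bullet},n}^{\circ}$), which amounts to the fact that $\ct_n$ is \'etale and constant by construction, and that this transport of labels is functorial in $s_n$.
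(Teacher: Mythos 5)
Your proposal is correct and follows essentially the same route as the paper: the paper's ``proof'' is just the one-sentence remark preceding the statement, namely that Theorem \ref{irreducible components} gives a bijection between the irreducible components of the geometric fibers of $f_n$ and those of $\pi_n$, after which Theorem \ref{pi 0 curve} transfers verbatim. You fill in exactly the details the paper leaves implicit (that $\ct_{n,s_n}$ is reduced because $\ct_n$ is \'etale, and that the two $\Lambda^0/n$-coverings over a given geometric point are associated to the \emph{same} finite subgroup scheme via Theorem \ref{curve to jacobian}), so the proposal is a faithful and slightly more careful rendering of the paper's argument.
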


\subsection{Reductions over smaller strata}\label{reduction over si}

For $i=1,\cdots, r$, let $\widehat{C}_{i}=\spf \big(k[\![x,y]\!]/(P_{i}(x,y))\big)$, it is a  branch of the  singularity of $C_{\gamma}$ at $c$. 
For a subset $I\subset \{1,\cdots,r\}$, let $\gamma_{I}=\diag(\gamma_{i})_{i\in I}$ and let $\widehat{C}_{I}=\bigcup_{i\in I} \widehat{C}_{i}$ and let $P_{I}(x,y)=\prod_{i\in I}P_{i}(x,y)$.  
Recall that $\widehat{\cb}$ is the completion of $\cb$ at $0$, and we can fix an isomorphism $\widehat{\cb}=\spec\big(k[\![t_{1},\cdots,t_{\tau_{\gamma}}]\!]\big)$, with $\tau_{\gamma}$ the Tjurina number of the  singularity of $C_{\gamma}$ at $0$. Suppose that the miniversal  deformation $\hat{\pi}:\widehat{\cc}_{c}\to \widehat{\cb}$ is described by the power series
$$
P(x,y,{\bf{t}})\in k[\![x,y,t_{1},\cdots,t_{\tau_{\gamma}}]\!].
$$
In other words, we have the commutative diagram
$$
\begin{tikzcd}
\widehat{\cc}_{c}\arrow[d]\arrow[r, "\simeq"] & {\rm V}\big(P(x,y,{\bf{t}})\big)\subset\widehat{\ba}_{0}^{2}\times \widehat{\ba}^{\tau_{\gamma}}_{0}  \arrow[d] 
\\
\widehat{\cb}\arrow[r, "\simeq"]& \widehat{\ba}^{\tau_{\gamma}}_{0}. 
\end{tikzcd}
$$   

For a partition $I_{\bullet}\vdash\{1,\cdots,r\}$ of length $l$, consider the family 
$$
\pi_{I_{\bullet}}:\cc_{I_{\bullet}}:=\cc\times_{\cb}\cs_{I_{\bullet}}\to \cs_{I_{\bullet}}.
$$ 
Let $\widehat{\cs}_{I_{\bullet}}$ be the completion of $\cs_{I_{\bullet}}$ at $0$.

\begin{prop}\label{si geom}

Over $\widehat{\cs}_{I_{\bullet}}$, we can factorize
$
P(x,y,{\bf{t}})=\prod_{j=1}^{l}P_{I_{j}}(x,y,{\bf t}),
$ 
with $P_{I_{j}}(x,y,{\bf t})\in \bar{k}[\![x,y,{\bf t}]\!]$ satisfying $P_{I_{j}}(x,y,0)=P_{I_{j}}(x,y), j=1,\cdots,l$.
Moreover, this factorization property defines $\widehat{\cs}_{I_{\bullet}}$ as a closed sub formal scheme of $\widehat{\cb}$.

\end{prop}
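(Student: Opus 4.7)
The plan is to construct a closed sub formal scheme of $\widehat{\cb}$ defined by the factorization property and identify it with $\widehat{\cs}_{I_\bullet}$ via a double inclusion. Consider the multiplication morphism of formal schemes
$$m:\widehat{F}_{I_\bullet}:=\prod_{j=1}^{l}\widehat{\defm}{}^{\mathrm{top}}_{P_{I_j}}\longrightarrow \widehat{\defm}{}^{\mathrm{top}}_{P}=\widehat{\cb},\quad (Q_1,\ldots,Q_l)\mapsto Q_1\cdots Q_l.$$
Both source and target are formally smooth over $\bar k$ (as miniversal deformations of plane curve singularities). Because $\bar k[[x,y]]$ is a UFD in which the $P_{I_j}$ are pairwise coprime (corresponding to disjoint groups of irreducible branches), a factorization of $P(x,y,{\bf t})$ into pieces lifting the $P_{I_j}$ over any Artinian local $\bar k$-algebra $R$ is unique up to the tautological action of units. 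This makes $m$ injective on $R$-points, hence injective on tangent spaces at the base point; combined with the smoothness of both sides, the infinitesimal criterion forces $m$ to be a closed immersion. Let $\widehat{\cb}^{\mathrm{fact}}_{I_\bullet}$ denote its image, which is by construction the closed sub formal scheme of $\widehat{\cb}$ cut out by the factorization condition.

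It remains to identify $\widehat{\cs}_{I_\bullet}$ with $\widehat{\cb}^{\mathrm{fact}}_{I_\bullet}$ at the level of underlying sets, both being smooth. For $\widehat{\cs}_{I_\bullet}\subseteq \widehat{\cb}^{\mathrm{fact}}_{I_\bullet}$, density of $\cs_{I_\bullet}^\circ$ in $\cs_{I_\bullet}$ together with closedness of $\widehat{\cb}^{\mathrm{fact}}_{I_\bullet}$ reduces the claim to $s\in \cs_{I_\bullet}^\circ\cap \cb_{\{0\}}$. For such $s$, the characterization in Remark \ref{strata closure}(1) together with A'Campo's morsification picture shows that the surviving nodes of $\cc_s$ near $c$ are exactly the inter-group intersections $\{D_j\mid j\in S_{I_\bullet}\}$; all intra-group nodes are smoothed, so $\cc_s$ in a neighbourhood of $c$ splits analytically as $\bigcup_{j=1}^l\cc_{s,j}$ with each $\cc_{s,j}$ a smooth deformation of the sub-germ $\widehat{C}_{I_j}$ meeting the others transversally, and the desired factorization of $P$ follows by taking defining equations. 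For the reverse inclusion $\widehat{\cb}^{\mathrm{fact}}_{I_\bullet}\subseteq \widehat{\cs}_{I_\bullet}$, given a factorization $P(x,y,t)=\prod_{j'}Q_{j'}(x,y,t)$ at $t$, I verify $c_{I_j}\in \bbv_t^0$ by showing the associated line bundle on $\cc_t$ pulls back trivially to the normalization $\widetilde{\cc}_t$: the factorization induces a product decomposition $E=\prod_{j'} E'_{j'}$ of the ring of fractions at the germ, under which the tuple representing $c_{I_j}\bmod n$ (from the gluing construction of Section \ref{finite abelian curve section}) becomes $\zeta_n$ on $E'_j$ and $1$ on $E'_{j'}$ for $j'\neq j$; both are units, so the pullback is trivial, whence $c_{I_j}\in \bbv_t^0$.

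The principal obstacle is verifying that $m$ is a closed immersion, which reduces to injectivity of its induced map on tangent spaces
$$\bigoplus_{j} k[[x,y]]/(P_{I_j},\partial_x P_{I_j},\partial_y P_{I_j})\longrightarrow k[[x,y]]/(P,\partial_x P,\partial_y P),\quad (\sigma_j)\mapsto \sum_j \sigma_j \prod_{j'\neq j} P_{I_{j'}}.$$
This requires localizing at each $P_{I_j}$ and exploiting the pairwise coprimality of the $P_{I_j}$ in $\bar k[[x,y]]$ to decouple the contributions, reducing injectivity to a statement on each factor separately. A secondary subtlety is the passage in the first inclusion from the node structure on $\cs_{I_\bullet}^\circ$ to the existence of the local factorization, which I have only sketched by invoking A'Campo's explicit morsification extended in a uniform family.
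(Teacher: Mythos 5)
Your argument fails at the first step: the source $\prod_{j}\widehat{\defm}{}^{\rm top}_{P_{I_j}}$ of your multiplication morphism $m$ parametrizes only \emph{abstract} deformations of each branch group, and therefore records no information about how the groups sit relative to one another inside $\spf\,\bar{k}[[x,y]]$. A factorization $P(x,y,{\bf t})=\prod_j Q_j(x,y,{\bf t})$ encodes exactly that extra data, so the image of $m$ is strictly smaller than the factorization locus in general, and your identification of $\widehat{\cb}^{\mathrm{fact}}_{I_\bullet}$ with that image is incorrect. The simplest case already kills it. Take the tacnode $P=y(y-x^2)$, $r=2$, $P_1=y$, $P_2=y-x^2$, with miniversal deformation $P(x,y,{\bf t})=y^2-yx^2+t_0+t_1x+t_2x^2$ over $\widehat{\cb}=\spf\bar{k}[[t_0,t_1,t_2]]$. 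Both branches are smooth germs, so $\prod_j\widehat{\defm}{}^{\rm top}_{P_{I_j}}$ is a single reduced point and the image of $m$ is $\{0\}$. But the factorizations $P=(y-a)(y-b)$ with $a+b=x^2$, $a(x,0)=0$ are exactly $a=a_0$ constant (the coefficient constraints in degrees $\geq 3$ kill all $a_i$, $i\geq 1$, by induction on the order); this gives $P=(y-t_2)\bigl(y-(x^2-t_2)\bigr)$, i.e.\ the one-dimensional curve $\{t_1=0,\ t_0+t_2^2=0\}$, which is $\widehat{\cs}_{I_\bullet}$, the codimension-$2$ locus where the tacnode degenerates into two nodes. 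Here $m$ is indeed a closed immersion, but its image misses almost all of $\widehat{\cs}_{I_\bullet}$ — the extra parameter is precisely the relative position of the two smooth branches, which your source cannot see.

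The missing data is what the paper's functor $\defm^{\rm top}_{A\hookrightarrow A_{I_\bullet}}$ (equivalently $\defm^{\rm top}_{k[[x,y]]\to A_{I_\bullet}}$, by Theorem \ref{iso embed}) is designed to record. But note two things. First, those functors are introduced and analysed only \emph{after} Proposition \ref{si geom}, to establish Theorems \ref{strata local} and \ref{strata}; the paper's proof of Proposition \ref{si geom} itself is purely Jacobian-theoretic. It rewrites the condition $(\widehat{\crr}_{I_\bullet})_{s_\infty}\subset\bbv_s^0$ through the invertible sheaves $\widehat{\cl}_{I_j,\zeta_n}$ of Proposition \ref{ci express}, observes that $[\widehat{\cl}_{I_j,\zeta_n,s}]\in J^{\rm a}_{\cc_s}[n]$ is the same as the class dying under $J_{\cc_s}[n]\to J_{\widetilde{\cc}_s}[n]$, and reads that off geometrically as the deformed germs $\widehat{C}_{I_j}$ staying on distinct local branches of $(\widehat{\cc}_c)_s$ — exactly the factorization of $P(x,y,{\bf t})$. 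Closedness is then inherited from the definition of $\cs_{I_\bullet}$, not proved via any immersion criterion. Second, the map $\defm^{\rm top}_{A\hookrightarrow A_{I_\bullet}}\to\widehat{\cb}$ is only finite and generically an isomorphism onto $\widehat{\cs}_{I_\bullet}$ — it is the \emph{normalization} — so a closed-immersion framework, which would in any case force $\widehat{\cs}_{I_\bullet}$ to be formally smooth, does not match the paper. The reverse inclusion you sketch (a factorization implies $c_{I_j}\in\bbv_t^0$ via triviality of pull-backs to the normalization) mirrors one direction of the paper's chain of equivalences and is sound in spirit, but it rests on your incorrect definition of $\widehat{\cb}^{\mathrm{fact}}_{I_\bullet}$.
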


\begin{proof}

Recall that we have constructed a set of invertible sheaves $\cl_{i,\zeta_{n}}, i=1,\cdots,r,$ on the curve $C_{\gamma}$ in the proof of proposition \ref{ci express}, where $\zeta_{n}$ is a primitive $n$-th root of unity and $(n, p)=1$. 
For a subset $I\subset \{1,\cdots,r\}$, let $\cl_{I,\zeta_{n}}=\bigotimes_{i\in I} \cl_{i,\zeta_{n}}$, it corresponds to a point $[\cl_{I,\zeta_{n}}]$ of $J_{C_{\gamma}}[n]$. 
By construction, $[\cl_{I,\zeta_{n}}]\otimes\mu_{n}^{-1}$ is the image of the class $c_{I}\mod n$ under the isomorphism
$$
H^{1}(C_{\gamma},\bz/n)=J_{C_{\gamma}}[n]\otimes \mu_{n}^{-1}. 
$$
With the isomorphism (\ref{h1 curve mod n}) for $x=0$, the point $[\cl_{I,\zeta_{n}}]$ extends to a section of $\pic_{\cc/\cb}[n]$ over $\widehat{\cb}$, let $\widehat{\cl}_{I,\zeta_{n}}$ be the corresponding relative invertible sheaf on $\cc\times_{\cb}\widehat{\cb}$. 
Then we can rewrite the definition of $\widehat{\cs}_{I_{\bullet}}$ as
$$
\widehat{\cs}_{I_{\bullet}}=\Big\{s\in \widehat{\cb} \,\big|\, \big[\widehat{\cl}_{I_{j},\zeta_{n}}\big]_{s} \in J_{\cc_{s}}^{\rm a}[n] \text{ for all } j \text{ and all }n \Big\}, 
$$ 
where $J_{\cc_{s}}^{\rm a}$ is the maximal connected affine sub group scheme of $J_{\cc_{s}}$ as defined in (\ref{chevalley jac exp}).

By construction, the invertible sheaf $\cl_{I,\zeta_{n}}$ can be obtained by gluing $\co_{C_{\gamma}\backslash \{c\}}$ and the $A$-submodule of $E=\prod_{i=1}^{r}E_{i}$ generated by $(\cdots, \zeta_{n},\cdots,1,\cdots)$ along $\spec(E)=(C_{\gamma}\backslash \{c\})\times_{C_{\gamma}}\spec(A)$, with $\zeta_{n}$ sitting at the positions indexed by $i\in I$. 
Applying the construction to $\cl_{I_{j},\zeta_{n}}$'s, we see that for all the extensions $\widehat{\cl}_{I_{j},\zeta_{n},s}$ to satisfy the condition 
$$
\big[\widehat{\cl}_{I_{j},\zeta_{n},s}\big] \in J_{\cc_{s}}^{\rm a}[n], \quad \text{ for all } n,
$$
the deformation from $(\widehat{\cc}_{c})_{0}=\widehat{C}_{\gamma,c}$ to $(\widehat{\cc}_{c})_{s}$ has to satisfy the condition that the intersections between distinct $\widehat{C}_{I_{j}}$ and $\widehat{C}_{I_{j'}}$ were not smoothed. In particular, the intersection multiplicity between them doesn't change. 
Indeed, the above condition for a subset $I\subset \{1,\cdots,l\}$ is equivalent to the condition that the point $\big[\widehat{\cl}_{I,\zeta_{n},s}\big]\in J_{\cc_{s}}[n]$ goes to $0$ under the morphism
$$
J_{\cc_{s}}[n] \to J_{\widetilde{\cc}_{s}}[n], 
$$
which is equivalent to the condition that the deformations of $\widehat{C}_{I}$ and $\widehat{C}_{\bar{I}}$ in $\cc_{s}$ can be detached from each other in the normalization $\widetilde{\cc}_{s}$. In other words, their deformations in $\cc_{s}$ belong to different local branches of $(\widehat{\cc}_{c})_{s}$. 
The last condition for all $I_{j}$'s is clearly equivalent to the condition that the deformations of all the distinct $\widehat{C}_{I_{j}}$ and $\widehat{C}_{{I}_{j'}}$ in $\cc_{s}$ belong to different local branches of $(\widehat{\cc}_{c})_{s}$, which is exactly the condition in the proposition.

\end{proof}

By the construction of $\cs_{I_{\bullet}}$, the conclusion of proposition \ref{si geom} extends to the family $\pi_{I_{\bullet}}:\cc_{I_{\bullet}}\to \cs_{I_{\bullet}}$. 
Let $A=k[\![x,y]\!]/(P(x,y))$ and  $A_{I_{j}}=k[\![x,y]\!]/(P_{I_{j}}(x,y))$ for $j=1,\cdots,l$, let $A_{I_{\bullet}}=\prod_{j=1}^{l}A_{I_{j}}$.  
The proposition suggests that we should consider the deformation functor
$$
\defm_{A\hookrightarrow A_{I_{\bullet}}}^{\rm top}: {\rm Art}_{k}\to {\rm Sets}, 
$$
which sends an artinian $k$-algebra $R$ to the set of isomorphism classes of homomorphisms of $R$-algebras $A_{R}\to A_{I_{\bullet}, R}$ whose reduction modulo $\km_{R}$ equals $A\hookrightarrow A_{I_{\bullet}}$, here $A_{R}$ is a flat deformation of $A$ over $R$ and $A_{I_{\bullet}, R}$ is a flat deformation of $A_{I_{\bullet}}$. It is clear that we have the forgetful morphism
$$
\defm_{A\hookrightarrow A_{I_{\bullet}}}^{\rm top}\to \defm_{A}^{\rm top}.
$$
Similar deformation functor has been considered by Laumon \cite{laumon springer}, \S A.3, we will borrow his techniques. 

\begin{lem}\label{auto inj}

Any homomorphism of $R$-algebra $(A_{R}\to A_{I_{\bullet}, R})\in \defm_{A\hookrightarrow A_{I_{\bullet}}}^{\rm top}(R)$ is necessarily injective and its cokernel is automatically $R$-flat. 

\end{lem}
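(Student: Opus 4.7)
The plan is to prove both assertions by a single application of the local flatness criterion together with a Nakayama-type argument valid for artinian base rings. The starting observation is that the reduction $A \hookrightarrow A_{I_\bullet}$ is already injective: since $P(x,y) = \prod_j P_{I_j}(x,y)$ with the factors $P_{I_j}$ pairwise coprime in the UFD $k[\![x,y]\!]$ (they share no analytic branches), the kernel of $A \to \prod_j A_{I_j}$ is $\bigcap_j (P_{I_j})/(P) = (P)/(P) = 0$, and the cokernel is a finite-dimensional $k$-vector space supported at the singular point.

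Given $\phi \colon A_R \to A_{I_\bullet,R}$ in $\defm^{\rm top}_{A \hookrightarrow A_{I_\bullet}}(R)$, I would first show that $N := \operatorname{coker}(\phi)$ is $R$-flat. From the exact sequence $0 \to \operatorname{Im}(\phi) \to A_{I_\bullet,R} \to N \to 0$ and the $R$-flatness of $A_{I_\bullet,R}$, the Tor long exact sequence yields
$$\operatorname{Tor}_1^R(N,k) \;=\; \ker\!\bigl(\operatorname{Im}(\phi) \otimes_R k \longrightarrow A_{I_\bullet}\bigr).$$
Since $\operatorname{Im}(\phi) \otimes_R k$ is a quotient of $A_R \otimes_R k = A$, and the composite $A \twoheadrightarrow \operatorname{Im}(\phi)\otimes_R k \to A_{I_\bullet}$ is the injection from the previous paragraph, this kernel vanishes. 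Because $\mathfrak{m}_R$ is nilpotent, the local flatness criterion applies without any finiteness hypothesis on $N$ and gives that $N$ is $R$-flat.

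With $N$ flat, the sequence $0 \to \operatorname{Im}(\phi) \to A_{I_\bullet,R} \to N \to 0$ now shows that $\operatorname{Im}(\phi)$ is itself $R$-flat (as $\operatorname{Tor}_1^R(N,-)$ vanishes). Tensoring the defining sequence $0 \to K \to A_R \to \operatorname{Im}(\phi) \to 0$ (with $K = \ker\phi$) by $k$ therefore stays short exact; but the surjection $A \twoheadrightarrow \operatorname{Im}(\phi)\otimes_R k$ is also injective, since composing it with the injection $\operatorname{Im}(\phi)\otimes_R k \hookrightarrow A_{I_\bullet}$ recovers $A \hookrightarrow A_{I_\bullet}$. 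Hence $K \otimes_R k = 0$, i.e.\ $K = \mathfrak{m}_R K$; iterating and using that $\mathfrak{m}_R^N = 0$ for some $N$ forces $K \subset \mathfrak{m}_R^N A_R = 0$, so $\phi$ is injective.

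I do not anticipate any serious obstacle: the entire argument reduces to the injectivity of the closed fibre $A \hookrightarrow A_{I_\bullet}$, which is transparent from the coprimality of the $P_{I_j}$, followed by a formal Tor/Nakayama manipulation. The only mildly delicate point is that $K$ is not \emph{a priori} finitely generated over $R$ (being a submodule of the complete local $R$-algebra $A_R$), but the nilpotence of $\mathfrak{m}_R$ inside the artinian ring $R$ makes the unrestricted implication $K = \mathfrak{m}_R K \Rightarrow K = 0$ valid, so no refined completeness-Nakayama is required.
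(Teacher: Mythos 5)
Your proof is correct and is essentially the same as the paper's (which follows Laumon, lemme A.3.2): both tensor the two exact sequences $0\to K\to A_R\to I_R\to 0$ and $0\to I_R\to A_{I_\bullet,R}\to N\to 0$ with $k$, use that the composite $A\twoheadrightarrow I_R\otimes_R k\to A_{I_\bullet}$ is the given injection to deduce $A\cong I_R\otimes_R k$ and $\Tor_1^R(N,k)=0$, then apply the local flatness criterion and Nakayama. Your preliminary verification of the injectivity of $A\hookrightarrow A_{I_\bullet}$ via coprimality of the $P_{I_j}$ is harmless but superfluous, as it is part of the data in $\defm^{\rm top}_{A\hookrightarrow A_{I_\bullet}}$; your remark that the Nakayama step really rests on the nilpotence of $\mathfrak{m}_R$ (since $K$ is not finitely generated over $R$) is a valid clarification of the paper's terse ``by Nakayama's lemma''.
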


\begin{proof}

The proof is identical to \cite{laumon springer}, lemme A.3.2.
Let $N_{R}, I_{R}$ and $C_{R}$ be the kernel, image and cokernel of the morphism $A_{R}\to A_{I_{\bullet}, R}$. Then we have the exact sequences
$$
0\to \Tor^{R}_{1}(I_{R}, k)\to N_{R}\otimes_{R}k\to A\to I_{R}\otimes_{R}k\to 0,
$$
and
$$
0\to \Tor^{R}_{1}(C_{R}, k)\to I_{R}\otimes_{R}k\to A_{I_{\bullet}}\to C_{R}\otimes_{R}k\to 0.
$$
Note that the composite $A\twoheadrightarrow I_{R}\otimes_{R}k \to A_{I_{\bullet}}$ equals the injection $A\hookrightarrow A_{I_{\bullet}}$, whence
$$
A\cong I_{R}\otimes_{R}k \quad \text{and}\quad
I_{R}\otimes_{R}k \hookrightarrow A_{I_{\bullet}}.
$$
With the above exact sequences, they imply 
$$
\Tor^{R}_{1}(I_{R}, k)\cong N_{R}\otimes_{R}k \quad \text{and}\quad 
\Tor^{R}_{1}(C_{R}, k)=0.
$$
We deduce that $C_{R}$ must be $R$-flat and hence so is $I_{R}$ as $A_{I_{\bullet},R}$ is $R$-flat by assumption. Then $N_{R}\otimes_{R}k\cong \Tor^{R}_{1}(I_{R}, k)=0$ and so $N_{R}=0$ by Nakayama's lemma.

\end{proof}

Consider now the deformation functors 
$$
\defm^{\rm top}_{k[\![x,y]\!]\twoheadrightarrow A}, \defm^{\rm top}_{k[\![x,y]\!]\to A_{I_{\bullet}}}: {\rm Art}_{k}\to {\rm Sets}, 
$$
which send an artinian $k$-algebra $R$ to the set of isomorphism classes of homomorphisms of $R$-algebras 
$$
R[\![x,y]\!]\to A_{R} \quad \text{and}\quad
R[\![x,y]\!]\to A_{I_{\bullet},R},
$$
whose reduction modulo $\km_{R}$ equals respectively the morphism $k[\![x,y]\!]\twoheadrightarrow A$ and the composite $k[\![x,y]\!]\twoheadrightarrow A\hookrightarrow A_{I_{\bullet}}$, here as before $A_{R}$ is a flat deformation of $A$ over $R$ and $A_{I_{\bullet}, R}$ is a flat deformation of $A_{I_{\bullet}}$. 
Note that the morphism $R[\![x,y]\!]\to A_{R}$ is automatically surjective by Nakayama's lemma. We have the obvious morphisms of functors
$$
\begin{tikzcd}
&\defm_{A\hookrightarrow A_{I_{\bullet}}}^{\rm top}\ar[d]
\\ 
\defm^{\rm top}_{k[\![x,y]\!]\twoheadrightarrow A}\ar[r] &\defm_{A}^{\rm top}
\end{tikzcd},
$$
and we set
$$
\defm^{\rm top}_{k[\![x,y]\!]\twoheadrightarrow A\hookrightarrow A_{I_{\bullet}}}:=
\defm^{\rm top}_{k[\![x,y]\!]\twoheadrightarrow A}\times_{\defm_{A}^{\rm top}}\defm_{A\hookrightarrow A_{I_{\bullet}}}^{\rm top}.
$$
The functor $\defm_{A}^{\rm top}$, the morphism of functors $\defm^{\rm top}_{k[\![x,y]\!]\twoheadrightarrow A}\to\defm_{A}^{\rm top}$ hence also the functor $\defm^{\rm top}_{k[\![x,y]\!]\twoheadrightarrow A}$ are formally smooth over $k$.

\begin{thm}\label{iso embed}

The morphism coming from composite $\defm^{\rm top}_{k[\![x,y]\!]\twoheadrightarrow A\hookrightarrow A_{I_{\bullet}}} \to \defm^{\rm top}_{k[\![x,y]\!]\to A_{I_{\bullet}}}$ is an isomorphism of functors. 

\end{thm}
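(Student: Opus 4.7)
The plan is to exhibit an explicit inverse to the forgetful morphism. Given $\phi : R[\![x,y]\!] \to A_{I_{\bullet}, R}$ representing an element of $\defm^{\rm top}_{k[\![x,y]\!] \to A_{I_{\bullet}}}(R)$, the candidate inverse sends $\phi$ to the datum $(R[\![x,y]\!] \twoheadrightarrow A_R \hookrightarrow A_{I_{\bullet}, R})$ with $A_R := \mathrm{Im}(\phi)$. The bulk of the argument will consist in verifying that this $A_R$ is an $R$-flat deformation of $A$ factoring $\phi$.

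The first step is a structural analysis of $A_{I_{\bullet}, R}$. Since $\km_R$ is nilpotent, the orthogonal idempotents of $A_{I_{\bullet}} = \prod_{j} A_{I_j}$ lift uniquely, giving a decomposition $A_{I_{\bullet}, R} \cong \prod_{j} A_{I_j, R}$ with each factor $R$-flat and deforming $A_{I_j}$. The composite $\phi_j : R[\![x,y]\!] \to A_{I_j, R}$ is surjective by Nakayama. Its kernel is $R$-flat (from flatness of $A_{I_j, R}$), reduces mod $\km_R$ to the principal ideal $(P_{I_j}) \subset k[\![x,y]\!]$, and is therefore generated by a single lift $\tilde{P}_{I_j}$ of $P_{I_j}$ by a second application of Nakayama in the noetherian local ring $R[\![x,y]\!]$. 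Hence $A_{I_j, R} = R[\![x,y]\!]/(\tilde{P}_{I_j})$.

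Setting $\tilde{P} := \prod_{j} \tilde{P}_{I_j}$, the next step is to show $\ker(\phi) = \bigcap_{j} (\tilde{P}_{I_j}) = (\tilde{P})$. One inclusion is immediate. For the reverse, $R[\![x,y]\!]/(\tilde{P})$ is $R$-flat because $\tilde{P}$ lifts the non-zero-divisor $P$, and $\prod_{j} R[\![x,y]\!]/(\tilde{P}_{I_j})$ is $R$-flat by construction. Their natural comparison map reduces modulo $\km_R$ to $k[\![x,y]\!]/(P) \hookrightarrow \prod_{j} k[\![x,y]\!]/(P_{I_j})$, which is injective because the $P_{I_j}$ are pairwise coprime in the UFD $k[\![x,y]\!]$ (they define distinct analytic branches of $C_{\gamma}$ at $c$). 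The local criterion of flatness for maps between flat modules then promotes this injectivity from $k$ to $R$. Consequently $A_R = R[\![x,y]\!]/(\tilde{P})$ is an $R$-flat deformation of $A$, and by Lemma~\ref{auto inj} the embedding $A_R \hookrightarrow A_{I_{\bullet}, R}$ lies in $\defm^{\rm top}_{A \hookrightarrow A_{I_{\bullet}}}(R)$.

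Checking that the two composites with the forgetful morphism are identities is then formal: starting from $(R[\![x,y]\!] \twoheadrightarrow A_R \hookrightarrow A_{I_{\bullet}, R})$, taking the image of the composite recovers $A_R$ because the second arrow is injective; starting from $\phi$ and factoring through $\mathrm{Im}(\phi)$ returns $\phi$. Naturality in $R$ follows from the compatibility of image formation with base change between artinian $k$-algebras. The principal technical hurdle is the coprimality lift described in the third paragraph, but it reduces transparently to the local flatness criterion combined with the UFD structure of $k[\![x,y]\!]$.
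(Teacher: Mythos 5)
Your argument is correct, but it takes a genuinely different route from the paper's. The paper (following Laumon, th\'eor\`eme A.3.1) works directly with a free resolution $0 \to k[\![x,y]\!]^{n} \xrightarrow{F} k[\![x,y]\!]^{n} \to A_{I_{\bullet}} \to 0$, extracts the defining equation via the determinant $\det F$ (shown to generate $(P)$ by the cofactor/annihilator trick), lifts the resolution to $R[\![x,y]\!]$ using $R$-flatness of $A_{I_{\bullet},R}$, and sets $A_{R} = R[\![x,y]\!]/(\det F_{R})$. You instead exploit the \emph{product structure} $A_{I_{\bullet}} = \prod_{j} A_{I_{j}}$: lift idempotents through the nilpotent ideal to split $A_{I_{\bullet},R} = \prod_{j} A_{I_{j},R}$, observe that each factor is a cyclic $R[\![x,y]\!]$-module with kernel a deformation of the principal ideal $(P_{I_{j}})$, hence principal by Nakayama, and then set $\tilde P = \prod_{j}\tilde P_{I_{j}}$. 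The coprimality of the $P_{I_{j}}$ in $k[\![x,y]\!]$ plus the local criterion of flatness identifies $\bigcap_{j}(\tilde P_{I_{j}})$ with $(\tilde P)$, and Lemma~\ref{auto inj} then hands you back the required injectivity and flat-cokernel property.

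The trade-off: the paper's determinant argument does not use that $A_{I_{\bullet}}$ decomposes as a product of quotients of $k[\![x,y]\!]$ by \emph{principal} ideals. This is exactly what lets the paper state, in the remark following the corollaries, that the theorem holds verbatim for an arbitrary finite birational extension $A \hookrightarrow A' \subset \widetilde{A}$ (where $A'$ need not split as a product of plane-branch algebras). Your proof relies essentially on the partition structure of $A_{I_{\bullet}}$ and the fact that each $A_{I_{j}}$ is a hypersurface quotient, so it does not directly generalize to that setting. What it buys in exchange is concreteness: you avoid Laumon's lemma on the existence of the length-one free resolution (his A.3.3), and you identify $A_{R}$ explicitly as $R[\![x,y]\!]/(\tilde P)$ with $\tilde P$ a product of lifted branch equations, which makes the factorization statement of Proposition~\ref{si geom} transparent. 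One small point worth flagging: the naturality-in-$R$ sentence at the end implicitly uses that $\mathrm{coker}(\phi)$ is $R$-flat so that image formation commutes with base change; this is supplied by Lemma~\ref{auto inj}, which you do invoke, but it would be cleaner to say so explicitly rather than attribute it to a general ``compatibility.''
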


\begin{proof}

The proof is identical to \cite{laumon springer}, th\'eor\`eme A.3.1. We need to construct an inverse to the morphism in question. 
Let $R[\![x,y]\!]\to A_{I_{\bullet},R}$ be a  deformation of $k[\![x,y]\!]\to A_{I_{\bullet}}$. 
Then $A_{I_{\bullet}}$ and $A_{I_{\bullet},R}$ can be seen as modules over $k[\![x,y]\!]$ and $R[\![x,y]\!]$ with these morphisms.

As $A_{I_{\bullet}}$ is a finite and torsion free $A$-module, by lemma A.3.3 of \textit{loc. cit.}, $A_{I_{\bullet}}$ admits a resolution as $k[\![x,y]\!]$-module
\begin{equation}\label{resolve ai}
0\to k[\![x,y]\!]^{n}\xrightarrow{F} k[\![x,y]\!]^{n}\to  A_{I_{\bullet}}\to 0. 
\end{equation}
Let $F^{*}$ be the matrix of cofactors of $F$, then $F^{*}F=FF^{*}=\det(F)$ annihilates $A_{I_{\bullet}}$ as $k[\![x,y]\!]$-module, which implies that $\det(F)\in (P(x,y)).$
Conversely, let $g\in k[\![x,y]\!]$ be an annihilator of $A_{I_{\bullet}}$, then there exists a matrix $G\in {\rm Mat}_{n\times n}(k[\![x,y]\!])$ such that $g=G\circ F$, whence $g^{n}=\det(g)=\det(G)\det(F)$. 
In particular, we can take $g=P(x,y)$ and this implies that $(P(x,y))^{n}\subset (\det(F))$. 
As $k[\![x,y]\!]$ is reduced, we obtain then $(P(x,y))=(\det(F))$. In other words, we can reconstruct the algebra $A=k[\![x,y]\!]/(P(x,y))$ from the resolution (\ref{resolve ai}).  

As $A_{I_{\bullet},R}$ is flat over $R$ with $A_{I_{\bullet},R}\otimes_{R}k=A_{I_{\bullet}}$, the resolution (\ref{resolve ai}) can be lifted to a resolution
$$
0\to R[\![x,y]\!]^{n}\xrightarrow{F_{R}} R[\![x,y]\!]^{n}\to  A_{I_{\bullet},R}\to 0.
$$
With the same argument as above, we obtained that $\det(F_{R})$ is the annihilator of $A_{I_{\bullet},R}$ as $R[\![x,y]\!]$-module, i.e. we have factorization
$$
R[\![x,y]\!]\twoheadrightarrow R[\![x,y]\!]/(\det(F_{R})) \to A_{I_{\bullet},R}
$$
which lifts the morphism $k[\![x,y]\!]\twoheadrightarrow k[\![x,y]\!]/(P(x,y)) \to A_{I_{\bullet},R}$. 
Then $A_{R}:=R[\![x,y]\!]/(\det(F_{R}))$ is a flat lifting of $A$ over $R$ and we get deformation $A_{R}\to A_{I_{\bullet},R}$ of $A\to A_{I_{\bullet}}$.
By lemma \ref{auto inj}, the morphism $A_{R}\to A_{I_{\bullet},R}$ is necessarily injective, and we get the factorization $R[\![x,y]\!]\twoheadrightarrow R[\![x,y]\!]/(\det(F_{R})) \hookrightarrow A_{I_{\bullet},R}$ of the morphism $R[\![x,y]\!]\to A_{I_{\bullet},R}$. In this way, we get the inverse of the morphism in the theorem.

\end{proof}

\begin{cor}

The functor $\defm^{\rm top}_{k[\![x,y]\!]\twoheadrightarrow A\hookrightarrow A_{I_{\bullet}}}$ is formally smooth, and so is the functor $\defm^{\rm top}_{ A\hookrightarrow A_{I_{\bullet}}}$.

\end{cor}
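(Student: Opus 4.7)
The plan is to deduce both formal smoothness statements from Theorem \ref{iso embed} combined with the standard fact that the deformation functor of a plane hypersurface singularity is formally smooth.

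For the first functor, by Theorem \ref{iso embed} it suffices to prove formal smoothness of $\defm^{\rm top}_{k[\![x,y]\!]\to A_{I_{\bullet}}}$. Given a small surjection $R\twoheadrightarrow R'$ of artinian $k$-algebras and an object $(R'[\![x,y]\!]\to A_{I_{\bullet},R'})$ over $R'$, I would first lift the $R'$-algebra $A_{I_{\bullet},R'}$ to a flat $R$-algebra $A_{I_{\bullet},R}$. This is possible because $A_{I_{\bullet}}=\prod_{j=1}^{l}A_{I_{j}}$ with each $A_{I_{j}}=k[\![x,y]\!]/(P_{I_{j}})$ a plane hypersurface singularity, for which every flat deformation has the form $R[\![x,y]\!]/(\widetilde{P}_{I_{j}})$ for a lift of $P_{I_{j}}$; thus each $\defm^{\rm top}_{A_{I_{j}}}$ is formally smooth, and so is the product $\defm^{\rm top}_{A_{I_{\bullet}}}\cong\prod_{j}\defm^{\rm top}_{A_{I_{j}}}$. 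The underlying morphism is then lifted by choosing arbitrary preimages $\tilde{x},\tilde{y}\in A_{I_{\bullet},R}$ of the images of $x,y$ in $A_{I_{\bullet},R'}$ via the surjection $A_{I_{\bullet},R}\twoheadrightarrow A_{I_{\bullet},R'}$.

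For the second functor, I would show that the forgetful morphism
$$
\defm^{\rm top}_{k[\![x,y]\!]\twoheadrightarrow A\hookrightarrow A_{I_{\bullet}}}\longrightarrow \defm^{\rm top}_{A\hookrightarrow A_{I_{\bullet}}}
$$
is surjective on objects over every artinian $R$. Given $(A_{R}\hookrightarrow A_{I_{\bullet},R})$, choose lifts $\tilde{x},\tilde{y}\in A_{R}$ of $x,y\in A$; the resulting continuous $R$-algebra map $R[\![x,y]\!]\to A_{R}$ is surjective by Nakayama, and its kernel is principal, generated by some lift $\widetilde{P}$ of $P$. This is essentially the same reasoning as in the proof of Theorem \ref{iso embed}, applied to the hypersurface $A$ itself. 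Combined with the formal smoothness of the source established above, this surjectivity forces formal smoothness of the target: any object of the target over $R'$ can first be lifted to the source over $R'$, then further lifted to the source over $R$ by formal smoothness, and finally mapped back down to obtain the required lift over $R$.

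I do not anticipate a serious obstacle, since the whole corollary rests on the principles that plane hypersurface deformations are formally smooth and that Theorem \ref{iso embed} allows one to trade the rigidity of the intermediate algebra $A$ for the flexibility of a continuous map from the free algebra $k[\![x,y]\!]$.
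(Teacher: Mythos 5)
Your proof is correct and takes essentially the same approach as the paper: deduce the first assertion from Theorem~\ref{iso embed} together with the formal smoothness of $\defm^{\rm top}_{k[\![x,y]\!]\to A_{I_{\bullet}}}$, then propagate smoothness along the forgetful morphism to $\defm^{\rm top}_{A\hookrightarrow A_{I_{\bullet}}}$. You spell out two points the paper leaves implicit (that $\defm^{\rm top}_{A_{I_{\bullet}}}\cong\prod_{j}\defm^{\rm top}_{A_{I_{j}}}$ is unobstructed because each $A_{I_{j}}$ is a hypersurface, and that the forgetful morphism is surjective on $R$-points via Nakayama); the paper instead invokes formal smoothness of the forgetful morphism, from which surjectivity follows since both functors have a single $k$-point, so the two arguments coincide in substance.
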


\begin{proof}

The first assertion follows from theorem \ref{iso embed} and the fact that the functor $\defm^{\rm top}_{k[\![x,y]\!]\to A_{I_{\bullet}}}$ is formally smooth. As the morphism $\defm^{\rm top}_{k[\![x,y]\!]\twoheadrightarrow A\hookrightarrow A_{I_{\bullet}}} \to \defm^{\rm top}_{ A\hookrightarrow A_{I_{\bullet}}}$ is formally smooth, we deduce from it the formal smoothness of the latter.

\end{proof}

\begin{cor}\label{reduce to levi functor smooth}

The morphism $\defm^{\rm top}_{ A\hookrightarrow A_{I_{\bullet}}}\to \defm^{\rm top}_{A_{I_{\bullet}}}$ is formally smooth. 

\end{cor}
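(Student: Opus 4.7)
The plan is to verify the infinitesimal lifting criterion, pivoting through Theorem \ref{iso embed} and the elementary formal smoothness of the functor $\defm^{\rm top}_{k[\![x,y]\!]\to A_{I_{\bullet}}}$. Fix a small extension $R'\twoheadrightarrow R$ in $\mathrm{Art}_{k}$ with square-zero kernel $J$, a deformation $(A_{R}\hookrightarrow A_{I_{\bullet},R})\in \defm^{\rm top}_{A\hookrightarrow A_{I_{\bullet}}}(R)$, and a flat lift $A_{I_{\bullet},R'}$ of $A_{I_{\bullet},R}$ over $R'$. The goal is to construct a compatible lift $(A_{R'}\hookrightarrow A_{I_{\bullet},R'})\in \defm^{\rm top}_{A\hookrightarrow A_{I_{\bullet}}}(R')$.

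First I would enrich the $R$-level datum with a presentation. Since any surjection $k[\![x,y]\!]\twoheadrightarrow A$ lifts to a surjection $R[\![x,y]\!]\twoheadrightarrow A_{R}$ by Nakayama's lemma, composing with $A_{R}\hookrightarrow A_{I_{\bullet},R}$ produces an element of $\defm^{\rm top}_{k[\![x,y]\!]\twoheadrightarrow A\hookrightarrow A_{I_{\bullet}}}(R)$, which by Theorem \ref{iso embed} is the same datum as a single morphism $\varphi_{R}:R[\![x,y]\!]\to A_{I_{\bullet},R}$. Next, I would lift $\varphi_{R}$ to $R'$: because $A_{I_{\bullet},R'}\twoheadrightarrow A_{I_{\bullet},R}$ is surjective, the images of $x$ and $y$ in the maximal ideal of $A_{I_{\bullet},R}$ can be lifted to elements of the maximal ideal of $A_{I_{\bullet},R'}$, and by completeness these extend uniquely to an $R'$-algebra morphism $\varphi_{R'}:R'[\![x,y]\!]\to A_{I_{\bullet},R'}$ whose reduction modulo $J$ is $\varphi_{R}$. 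Applying Theorem \ref{iso embed} again, now at level $R'$, the morphism $\varphi_{R'}$ factors canonically as $R'[\![x,y]\!]\twoheadrightarrow A_{R'}\hookrightarrow A_{I_{\bullet},R'}$, with $A_{R'}$ a flat lift of $A$; forgetting the presentation yields the required element of $\defm^{\rm top}_{A\hookrightarrow A_{I_{\bullet}}}(R')$.

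The only delicate point — and the closest thing to an obstacle — is verifying that the reduction of $A_{R'}\hookrightarrow A_{I_{\bullet},R'}$ modulo $J$ recovers the originally chosen $A_{R}\hookrightarrow A_{I_{\bullet},R}$. This is where I would invoke the explicit recipe in the proof of Theorem \ref{iso embed}: $A_{R'}$ is obtained as $R'[\![x,y]\!]/(\det F_{R'})$ from a finite free resolution $F_{R'}$ of $A_{I_{\bullet},R'}$ as an $R'[\![x,y]\!]$-module, and the $R'$-flatness of $A_{I_{\bullet},R'}$ guarantees that $F_{R'}$ reduces modulo $J$ to a resolution $F_{R}$ of $A_{I_{\bullet},R}$ with $R[\![x,y]\!]/(\det F_{R})=A_{R}$. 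Combined with the automatic injectivity supplied by Lemma \ref{auto inj}, this confirms that the infinitesimal lifting criterion is satisfied, yielding the formal smoothness asserted in the corollary.
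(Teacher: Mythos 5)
Your proof is correct and follows essentially the same route as the paper: you unwind the paper's abstract factorization argument (the commutative triangle $q = q_2\circ q_1$, with $q$ identified via Theorem \ref{iso embed} with the formally smooth morphism $\defm^{\rm top}_{k[\![x,y]\!]\to A_{I_\bullet}}\to\defm^{\rm top}_{A_{I_\bullet}}$ and $q_1$ surjective) into an explicit verification of the infinitesimal lifting criterion. The compatibility check at the end is in fact automatic from the naturality of the functor isomorphism of Theorem \ref{iso embed}, so your careful tracing through the determinantal construction is a useful sanity check but not logically required.
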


\begin{proof}

Consider the commutative diagram of morphisms of functors
$$
\begin{tikzcd}
\defm^{\rm top}_{k[\![x,y]\!]\twoheadrightarrow A\hookrightarrow A_{I_{\bullet}}} \ar[rd, "q"'] \ar[r, "q_{1}"]& \defm^{\rm top}_{ A\hookrightarrow A_{I_{\bullet}}}\ar[d, "q_{2}"]
\\
&\defm^{\rm top}_{A_{I_{\bullet}}}\end{tikzcd}.
$$
By theorem \ref{iso embed}, the morphism $q$ can be identified with the morphism $\defm^{\rm top}_{k[\![x,y]\!]\to A_{I_{\bullet}}} \to  \defm^{\rm top}_{A_{I_{\bullet}}}$, which is formally smooth. The morphism $q_{1}$ is formally smooth and surjective. As $q=q_{2}\circ q_{1}$, we obtain that $q_{2}$ must be smooth.

\end{proof}

\begin{rem}

Let $A\hookrightarrow A'$ be a morphism of algebras with $A'\subset \widetilde{A}$. Then the above results about the deformation functor $\defm^{\rm top}_{ A\hookrightarrow A_{I_{\bullet}}}$ continue to hold if we replace $A_{I_{\bullet}}$ with $A'$. 

\end{rem}

We can draw conclusions about $\cs_{I_{\bullet}}$ from the above local analysis. By proposition \ref{si geom}, $\widehat{\cs}_{I_{\bullet}}$ is the schematic image of the morphism $\defm^{\rm top}_{A\hookrightarrow A_{I_{\bullet}}} \to \defm^{\rm top}_{A}$.
As the morphism is finite and generically an isomorphism onto $\widehat{\cs}_{I_{\bullet}}$, and that the functor $\defm^{\rm top}_{ A\hookrightarrow A_{I_{\bullet}}}$ is formally smooth, we deduce that the functor $\defm^{\rm top}_{ A\hookrightarrow A_{I_{\bullet}}}$ is actually representable by the normalization of $\widehat{\cs}_{I_{\bullet}}$. 
Let $\varphi_{I_{\bullet}}:\widetilde{\cs}_{I_{\bullet}}\to {\cs}_{I_{\bullet}}$ be the normalization, then there exists a unique point $\tilde{0}$ of $\widetilde{\cs}_{I_{\bullet}}$ lying over $0\in \cs_{I_{\bullet}}$, it corresponds to $(A\hookrightarrow A_{I_{\bullet}})\in \defm^{\rm top}_{ A\hookrightarrow A_{I_{\bullet}}}(k)$. 
Let $\widehat{\widetilde{\cs}}_{I_{\bullet}}$ be the formal neighbourhood of $\widetilde{\cs}_{I_{\bullet}}$ at $\tilde{0}$, it can be identified with $\defm^{\rm top}_{ A\hookrightarrow A_{I_{\bullet}}}$. 
The morphism $\defm^{\rm top}_{ A\hookrightarrow A_{I_{\bullet}}} \to \defm^{\rm top}_{A_{I_{\bullet}}}$ then induces a morphism $\widehat{\widetilde{\cs}}_{I_{\bullet}} \to \prod_{j=1}^{l}\defm^{\rm top}_{\widehat{C}_{I_{j}}}$.
By corollary \ref{reduce to levi functor smooth}, the morphism is formally smooth. 
We summarize the above discussion as:

\begin{thm}\label{strata local}

The normalization $\widetilde{\cs}_{I_{\bullet}}$ of ${\cs}_{I_{\bullet}}$ is smooth over $k$, and there exists a natural morphism $\widehat{\widetilde{\cs}}_{I_{\bullet}} \to \prod_{j=1}^{l}\defm^{\rm top}_{\widehat{C}_{I_{j}}}$ which is formally smooth.

\end{thm}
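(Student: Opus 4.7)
The plan is to assemble the theorem from proposition \ref{si geom} together with the deformation-theoretic results already established in this subsection; most ingredients are in place in the discussion preceding the statement, so the work is to package them correctly.

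First I would use proposition \ref{si geom} to identify the formal completion $\widehat{\cs}_{I_\bullet}$ of $\cs_{I_\bullet}$ at $0$ with the scheme-theoretic image of the forgetful morphism
\[
p:\defm^{\rm top}_{A\hookrightarrow A_{I_\bullet}}\longrightarrow \defm^{\rm top}_A=\widehat{\cb}.
\]
The morphism $p$ is finite (an embedding $A_R\hookrightarrow A_{I_\bullet,R}$ has tautological source) and generically an isomorphism onto $\widehat{\cs}_{I_\bullet}$: on the dense open locus where the branches of the deformed singularity are pairwise disjoint, the factorization into the groups specified by $I_\bullet$ is uniquely determined. Combining this with formal smoothness of $\defm^{\rm top}_{A\hookrightarrow A_{I_\bullet}}$ (the corollary to theorem \ref{iso embed}) and the universal property of the normalization $\varphi_{I_\bullet}:\widetilde{\cs}_{I_\bullet}\to \cs_{I_\bullet}$, $p$ factorizes through $\widetilde{\cs}_{I_\bullet}$ and identifies $\defm^{\rm top}_{A\hookrightarrow A_{I_\bullet}}$ with the formal neighbourhood $\widehat{\widetilde{\cs}}_{I_\bullet}$ at the unique preimage $\tilde 0$ of $0$. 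This gives smoothness at $\tilde 0$.

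To propagate smoothness globally I would repeat the same local deformation-theoretic argument at any closed point of $\widetilde{\cs}_{I_\bullet}$, replacing $A$ by the local algebra of the fiber of $\widehat{\cc}$ there and $I_\bullet$ by the partition it induces on the branches of that fiber; none of the proofs of theorem \ref{iso embed} or corollary \ref{reduce to levi functor smooth} uses specifics of $A$. Finally, the factorization $A_{I_\bullet}=\prod_{j=1}^l A_{I_j}$ induces a product decomposition $\defm^{\rm top}_{A_{I_\bullet}}\cong \prod_{j=1}^l \defm^{\rm top}_{\widehat{C}_{I_j}}$, and composing the isomorphism $\widehat{\widetilde{\cs}}_{I_\bullet}\cong \defm^{\rm top}_{A\hookrightarrow A_{I_\bullet}}$ with the formally smooth morphism of corollary \ref{reduce to levi functor smooth} yields the desired map to $\prod_{j=1}^l \defm^{\rm top}_{\widehat{C}_{I_j}}$. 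The main obstacle is the global smoothness step: one must verify that the branch partition prescribed by $I_\bullet$ is preserved along $\cs_{I_\bullet}$ so that the local deformation functors at points other than $0$ have the same structure and the corollary applies uniformly.
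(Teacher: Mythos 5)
Your proposal follows essentially the same route as the paper: identify $\widehat{\cs}_{I_\bullet}$ with the scheme-theoretic image of the forgetful morphism $\defm^{\rm top}_{A\hookrightarrow A_{I_\bullet}}\to\defm^{\rm top}_A$ via proposition~\ref{si geom}, note that this finite, generically isomorphic morphism from a formally smooth source is the normalization, and obtain the map to $\prod_j\defm^{\rm top}_{\widehat{C}_{I_j}}$ by composing the resulting identification $\widehat{\widetilde{\cs}}_{I_\bullet}\cong\defm^{\rm top}_{A\hookrightarrow A_{I_\bullet}}$ with the formally smooth morphism of corollary~\ref{reduce to levi functor smooth}. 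The one place where you go beyond the text is the globalization of smoothness to all of $\widetilde{\cs}_{I_\bullet}$; the paper's own proof only treats the formal completion at $\tilde 0$ and tacitly extends, so your explicit remark that the same local deformation-theoretic argument must be run at other points of the stratum (with $A$ and the branch partition replaced by the appropriate local data there) is a useful supplement rather than a deviation.
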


The identification of $\widehat{\widetilde{\cs}}_{I_{\bullet}}$ with $\defm^{\rm top}_{ A\hookrightarrow A_{I_{\bullet}}}$ extends to $\widetilde{\cs}_{I_{\bullet}}$, let $\tilde{\phi}_{I_{\bullet}}^{+}: \widetilde{\cc}^{+}_{I_{\bullet}}\to {\cc}^{+}_{I_{\bullet}}:={\cc}_{I_{\bullet}}\times_{\cs_{I_{\bullet}}} \widetilde{\cs}_{I_{\bullet}}$ be the universal family over it. 
Then $\tilde{\phi}^{+}_{I_{\bullet}}$ is a finite flat morphism, it can be thought of as a simultaneous partial resolution. 
Geometrically, what it does is to detach the intersections between the local branches defined by $P_{I_{j}}(x,y, \mathbf{t}), j=1,\cdots, l$.
Over $\widetilde{\cs}_{I_{\bullet}}^{\circ}:=\widetilde{\cs}_{I_{\bullet}}\times_{{\cs}_{I_{\bullet}}}\cs_{I_{\bullet}}^{\circ}$, the geometric fibers of the family $\cc_{I_{\bullet}}^{+}\to \widetilde{\cs}_{I_{\bullet}}$ are projective irreducible curves with $h_{I_{\bullet}}$ ordinary double points, and the morphism $\tilde{\phi}^{+}_{I_{\bullet}}$ is just the simultaneous normalization of the restriction of ${\cc}^{+}_{I_{\bullet}}$ to $\widetilde{\cs}_{I_{\bullet}}^{\circ}$. 
Over $\tilde{0}\in \widetilde{\cs}_{I_{\bullet}}$, we get the partial resolution $\phi_{I_{\bullet}}:\widetilde{C}_{I_{\bullet}}\to C_{\gamma}$, which is an isomorphism over $C_{\gamma}\backslash \{c\}$, and such that $\phi_{I_{\bullet}}^{-1}(c)=\{c_{1},\cdots,c_{l}\}$ and that the singularity of $\widetilde{C}_{I_{\bullet}}$ at $c_{j}$ is isomorphic to $\widehat{C}_{I_{j}}$ for all $j$. 
Let $\tilde{\pi}^{+}_{I_{\bullet}}: \widetilde{\cc}^{+}_{I_{\bullet}}\to \widetilde{\cs}_{I_{\bullet}}$ be the structural morphism, it is a projective flat family of curves with generic geometric fibers being smooth projective of genus $\delta_{\gamma}-h_{I_{\bullet}}$.  
By theorem \ref{strata local}, the restriction of $\tilde{\pi}^{+}_{I_{\bullet}}$ to the formal neighbourhood $\widehat{\widetilde{\cs}}_{I_{\bullet}}$ is the product of versal deformations of the singularities of $\widetilde{C}_{I_{\bullet}}$.  
In other words, we have:

\begin{thm}\label{strata}

The family $\tilde{\pi}^{+}_{I_{\bullet}}: \widetilde{\cc}^{+}_{I_{\bullet}}\to \widetilde{\cs}_{I_{\bullet}}$ is an algebraization of a versal deformation of $\widetilde{C}_{I_{\bullet}}$.

\end{thm}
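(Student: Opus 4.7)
The plan is to verify that the restriction of $\tilde{\pi}^{+}_{I_{\bullet}}$ to the formal neighbourhood $\widehat{\widetilde{\cs}}_{I_{\bullet}}$ at $\tilde{0}$ is a versal formal deformation of $\widetilde{C}_{I_{\bullet}}$; algebraization is then automatic since the family is already realized over the scheme $\widetilde{\cs}_{I_{\bullet}}$. Smoothness of $\widetilde{\cs}_{I_{\bullet}}$ at $\tilde{0}$ has been obtained in Theorem \ref{strata local}, so it remains only to check that the Kodaira--Spencer map
$$
\kappa\colon T_{\tilde{0}}\widetilde{\cs}_{I_{\bullet}}\longrightarrow T^{1}(\widetilde{C}_{I_{\bullet}})
$$
is surjective onto the tangent space of the formal deformation functor of the projective curve.

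For $\widetilde{C}_{I_{\bullet}}$ a projective curve with local complete intersection (planar) singularities, the local-to-global spectral sequence for $\Ext^{\bullet}(\Omega^{1},\co)$ yields the short exact sequence
$$
0 \to H^{1}(\widetilde{C}_{I_{\bullet}}, T_{\widetilde{C}_{I_{\bullet}}}) \to T^{1}(\widetilde{C}_{I_{\bullet}}) \to \bigoplus_{j=1}^{l} T^{1}_{\widehat{C}_{I_{j}}} \to 0,
$$
the right-hand surjection being a consequence of $H^{2}$ vanishing on a curve. The composite of $\kappa$ with the quotient onto $\bigoplus_{j} T^{1}_{\widehat{C}_{I_{j}}}$ is identified with the tangent map of the formally smooth morphism $\widehat{\widetilde{\cs}}_{I_{\bullet}} \to \prod_{j}\defm^{\rm top}_{\widehat{C}_{I_{j}}}$ provided by Corollary \ref{reduce to levi functor smooth}, hence is already surjective.

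It remains to lift this to full surjectivity of $\kappa$, i.e.\ to exhibit tangent directions in $T_{\tilde{0}}\widetilde{\cs}_{I_{\bullet}}$ that map onto the kernel $H^{1}(\widetilde{C}_{I_{\bullet}}, T_{\widetilde{C}_{I_{\bullet}}})$. This kernel parameterizes deformations fixing the local isomorphism class of each singularity but varying the positions of the preimages of the $c_{j}$ on the normalization $\bp^{1}$, and such variations are precisely encoded in the fibre direction of the formally smooth morphism above: by Theorem \ref{iso embed} the formal completion $\widehat{\widetilde{\cs}}_{I_{\bullet}}$ is pro-represented by $\defm^{\rm top}_{k[\![x,y]\!]\to A_{I_{\bullet}}}$, whose relative dimension over $\prod_{j}\defm^{\rm top}_{\widehat{C}_{I_{j}}}$ records precisely the freedom in choosing the composite map $k[\![x,y]\!]\twoheadrightarrow A\hookrightarrow A_{I_{\bullet}}$—equivalently, the relative positions of the branches of the singularity on $\bp^{1}$—and this should match $\dim H^{1}(T_{\widetilde{C}_{I_{\bullet}}})$. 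Verifying this dimensional matching, and hence the bijectivity of $\kappa$ on tangent spaces (which in fact gives miniversality), is the main obstacle; one expects it to follow from the explicit description of the two sides in terms of $H^{0}$ and $H^{1}$ of $T_{\bp^{1}}$ modulo the singularity data, but it demands a careful bookkeeping. With the matching in hand, versality at $\tilde{0}$ is established and the theorem follows.
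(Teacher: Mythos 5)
Your proposal reads ``versal deformation of $\widetilde{C}_{I_\bullet}$'' as referring to the global deformation functor of the projective curve, whose tangent space is $T^1(\widetilde{C}_{I_\bullet})=\Ext^1(\Omega^1_{\widetilde{C}_{I_\bullet}},\co_{\widetilde{C}_{I_\bullet}})$, and this leads you to a genuine, unresolved gap. In the paper, ``(mini)versal deformation'' of a projective curve with planar singularities is used throughout in the \emph{local} sense: the original family $\pi:\cc\to\cb$ is taken over $\cb=\ba^{\tau_\gamma}$ with $\tau_\gamma$ the Tjurina number of the germ, and the miniversal family over $\widehat{\cb}$ is described by a power series $P(x,y,\mathbf t)$ in \S\ref{reduction over si}. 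What is required is that the Kodaira--Spencer map surjects onto the product of the local deformation functors $\prod_j\defm^{\rm top}_{\widehat{C}_{I_j}}$, not onto the full global $T^1$; the equisingular piece $H^1(\widetilde{C}_{I_\bullet},T_{\widetilde{C}_{I_\bullet}})$ is simply not part of what is asserted. With this reading, Theorem~\ref{strata} is an immediate reformulation of Theorem~\ref{strata local}, namely the formal smoothness of $\widehat{\widetilde{\cs}}_{I_\bullet}\to\prod_j\defm^{\rm top}_{\widehat{C}_{I_j}}$ together with the identification of the universal family near each $c_j$ with the pull-back of the universal local deformation; the phrase ``In other words'' preceding the theorem signals that no further argument is given, and none is needed for the later applications (the Severi inequality of Diaz--Harris and Ng\^o's support theorem applied to $\tilde f^{+}_{I_\bullet,n}$).

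The first part of your argument --- surjectivity of the composite of $\kappa$ with the quotient onto $\bigoplus_j T^1_{\widehat{C}_{I_j}}$ via Theorem~\ref{strata local}/Corollary~\ref{reduce to levi functor smooth} --- is correct and is precisely the content the paper uses. From ``It remains to lift this to full surjectivity'' onward, however, the proof is incomplete: you propose that the fibre direction of $\widehat{\widetilde{\cs}}_{I_\bullet}\to\prod_j\defm^{\rm top}_{\widehat{C}_{I_j}}$ ``should match'' $\dim H^1(T_{\widetilde{C}_{I_\bullet}})$, call the verification ``the main obstacle'' demanding ``careful bookkeeping,'' and stop without carrying it out. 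Moreover the dimensional equality you conjecture (``which in fact gives miniversality'') is false in general: for the ordinary triple point with the finest partition $I_\bullet=\{1\}\sqcup\{2\}\sqcup\{3\}$ one has $\widetilde{C}_{I_\bullet}\cong\bp^1$ and $T^1(\widetilde{C}_{I_\bullet})=0$, while $\dim\widehat{\widetilde{\cs}}_{I_\bullet}=\tau_\gamma-h_{I_\bullet}=4-3=1$, so the family is versal but not miniversal and $\kappa$ cannot be a bijection. In short, the paper's claim requires only Theorem~\ref{strata local}; your added step addresses a stronger statement that is neither claimed nor needed, and it is left unproved.
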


\begin{rem}

Following the same lines of reasoning as Teissier \cite{teissier resolution}, I-1.3.2, and with the fact that $\tilde{\phi}^{+}_{I_{\bullet}}$ is a finite flat morphism, one can show that $\widetilde{\cc}^{+}_{I_{\bullet}}$ coincides with the normalization of ${\cc}^{+}_{I_{\bullet}}$. 

\end{rem}

\section{The decomposition theorems}

Consider the families
$
f_{n}:\overline{\cp}_{n}\to \cb_{n}, (n,p)=1.
$
In this section, we will work out a decomposition theorem for the complex $Rf_{n,*}\ql$. Pass to the projective limit, we get a similar decomposition for the affine Springer fibers.

\subsection{The decomposition theorem for the finite abelian coverings}

By proposition \ref{smooth picard total}, the total space $\overline{\cp}_{n}$ is smooth over $k$. As $f_{n}$ is projective, the complex $Rf_{n,*}\ql$ is pure by Deligne's Weil-II \cite{weil2}. By the decomposition theorem of Beilinson, Bernstein, Deligne and Gabber \cite{bbdg}, we have
$$
Rf_{n,*}\ql=\bigoplus_{i=0}^{2\delta_{\gamma}} {}^{p}\ch^{i}(Rf_{n,*}\ql)[-i].
$$
We will make explicit the perverse sheaves ${}^{p}\ch^{i}(Rf_{n,*}\ql)$. 


By theorem \ref{pi 0 picard}, a geometric fiber of $f_{n}:\overline{\cp}_{n}\to \cb_{n}$ can have multiple irreducible components only when it lies over the union $\bigcup_{I_{\bullet}\vdash \{1,\cdots,r\}}\cs_{I_{\bullet}, n}$.  
Let $\pi_{I_{\bullet}}^{\circ}: \cc_{I_{\bullet}}^{\circ}:=\cc\times_{\cb}\cs_{I_{\bullet}}^{\circ}\to \cs_{I_{\bullet}}^{\circ}$ be the restriction of the family $\pi:\cc\to \cb$ to $\cs_{I_{\bullet}}^{\circ}$, it is a flat family of projective geometrically irreducible curves with $h_{I_{\bullet}}$ ordinary double points. 
Let $\tilde{\phi}^{\circ}_{I_{\bullet}}:\widetilde{\cc}_{I_{\bullet}}^{\circ}\to \cc_{I_{\bullet}}^{\circ}$ be the normalization, then the family $\tilde{\pi}_{I_{\bullet}}^{\circ}:\widetilde{\cc}_{I_{\bullet}}^{\circ}\to \cs_{I_{\bullet}}^{\circ}$ is a simultaneous resolution of singularities for the family $\pi_{I_{\bullet}}^{\circ}$. 
The pull-back of invertible sheaves along $\tilde{\phi}_{I_{\bullet}}^{\circ}$ defines a morphism
$$
(\tilde{\phi}_{I_{\bullet}}^{\circ})^{*}:\pic_{\cc^{\circ}_{I_{\bullet}}/\cs^{\circ}_{I_{\bullet}}}\to \pic_{\widetilde{\cc}^{\circ}_{I_{\bullet}}/\cs^{\circ}_{I_{\bullet}}},
$$
and this identifies $\pic^{0}_{\widetilde{\cc}^{\circ}_{I_{\bullet}}/\cs^{\circ}_{I_{\bullet}}}$ as the abelian factor of $\pic_{\cc^{\circ}_{I_{\bullet}}/\cs^{\circ}_{I_{\bullet}}}$. 
Let $\widetilde{\ct}^{\circ}_{n,I_{\bullet}}\subset \pic_{\widetilde{\cc}^{\circ}_{I_{\bullet}}/\cs^{\circ}_{I_{\bullet}}}[n]\times_{\cs^{\circ}_{I_{\bullet}}}\cs^{\circ}_{I_{\bullet},n}$ be the image under $(\tilde{\phi}_{I_{\bullet}}^{\circ})^{*}$ of the base change to $\cs^{\circ}_{I_{\bullet},n}$ of $\ct_{n}\subset \pic_{{\cc}/\cb}[n]\times_{\cb} \cb_{n}$. The finite group scheme $\widetilde{\ct}^{\circ}_{n, I_{\bullet}}$ defines a finite abelian covering 
\begin{equation*}
\widetilde{\Phi}^{\circ}_{n,I_{\bullet}}:\widetilde{\cp}^{\circ}_{I_{\bullet},n}\to  \pic^{0}_{\widetilde{\cc}^{\circ}_{I_{\bullet}}/\cs^{\circ}_{I_{\bullet}}}\times_{\cs^{\circ}_{I_{\bullet}}}\cs^{\circ}_{I_{\bullet},n}. 
\end{equation*}
Compose it with the structural morphism to $\cs^{\circ}_{I_{\bullet},n}$, we get an Abelian scheme
\begin{equation*}
\tilde{f}^{\circ}_{I_{\bullet},n}: \widetilde{\cp}^{\circ}_{I_{\bullet}, n}\to \cs^{\circ}_{I_{\bullet},n}.
\end{equation*}
Recall that we have defined a fiberwise dense open subfamily $f_{n}^{\circ}:\cp_{n}\to \cb_{n}$ of $f_{n}$ before proposition \ref{smooth picard total}, which is the $\Lambda^{0}/n$-covering of the base change to $\cb_{n}$ of the family $\cp\to \cb$ determined by $\ct_{n}$. As a corollary of theorem \ref{pi 0 picard} and the proof of theorem \ref{irreducible components}, we have:

\begin{prop}\label{picard si}

The irreducible components of the geometric fibers of $f_{n}:\overline{\cp}_{n}\to \cb_{n}$ over $\cs_{I_{\bullet},n}^{\circ}$ are parametrized by $\mathbb{V}_{I_{\bullet}}/n$. Moreover, the abelian factor of $\cp_{n}|_{\cs_{I_{\bullet},n}^{\circ}}$ can be identified with $\widetilde{\cp}^{\circ}_{I_{\bullet}, n}$.


\end{prop}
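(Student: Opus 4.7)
The first assertion is essentially a direct application of Theorem~\ref{pi 0 picard}. Indeed, $\cs^{\circ}_{I_{\bullet},n}$ is an open subscheme of $\cs_{I_{\bullet},n}$ by Remark~\ref{strata closure}, so any geometric point $s_n$ of $\cs_{I_{\bullet},n}^{\circ}$ is in particular a geometric point of $\cs_{I_{\bullet},n}$, and the parametrization of the irreducible components of $\overline{\cp}_{n,s_n}$ by $\bbv_{I_{\bullet}}/n$ follows. No additional work is needed here.

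For the second assertion, the strategy is to transfer the relative Chevalley d\'evissage through the finite abelian covering. Over $\cs^{\circ}_{I_{\bullet}}$, each fiber of $\pi^{\circ}_{I_{\bullet}}$ has exactly $h_{I_{\bullet}}$ ordinary double points, and the simultaneous normalization $\tilde{\phi}^{\circ}_{I_{\bullet}}$ exists as a morphism of smooth schemes. The pull-back of invertible sheaves defines the relative Chevalley d\'evissage
$$
1\to \cr \to \cp|_{\cs^{\circ}_{I_{\bullet}}} \xrightarrow{(\tilde{\phi}^{\circ}_{I_{\bullet}})^{*}} \pic^{0}_{\widetilde{\cc}^{\circ}_{I_{\bullet}}/\cs^{\circ}_{I_{\bullet}}}\to 1,
$$
where $\cr$ is a relative torus of rank $h_{I_{\bullet}}$, identifying $\pic^{0}_{\widetilde{\cc}^{\circ}_{I_{\bullet}}/\cs^{\circ}_{I_{\bullet}}}$ as the abelian factor of $\cp|_{\cs^{\circ}_{I_{\bullet}}}$. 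Base changing to $\cs^{\circ}_{I_{\bullet},n}$ preserves this d\'evissage.

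The next step is to analyse how the $\Lambda^{0}/n$-covering interacts with this d\'evissage. By Theorem~\ref{curve to jacobian} (applied relatively over $\cs^{\circ}_{I_{\bullet},n}$), the covering $\cp_{n}|_{\cs^{\circ}_{I_{\bullet},n}} \to \cp\times_{\cb}\cs^{\circ}_{I_{\bullet},n}$ is classified by the finite subgroup scheme $\ct_{n}|_{\cs^{\circ}_{I_{\bullet},n}}$ of $\pic_{\cc^{\circ}_{I_{\bullet}}/\cs^{\circ}_{I_{\bullet}}}[n]\times_{\cs^{\circ}_{I_{\bullet}}}\cs^{\circ}_{I_{\bullet},n}$. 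Applying Theorem~\ref{curve to jacobian} again to the quotient morphism to $\pic^{0}_{\widetilde{\cc}^{\circ}_{I_{\bullet}}/\cs^{\circ}_{I_{\bullet}}}$, the abelian factor of $\cp_{n}|_{\cs^{\circ}_{I_{\bullet},n}}$ is by definition the finite abelian covering of $\pic^{0}_{\widetilde{\cc}^{\circ}_{I_{\bullet}}/\cs^{\circ}_{I_{\bullet}}}\times_{\cs^{\circ}_{I_{\bullet}}}\cs^{\circ}_{I_{\bullet},n}$ associated to the image of $\ct_{n}|_{\cs^{\circ}_{I_{\bullet},n}}$ under $(\tilde{\phi}^{\circ}_{I_{\bullet}})^{*}$, which is precisely $\widetilde{\ct}^{\circ}_{n,I_{\bullet}}$. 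This is exactly the covering $\widetilde{\Phi}^{\circ}_{n,I_{\bullet}}:\widetilde{\cp}^{\circ}_{I_{\bullet},n}\to \pic^{0}_{\widetilde{\cc}^{\circ}_{I_{\bullet}}/\cs^{\circ}_{I_{\bullet}}}\times_{\cs^{\circ}_{I_{\bullet}}}\cs^{\circ}_{I_{\bullet},n}$ by definition of the latter.

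The main subtlety here is that $\cp_{n}|_{\cs^{\circ}_{I_{\bullet},n}}$ has several connected components (parametrized by $\Ker(\phi^{*}|_{\ct_{n}})$ per Theorem~\ref{irreducible components}), and one must verify at the fiber level that the identity component indeed yields the Chevalley decomposition whose abelian quotient matches $\widetilde{\cp}^{\circ}_{I_{\bullet},n}$. This follows from the analysis already carried out in the proof of Theorem~\ref{irreducible components}, where the restriction to $J_{\cc_{s}}$ was shown to factor as a trivial $\Ker(\phi^{*}|_{\ct_{n,s_{n}}})$-covering of the covering of $J_{\cc_{s}}$ pulled back from the covering of $J_{\widetilde{\cc}_{s}}$ determined by $\phi^{*}(\ct_{n,s_{n}})^{\red}=\widetilde{\ct}^{\circ}_{n,I_{\bullet},s_{n}}$. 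Combining this fiberwise identification with the globalization afforded by the eigensheaf construction of Theorem~\ref{curve to jacobian} yields the canonical identification of the abelian factor of $\cp_{n}|_{\cs^{\circ}_{I_{\bullet},n}}$ with $\widetilde{\cp}^{\circ}_{I_{\bullet},n}$.
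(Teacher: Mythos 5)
Your proposal is correct and follows the same route the paper implicitly takes: the first assertion is exactly Theorem~\ref{pi 0 picard}, and the second is extracted from the structure of the covering established in the proof of Theorem~\ref{irreducible components}, namely that the covering of $J_{\cc_{s}}$ is a trivial $\Ker(\phi^{*}|_{\ct_{n,s_{n}}})$-covering of the pullback along $\phi_s^{*}$ of the covering of $J_{\widetilde{\cc}_{s}}$ determined by the (reduced) image of $\ct_{n,s_n}$, so that the abelian quotient of the neutral component is that covering. The only quibble is that invoking Theorem~\ref{curve to jacobian} for the quotient step in the middle paragraph is a misattribution — that theorem concerns transfer of coverings between curves and their compactified Jacobians, not compatibility with the Chevalley quotient — but your final paragraph supplies the correct justification, so the argument stands.
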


Let $\cf_{I_{\bullet},n}^{i}=R^{i}(\tilde{f}^{\circ}_{I_{\bullet},n})_{*}\ql$ and we simplify $\cf_{I_{\bullet}}^{i}=\cf_{I_{\bullet},1}^{i}$. As $\widetilde{\Phi}^{\circ}_{n,I_{\bullet}}$ is an isogeny of abelian schemes, we have actually $\cf_{I_{\bullet},n}^{i}=\varpi_{n}^{*}\cf_{I_{\bullet}}^{i}$. Moreover, we have $\cf_{I_{\bullet}}^{i}=\bigwedge^{i}\cf_{I_{\bullet}}^{1}$ and $\cf_{I_{\bullet}}^{1}\cong R^{1}\tilde{\pi}_{I_{\bullet},*}^{\circ}\ql$.

\begin{thm}\label{support variant}

For the family $f_{n}:\overline{\cp}_{n}\to \cb_{n}$, we have 
$$
Rf_{n,*}\ql=\bigoplus_{i=0}^{2\delta_{\gamma}} j_{n,!*}(R^{\,i}f^{\sm}_{n,*}\ql)[-i]
\oplus 
\bigoplus_{\substack{I_{\bullet} \text{ partition of}\\ \{1,\cdots,r\}}} 
\bigoplus_{i'=0}^{2(\delta_{\gamma}-h_{I_{\bullet}})}
\bigg\{(j_{I_{\bullet},n})_{!*}\cf_{I_{\bullet},n}^{i'}(-h_{I_{\bullet}})[-i'-2h_{I_{\bullet}}]\bigg\}^{\oplus\, |(\bbv_{I\bullet}/n)^{\circ}|},
$$
where $j_{n}:\cb_{n}^{\sm}\to \cb_{n}$ and $j_{I_{\bullet},n}:\cs_{I_{\bullet},n}^{\circ}\to \cs_{I_{\bullet},n}$ are the natural inclusions.

\end{thm}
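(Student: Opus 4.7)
The plan is to apply Ng\^o's support theorem (Theorem~\ref{ngo support main}) to the weak abelian fibration formed by $\cp_n$ acting on $\overline{\cp}_n$ over $\cb_n$, with three essential inputs: the smoothness of $\overline{\cp}_n/k$ (Proposition~\ref{smooth picard total}), the explicit determination of the loci of reducible fibers (Theorem~\ref{pi 0 picard}), and the identification of the abelian quotient of the relative Jacobian on each stratum (Proposition~\ref{picard si}).

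First, since $\overline{\cp}_n$ is smooth and $f_n$ is proper, $Rf_{n,*}\ql$ is pure and semisimple, so it decomposes as
\[
Rf_{n,*}\ql = \bigoplus_{i} {}^{p}\ch^{i}(Rf_{n,*}\ql)[-i],
\]
and each perverse cohomology is a direct sum of intermediate extensions. I then need to determine the supports. The pair $(\cp_n, \overline{\cp}_n)$ is a weak abelian fibration over $\cb_n$ in the sense of Ng\^o; the stabilizers of the $\cp_n$-action are affine and $\rT_{\ql}(\cp_n)$ is polarizable (inherited from the family $f:\overline{\cp}\to\cb$ via the \'etale morphism $\varpi_n$). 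Hence Theorem~\ref{ngo support main} gives for any support $s_n$ the inequality
\[
\delta_\gamma - \delta^{\aff}_{s_n} \;\le\; \tfrac{1}{2}\amp_{s_n}(Rf_{n,*}\ql) \;\le\; \delta_\gamma - \codim_{\cb_n}(\overline{\{s_n\}}).
\]
The Severi-type inequality of Diaz--Harris applied to the family $\cc_n\to\cb_n$, combined with proposition~\ref{finite cover smooth} giving that $\cb_n\to \cb^\circ$ is \'etale, yields $\codim_{\cb_n}(\overline{\{s_n\}})\ge \delta(\cc_{n,s_n}) = \delta^{\aff}_{s_n}$, so both inequalities are equalities at any support. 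The second part of Theorem~\ref{ngo support main} then forces the extra local systems at $s_n$ to appear in the form $\boldsymbol{\Lambda}^{\bullet}_{s_n}\otimes \cl$ with $\boldsymbol{\Lambda}^{\rm top}_{s_n}\otimes \cl$ a direct summand of $(R^{2\delta_\gamma}f_{n,*}\ql)_{s_n}$; in particular, each such summand corresponds to an irreducible component of the geometric fiber $\overline{\cp}_{n,s_n}$ which does not extend to an irreducible component of a nearby, more generic fiber.

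Next, Theorem~\ref{pi 0 picard} says that the locus of reducible fibers of $f_n$ is exactly $\bigcup_{I_\bullet} \cs_{I_\bullet,n}$, and over a geometric point of $\cs^\circ_{I_\bullet,n}$ the components are indexed by $\bbv_{I_\bullet}/n$. The components that first appear over $\cs^\circ_{I_\bullet,n}$, i.e.\ do not already exist on a stratum $\cs^\circ_{I'_\bullet,n}$ with $I_\bullet \vdash I'_\bullet$ strictly coarser, are precisely indexed by $(\bbv_{I_\bullet}/n)^\circ$ (see Remark~\ref{strata closure}(2)). Each such component is the support of a distinct extra summand of ${}^p\ch^\bullet(Rf_{n,*}\ql)$ with generic point in $\cs^\circ_{I_\bullet,n}$. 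To identify the associated local system, I use Proposition~\ref{picard si}: the abelian quotient of $\cp_n|_{\cs^\circ_{I_\bullet,n}}$ is the abelian scheme $\tilde f^\circ_{I_\bullet,n}: \widetilde{\cp}{}^\circ_{I_\bullet,n}\to \cs^\circ_{I_\bullet,n}$, so $\boldsymbol{\Lambda}^\bullet_{s_n} \cong \bigoplus_{i'} \cf^{i'}_{I_\bullet,n}$ in a neighbourhood of the generic point. Combined with the equality case of the amplitude inequality, which contributes a Tate twist $(-h_{I_\bullet})$ and shift $[-2h_{I_\bullet}]$ reflecting the weight of the toric part $\rT_{\ql}(R_{s_n})$ of dimension $\dim R_{s_n} = h_{I_\bullet}$, each extra component contributes precisely one copy of $(j_{I_\bullet,n})_{!*}\cf^{i'}_{I_\bullet,n}(-h_{I_\bullet})[-i'-2h_{I_\bullet}]$ for each $i' = 0,\ldots, 2(\delta_\gamma - h_{I_\bullet})$. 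Summing over the $|(\bbv_{I_\bullet}/n)^\circ|$ new components and over all partitions $I_\bullet$ yields the formula; the remaining contributions (generic point of $\cb_n$ together with components that do extend generically) assemble into the main term $\bigoplus_i j_{n,!*}(R^i f^{\sm}_{n,*}\ql)[-i]$.

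The main obstacle is the bookkeeping for extra components across nested strata: I must check that the combinatorial notion $(\bbv_{I_\bullet}/n)^\circ$ truly counts each new irreducible component exactly once, without double-counting between a stratum and its specialisations. Concretely, this requires verifying that the specialization maps between the schemes of geometric irreducible components of fibers, induced by $\cs^\circ_{I_\bullet,n} \rightsquigarrow \cs^\circ_{I'_\bullet,n}$ for $I_\bullet \vdash I'_\bullet$, are compatible with the natural maps $\bbv_{I'_\bullet}/n \hookrightarrow \bbv_{I_\bullet}/n$ coming from the inclusions of subgroups of $\bbv$; this compatibility, together with Theorem~\ref{strata} which identifies the local geometry along $\cs_{I_\bullet}$ with a versal deformation of the partial resolution $\widetilde{C}_{I_\bullet}$, will guarantee that each summand appears in the decomposition with the correct multiplicity.
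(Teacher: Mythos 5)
Your strategy matches the paper's: apply Ng\^o's support theorem to the weak abelian fibration $(\cp_n,\overline{\cp}_n)$ over $\cb_n$, use the Severi inequality to force both bounds to become equalities, invoke Theorem~\ref{pi 0 picard} to locate the extra supports in $\cs^\circ_{I_\bullet,n}$, and identify the abelian factor via Proposition~\ref{picard si}. (One small correction: the Diaz--Harris inequality should be applied to the original family $\pi:\cc\to\cb$, whose fibers are geometrically integral projective curves with planar singularities; the fibers of $\cc_n\to\cb_n$ can be reducible, so the inequality is not directly applicable there. One then transfers the codimension bound to $\cb_n$ via the \'etale morphism $\varpi_n$.)

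However, there is a genuine gap: you never establish that the sheaf $(R^{2\delta_\gamma}f_{n,*}\ql)|_{\cs^\circ_{I_\bullet,n}}$ is \emph{constant}. Ng\^o's Theorem~\ref{ngo support main} only tells you that the extra local systems attached to a support $s_n$ appear in the form $\boldsymbol{\Lambda}^{\bullet}_{s_n}\otimes\cl$ with $\boldsymbol{\Lambda}^{\rm top}_{s_n}\otimes\cl$ a summand of $(R^{2\delta_\gamma}f_{n,*}\ql)|_{\cs^\circ_{I_\bullet,n}}$; a priori $\cl$ could be a non-trivial rank-one local system on $\cs^\circ_{I_\bullet,n}$, in which case the monodromy would permute the irreducible components of the nearby fibers and the contribution would \emph{not} split as a direct sum of copies of $(j_{I_\bullet,n})_{!*}\cf^{i'}_{I_\bullet,n}(-h_{I_\bullet})[-i'-2h_{I_\bullet}]$. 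Your phrase ``each extra component contributes precisely one copy'' presupposes this constancy rather than proving it. The paper closes this gap by observing that $\ct_n$ is by construction a \emph{constant} finite group scheme over $\cb_n$, and then using Theorem~\ref{irreducible components} to get, for every geometric point $s_n$ of $\cs^\circ_{I_\bullet,n}$, a natural injection of the set of irreducible components of $\overline{\cp}_{n,s_n}$ (parametrized by $\Ker(\phi^*_{\bar s_n}|_{\ct_{n,s_n}})$) into the set of components of $\overline{\cp}_{n,0_n}$; this kills any possible monodromy on $(R^{2\delta_\gamma}f_{n,*}\ql)|_{\cs^\circ_{I_\bullet,n}}$, so that $\cl$ must indeed be a constant Tate twist.

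You also flag the multiplicity bookkeeping as an open obstacle without resolving it. The paper handles it by a subtractive count using the stratum closure relation of Remark~\ref{strata closure}(2): the rank $|\bbv_{I_\bullet}/n|$ of the constant sheaf on $\cs^\circ_{I_\bullet,n}$ is accounted for by the contributions from the coarser strata $\cs^\circ_{I'_\bullet,n}$ with $I_\bullet\vdash I'_\bullet$ (which already carry $|\bbv_{I'_\bullet}/n|$ components), leaving exactly $|(\bbv_{I_\bullet}/n)^\circ|$ new ones. Since this multiplicity is one of the substantive outputs of the theorem, this computation needs to be carried out, not just announced.
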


\begin{proof}

We follow the strategy of Ng\^o, as briefly recalled in \S\ref{old support}.
It is clear that the action $\cp_{n}\times_{\cb_{n}} \overline{\cp}_{n}\to \overline{\cp}_{n}$ forms a weak abelian fibration. 
For each point $s_{n}\in \cb_{n}$, let $\delta_{s_{n}}^{\aff}$ be the dimension of the maximal connected affine sub group scheme of $\cp_{n,\,\bar{s}_{n}}$. By theorem \ref{ngo support main}, any support $s_{n}$ of $Rf_{n,*}\ql$ satisfies the inequality
\begin{equation}\label{ngo inequality}
\delta_{\gamma}-\delta_{s_{n}}^{\aff}\le \frac{1}{2}\amp_{s_{n}}(Rf_{n,*}\ql)\le \delta_{\gamma}-\codim_{\cb_{n}}(\{s_{n}\}).
\end{equation}
Let $s=\varpi_{n}(s_{n})$. As $\cp_{n}\to \cp\times_{\cb}\cb_{n}$ is finite \'etale, we have
$$
\delta_{s_{n}}^{\aff}=\delta(\cc_{s}). 
$$
On the other hand, we have the inequality of Severi
\begin{equation}\label{severi n}
\codim_{\cb}(\{s\})\ge \delta(\cc_{s}).
\end{equation}
It is clear that $\codim_{\cb_{n}}(\{s_{n}\})=\codim_{\cb}(\{s\})$.
The inequalities (\ref{ngo inequality}) and (\ref{severi n}) imply that both must be equality. Hence the extra support $\{s_{n}\}$ of $Rf_{n,*}\ql$ must be among the preimages in $\cb_{n}$ of the strict $\delta$-strata in $\cb$. 
Moreover, any extra local system over $s_{n}$ appears in the form $\boldsymbol{\Lambda}_{s_{n}}^{\bullet}\otimes \cl$, $\cl$ of degree $0$, such that the top degree piece $\boldsymbol{\Lambda}_{s_{n}}^{\rm top}\otimes \cl$ appears as direct summands of $(R^{2\delta_{\gamma}}f_{n,*}\ql)_{s_{n}}$, where $\boldsymbol{\Lambda}_{s_{n}}^{\bullet}$ is the cohomological ring of the abelian part of $\cp_{n,\bar{s}_{n}}$.

By theorem \ref{pi 0 picard}, for the geometric fibers of $f_{n}$ to have multiple irreducible components over $\overline{\{s_{n}\}}$, we need $s_{n}$ to lie in the union $\bigcup_{I_{\bullet}} \cs_{I_{\bullet},n}$. The extra constraint $\delta(\cc_{s})=\codim_{\cb}(\{s\})$ implies that $s_{n}$ must be $\cs_{I_{\bullet},n}^{\circ}$ for some partition $I_{\bullet}$. Moreover, we have $(R^{2\delta_{\gamma}}f_{n,*}\ql)_{\bar{s}_{n}}=\ql(-\delta_{\gamma})^{\bbv_{I_{\bullet}}/n}$ by the same theorem.
We claim that $(R^{2\delta_{\gamma}}f_{n,*}\ql)|_{\cs_{I_{\bullet}, n}^{\circ}}$ is constant. 
By theorem \ref{irreducible components}, the irreducible components of $\overline{\cp}_{n,s_{n}}$ are naturally parametrized by $\Ker(\phi_{\bar{s}_{n}}^{*}|_{\ct_{n,s_{n}}})$, where $\phi_{\bar{s}_{n}}:\widetilde{\cc}_{\bar{s}_{n}}\to {\cc}_{\bar{s}_{n}}$ is the normalization of ${\cc}_{\bar{s}_{n}}$ and $\phi_{\bar{s}_{n}}^{*}$ is the induced morphism between their Jacobians.
Compare with the parametrization of irreducible components of $\overline{\cp}_{n,0}$, as $\ct_{n}$ is constant over $\cb_{n}$, we get an {injection} from the set of irreducible components of $\overline{\cp}_{n,s_{n}}$ into that of $\overline{\cp}_{n,0}$. Hence the sheaf $(R^{2\delta_{\gamma}}f_{n,*}\ql)|_{\cs_{I_{\bullet}, n}^{0}}$ can not have non-trivial monodromy and has to be constant, so
$$
(R^{2\delta_{\gamma}}f_{n,*}\ql)|_{\cs_{I_{\bullet}, n}^{\circ}}=\ql(-\delta_{\gamma})^{\bbv_{I_{\bullet}}/n}.
$$

By proposition \ref{picard si}, the abelian factor of $\cp_{n}|_{\cs_{I_{\bullet},n}^{\circ}}$ can be identified with $\widetilde{\cp}^{\circ}_{I_{\bullet}, n}$. Hence we can identify $\boldsymbol{\Lambda}_{s_{n}}^{\bullet}$ with the fiber $\big(\bigoplus_{i}\cf^{i}_{I_{\bullet}, n}\big)_{s_{n}}$. 
Now that $\boldsymbol{\Lambda}_{s_{n}}^{\rm top}\otimes \cl$ appears as direct summands of $(R^{2\delta_{\gamma}}f_{n,*}\ql)_{s_{n}}$, and that $\boldsymbol{\Lambda}_{s_{n}}^{\rm top}=\cf^{2(\delta_{\gamma}-h_{I_{\bullet}})}_{I_{\bullet}, n}=\ql(-\delta_{\gamma}+h_{I_{\bullet}})$, the local system $\cl$ has to be $\ql(-h_{I_{\bullet}})^{\oplus \,m}$ for some multiplicity $m$. 
The multiplicity can be calculated by subtracting from $\ql(-\delta_{\gamma})^{\bbv_{I_{\bullet}}/n}|_{\cs^{\circ}_{I_{\bullet},n}}$  the contributions from the intermediate extensions of similar local systems over the stratum $\cs^{\circ}_{I_{\bullet}'}$ having $\cs^{\circ}_{I_{\bullet}}$ in the closure. 
The stratum closure relation has been explained in remark \ref{strata closure}, and the multiplicity $m$ can be easily calculated to be $|(\bbv_{I\bullet}/n)^{\circ}|$.

\end{proof}

We call the first summand in the theorem the \emph{main term} for the decomposition of $Rf_{n,*}\ql$. 
We will show that the remaining terms are of the same nature as that of the main term.

We use the notations of \S\ref{reduction over si}. 
Let $I_{\bullet}$ be a non-trivial partition of $\{1,\cdots, r\}$ of length $l$. 
Recall that for the normalization $\varphi_{I_{\bullet}}:\widetilde{\cs}_{I_{\bullet}}\to \cs_{I_{\bullet}}$  we have constructed a simultaneous partial resolution $\tilde{\phi}_{I_{\bullet}}^{+}: \widetilde{\cc}^{+}_{I_{\bullet}}\to {\cc}^{+}_{I_{\bullet}}$ and get a flat family of projective curves $\tilde{\pi}^{+}_{I_{\bullet}}: \widetilde{\cc}^{+}_{I_{\bullet}}\to \widetilde{\cs}_{I_{\bullet}}$, which is an algebraization of a versal deformation of the partial resolution $\widetilde{C}_{I_{\bullet}}$ by theorem \ref{strata}. 
The restriction of $\varphi_{I_{\bullet}}$ to $\widetilde{\cs}_{I_{\bullet}}^{\circ}$ is an isomorphism, and with it we can identify  the restriction of the family $\tilde{\pi}_{I_{\bullet}}^{+}$ to $\widetilde{\cs}_{I_{\bullet}}^{\circ}$ with the family $\tilde{\pi}_{I_{\bullet}}^{\circ}$.

Let $\tilde{f}_{I_{\bullet}}^{+}:\widetilde{\cp}_{I_{\bullet}}^{+}\to \widetilde{\cs}_{I_{\bullet}}$ be the relative compactified Jacobian of the family $\tilde{\pi}^{+}_{I_{\bullet}}: \widetilde{\cc}^{+}_{I_{\bullet}}\to \widetilde{\cs}_{I_{\bullet}}$. The partial resolution $\tilde{\phi}_{I_{\bullet}}^{+}: \widetilde{\cc}^{+}_{I_{\bullet}}\to {\cc}^{+}_{I_{\bullet}}$ induces a morphism
$$
(\tilde{\phi}_{I_{\bullet}}^{+})^{*}:\pic_{\cc^{+}_{I_{\bullet}}/\widetilde{\cs}_{I_{\bullet}}}\to \pic_{\widetilde{\cc}^{+}_{I_{\bullet}}/\widetilde{\cs}_{I_{\bullet}}}.
$$ 
Let $\widetilde{\cs}_{I_{\bullet},n}=\widetilde{\cs}_{I_{\bullet}}\times_{\cs_{I_{\bullet}}}\cs_{I_{\bullet}, n}$. 
Note that $\widetilde{\cs}_{I_{\bullet},n}^{\circ}:=\widetilde{\cs}_{I_{\bullet},n}\times_{\cs_{I_{\bullet}, n}}\cs_{I_{\bullet},n}^{\circ}$ coincides with $\cs_{I_{\bullet},n}^{\circ}$ and $\widetilde{\cs}_{I_{\bullet},n}$ is the normalization of ${\cs}_{I_{\bullet},n}$ as $\cs_{I_{\bullet}, n}$ is \'etatle over $\cs_{I_{\bullet}}$.
Let
$$\widetilde{\ct}^{+}_{n,I_{\bullet}}\subset \pic_{\widetilde{\cc}^{+}_{I_{\bullet}}/\widetilde{\cs}_{I_{\bullet}}}[n]\times_{\widetilde{\cs}_{I_{\bullet}}}\widetilde{\cs}_{I_{\bullet},n}
$$
be the image under $(\tilde{\phi}_{I_{\bullet}}^{+})^{*}$ of the base change to $\widetilde{\cs}_{I_{\bullet},n}$ of $\ct_{n}\subset \pic_{{\cc}/\cb}[n]\times_{\cb} \cb_{n}$. 
Then $\widetilde{\ct}^{+}_{n,I_{\bullet}}$ defines a finite abelian covering 
\begin{equation*}
\widetilde{\Phi}^{+}_{n,I_{\bullet}}:\widetilde{\cp}^{+}_{I_{\bullet},n}\to  \widetilde{\cp}^{+}_{I_{\bullet}}\times_{\widetilde{\cs}_{I_{\bullet}}}\widetilde{\cs}_{I_{\bullet},n}. 
\end{equation*}
Compose it with the structural morphism to $\widetilde{\cs}_{I_{\bullet},n}$, we get a family
\begin{equation*}
\tilde{f}^{+}_{I_{\bullet},n}: \widetilde{\cp}^{+}_{I_{\bullet}, n}\to \widetilde{\cs}_{I_{\bullet},n}.
\end{equation*}
By construction, the restriction of $\widetilde{\ct}^{+}_{n,I_{\bullet}}$ to $\widetilde{\cs}_{I_{\bullet},n}^{\circ}$ coincides with $\widetilde{\ct}^{\circ}_{n,I_{\bullet}}$, and the restriction of $\tilde{f}^{+}_{I_{\bullet},n}$ to $\widetilde{\cs}_{I_{\bullet},n}^{\circ}$ coincides with  $\tilde{f}^{\circ}_{I_{\bullet},n}$.

By theorem \ref{strata}, $\tilde{\pi}^{+}_{I_{\bullet}}: \widetilde{\cc}^{+}_{I_{\bullet}}\to \widetilde{\cs}_{I_{\bullet}}$ is an algebraization of a versal deformation of $\widetilde{C}_{I_{\bullet}}$. 
With the same lines of reasoning as theorem \ref{support variant}, we get a decomposition theorem for the family $\tilde{f}^{+}_{I_{\bullet},n}$ in the form
\begin{equation}\label{decomposition si tilde}
R\big(\tilde{f}^{+}_{I_{\bullet},n}\big)_{*}\ql=\bigoplus_{i=0}^{2(\delta_{\gamma}-h_{I_{\bullet}})} \Big\{\big(\tilde{j}^{+}_{I_{\bullet},n}\big)_{!*}R^{\,i}\big(\tilde{f}^{+, \sm}_{I_{\bullet},n}\big)_{*}\ql\Big\}[-i]
\oplus 
\bigoplus \text{\it Remaining terms},
\end{equation}
where $\tilde{j}^{+}_{I_{\bullet},n}: \widetilde{\cs}_{I_{\bullet},n}^{\circ}\to \widetilde{\cs}_{I_{\bullet},n}$ is the inclusion. As the restriction of $\tilde{f}^{+}_{I_{\bullet},n}$ to $\widetilde{\cs}_{I_{\bullet},n}^{\circ}$ coincides with  $\tilde{f}^{\circ}_{I_{\bullet},n}$, we can identify
$$
R^{\,i}\big(\tilde{f}^{+, \sm}_{I_{\bullet},n}\big)_{*}\ql=\cf_{I_{\bullet},n}^{i}.
$$
Let ${\varphi}_{I_{\bullet},n}: \widetilde{\cs}_{I_{\bullet},n}\to \cs_{I_{\bullet}, n}$ be the structural morphism, it is a finite morphism as it is the base change of the normalization $\varphi_{I_{\bullet}}:\widetilde{\cs}_{I_{\bullet}}\to  \cs_{I_{\bullet}}$. 
Hence we have
\begin{equation}\label{push down along normalization}
(j_{I_{\bullet}, n})_{!*}\cf_{I_{\bullet},n}^{i}=\big({\varphi}_{I_{\bullet},n}\big)_{*} 
\big(\tilde{j}^{+}_{I_{\bullet},n}\big)_{!*}R^{\,i}\big(\tilde{f}^{+, \sm}_{I_{\bullet},n}\big)_{*}\ql
\end{equation}

On the other hand, we can exploit the fact that the curve $\widetilde{C}_{I_{\bullet}}$ is projective rational with singularities at $c_{1},\cdots,c_{l}$, such that $\widehat{\co}_{\widetilde{C}_{I_{\bullet}},c_{j}}\cong \co[\gamma_{I_{j}}]$ for all $j$, to get a factorization of the decomposition (\ref{decomposition si tilde}). 
For $I\subsetneq \{1,\cdots,r\}$, let $C_{\gamma_{I}}$ be the spectral curve of $\gamma_{I}$, let $A_{I}=\co[\gamma_{I}]$ and $E_{I}$ the field of fractions of $A_{I}$.  
Let $\pi_{I}:\cc_{I}\to \cb_{I}$ be an algebraization of a miniversal deformation of $C_{\gamma_{I}}$, and let $f_{I}:\overline{\cp}_{I}\to \cb_{I}$ be its relative compactified Jacobian.
Let $j_{I}:\cb_{I}^{\rm sm}\to \cb_{I}$ be the inclusion and let $\cf_{I}^{i}=R^{i}f_{I,*}^{\sm}\ql$. 
We define $f_{I,n}:\overline{\cp}_{I,n}\to \cb_{I,n}$ and denote $j_{I,n}:\cb_{I,n}^{\rm sm}\to \cb_{I,n}$ as before, let $\cf_{I,n}^{i}=R^{i}(f_{I,n}^{\sm})_{*}\ql$. 
Let $M_{I_{\bullet}}$ be the Levi subgroup of $\gl_{d}$ defined by 
\begin{equation*}
M_{I\bullet}=\gl_{I_{1}}\times \cdots \times \gl_{I_{l}}=\prod_{j=1}^{l}\gl(E_{I_{j}}).
\end{equation*}
It is clear that $\gamma$ belongs to the Lie algebra of $M_{I\bullet}$, and we have the affine Springer fiber 
\begin{equation*} 
\xx_{\gamma}^{I_{\bullet}}:=\xx_{\gamma}^{M_{I_{\bullet}}}=\xx_{\gamma_{I_{1}}}\times\cdots\times \xx_{\gamma_{I_{l}}}.
\end{equation*}
Let $\Lambda_{I_{j}}^{0}$ be the free discret abelian group acting on $\xx_{\gamma_{I_{j}}}^{0}$. 
As explained in \S\ref{review laumon}, we have the radical finite surjective morphisms
$$
\Lambda_{I_{j}}^{0}\backslash \xx_{\gamma_{I_{j}}}^{0}\to \overline{P}_{C_{\gamma_{I_{j}}}}, \quad  j=1,\cdots, l,$$
and
\begin{equation*}
\big(\Lambda_{I_{1}}^{0}\backslash \xx_{\gamma_{I_{1}}}^{0}\big)\times\cdots\times \big(\Lambda_{I_{l}}^{0}\backslash \xx_{\gamma_{I_{l}}}^{0}\big)\to \overline{P}_{\widetilde{C}_{I_{\bullet}}}.
\end{equation*}
Hence we get the factorization
\begin{equation}\label{factorize cj si}
\overline{P}_{\widetilde{C}_{I_{\bullet}}} \cong \overline{P}_{C_{\gamma_{I_{1}}}}\times\cdots\times \overline{P}_{C_{\gamma_{I_{l}}}}.
\end{equation}
This factorization property is compatible with the local-global property of the deformation theory of curves and their compactified Jacobians.
Hence the formal smooth morphism $\widehat{\widetilde{\cs}}_{I_{\bullet}}\to \prod_{j=1}^{l}\defm^{\rm top}_{\widehat{C}_{I_{j}}}$ of theorem \ref{strata local} is compatible with the above factorization of $\overline{P}_{\widetilde{C}_{I_{\bullet}}}$. This implies the factorization
$$
\big(R\tilde{f}^{+}_{I_{\bullet}, *}\ql\big)_{\tilde{0}}=(R{f}_{I_{1},*}\ql)_{0}\otimes\cdots\otimes (R{f}_{I_{l},*}\ql)_{0}.
$$
The factorization (\ref{factorize cj si}), restricted to the Jacobian of the curve, induces a factorization of the finite group scheme $\widetilde{\ct}^{+}_{n,I_{\bullet}}$. Each factor determines a $\Lambda_{I_{j}}^{0}/n$-covering $f_{I_{j},n}: \overline{\cp}_{I_{j}, n}\to \cb_{I_{j},n}$ of the family $f_{I_{j}}: \overline{\cp}_{I_{j}}\to \cb_{I_{j}}$. 
Hence we get the factorization
$$
\big(R(\tilde{f}^{+}_{I_{\bullet}, n})_{*}\ql\big)_{\tilde{0}_{n}}=\big(R({f}_{I_{1},n})_{*}\ql\big)_{0_{n}}\otimes\cdots\otimes \big(R({f}_{I_{l},n}\big)_{*}\ql)_{0_{n}}.
$$
Compare the main term of the decomposition (\ref{decomposition si tilde}) and those of the decomposition of $R({f}_{I_{j},n}\big)_{*}\ql$ given by theorem \ref{support variant}, we get the factorization
$$
\Big\{\big(\tilde{j}^{+}_{I_{\bullet},n}\big)_{!*}R\big(\tilde{f}^{+, \sm}_{I_{\bullet},n}\big)_{*}\ql\Big\}_{\tilde{0}_{n}}
=\big((j_{I_{1}, n})_{!*} R(f^{\rm sm}_{I_{1},n})_{*}\ql\big)_{0_{n}}\otimes \cdots \otimes \big((j_{I_{l}, n})_{!*} R(f^{\rm sm}_{I_{l},n})_{*}\ql\big)_{0_{n}}.
$$
With the equation (\ref{push down along normalization}) and the fact that there is a unique point $\tilde{0}\in \widetilde{\cs}_{I_{\bullet}}$ lying over $0\in {\cs}_{I_{\bullet}}$, we conclude:

\begin{thm}\label{reduction theorem}
With the above notations, we have 
$$
((j_{I_{\bullet}, n})_{!*}\cf_{I_{\bullet},n}^{i})_{0_{n}}=\bigoplus_{i_{1}+\cdots+i_{l}=i}
\big((j_{I_{1}, n})_{!*} \cf^{i_{1}}_{I_{1},n}\big)_{0_{n}}\otimes \cdots \otimes \big((j_{I_{l}, n})_{!*} \cf^{i_{l}}_{I_{l},n}\big)_{0_{n}}.
$$
\end{thm}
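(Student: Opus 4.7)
The plan is to build the identity out of three inputs already established in the excerpt: (i) the Künneth-type stalk factorization
\[
\big(R(\tilde{f}^{+}_{I_{\bullet},n})_{*}\ql\big)_{\tilde{0}_{n}}
=\big(R(f_{I_{1},n})_{*}\ql\big)_{0_{n}}\otimes\cdots\otimes \big(R(f_{I_{l},n})_{*}\ql\big)_{0_{n}},
\]
(ii) the decomposition of Theorem \ref{support variant} applied separately to $\tilde{f}^{+}_{I_{\bullet},n}$ and to each $f_{I_{j},n}$, and (iii) the identification
\[
(j_{I_{\bullet},n})_{!*}\cf^{i}_{I_{\bullet},n}=(\varphi_{I_{\bullet},n})_{*}(\tilde{j}^{+}_{I_{\bullet},n})_{!*}\cf^{i}_{I_{\bullet},n}
\]
of equation (\ref{push down along normalization}), combined with the fact that $\tilde{0}_{n}$ is the unique preimage of $0_{n}$ under the finite morphism $\varphi_{I_{\bullet},n}$, which reduces the stalk of the left-hand side to the stalk of $(\tilde{j}^{+}_{I_{\bullet},n})_{!*}\cf^{i}_{I_{\bullet},n}$ at $\tilde{0}_{n}$.

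First I would establish that the stalk factorization (i) is not merely an identity of graded vector spaces but respects the BBD decompositions on each side. The formally smooth morphism $\widehat{\widetilde{\cs}}_{I_{\bullet}}\to \prod_{j=1}^{l}\defm^{\mathrm{top}}_{\widehat{C}_{I_{j}}}$ from Theorem \ref{strata local} identifies the stratification of $\widetilde{\cs}_{I_{\bullet},n}$ by the loci where the deformation has prescribed singularity type with the product of the analogous stratifications in the $\cb_{I_{j},n}$: in particular, the open smooth stratum $\widetilde{\cs}_{I_{\bullet},n}^{\circ}$ pulls back from $\prod_{j}\cb_{I_{j},n}^{\mathrm{sm}}$, and the strata over which geometric fibers acquire more irreducible components on the left correspond to products of similar strata on the right. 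Consequently, the intersection cohomology complex of an exterior tensor product on $\prod_{j}\cb_{I_{j},n}$, restricted via the smooth map, equals the intersection cohomology on $\widetilde{\cs}_{I_{\bullet},n}$ of the corresponding pulled-back local system. This ensures that the main terms $(\tilde{j}^{+}_{I_{\bullet},n})_{!*}\cf^{i}_{I_{\bullet},n}[-i]$ and $\bigotimes_{j}(j_{I_{j},n})_{!*}\cf^{i_{j}}_{I_{j},n}[-i_{j}]$ match, and likewise the remaining Levi-type terms enumerated in Theorem \ref{support variant} match up across the two decompositions.

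Next I would isolate the contribution of the main term at the stalk over $\tilde{0}_{n}$. By Theorem \ref{support variant}, the main term is the unique summand whose support is all of $\widetilde{\cs}_{I_{\bullet},n}$; on the product side, it is the unique summand whose support is all of $\prod_{j}\cb_{I_{j},n}$. Tensor products of remaining terms, or cross terms mixing a main factor with a remaining factor, are supported on proper closed subschemes of $\prod_{j}\cb_{I_{j},n}$ pulling back under the formally smooth map to proper closed subschemes of $\widetilde{\cs}_{I_{\bullet},n}$ that do not coincide with the full ambient scheme. These match precisely the Levi-indexed remaining summands on the $\tilde{f}^{+}_{I_{\bullet},n}$ side. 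Hence taking stalks at $\tilde{0}_{n}$ and extracting the perverse cohomological degree $-i$ piece yields
\[
\big((\tilde{j}^{+}_{I_{\bullet},n})_{!*}\cf^{i}_{I_{\bullet},n}\big)_{\tilde{0}_{n}}=\bigoplus_{i_{1}+\cdots+i_{l}=i}\bigotimes_{j=1}^{l}\big((j_{I_{j},n})_{!*}\cf^{i_{j}}_{I_{j},n}\big)_{0_{n}}.
\]
Applying $(\varphi_{I_{\bullet},n})_{*}$ and using the identification (iii) together with the fact that the fiber of $\varphi_{I_{\bullet},n}$ over $0_{n}$ is the single reduced point $\tilde{0}_{n}$, so that $\big((\varphi_{I_{\bullet},n})_{*}K\big)_{0_{n}}=K_{\tilde{0}_{n}}$ for any $K$, gives the required formula.

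The main obstacle will be the rigorous justification that the BBD decomposition commutes with the Künneth factorization at the stalk under a formally smooth morphism, i.e.\ that the intersection complex of a product of local systems is the external tensor product of intersection complexes, pulled back along the formal smoothness of $\widehat{\widetilde{\cs}}_{I_{\bullet}}\to \prod_{j}\defm^{\mathrm{top}}_{\widehat{C}_{I_{j}}}$. In analytic language this is Künneth for intermediate extensions on strata admitting a product structure, but here one must transcribe it to the formal setting and verify that the identification of strata and of local monodromy (via the projective system $\widehat{\ct}$ built in \S\ref{finite abelian curve section}) is genuinely multiplicative across the partition $I_{\bullet}$. Once this compatibility is confirmed, the matching of main terms and the extraction of stalks is essentially formal.
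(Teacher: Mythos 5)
Your proposal follows essentially the same route as the paper: the Künneth factorization at the stalk $\tilde{0}_{n}$ coming from Theorem \ref{strata local} and the factorization (\ref{factorize cj si}) of $\overline{P}_{\widetilde{C}_{I_{\bullet}}}$, applying Theorem \ref{support variant} to both $\tilde{f}^{+}_{I_{\bullet},n}$ and each $f_{I_{j},n}$ and matching main terms with main terms, then pushing down along $\varphi_{I_{\bullet},n}$ using equation (\ref{push down along normalization}) and the fact that $\tilde{0}_{n}$ is the unique preimage of $0_{n}$. Where you are more explicit than the paper is in spelling out why the stalk-level Künneth identity respects the BBD decompositions --- via smooth pullback of IC complexes and the fact that an external tensor product of intermediate extensions is the intermediate extension of the external tensor product --- a step the paper compresses into the phrase ``compare the main term of the decomposition,'' so this amounts to filling in, not changing, the argument.
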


The decomposition theorem \ref{support variant} and the reduction theorem \ref{reduction theorem} are very similar to the Arthur-Kottwitz reduction of the affine Springer fiber $\xx_{\gamma}$. 
Recall that we  \cite{chen fundamental} have introduced a \emph{fundamental domain} $F_{\gamma}$ of $\xx_{\gamma}^{0}$, and associate to it a decomposition as disjoint union of locally closed subschemes
\begin{equation}\label{arthur-kottwitz}
\xx_{\gamma}^{0}=F_{\gamma}\cup \bigcup_{\substack{Q\in \cf(M_0)\\ Q\neq G}}\bigcup_{\nu\in \Lambda^{0}_{M_{Q}}\cap R_{Q}} S_{Q}^{\nu}.
\end{equation}
Moreover, for each locally closed subscheme $S_{Q}^{\nu}$, the parabolic reduction of the affine Grassmannian
$$
f_{Q}:\xx\to \xx^{M_{Q}}
$$
induces an iterated affine fibration
$$
f_{Q}^{\nu}: S_{Q}^{\nu}\to F_{\gamma}^{M_{Q},\nu},
$$
where $F_{\gamma}^{M_{Q},\nu}$ is a variant of the fundamental domain $F_{\gamma}^{M_{Q}}$ for $\xx_{\gamma}^{M_{Q}}$, they are very close to each other but slightly different. 
The relative dimension of $f_{Q}^{\nu}$ is
$$
d_{M_{Q}}:=\sum_{\alpha\in \Phi(N_{Q},T)}\val(\alpha(\gamma)).
$$
This is the so-called \emph{Arthur-Kottwitz} reduction. As $f_{Q}^{\nu}$ is an iterated affine fibration of relative dimension $d_{M_{Q}}$, we have 
$$
H_{c}^{*}(S_{Q}^{\nu},\ql)\cong H^{*}(F_{\gamma}^{M_{Q},\nu},\ql)(-d_{M_{Q}})[-2d_{M_{Q}}]. 
$$
Then the decomposition (\ref{arthur-kottwitz}) implies 
\begin{equation}\label{ak cohomology}
H^{*}(\xx_{\gamma}^{0},\ql)=H^{*}(F_{\gamma}, \ql)\oplus \bigoplus_{\substack{Q\in \cf(M_0)\\ Q\neq G}}\bigoplus_{\nu\in \Lambda^{0}_{M_{Q}}\cap R_{Q}} H^{*}\big(F_{\gamma}^{M_{Q},\nu},\ql\big)(-d_{M_{Q}})[-2d_{M_{Q}}].
\end{equation}
which resembles the decomposition in theorem \ref{support variant} and the reduction in theorem \ref{reduction theorem}. 

Despite this structural analogy, the decomposition in theorem \ref{support variant} is not a shadow of the Arthur-Kottwitz reduction, it is of different nature. 
Recall that we have the diagram (\ref{cd picard}), and that the Galois group of the finite abelian covering 
$
\Phi_{n}:\overline{\cp}_{n}\to \cp\times_{\cb}\cb_{n}
$
is $\Lambda^{0}/n$, hence the group $\Lambda^{0}/n$ acts on the complex $Rf_{n,*}\ql$.

\begin{prop}\label{main term n}

The main term $(Rf_{n,*}\ql)_{\rm main}$ is the unique summand in the decomposition of   $Rf_{n,*}\ql$ which is invariant under the action of $\Lambda^{0}/n$. 

\end{prop}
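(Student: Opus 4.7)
The plan is to compute the $\Lambda^0/n$-invariants of $Rf_{n,*}\ql$ directly, via the factorisation $f_n = \bar f \circ \Phi_n$ read off diagram~(\ref{cd picard}), where $\bar f : \overline{\cp}\times_{\cb}\cb_n \to \cb_n$ denotes the base change of $f$ along $\varpi_n$. Since $\Phi_n$ is finite \'etale Galois with group $\Lambda^0/n$, the invariants of $R\Phi_{n,*}\ql$ equal $\ql$ on $\overline{\cp}\times_{\cb}\cb_n$, and hence $(Rf_{n,*}\ql)^{\Lambda^0/n} = R\bar f_* \ql$. Proper base change applied to the right Cartesian square in~(\ref{cd picard}) then identifies this with $\varpi_n^* Rf_*\ql$.

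Next, I would apply Ng\^o's theorem~\ref{ngo support} to $f$ and use that the \'etale pullback $\varpi_n^*$ commutes with $j_{!*}$ to get
\[
(Rf_{n,*}\ql)^{\Lambda^0/n}
= \bigoplus_{i=0}^{2\delta_\gamma} j_{n,!*}\bigl(\varpi_n^* R^{i} f^{\sm}_*\ql\bigr)[-i].
\]
The crucial identification is $\varpi_n^* R^{i} f^{\sm}_*\ql \cong R^{i} f^{\sm}_{n,*}\ql$. Over $\cb_n^{\sm}$ both $\bar f^{\sm}$ and $f^{\sm}_n$ are abelian schemes and $\Phi_n^{\sm}$ is a fibrewise isogeny. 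The sheaf $\Phi_{n,*}^{\sm}\ql$ decomposes as $\bigoplus_\chi \cl_\chi$ by characters of $\Lambda^0/n$, with $\cl_1 = \ql$ and each non-trivial $\cl_\chi$ restricting to a non-trivial rank-one Kummer-type local system on each abelian fibre. Since non-trivial local systems of this form have vanishing cohomology on abelian varieties, $R\bar f^{\sm}_*\cl_\chi = 0$ for $\chi \neq 1$, so only the trivial character contributes and yields the identification. Consequently, $(Rf_{n,*}\ql)^{\Lambda^0/n}$ coincides exactly with the main term of Theorem~\ref{support variant}.

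This simultaneously establishes both assertions of the proposition. On the one hand, the main term equals the invariants, so $\Lambda^0/n$ acts trivially on it. On the other, any summand $S$ of the (equivariantly chosen) decomposition of Theorem~\ref{support variant} on which $\Lambda^0/n$ acts trivially must embed into $(Rf_{n,*}\ql)^{\Lambda^0/n}$ and hence into the main term; being disjoint from the main term as a direct summand, this forces $S=0$. The main obstacle is the fibrewise vanishing $R\bar f^{\sm}_*\cl_\chi = 0$ for non-trivial $\chi$, which requires verifying that every non-trivial character of $\Lambda^0/n$ restricts to a non-trivial character of $\pi_1$ of each abelian fibre of $\bar f^{\sm}$; this should follow from the construction of $\ct_n$ as a \emph{constant} non-trivial subgroup scheme of $\pic_{\cc\times_{\cb}\cb_n/\cb_n}[n]$ extending $J_{C_\gamma}[n]$, together with the surjectivity of $\pi_1(A_s)\twoheadrightarrow \Lambda^0/n$ coming from the isogeny structure on each fibre.
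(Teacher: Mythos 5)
Your proof is correct but takes a genuinely different route from the paper's. You compute the invariant subcomplex directly: $\Phi_n$ is finite \'etale Galois, so $(Rf_{n,*}\ql)^{\Lambda^0/n} = R\bar f_*\ql$, which by proper base change on the right square of diagram~(\ref{cd picard}) equals $\varpi_n^*Rf_*\ql$; then Ng\^o's Theorem~\ref{ngo support}, the fact that \'etale pullback commutes with $j_{!*}$, and isogeny invariance of $\ql$-cohomology ($\Phi_n^{\sm,*}\colon R^i\bar f^{\sm}_*\ql \xrightarrow{\sim} R^if^{\sm}_{n,*}\ql$) identify this with the main term of Theorem~\ref{support variant}. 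The paper instead proves triviality of the action on the main term via the homotopy lemma (the $\ct_n$-translation action on $\cp_n^{\sm}$ extends to the connected group scheme $\cp_n^{\sm}$, hence acts trivially on $Rf_{n,*}^{\sm}\ql$), and establishes uniqueness by showing that over each stratum $\cs^\circ_{I_\bullet,n}$ the action of $\Lambda^0/n$ factors through $\pi_0(\cp_n)$ and permutes the irreducible components of the fibres, with the trivial representation appearing exactly once from the restriction of the main term. Your route is arguably tighter — it produces the invariant summand as a canonical subcomplex and matches it with the main term purely on support grounds, without needing to analyze the permutation action — while the paper's argument is more informative about \emph{how} $\Lambda^0/n$ acts on the residual terms, which is implicitly reused in the later discussion of Theorem~\ref{decomposition}. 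One small simplification: the fibrewise vanishing $R\bar f^{\sm}_*\cl_\chi = 0$ that you flag as the main obstacle is automatic once you know $\Phi_n^{\sm,*}$ is an isomorphism on $\ql$-cohomology (which follows from $\Phi_n^{\sm}$ being an isogeny of abelian schemes), since then the $\chi=1$ summand of $\bigoplus_\chi R^i\bar f^{\sm}_*\cl_\chi$ already exhausts $R^if^{\sm}_{n,*}\ql$; you do not need to separately verify non-triviality of the fibrewise characters.
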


\begin{proof}

For the action of $\Lambda^{0}/n$ on the main term, note that the restriction of the family $f:\overline{\cp}\to \cb$ to $\cb^{\sm}$ coincides with the family of Jacobians of curves
$$
f^{\sm}:\pic^{0}_{\cc^{\sm}/\cb^{\sm}}\to \cb^{\sm},
$$
and the finite abelian covering $\Phi_{n}:{\cp}_{n}^{\sm}\to \pic^{0}_{\cc^{\sm}/\cb^{\sm}}\times \cb_{n}^{\sm}$ is an isogeny of abelian scheme with kernel isomorphic to the constant finite group scheme $\ct_{n}\subset {\cp}_{n}^{\sm}$.
The group $\ct_{n}$ acts on $\cp_{n}^{\sm}$ by translation, and the induced action on $Rf_{n,*}^{\sm}\ql$ coincides with that of  $\Lambda^{0}/n$.
It is clear that the action of $\ct_{n}$ on $\cp_{n}^{\sm}$ extends to the action of $\cp_{n}^{\sm}$ on itself. Hence the action of $\ct_{n}$ on $Rf_{n,*}^{\sm}\ql$ extends also to the action of $\cp_{n}^{\sm}$. By a well-known homotopy lemma (cf. \cite{laumon ngo}, Lem. 3.2.3), the latter action must be trivial as  $f_{n}^{\sm}: \cp_{n}^{\sm}\to \cb_{n}^{\sm}$ is a smooth group scheme with geometrically connected fibers. As the intermediate extension is functorial, the action of $\Lambda^{0}/n$ on $j_{n,!*}(Rf_{n,*}^{\sm}\ql)$ must be trivial as well.

For the action of $\Lambda^{0}/n$ on the remaining terms in theorem \ref{support variant}, note that their restrictions to $\cs_{I_{\bullet}}^{\circ}$ appear as a submodule of $Rf_{n,*}\ql|_{\cs^{\circ}_{I_{\bullet},n}}$. 
The same argument as above with the homotopy lemma implies that the action of $\Lambda^{0}/n$ on $Rf_{n,*}\ql|_{\cs^{\circ}_{I_{\bullet},n}}$ factorizes through the group $\pi_{0}(\cp_{n}|_{\cs^{\circ}_{I_{\bullet},n}})$, and it acts by permuting the irreducible components of $\overline{\cp}_{n}|_{\cs^{\circ}_{I_{\bullet},n}}$. As the irreducible components of the geometric fibers of $\cp_{n}$ and $\overline{\cp}_{n}$ are the same, it is  clear that the trivial action appears with multiplicity 1 and it comes from the action on $j_{n,!*}(Rf_{n,*}^{\sm}\ql)|_{\cs^{\circ}_{I_{\bullet},n}}$, hence the action on the other summands of $Rf_{n,*}\ql|_{\cs^{\circ}_{I_{\bullet},n}}$ must be non-trivial. 
This finishes the proof of the proposition.

\end{proof}

\subsection{Consequence for the affine Springer fibers}

In this section, we restrict to the case $k=\bc$ as we will use homology instead of cohomology. The reasoning can be easily extended to the finite field case by defining the $\ell$-adic homology formally as the dual of $\ell$-adic cohomology.

For $m| n$, we have finite abelian covering $n\Lambda^{0}\backslash \xx_{\gamma}^{0} \to m\Lambda^{0}\backslash \xx_{\gamma}^{0}$, which induces natural morphisms
\begin{equation}\label{transition mn}
H_{*}\big(n\Lambda^{0}\backslash \xx_{\gamma}^{0},\bz\big)\to H_{*}\big(m\Lambda^{0}\backslash \xx_{\gamma}^{0},\bz\big).
\end{equation}
As $n$ varies, we get a projective system. 

\begin{prop}\label{coh as limit}

The homology of $\xx_{\gamma}^{0}$ can be recovered from that of the quotients $n\Lambda^{0}\backslash \xx_{\gamma}^{0}$ by
\begin{equation*}
H_{*}\big({\xx}^{0}_{\gamma},\bz\big)=\varprojlim_{n}  H_{*}\big(n\Lambda^{0}\backslash \xx_{\gamma}^{0},\bz\big).
\end{equation*}

\end{prop}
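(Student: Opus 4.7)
The plan is to combine the Arthur--Kottwitz reduction of \cite{chen fundamental} with a Mittag--Leffler argument on the tower of finite coverings. The continuous quotient maps $q_n : \xx_\gamma^0 \to n\Lambda^0\backslash\xx_\gamma^0$ are compatible with the tower, since the triangle $\xx_\gamma^0 \to n\Lambda^0\backslash\xx_\gamma^0 \to m\Lambda^0\backslash\xx_\gamma^0$ commutes for $m\mid n$. Their pushforwards on singular homology are compatible with the transition maps (\ref{transition mn}), yielding a canonical morphism
$$\Phi : H_*(\xx_\gamma^0, \bz) \longrightarrow \varprojlim_n H_*(n\Lambda^0\backslash \xx_\gamma^0, \bz),$$
and the task is to show $\Phi$ is an isomorphism.

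Second, I would use the Arthur--Kottwitz decomposition (\ref{arthur-kottwitz}) to exhaust $\xx_\gamma^0$ by a nested sequence of finite-type closed subvarieties $K_N \subset \xx_\gamma^0$ (e.g.\ the closure of $F_\gamma \cup \bigcup_{Q\neq G,\,|\nu|\le N} S_Q^\nu$). Each $K_N$ is a finite union of Arthur--Kottwitz strata, hence finite-type with finitely generated homology, and $H_*(\xx_\gamma^0,\bz) = \varinjlim_N H_*(K_N,\bz)$ since singular homology commutes with nested unions. Because $\Lambda^0$ acts freely and properly on $\xx_\gamma^0$, for each fixed $N$ there is an $n_0(N)$ such that the nontrivial translates of $K_N$ by $n\Lambda^0$ are disjoint from $K_N$ for $n \ge n_0(N)$, so $q_n|_{K_N} : K_N \hookrightarrow n\Lambda^0\backslash\xx_\gamma^0$ is a closed embedding.

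Third, I would analyze the transition maps stratum by stratum. For each fixed Arthur--Kottwitz stratum $P$ and for $n$ sufficiently large, $P$ embeds as a single $n\Lambda^0$-orbit inside $\overline{P}_n := n\Lambda^0\backslash\xx_\gamma^0$, contributing a stable summand isomorphic to $H_*(P,\bz)$ on which the transition maps become identifications. The complementary ``unstable'' summands group together several strata into single orbits at each finite level; on these pieces the transition maps are pushforwards under finite coverings of unbounded degree, so the $\varprojlim$ of the complement vanishes on each homological degree.

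The main obstacle will be controlling the interchange of $\varinjlim_N$ and $\varprojlim_n$. This requires verifying Mittag--Leffler degree by degree on $\{H_*(\overline{P}_n,\bz)\}_n$, which is ensured by the stabilization described above: the stable summands are eventually constant, while the unstable ones have images divisible by arbitrarily large integers. With $\varprojlim^1 = 0$ in each degree, the two limits commute, and we obtain
$$\varprojlim_n H_*(\overline{P}_n,\bz) \,=\, \varinjlim_N H_*(K_N,\bz) \,=\, H_*(\xx_\gamma^0,\bz),$$
proving that $\Phi$ is an isomorphism.
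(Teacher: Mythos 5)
Your approach is genuinely different from the paper's: the paper argues directly at the level of singular chains (compactness of chains gives injectivity of $\Phi$; indivisibility of $\bz$-coefficients is invoked to argue that a compatible family $(c_n)_n$ must eventually stabilize to a chain in $\xx_\gamma^0$), whereas you route the argument through the Arthur--Kottwitz exhaustion and a Mittag--Leffler/interchange-of-limits argument. This is a reasonable idea, but as written it has a real gap in the third step.

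The key unjustified claim is that $H_*(\overline{P}_n,\bz)$ decomposes as a direct sum of a ``stable'' summand (isomorphic to $\bigoplus_P H_*(P,\bz)$ over the Arthur--Kottwitz strata $P$ that embed for that $n$) and an ``unstable'' complement whose transition maps are multiplication by ever-larger integers. The Arthur--Kottwitz decomposition (\ref{arthur-kottwitz}) is a stratification of $\xx_\gamma^0$ by locally closed pieces; on passing to homology this produces, at best, a filtration of $H_*(\overline{P}_n,\bz)$ and the associated spectral sequence, not a canonical direct-sum decomposition. Likewise, the fact that $q_n|_{K_N}$ is a closed embedding for $n$ large gives a map $H_*(K_N,\bz)\to H_*(\overline{P}_n,\bz)$, but this map is not split injective in general --- so ``a stable summand isomorphic to $H_*(P,\bz)$'' is not established. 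Without that splitting, the claim that the complementary part of $\varprojlim_n H_*(\overline{P}_n,\bz)$ vanishes has nothing to attach to.

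Separately, invoking Mittag--Leffler to justify interchanging $\varinjlim_N$ with $\varprojlim_n$ is a non-sequitur. The Mittag--Leffler condition gives $\varprojlim^1 = 0$, i.e.\ exactness of $\varprojlim$ applied to a short exact sequence of towers; it does not, by itself, license commuting a filtered colimit with an inverse limit in the doubly indexed system $\{H_*(K_N)\to H_*(\overline{P}_n)\}_{N,n}$. To make an exhaustion-based argument work you would need to exhibit, for each degree $i$ and each $N$, an $n_0(N,i)$ such that $H_i(K_N)\to H_i(\overline{P}_n)$ is injective with image independent of $n\geq n_0$ (in the divisibility order) and such that these images exhaust $\varprojlim_n H_i(\overline{P}_n)$; none of this follows from the freeness of the $\Lambda^0$-action alone. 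The paper sidesteps all of this by working with explicit singular cycles and their lifts, which is a different and more elementary mechanism.
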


\begin{proof}

By construction, the singular chains defining the homology of a topological space are finite sums of singular simplices. In particular, they are compactly supported. Hence the natural projection $\xx_{\gamma}^{0} \to n\Lambda^{0}\backslash \xx_{\gamma}^{0}$ induces morphism of homology
$$
H_{*}\big(\xx_{\gamma}^{0},\bz\big)\to H_{*}\big(\Lambda^{0}\backslash \xx_{\gamma}^{0},\bz\big).
$$
It is clearly compatible with the transition map (\ref{transition mn}), hence we get a morphism
\begin{equation}\label{transition}
H_{*}\big(\xx_{\gamma}^{0},\bz\big)\to \varprojlim_{n} H_{*}\big(n\Lambda^{0}\backslash \xx_{\gamma}^{0},\bz\big).
\end{equation}
As the singular chains are compactly supported, any chain for $\xx_{\gamma}^{0}$ together with its boundary will be mapped identically into $n\Lambda^{0}\backslash \xx_{\gamma}^{0}$ for $n$ sufficiently large. Hence the morphism (\ref{transition}) must be an injection. On the other hand, for $(c_{n})_{n}\in \varprojlim_{n} H_{*}\big(n\Lambda^{0}\backslash \xx_{\gamma}^{0},\bz\big)$, as we are working with $\bz$-coefficient (in particular the coefficients are not infinitely divisible), for $n$ sufficiently large the chains representing $c_{n}$ should be stabilized, i.e. for all $n|n'$ a chain representing $c_{n'}$ should be a lift of a chain for $c_{n}$ . This implies that the chains actually come from a chain in $\xx_{\gamma}^{0}$, hence the morphism (\ref{transition}) is surjective and so is an isomorphism.

\end{proof}


\begin{rem}\label{no zl}

Let $A$ be a finitely generated abelian group, we denote by $A_{\rm fr}$ the free summand of $A$. The above proposition implies that
$$
H_{*}\big({\xx}^{0}_{\gamma},\bz\big)_{\rm fr}=\varprojlim_{n}  H_{*}\big(n\Lambda^{0}\backslash \xx_{\gamma}^{0},\bz\big)_{\rm fr}. 
$$
Indeed, with the Arthur-Kottwitz reduction we get an expression for 
$
H_{*}\big({\xx}^{0}_{\gamma},\bz\big)$
similar to (\ref{ak cohomology}). As all the $H_{*}\big(F^{M}_{\gamma},\bz\big)$ are finitely generated abelian groups, $
H_{*}\big({\xx}^{0}_{\gamma},\bz\big)$ doesn't contain any summand of the form $\bz_{l}$. Hence the projective limit of the torsion summand of $H_{*}\big(n\Lambda^{0}\backslash \xx_{\gamma}^{0},\bz\big)$ remains torsion, it doesn't contribute to the free summand $H_{*}\big({\xx}^{0}_{\gamma},\bz\big)_{\rm fr}$.

\end{rem}

Let $\bbf_{n}=R^{1}f_{n,*}^{\sm}\bz$ and $\bbf_{n}^{\vee}=\ch om(\bbf_{n}, \bz)$. 
For a non-trivial partition $I_{\bullet}\vdash \{1,\cdots, r\}$, let $\bbf_{I_{\bullet}, n}=R^{1}(\tilde{f}_{I_{\bullet},n}^{\,\rm sm})_{*}\bz$ and $\bbf_{I_{\bullet}, n}^{\vee}=\ch om(\bbf_{I_{\bullet},n}, \bz)$. 
For a subset $I\subsetneq \{1,\cdots,r\}$, we have introduced the families $f_{I}:\overline{\cp}_{I}\to \cb_{I}$ and $f_{I, n}:\overline{\cp}_{I,n}\to \cb_{I,n}$, let $\tau_{I}=\dim(\cb_{I})$ and let $\bbf_{I, n}=R^{1}({f}_{I,n}^{\,\rm sm})_{*}\bz$ and $\bbf_{I, n}^{\vee}=\ch om(\bbf_{I,n}, \bz)$.

\begin{prop}\label{decomposition 0}

For any $i\in \bn_{0}$ we have
\begin{align*}
H_{i}\big({\xx}_{\gamma}^{0},\bz\big)_{\rm fr}=&
\varprojlim_{n}\bigoplus_{i'=0}^{i} 
\ch^{2\tau_{\gamma}-(i-i')}_{\{0_{n}\}}\big(j_{n,!*}\BigWedge^{i'}\bbf_{n}^{\vee}\big)_{\rm fr}
\oplus \bigoplus_{\substack{I_{\bullet} \text{ partition of}\\ \{1,\cdots,r\}}} 
\bigoplus_{i''=0}^{i-2h_{I_{\bullet}}} 
\bigg\{\bigoplus_{i_{1}+\cdots+i_{l}=i''} \bigoplus_{j_{1}+\cdots+j_{l}=i-2h_{I_{\bullet}}-i''}  \nonumber
\\
&
\varprojlim_{n}\ch^{2\tau_{I_{1}}-j_{1}}_{\{0_{n}\}}\big((j_{I_{1}, n})_{!*} \cf^{i_{1}}_{I_{1},n}\big)_{\rm fr}\otimes \cdots \otimes \ch^{2\tau_{I_{l}}-j_{l}}_{\{0_{n}\}}\big((j_{I_{l}, n})_{!*} \cf^{i_{l}}_{I_{l},n}\big)_{\rm fr}(h_{I_{\bullet}})\bigg\}^{\oplus\, \bbv^{\circ}_{I_{\bullet}}},
\end{align*}
where $l$ is the length of the partition $I_{\bullet}$, and the superscript $\oplus\, \bbv^{\circ}_{I_{\bullet}}$ signifies that it is a direct sum of copies indexed by $\bbv^{\circ}_{I_{\bullet}}$.

\end{prop}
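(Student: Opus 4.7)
The strategy is to convert the statement into a question about each finite covering $\overline{P}_n$, apply the decomposition theorem \ref{support variant} (with the reduction of theorem \ref{reduction theorem}) to compute $H^{*}(\overline{P}_n,\bz_\ell)$, dualize via Verdier duality on $\cb_n$, and finally take the projective limit in $n$. Concretely, by proposition \ref{coh as limit} combined with Laumon's homeomorphism $n\Lambda^{0}\backslash\xx_\gamma^{0} \simeq \overline{P}_n$ of theorem \ref{homeo SJ}, we have
\[
H_{i}(\xx_\gamma^{0},\bz)_{\rm fr} = \varprojlim_{n}\, H_{i}(\overline{P}_n,\bz)_{\rm fr},
\]
and remark \ref{no zl} ensures that passing to the free part commutes with this inverse limit. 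The task is then to compute $H_i(\overline{P}_n,\bz_\ell)_{\rm fr}$ in a form compatible with the transition morphisms $\varpi_{m,n}$.

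For each $n$, theorem \ref{support variant} applied to $f_n : \overline{\cp}_n \to \cb_n$ and taken at the stalk $0_n \in \cb_n$ gives a direct sum decomposition of $H^{*}(\overline{P}_n,\ql) = (Rf_{n,*}\ql)_{0_n}$ into (a) the main-term stalks $\bigoplus_{i'} \ch^{\bullet}(j_{n,!*}R^{i'}f^{\sm}_{n,*}\ql)_{0_n}$ and (b) remaining stratum contributions from $\cs_{I_\bullet,n}^{\circ}$ with multiplicity $|(\bbv_{I_\bullet}/n)^{\circ}|$. For the second class, theorem \ref{reduction theorem} factors the stalk at $0_n$ as a tensor product over the Levi subgroups indexed by the parts $I_1,\dots,I_l$ of $I_\bullet$, producing precisely the tensor factors $(j_{I_j,n})_{!*}\cf_{I_j,n}^{i_j}$ appearing in the statement.

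The next step is to dualize stalk-cohomology into local cohomology. Since $\cb_n$ is smooth of dimension $\tau_\gamma$ (resp.\ $\cb_{I_j,n}$ of dimension $\tau_{I_j}$), Verdier duality identifies $\mathbb{D}\bigl(j_{n,!*}(\cl[\tau_\gamma])\bigr) = j_{n,!*}(\cl^{\vee}[\tau_\gamma])(\tau_\gamma)$ for any smooth $\ql$-local system $\cl$ on $\cb_n^{\sm}$, yielding the natural duality
\[
\bigl(\ch^{k}(j_{n,!*}\cl)_{0_n}\bigr)^{\vee} \;\cong\; \ch^{\,2\tau_\gamma - k}_{\{0_n\}}\bigl(j_{n,!*}\cl^{\vee}\bigr)(\tau_\gamma).
\]
Applied to $\cl = \cf_n^{i'} = \BigWedge^{i'}R^{1}f_{n,*}^{\sm}\ql$, whose dual is $\BigWedge^{i'}\bbf_n^{\vee}\otimes \ql$, and combined with the Poincaré-Verdier duality $H_i(\overline{P}_n,\ql) \cong H^{i}(\overline{P}_n,\ql)^{\vee}$, this converts the cohomological decomposition into the local-cohomology form appearing in the proposition; the shift $2\tau_\gamma - (i-i')$ (resp.\ $2\tau_{I_j}-j_j$) and the Tate twist $(h_{I_\bullet})$ track the dimension of the relevant base and the codimension $h_{I_\bullet}$ of $\cs_{I_\bullet}^{\circ}$.

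Finally, the families $(\ct_n)$ and $(\cb_n)$ form a projective system compatible with the decomposition and with Verdier duality, so taking $\varprojlim_n$ on each summand of the decomposition gives exactly the expressions in the statement. The multiplicity $|(\bbv_{I_\bullet}/n)^{\circ}|$ at level $n$ stabilizes in the limit to the $\bbv_{I_\bullet}^{\circ}$-indexed direct sum by construction of $\bbv_{I_\bullet}^{\circ}$ in remark \ref{strata closure}; remark \ref{no zl} guarantees no extra $\bz_\ell$-summands appear, so the limit correctly computes $H_i(\xx_\gamma^{0},\bz)_{\rm fr}$. The principal technical difficulty will be bookkeeping the perverse shifts, Tate twists, and the $\Lambda^{0}/n$-equivariance of all the dualities (cf.\ proposition \ref{main term n}), together with verifying that the stratum-wise factorization of theorem \ref{reduction theorem} is genuinely compatible with the local-cohomology dualization, so that the tensor-factor shape on the right-hand side assembles correctly from the stratified decomposition on $\cb_n$.
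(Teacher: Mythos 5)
Your proposal is essentially the same argument as the paper's own proof: both start from Theorems \ref{support variant} and \ref{reduction theorem} to decompose the cohomology of $n\Lambda^{0}\backslash\xx_{\gamma}^{0}\cong\overline{P}_{n}$, dualize stalk cohomology into local cohomology on $\cb_n$ via Verdier duality, and then pass to the projective limit using Proposition \ref{coh as limit} and Remark \ref{no zl}. The one thing the paper makes precise that your sketch leaves implicit is the mechanism by which the free part $(-)_{\rm fr}$ is extracted correctly: the paper applies $\Hom(-,\bz)$ to the cohomological decomposition (with $\bz$-coefficients, since $k=\bc$ in this section — not $\bz_\ell$, as you sometimes write) and uses the fact that $\Hom(A,\bz)$ is automatically free for finitely generated $A$, which is what licenses replacing ``modulo torsion'' with genuine equalities of free summands before the limit. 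Your last paragraph openly flags the remaining verifications (shifts, twists, compatibility of the $I_\bullet$-factorization with dualization) rather than carrying them out, but these are exactly the elementary book-keeping steps the paper records explicitly; there is no gap in the underlying strategy.
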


\begin{proof}

For finitely generated abelian groups $A_{1}$ and $A_{2}$, we denote $A_{1}\doteq A_{2}$ if the free summand of $A_{1}$ and $A_{2}$ are equal. 
By theorem \ref{support variant} and \ref{reduction theorem}, we have 
{\small
\begin{align*}
H^{i}\big(n\Lambda^{0}\backslash{\xx}_{\gamma}^{0},\bz\big)\doteq &
\bigoplus_{i'=0}^{i} 
\ch^{i-i'}(j_{n,!*}(R^{\,i'}f^{\sm}_{n,*}\bz)_{0_{n}})
\oplus \bigoplus_{\substack{I_{\bullet} \text{ partition of}\\ \{1,\cdots,r\}}} 
\bigoplus_{i''=0}^{i-2h_{I_{\bullet}}} 
\\
&
\bigg\{\ch^{i-2h_{I_{\bullet}}-i''}\big((j_{I_{\bullet},n})_{!*}
\big(R^{i''}(\tilde{f}_{I_{\bullet},n}^{\,\rm sm})_{*}\bz\big)_{0_{n}}\big)(-h_{I_{\bullet}})\bigg\}^{\oplus\, |(\bbv_{I\bullet}/n)^{\circ}|}
\\
=&\bigoplus_{i'=0}^{i} 
\ch^{i-i'}(j_{n,!*}(R^{\,i'}f^{\sm}_{n,*}\bz)_{0_{n}})
\oplus \bigoplus_{\substack{I_{\bullet} \text{ partition of}\\ \{1,\cdots,r\}}} 
\bigoplus_{i''=0}^{i-2h_{I_{\bullet}}} 
\\
&
\bigg\{\ch^{i-2h_{I_{\bullet}}-i''}\bigg(\bigoplus_{i_{1}+\cdots+i_{l}=i''}
\big((j_{I_{1}, n})_{!*} \cf^{i_{1}}_{I_{1},n}\big)_{0_{n}}\otimes \cdots \otimes \big((j_{I_{l}, n})_{!*} \cf^{i_{l}}_{I_{l},n}\big)_{0_{n}}\bigg)(-h_{I_{\bullet}})\bigg\}^{\oplus\, |(\bbv_{I\bullet}/n)^{\circ}|}
\\
=&\bigoplus_{i'=0}^{i} 
\ch^{i-i'}(j_{n,!*}(R^{\,i'}f^{\sm}_{n,*}\bz)_{0_{n}})
\oplus \bigoplus_{\substack{I_{\bullet} \text{ partition of}\\ \{1,\cdots,r\}}} 
\bigoplus_{i''=0}^{i-2h_{I_{\bullet}}} 
\bigg\{\bigoplus_{i_{1}+\cdots+i_{l}=i''} \bigoplus_{j_{1}+\cdots+j_{l}=i-2h_{I_{\bullet}}-i''}
\\
&
\ch^{j_{1}}\big((j_{I_{1}, n})_{!*} \cf^{i_{1}}_{I_{1},n}\big)_{0_{n}}\otimes \cdots \otimes \ch^{j_{l}}\big((j_{I_{l}, n})_{!*} \cf^{i_{l}}_{I_{l},n}\big)_{0_{n}}(-h_{I_{\bullet}})\bigg\}^{\oplus\, |(\bbv_{I\bullet}/n)^{\circ}|}.
\end{align*}
}

\noindent Applying $\Hom(\bullet,\bz)$, we get an expression of $H_{i}\big(n\Lambda^{0}\backslash{\xx}_{\gamma}^{0},\bz\big)$ modulo torsion. Note that for any finitely generated abelian group $A$ the group $\Hom(A,\bz)$ is alway \emph{free} and finitely generated. Hence we actually get an expression of the free summand
\begin{align}
&H_{i}\big(n\Lambda^{0}\backslash{\xx}_{\gamma}^{0},\bz\big)_{\rm fr} \nonumber
\\
=&\bigoplus_{i'=0}^{i} 
\Hom\big(\ch^{i-i'}(j_{n,!*}(R^{\,i'}f^{\sm}_{n,*}\bz)_{0_{n}}), \bz\big)
\oplus \bigoplus_{\substack{I_{\bullet} \text{ partition of}\\ \{1,\cdots,r\}}} 
\bigoplus_{i''=0}^{i-2h_{I_{\bullet}}} 
\bigg\{\bigoplus_{i_{1}+\cdots+i_{l}=i''} \bigoplus_{j_{1}+\cdots+j_{l}=i-2h_{I_{\bullet}}-i''}  \nonumber
\\
&
\Hom\big(\ch^{j_{1}}\big((j_{I_{1}, n})_{!*} \cf^{i_{1}}_{I_{1},n}\big)_{0_{n}},\bz\big)\otimes \cdots \otimes \Hom\big(\ch^{j_{l}}\big((j_{I_{l}, n})_{!*} \cf^{i_{l}}_{I_{l},n}\big)_{0_{n}}, \bz\big)(h_{I_{\bullet}})\bigg\}^{\oplus\, |(\bbv_{I\bullet}/n)^{\circ}|}. \label{decomposition 00}
\end{align}

Let $\bbd_{\cb_{n}}$ be the dualizing complex for $\cb_{n}$ with $\bz$-coefficient. As $\cb_{n}$ is smooth of complex dimension $\tau_{\gamma}$, we have $\bbd_{\cb_{n}}=\bz[2\tau_{\gamma}]$.
With Poincar\'e-Verdier duality (cf. \cite{verdier duality}, \cite{ks} Chap. 3), we have 
\begin{align*}
\rhom\big(j_{n,!*}(R^{\,i'}f^{\sm}_{n,*}\bz[\tau_{\gamma}])_{0_{n}}, \bz\big)
&=\rhom\big(i_{0_{n}}^{*}j_{n,!*}(R^{\,i'}f^{\sm}_{n,*}\bz[\tau_{\gamma}]), i_{0_{n}}^{!}\bbd_{\cb_{n}}\big)
\\
&=i_{0_{n}}^{!}\rhom\big(j_{n,!*}(R^{\,i'}f^{\sm}_{n,*}\bz[\tau_{\gamma}]), \bbd_{\cb_{n}}\big)
\\
&=i_{0_{n}}^{!}j_{n,!*}\BigWedge^{i'}\bbf_{n}^{\vee}[\tau_{\gamma}].
\end{align*}
Take the free summand of the cohomology of both sides, we obtain
\begin{equation*}\label{id dual 1}
\Hom\big(\ch^{i}\big(j_{n,!*}(R^{\,i'}f^{\sm}_{n,*}\bz)\big)_{0_{n}}, \bz\big)
=\ch^{2\tau_{\gamma}-i}_{\{0_{n}\}}\big(j_{n,!*}\BigWedge^{i'}\bbf_{n}^{\vee}\big)_{\rm fr}.
\end{equation*}
The second summand of the equation (\ref{decomposition 00}) can be simplified in the same way, and we get 
\begin{align*}
H_{i}\big(n\Lambda^{0}\backslash{\xx}_{\gamma}^{0},\bz\big)_{\rm fr}=&
\bigoplus_{i'=0}^{i} 
\ch^{2\tau_{\gamma}-(i-i')}_{\{0_{n}\}}\big(j_{n,!*}\BigWedge^{i'}\bbf_{n}^{\vee}\big)_{\rm fr}
\oplus \bigoplus_{\substack{I_{\bullet} \text{ partition of}\\ \{1,\cdots,r\}}} 
\bigoplus_{i''=0}^{i-2h_{I_{\bullet}}} 
\bigg\{\bigoplus_{i_{1}+\cdots+i_{l}=i''} \bigoplus_{j_{1}+\cdots+j_{l}=i-2h_{I_{\bullet}}-i''}  \nonumber
\\
&
\ch^{2\tau_{I_{1}}-j_{1}}_{\{0_{n}\}}\big((j_{I_{1}, n})_{!*} \cf^{i_{1}}_{I_{1},n}\big)_{\rm fr}\otimes \cdots \otimes \ch^{2\tau_{I_{l}}-j_{l}}_{\{0_{n}\}}\big((j_{I_{l}, n})_{!*} \cf^{i_{l}}_{I_{l},n}\big)_{\rm fr}(h_{I_{\bullet}})\bigg\}^{\oplus\, |(\bbv_{I\bullet}/n)^{\circ}|}. 
\end{align*}
By proposition \ref{coh as limit} and remark \ref{no zl}, we can take projective limit of the free summand of the right hand side of the above equation to get the free summand of $H_{*}(\xx_{\gamma}^{0}, \bz)$, whence the theorem.

\end{proof}

As a corollary of proposition \ref{main term n}, we have a geometric interpretation of the main term in the above theorem. 

\begin{prop}\label{main term 0}

For all $i\in \bn_{0}$, we have
$$
H_{i}\big({\xx}_{\gamma}^{0},\bz\big)_{\rm fr}^{\Lambda^{0}}=\varprojlim_{n}\bigoplus_{i'=0}^{i} 
\ch^{2\tau_{\gamma}-(i-i')}_{\{0_{n}\}}\big(j_{n,!*}\BigWedge^{i'}\bbf_{n}^{\vee}\big)_{\rm fr}.
$$

\end{prop}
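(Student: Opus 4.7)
The plan is to compute the $\Lambda^{0}$-invariants summand by summand in the decomposition of Proposition~\ref{decomposition 0}, using Proposition~\ref{main term n} to detect exactly which pieces survive. First I would note that the action of $\Lambda^{0}$ on $H_{i}(\xx_{\gamma}^{0},\bz)$ is compatible, via the projections of Proposition~\ref{coh as limit}, with the action of $\Lambda^{0}/n$ on $H_{i}(n\Lambda^{0}\backslash \xx_{\gamma}^{0},\bz)$ coming from the Galois action of the finite abelian covering $n\Lambda^{0}\backslash \xx_{\gamma}^{0}\to \Lambda^{0}\backslash \xx_{\gamma}^{0}$. Since taking invariants under a finite group commutes with projective limits (the system is an inverse system of finitely generated abelian groups with compatible group actions) and with extraction of the free part, we obtain the identity
\[
H_{i}(\xx_{\gamma}^{0},\bz)_{\rm fr}^{\Lambda^{0}}
=\varprojlim_{n} H_{i}(n\Lambda^{0}\backslash \xx_{\gamma}^{0},\bz)_{\rm fr}^{\Lambda^{0}/n}.
\]

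Next I would translate Proposition~\ref{main term n} from $\ql$-coefficients to integral coefficients modulo torsion. The Galois action of $\Lambda^{0}/n$ on $\overline{\cp}_{n}$ is defined over $\bz$, so it acts on the entire integral complex $Rf_{n,*}\bz$ and on each summand in the integral analogue of Theorem~\ref{support variant}. The argument of Proposition~\ref{main term n}, based on the homotopy lemma applied to the smooth group scheme $f_{n}^{\rm sm}:\cp_{n}^{\rm sm}\to \cb_{n}^{\rm sm}$, is purely geometric and identifies the main term $j_{n,!*}(R^{i'}f_{n,*}^{\rm sm}\bz)$ as the unique $\Lambda^{0}/n$-invariant direct summand; the remaining summands, indexed by non-trivial partitions $I_{\bullet}$, carry a non-trivial $\Lambda^{0}/n$-action that permutes the irreducible components of the geometric fibers over $\cs_{I_{\bullet},n}^{\circ}$ and therefore contains no trivial isotypic component (once restricted to the free part).

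Combining these ingredients, the Poincar\'e-Verdier dualization performed in the proof of Proposition~\ref{decomposition 0} intertwines the $\Lambda^{0}/n$-action on the cohomology sheaves with the one on the dual sheaves $\bigwedge^{i'}\bbf_{n}^{\vee}$; hence taking $\Lambda^{0}/n$-invariants of the right-hand side of the decomposition kills precisely the Levi contributions and keeps the main term
\[
\bigoplus_{i'=0}^{i}\ch^{2\tau_{\gamma}-(i-i')}_{\{0_{n}\}}\big(j_{n,!*}\BigWedge^{i'}\bbf_{n}^{\vee}\big)_{\rm fr}.
\]
Passing to the projective limit over $n$ with the first displayed identity yields the proposition.

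The main technical point will be the second paragraph: checking that Proposition~\ref{main term n}, originally stated for $\ql$-sheaves, survives passage to the free part of integral cohomology. This is morally automatic because the homotopy-lemma step only uses that $f_{n}^{\rm sm}$ is a smooth group scheme with geometrically connected fibers and that intermediate extensions commute with the group action, both of which hold over $\bz$; but care is needed to ensure that $\Lambda^{0}/n$-invariants commute with the operation of taking free part on each intermediate extension sheaf, which holds because the action is by a finite group of order prime to zero and because the non-invariant summands carry no trivial constituent even modulo torsion, by the irreducible-component permutation argument of Proposition~\ref{main term n}.
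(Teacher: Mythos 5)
Your proposal is correct and takes essentially the same approach as the paper: the paper states the result as an immediate corollary of Proposition~\ref{main term n} combined with Proposition~\ref{coh as limit}, without writing out a proof, and your argument is exactly the expansion of that reasoning — passing invariants through the projective limit and invoking Proposition~\ref{main term n} to kill the Levi summands. The integrality point you flag in the last paragraph is indeed the only step requiring care, and your resolution (the Levi summands carry no trivial constituent rationally by Proposition~\ref{main term n}, hence their integral lattices have no $\Lambda^{0}/n$-invariants either) is the right one.
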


\begin{rem}\label{reduce to invariant}

With theorem \ref{reduction theorem}, similar interpretation holds for the remaining terms in the theorem. This leads to the quite surprising result that we can reduce the study of the homology  $H_{*}(\xx_{\gamma}^{0}, \bq)$ to its $\Lambda^{0}$-invariant subspace. This provides yet another reduction of $H_{*}(\xx_{\gamma}^{0}, \bq)$, in addition to the Arthur-Kottwitz reduction and the Harder-Narasimhan reduction \cite{chen fundamental}.

\end{rem}

The formula in proposition \ref{decomposition 0} can be further simplified. 
To begin with, we examine the local monodromies at $0$ of the sheaf $\cf\cong R^{1}\pi_{*}^{\rm sm}\bq$. 
We have fixed a geometric generic point $\bar{\xi}_{0}$ of $\cb_{\{0\}}$, let $\rff=\cf_{\bar{\xi}_{0}}$, consider the local monodromy action 
$$
\rho_{0}: \pi_{1}(\cb_{\{0\}}-\Delta,\,\bar{\xi}_{0})\to \aut(\rff).
$$
The cup product on the cohomology of curves induces a non-degenerate alternating pairing
\begin{equation}\label{pair f}
\pair{\, ,}: \cf\times \cf\to \bq(-1).
\end{equation}
It induces a non-degenerate alternating bilinear form
\begin{equation}\label{cup prod on fiber}
\pair{\, ,}: \rff\times \rff\to \bq(-1).
\end{equation}
Let $B_{0}(\ep)$ an open ball of sufficiently small radius centered at $0$ in $\cb$, such that the restriction of $\pi$ to it is a representative of the miniversal deformation of $\spf(\co[\gamma])$. 
Let $E$ be the subspace of $\rff$ generated by the vanishing cycles at the smooth points of $\Delta\cap  B_{0}(\ep)$, let $E^{\perp}$ be its orthogonal complement with respect to the above pairing. 
Let $L$ be a generic line sufficiently close to $0$ in $B_{0}(\ep)$ and let $\{s_{i}\}_{i\in I}$ be the intersection $L\cap \Delta$. Let $\delta_{i}\in E$ be the vanishing cycle at ${s_{i}}$, and let $\sigma_{i}$ be the associated Picard-Lefschetz transformation. 
Then $\{\delta_{i}\}_{i\in I}$ forms a basis of $E$, and $\pi_{1}(\cb_{\{0\}}-\Delta,\,\bar{\xi}_{0}) 
$ is generated by $\{\sigma_{i}\}_{i\in I}$, which implies that
\begin{equation}\label{e perp c}
E^{\perp}=(\rff)^{\pi_{1}(\cb_{\{0\}}-\Delta,\,\bar{\xi}_{0})}=H^{1}(C_{\gamma},\bq),
\end{equation}
where the second equality is due to lemma \ref{support curve}. 
Moreover, we have $E^{\perp}\subset E$ because
$$
E^{\perp}/(E^{\perp}\cap E)=H^{1}(\widetilde{C}_{\gamma},\bq)=0, 
$$
where $\widetilde{C}_{\gamma}$ is the normalization of $C_{\gamma}$. 
It is clear that $\pi_{1}(\cb_{\{0\}}-\Delta,\,\bar{\xi}_{0})$ preserves both subspaces $E$ and $E^{\perp}$, and we obtain the filtration of $\pi_{1}(\cb_{\{0\}}-\Delta,\,\bar{\xi}_{0})$-modules
\begin{equation}\label{local galois module filt}
0\subset E^{\perp}\subset E\subset \rff.
\end{equation}
By the Picard-Lefschetz formula, both gradings $E^{\perp}$ and $\rff/E$ are trivial $\pi_{1}(\cb_{\{0\}}-\Delta,\,\bar{\xi}_{0})$-modules.
By theorem \ref{monodromy}, $E/E^{\perp}$ is irreducible $\pi_{1}(\cb_{\{0\}}-\Delta,\,\bar{\xi}_{0})$-module. 
With the pairing (\ref{cup prod on fiber}), we can identify canonically 
$$
\rff/E^{\perp}\cong \Hom(E, \bq(-1))=E^{*}(-1). 
$$
Let $\ce$ and $\ce^{\perp}$ be the sub sheaf of $\cf|_{B_{0}(\ep)}$ corresponding to the $\pi_{1}(B_{0}(\ep)-\Delta, \bar{\xi}_{0})$-module $E$ and $E^{\perp}$ respectively. 
Let $\cf^{\vee}=\ch om(\cf,\bq)$ and let $(\cf/\ce^{\perp})^{\vee}=\ch om(\cf/\ce^{\perp}, \bq)$.

\begin{prop}\label{reduce limit 1}
For all $i, i'\in \bn_{0}$, we have
$$
\Big(\varprojlim_{n}
\ch^{i}_{\{0_{n}\}}\big(j_{n,!*}\BigWedge^{i'}\bbf_{n}^{\vee}\big)_{\rm fr}\Big)\otimes_{\bz}\bq
={\rm Im}\Big\{\ch^{i}_{\{0\}}\big(j_{!*}\BigWedge^{i'}(\cf/\ce^{\perp})^{\vee}\big)\to  \ch^{i}_{\{0\}}\big(j_{!*}\BigWedge^{i'}\cf^{\vee}\big)\Big\}.
$$

\end{prop}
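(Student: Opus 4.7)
The idea is to use the lattice refinement $\bbf_n^\vee \hookrightarrow \varpi_n^*\bbf^\vee$ arising from the isogeny $\Phi_n^\circ$, and to show that in the limit $n \to \infty$ these lattices cut out precisely the subsheaf $(\cf/\ce^\perp)^\vee$ rationally, translating this refinement into the image of the map in the proposition.

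First I would extract the integral structure. Since $\Phi_n^\circ: \cp_n \to \cp \times_{\cb} \cb_n$ is an isogeny of abelian schemes over $\cb_n^\sm$ with kernel the constant finite group scheme $\ct_n \cong J_{C_\gamma}[n] = \rS_\gamma[n]$, the induced map on $R^1(-)_*\bz$ produces an exact sequence $0 \to \varpi_n^*\bbf \to \bbf_n \to \ct_n^{*} \to 0$ on $\cb_n^\sm$, which dualizes to $0 \to \bbf_n^\vee \to \varpi_n^*\bbf^\vee \to \ct_n \to 0$. In particular $\bbf_n^\vee$ is a finite-index sublattice of $\varpi_n^*\bbf^\vee$, and rationally both equal $\varpi_n^*\cf^\vee$. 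As $n$ varies, with transitions $\ct_n \hookrightarrow \ct_m$ for $n\mid m$ coming from the natural inclusion of $n$-torsion in $m$-torsion inside $J_{C_\gamma}$, the direct system $\{\ct_n\}$ exhausts the prime-to-$p$ torsion of $\rS_\gamma$. Using the identification of equation (\ref{h1 curve copy}) together with Proposition \ref{h1 inv generator} to translate this torsion into the invariant subspace $E^\perp = \ce^\perp|_0 \subset \cf|_0$, and using the pairing $\pair{\,,}$ to convert annihilation of $\varinjlim \ct_n$ into the orthogonal complement, one obtains the rational identification $(\varprojlim_n \bbf_n^\vee) \otimes \bq = (\cf/\ce^\perp)^\vee$ as subsheaves of $\cf^\vee$ near $0$.

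Second I would propagate this integral information through $\BigWedge^{i'}$, $j_{n,!*}$ and $\ch^i_{\{0_n\}}$. Since $\varpi_n$ is \'etale, all three operations commute with \'etale pullback, and combined with compatibility with $\otimes\bq$ one has the canonical rational identification $\ch^i_{\{0_n\}}(j_{n,!*}\BigWedge^{i'}\bbf_n^\vee) \otimes \bq \cong \ch^i_{\{0\}}(j_{!*}\BigWedge^{i'}\cf^\vee)$ for every $n$. Taking the free part of the integral local cohomology, however, restricts this rational target to the subspace controlled by the lattice $\BigWedge^{i'}\bbf_n^\vee \subset \BigWedge^{i'}\varpi_n^*\bbf^\vee$, since the finite cokernel $\BigWedge^{i'}\varpi_n^*\bbf^\vee / \BigWedge^{i'}\bbf_n^\vee$ contributes only torsion to local cohomology after $j_{n,!*}$. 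Taking the inverse limit, the free-part subspaces form a decreasing system inside $\ch^i_{\{0\}}(j_{!*}\BigWedge^{i'}\cf^\vee)$; their intersection is precisely the image of the natural map $\ch^i_{\{0\}}(j_{!*}\BigWedge^{i'}(\cf/\ce^\perp)^\vee) \to \ch^i_{\{0\}}(j_{!*}\BigWedge^{i'}\cf^\vee)$ induced by the inclusion $(\cf/\ce^\perp)^\vee \hookrightarrow \cf^\vee$, by the first step.

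The main obstacle is justifying the second step: controlling precisely how the finite-index sublattice $\BigWedge^{i'}\bbf_n^\vee \subset \BigWedge^{i'}\varpi_n^*\bbf^\vee$ propagates through the non-exact functor $j_{n,!*}$ and interacts with the free-part operation on local cohomology. This likely requires a local analysis near $0_n$ using the explicit Picard--Lefschetz description of the monodromy from Section \ref{monodromy curve}, together with a careful integral-coefficient intermediate extension argument verifying that the torsion defect $\ct_n$ of the lattice inclusion contributes only torsion to $\ch^i_{\{0_n\}}$ and thus disappears when taking the free part and passing to the projective limit.
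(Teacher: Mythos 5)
Your first step is correct and matches the paper's: the isogeny $\Phi_n^\circ$ gives the exact sequence realizing $\bbf_n^\vee$ as a finite-index sublattice of $\varpi_n^*\bbf_1^\vee$, and unwinding the torsion $\ct_n$ (the paper works with $\crr_n\cong\ct_n\otimes\mu_n^{-1}$; the Tate twist is invisible over $\bc$) yields the rational identification $(\varprojlim_n\bbf_n^\vee)\otimes\bq=(\cf/\ce^\perp)^\vee$. So far so good.

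The gap is in your second step, and you correctly flag it, but the remedy you sketch is not the right one. What has to be shown is the containment
$$
\varprojlim_n\ch^{i}_{\{0_n\}}\big(j_{n,!*}\BigWedge^{i'}\bbf_n^\vee\big)_{\rm fr}\;\subset\;\mathrm{Im}\Big\{\ch^i_{\{0\}}\big(j_{!*}\BigWedge^{i'}\big(\textstyle\bigcap_n\bbf_n^\vee\big)\big)_{\rm fr}\to\ch^i_{\{0\}}\big(j_{!*}\BigWedge^{i'}\cf^\vee\big)\Big\},
$$
i.e.\ that a compatible family of local cohomology classes actually arises from the limit lattice, before applying $j_{!*}$ and $\ch^i_{\{0\}}$. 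This does not follow from knowing that each quotient $\BigWedge^{i'}\varpi_n^*\bbf_1^\vee/\BigWedge^{i'}\bbf_n^\vee$ is finite: $j_{!*}$ is not exact, $\ch^i_{\{0\}}$ is a derived functor, and neither commutes with $\varprojlim$ of sheaves. The statement that ``the torsion defect $\ct_n$ contributes only torsion to $\ch^i_{\{0_n\}}$'' is at best a remark about how the lattices in $\ch^i_{\{0\}}(j_{!*}\BigWedge^{i'}\cf^\vee)$ sit relative to one another; it says nothing about why their intersection is governed by the intersection lattice $\bigcap_n\bbf_n^\vee$. Nor is it clear how the Picard--Lefschetz description of the local monodromy, which you invoke, would control the passage through the non-exact intermediate-extension functor.

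The tool the paper actually uses is the Goresky--MacPherson \emph{chain construction} of intersection cohomology \cite{gm1}: a class in $\ch^i_{\{0_n\}}(j_{n,!*}\mathcal G)$ is represented by a singular chain with coefficients in $\mathcal G$ satisfying the allowability (intersection) conditions with respect to the stratification. A compatible system $(c_n)_n$ can then be represented by chains whose coefficients lie in $\bigcap_n\BigWedge^{i'}\bbf_n^\vee=\BigWedge^{i'}(\bigcap_n\bbf_n^\vee)$, which produces the required lift to $\ch^i_{\{0\}}\big(j_{!*}\BigWedge^{i'}(\bigcap_n\bbf_n^\vee)\big)_{\rm fr}$. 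Your argument needs to be rewritten around this topological realization of $j_{!*}$; as stated, the key containment is asserted rather than proved.
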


\begin{proof}

Let $U_{n}$ be the connected component of $B_{0}(\ep)\times_{\cb}\cb_{n}$ containing $0_{n}$. As $\cb_{n}\to \cb$ is \'etale and $B_{0}(\ep)$ is sufficiently small, we can identify $B_{0}(\ep)$ and $U_{n}$. In this way, all the sheaves introduced above can be seen as sheaves on $B_{0}(\ep)$. 
Note that the restriction of $\Phi_{n}: \overline{\cp}_{n}\to \overline{\cp}\times_{\cb}\cb_{n}$ to $\cb_{n}^{\sm}$ is an isogeny of abelian schemes. Hence the natural morphism $\bbf_{n}^{\vee}\to \bbf_{1}^{\vee}$ is just an inclusion of sheaves.
This holds also for the morphism $\bbf_{n'}^{\vee}\to \bbf_{n}^{\vee}$ with $n|n'$. 
In particular, we have
\begin{equation}\label{id dual 2}
\varprojlim_{n} \BigWedge^{i'}\bbf_{n}^{\vee}=\bigcap_{n} \big(\BigWedge^{i'}\bbf_{n}^{\vee}\big)=\BigWedge^{i'} \big( \textstyle{\bigcap}_{n} \bbf_{n}^{\vee}\big).
\end{equation}

Note that $\bbf_{n}^{\vee}\otimes \bq=\varpi_{n}^{*}\cf^{\vee}$, hence we have inclusion
\begin{equation}\label{first inclusion}
\ch^{i}_{\{0_{n}\}}\big(j_{n,!*}\BigWedge^{i'}\bbf_{n}^{\vee}\big)_{\rm fr}\subset \ch^{i}_{\{0\}}\big(j_{!*}\BigWedge^{i'}\cf^{\vee}\big).
\end{equation}
Let $c=(c_{n})_{n}$ be an element of $\varprojlim_{n}
\ch^{i}_{\{0_{n}\}}\big(j_{n,!*}\BigWedge^{i'}\bbf_{n}^{\vee}\big)_{\rm fr}$. 
By the chain construction of the intermediate extension of Goresky and MacPherson \cite{gm1}, $c_{n}$ is represented by a singular chain with coefficients in $\BigWedge^{i'}\bbf_{n}^{\vee}$ satisfying certain intersection properties. The compatibility between $c_{n}$ then implies that the coefficients can be taken in $\bigcap_{n}\big(\BigWedge^{i'}\bbf_{n}^{\vee}\big)$. Hence $c=(c_{n})_{n}$ actually lies in the image of
$$
\ch^{i}_{\{0\}}\big(j_{!*}\big(\textstyle{\bigcap}_{n}\BigWedge^{i'}\bbf_{n}^{\vee}\big)\big)_{\rm fr}
\to 
\varprojlim_{n}\ch^{i}_{\{0_{n}\}}\big(j_{n,!*}\BigWedge^{i'}\bbf_{n}^{\vee}\big)_{\rm fr}.
$$
With the inclusion (\ref{first inclusion}), this implies that
\begin{equation}\label{main term 000}
\varprojlim_{n}
\ch^{i}_{\{0_{n}\}}\big(j_{n,!*}\BigWedge^{i'}\bbf_{n}^{\vee}\big)_{\rm fr}
={\rm Im}\Big\{\ch^{i}_{\{0\}}\big(j_{!*}\big(\textstyle{\bigcap}_{n}\BigWedge^{i'}\bbf_{n}^{\vee}\big)\big)\to  \ch^{i}_{\{0\}}\big(j_{!*}\BigWedge^{i'}\cf^{\vee}\big)\Big\}.
\end{equation}

To make $\varprojlim_{n}\bbf_{n}^{\vee}=\textstyle{\bigcap}_{n} \bbf_{n}^{\vee}$ explicit, by proposition \ref{killed by finite covering curve} and its proof, we have the exact sequence
\begin{equation}\label{picard torsion n}
1\to \ct_{n}\to \big(\pic_{\cc\times_{\cb}\cb_{n}/\cb_{n}}\big)_{\rm tor} \to \big(\pic_{\cc_{n}/\cb_{n}}\big)_{\rm tor}\to 1, 
\end{equation}
or equivalently
$$
0\to \crr_{n}\to \varpi_{n}^{*}R^{1}\pi_{*}(\bq/\bz)\to R^{1}\pi_{n,*}(\bq/\bz)\to 0. 
$$
Applying $\Hom(\bullet,\bq/\bz)$, we get the exact sequence
$$
0\to \bbf_{n}^{\vee}\otimes \widehat{\bz} \to \varpi_{n}^{*}\bbf_{1}^{\vee}\otimes \widehat{\bz} \to \Hom(\crr_{n}, \bz/n)\to 0,
$$
and so 
\begin{equation}\label{fn}
0\to \bbf_{n}^{\vee} \to \varpi_{n}^{*}\bbf_{1}^{\vee} \to \Hom(\crr_{n}, \bz/n)\to 0.
\end{equation}
The morphism $\varpi_{n}^{*}\bbf_{1}^{\vee} \to \Hom(\crr_{n}, \bz/n)$ in the above exact sequence is induced from the morphism
$$
\varpi_{n}^{*}\bbf_{1}^{\vee} \to \Hom(\varpi_{n}^{*}R^{1}\pi^{\sm}_{*}\bz/n, \bz/n),
$$
which is itself induced from the cap product between homology and cohomlogy. With the exact sequence (\ref{fn}) we can interpret $\bbf_{n}^{\vee}$ as the subsheaf of $\varpi_{n}^{*}\bbf_{1}^{\vee}$ which is killed under the cap product with $\crr_{n}\subset \varpi_{n}^{*}R^{1}\pi^{\sm}_{*}\bz/n$. 
Note that $\crr_{n}$ is the reduction modulo $n$ of an integral structure on $\ce^{\perp}$. This implies that $\big(\varprojlim_{n} \bbf_{n}^{\vee}\big)\otimes_{\bz}\bq$ is the subsheaf of $\bbf_{1}^{\vee}\otimes \bq$ which is killed under cap product with $\ce^{\perp}$,
hence
\begin{equation*}
\big(\varprojlim_{n} \bbf_{n}^{\vee}\big)\otimes_{\bz}\bq=\ch om(\cf/\ce^{\perp}, \bq).
\end{equation*}
The proposition then follows from the isomorphism and the identification (\ref{main term 000}).

\end{proof}

For the family $\pi_{I}:\cc_{I}\to \cb_{I}$, we have local systems $\ce_{I}, \ce_{I}^{\perp}\subset \cf_{I}$ and their dual $\ce_{I}^{\vee}, (\cf_{I}/\ce_{I}^{\perp})^{\vee}\subset \cf_{I}^{\vee}$ as before. 
The remaining terms of proposition \ref{decomposition 0} can be simplified in the same way as above, and we can reformulate it and proposition \ref{main term 0} as:

\begin{thm}\label{decomposition}

For any $i\in \bn_{0}$, the homology $H_{i}\big({\xx}_{\gamma}^{0},\bq\big)$ equals 
{\small
\begin{align*}
&
\bigoplus_{i'=0}^{i} 
{\rm Im}\Big\{\ch^{2\tau_{\gamma}-(i-i')}_{\{0\}}\big(j_{!*}\BigWedge^{i'}(\cf/\ce^{\perp})^{\vee}\big)\to  \ch^{2\tau_{\gamma}-(i-i')}_{\{0\}}\big(j_{!*}\BigWedge^{i'}\cf^{\vee}\big)\Big\}
\oplus \bigoplus_{\substack{I_{\bullet} \text{ partition of}\\ \{1,\cdots,r\}}} 
\bigoplus_{i''=0}^{i-2h_{I_{\bullet}}}
\bigg\{\bigoplus_{i_{1}+\cdots+i_{l}=i''}   \nonumber
\\
&
\bigoplus_{j_{1}+\cdots+j_{l}=i-2h_{I_{\bullet}}-i''} \bigotimes_{k=1}^{l}
{\rm Im}\Big\{\ch^{2\tau_{I_{k}}-j_{k}}_{\{0\}}\big((j_{I_{k}})_{!*}\BigWedge^{i_{k}}(\cf_{I_{k}}/\ce_{I_{k}}^{\perp})^{\vee}\big)\to  \ch^{2\tau_{I_{k}}-j_{k}}_{\{0\}}\big((j_{I_{k}})_{!*}\BigWedge^{i_{k}}\cf_{I_{k}}^{\vee}\big)\Big\}(h_{I_{\bullet}})\bigg\}^{\oplus\, \bbv^{\circ}_{I_{\bullet}}},
\end{align*}
}

\noindent where $l$ is the length of $I_{\bullet}$. Moreover, the main term calculates $H_{i}\big({\xx}_{\gamma}^{0},\bq\big)^{\Lambda^{0}}$. 
\end{thm}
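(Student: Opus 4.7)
The plan is to deduce Theorem \ref{decomposition} from Proposition \ref{decomposition 0} by analyzing the projective limits that appear there, using Proposition \ref{reduce limit 1} and its natural generalization to each of the families $f_{I_k}: \overline{\cp}_{I_k} \to \cb_{I_k}$. After tensoring the formula of Proposition \ref{decomposition 0} with $\bq$, one obtains $H_i(\xx_\gamma^0, \bq)$ as a direct sum of terms, each being a tensor product of expressions of the form $\varprojlim_n \ch^{c}_{\{0_n\}}\bigl(j_{n,!*}\bigwedge^{i'}\bbf_n^\vee\bigr)_{\rm fr} \otimes \bq$, taken either for the given family $f$ or for one of the smaller families $f_{I_k}$.

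The first move is to apply Proposition \ref{reduce limit 1} to the main term, which gives the first summand of the theorem immediately. For the Levi contributions, I would apply the analogue of Proposition \ref{reduce limit 1} to each tensor factor $\varprojlim_n \ch^{2\tau_{I_k}-j_k}_{\{0_n\}}\bigl((j_{I_k,n})_{!*}\cf^{\,i_k}_{I_k,n}\bigr)_{\rm fr}\otimes\bq$. The construction of $\ct_n$, the sheaf $\crr_n$, the exact sequence (\ref{fn}), and the identification $\varprojlim_n \bbf_n^\vee \otimes \bq = (\cf/\ce^\perp)^\vee$ rest only on the hypothesis that the ambient family is a miniversal deformation of an irreducible spectral curve with planar singularities and that $\ce^\perp$ is the local invariant cohomology at $0$. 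This hypothesis holds equally well for each $\pi_{I_k}:\cc_{I_k}\to\cb_{I_k}$, so the identification transfers with $\cf_{I_k}$ and $\ce_{I_k}^\perp$ in place of $\cf$ and $\ce^\perp$.

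The main technical point to verify is that taking $\varprojlim_n$ commutes with the finite tensor product over $k=1,\dots,l$ after tensoring with $\bq$. This is harmless: each individual projective limit is, after tensoring with $\bq$, a finite-dimensional $\bq$-vector space (the relevant integral groups stabilize for $n$ large, by the same compactness argument underlying Remark \ref{no zl}), and finite tensor products over $\bq$ commute with projective limits of eventually stable systems. Carrying the Tate twist $(h_{I_\bullet})$ through from Proposition \ref{decomposition 0} then produces the Levi summands of the theorem.

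The final assertion, that the main term equals $H_i(\xx_\gamma^0, \bq)^{\Lambda^0}$, is obtained by tensoring Proposition \ref{main term 0} with $\bq$ and then applying Proposition \ref{reduce limit 1} to rewrite the resulting projective limit as the image of the natural map $\ch^{2\tau_\gamma-(i-i')}_{\{0\}}(j_{!*}\bigwedge^{i'}(\cf/\ce^\perp)^\vee) \to \ch^{2\tau_\gamma-(i-i')}_{\{0\}}(j_{!*}\bigwedge^{i'}\cf^\vee)$. The conceptually hardest step — the sheaf-theoretic duality $\varprojlim_n \bbf_n^\vee \otimes \bq = (\cf/\ce^\perp)^\vee$, extracted from the cap-product pairing between $\crr_n$ and $\bbf_n^\vee$ — is already proved in Proposition \ref{reduce limit 1}; what remains for the theorem is essentially organizational bookkeeping to combine the main and Levi contributions.
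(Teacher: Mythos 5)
Your proposal is correct and follows essentially the same route as the paper: the paper deduces Theorem \ref{decomposition} directly from Proposition \ref{decomposition 0} and Proposition \ref{main term 0} by applying Proposition \ref{reduce limit 1} (and its analogue for each family $f_{I_k}$) to each projective-limit factor, exactly as you outline. Your explicit treatment of the commutation of $\varprojlim_n$ with the finite tensor product over $k$, via eventual stability of the integral systems, is a detail the paper leaves implicit but is the right justification.
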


With Poincar\'e-Verdier duality, the theorem can be reformulated as:

\begin{thm}\label{decomposition co}

For all $i\in \bn_{0}$, the cohomology $H^{i}\big({\xx}_{\gamma}^{0},\bq\big)$ equals
{\small
\begin{align*}
&\bigoplus_{i'=0}^{i} 
{\rm Im}\Big\{\ch^{i-i'}\big(j_{!*}\BigWedge^{i'}\cf\big)_{0}\to  
\ch^{i-i'}\big(j_{!*}\BigWedge^{i'}(\cf/\ce^{\perp})\big)_{0} \Big\}
\oplus \bigoplus_{\substack{I_{\bullet} \text{ partition of}\\ \{1,\cdots,r\}}} 
\bigoplus_{i''=0}^{i-2h_{I_{\bullet}}} \bigg\{\bigoplus_{i_{1}+\cdots+i_{l}=i''} 
\\
&
\bigoplus_{j_{1}+\cdots+j_{l}=i-2h_{I_{\bullet}}-i''}  
\bigotimes_{k=1}^{l}
{\rm Im}\Big\{\ch^{j_{k}}\big((j_{I_{k}})_{!*}\BigWedge^{i_{k}}\cf_{I_{k}}\big)_{\{0\}}
\to
\ch^{j_{k}}\big((j_{I_{k}})_{!*}\BigWedge^{i_{k}}(\cf_{I_{k}}/\ce_{I_{k}}^{\perp})\big)_{\{0\}} \Big\}(-h_{I_{\bullet}})\bigg\}^{\oplus\, \bbv^{\circ}_{I_{\bullet}}},
\end{align*}
}

\noindent where $l$ is the length of $I_{\bullet}$. Moreover, the main term calculates $H^{i}\big({\xx}_{\gamma}^{0},\bq\big)^{\Lambda^{0}}$. 

\end{thm}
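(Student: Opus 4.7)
The theorem is the cohomological reformulation of Theorem \ref{decomposition} via Poincar\'e-Verdier duality. The plan is to apply $\Hom(-,\bq)$ to both sides of Theorem \ref{decomposition}---using that over a field $H^{i}(\xx_{\gamma}^{0},\bq) = \Hom(H_{i}(\xx_{\gamma}^{0},\bq),\bq)$---and then to identify the dual of each summand appearing in the decomposition.

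For the main term, recall that $\cb$ is smooth of dimension $\tau_{\gamma}$ and that $0\in \cb$ is a closed point. Local Poincar\'e-Verdier duality at $0$ yields, for any local system $\cg$ on $\cb^{\sm}$, the identity
$$
\Hom\bigl(\ch^{\,n}_{\{0\}}(j_{!*}\cg),\,\bq\bigr) \;\cong\; \ch^{\,2\tau_{\gamma}-n}(j_{!*}\cg^{\vee})_{0}(\tau_{\gamma}).
$$
This follows from $i_{0}^{!} = D\circ i_{0}^{*}\circ D$, the compatibility of the intermediate extension with Verdier duality on perverse sheaves, and the fact that on the smooth locus $D\cg = \cg^{\vee}[2\tau_{\gamma}](\tau_{\gamma})$. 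Applied with $\cg = \BigWedge^{i'}\cf^{\vee}$ and $\cg = \BigWedge^{i'}(\cf/\ce^{\perp})^{\vee}$, and using the standard identities $\cg^{\vee\vee} = \cg$ and $(\BigWedge^{i'}\cl)^{\vee} = \BigWedge^{i'}\cl^{\vee}$ for local systems of $\bq$-vector spaces, this converts the inclusion $(\cf/\ce^{\perp})^{\vee} \hookrightarrow \cf^{\vee}$ into the Verdier-dual surjection $\cf \twoheadrightarrow \cf/\ce^{\perp}$ on the transpose side. Since $\Hom(\mathrm{Im}(f),\bq) = \mathrm{Im}(f^{\vee})$ for linear maps of $\bq$-vector spaces, the image appearing in Theorem \ref{decomposition} transforms into the image of its transpose, which is precisely the main term of Theorem \ref{decomposition co}.

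The Levi summands are treated identically: on each smooth base $\cb_{I_{k}}$ of dimension $\tau_{I_{k}}$ the same local duality applies, and $\Hom$ distributes over the tensor products of the finite-dimensional $\bq$-vector spaces involved, yielding the factor-by-factor dual images. Under dualization the Tate twist $(h_{I_{\bullet}})$ of Theorem \ref{decomposition} becomes $(-h_{I_{\bullet}})$, as displayed. The identification of the main term with $H^{i}(\xx_{\gamma}^{0},\bq)^{\Lambda^{0}}$ follows by the same argument as Proposition \ref{main term 0}: Proposition \ref{main term n} characterizes the main term at each finite level $n$ as the unique $\Lambda^{0}/n$-invariant summand of the sheaf-theoretic decomposition of $Rf_{n,*}\ql$, and this invariance survives the passage to stalk cohomology at $0_{n}$ and to the projective limit over $n$.

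The principal obstacle will be maintaining a careful bookkeeping of Tate twists and grading shifts through the chain of dualizations, intermediate extensions, and exterior powers, and verifying that the various canonical identifications---$(\cl^{\vee})^{\vee}=\cl$, $(\BigWedge^{i'}\cl)^{\vee} = \BigWedge^{i'}\cl^{\vee}$, and the distributivity of $\Hom$ over tensor products---commute with $j_{!*}$ and with $\ch^{n}_{\{0\}}(-)$. A second subtle point is the passage $\varprojlim_{n}(\cdot)^{\Lambda^{0}/n} = (\cdot)^{\Lambda^{0}}$, which, for rational coefficients, reduces to the exactness of invariants under averaging at each finite level and the compatibility of this averaging with the transition maps of the projective system.
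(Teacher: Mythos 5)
Your proposal is correct and takes exactly the approach the paper takes, which is simply to apply Poincar\'e--Verdier duality to Theorem \ref{decomposition} (the paper itself gives no further details, saying only ``With Poincar\'e-Verdier duality, the theorem can be reformulated as''); your local duality formula and the transpose-of-images argument reproduce the computation done explicitly in the proof of Proposition \ref{decomposition 0}, and the $\Lambda^{0}$-invariance claim follows by the same reduction to Propositions \ref{main term n} and \ref{main term 0}. The one cosmetic point is that your displayed duality identity carries a Tate twist $(\tau_{\gamma})$ which, if read literally, would propagate into the main term where the statement of Theorem \ref{decomposition co} shows none; since this section works over $k=\bc$ with ordinary $\bq$-coefficients the twist is vacuous, but you should say so explicitly.
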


As a corollary, we can reformulate the purity hypothesis of Goresky, Kottwitz and MacPherson as:

\begin{cor}\label{reduce purity}

The homology of the affine Springer fiber is pure in the sense of Grothendieck-Deligne if and only if the group
$$
\bigoplus_{i'=0}^{i}{\rm Im}\Big\{\ch^{2\tau_{\gamma}-(i-i')}_{\{0\}}\big(j_{!*}\BigWedge^{i'}(\cf/\ce^{\perp})^{\vee}\big)\to  \ch^{2\tau_{\gamma}-(i-i')}_{\{0\}}\big(j_{!*}\BigWedge^{i'}\cf^{\vee}\big)\Big\}
$$
is pure of weight $-i$ for any $i\in \bn_{0}$.
\end{cor}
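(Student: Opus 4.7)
The plan is to derive the corollary as a direct structural consequence of Theorem \ref{decomposition}, proceeding by induction on $d = \rk(G)$ (equivalently on the number of branches $r$ of the singularity $\spf(\co[\gamma])$). The base case where $C_{\gamma}$ is unibranch is immediate because no non-trivial partition $I_{\bullet}$ of $\{1\}$ exists, so the Levi sum in Theorem \ref{decomposition} is empty and $H_{i}(\xx_{\gamma}^{0},\bq)$ coincides with the main term that appears in the statement of the corollary. Thus in the base case the corollary is tautological.

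For the inductive step, Theorem \ref{decomposition} presents $H_{i}(\xx_{\gamma}^{0},\bq)$ as a direct sum of the main term $M_{i}$ (the group whose purity is the subject of the corollary) and Levi contributions $L_{i}^{I_{\bullet}}$ indexed by non-trivial partitions $I_{\bullet}\vdash\{1,\ldots,r\}$. Since purity of a direct sum of $\ql$-representations is equivalent to purity of each summand, the question reduces to showing that each $L_{i}^{I_{\bullet}}$ is pure of weight $-i$ precisely when the main terms for all smaller affine Springer fibers $\xx_{\gamma_{I_{k}}}^{0}$ are pure of the expected weights. Inspection of the formula shows that $L_{i}^{I_{\bullet}}$ is a sum of Tate twists by $(h_{I_{\bullet}})$ of tensor products whose individual factors are exactly summands of the main term for $\xx_{\gamma_{I_{k}}}^{0}$, with $\gamma_{I_{k}}\in\ggl_{d_{k}}$ and $d_{k}<d$. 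Tracking weights, the Tate twist $(h_{I_{\bullet}})$ shifts the weight of the tensor product by $-2h_{I_{\bullet}}$, and the index bookkeeping $\sum i_{k}=i''$, $\sum j_{k}=i-2h_{I_{\bullet}}-i''$ is arranged exactly so that $L_{i}^{I_{\bullet}}$ is pure of weight $-i$ iff each tensor factor is pure of the weight predicted by the corollary for the relevant smaller affine Springer fiber.

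By the induction hypothesis applied to each $\gamma_{I_{k}}$, purity of the main terms for $\xx_{\gamma_{I_{k}}}^{0}$ is equivalent to purity of the full homology $H_{*}(\xx_{\gamma_{I_{k}}}^{0},\bq)$. Since the purity hypothesis of Goresky, Kottwitz and MacPherson asserts purity for \emph{all} regular semisimple affine Springer fibers --- in particular for all Levi reductions $\xx_{\gamma_{I_{k}}}^{0}$ --- the two-way equivalence of the corollary follows: within this inductive framework, purity of $H_{*}(\xx_{\gamma}^{0},\bq)$ holds if and only if the main term $M_{*}$ is pure of the asserted weights. The only non-routine step is the weight-counting verification that the Tate twists and the index sums in the Levi contributions match the required weight $-i$; I expect this to be the main (though mechanical) obstacle, and the rest of the argument amounts to formal manipulation of the decomposition provided by Theorem \ref{decomposition}.
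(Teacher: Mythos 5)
Your proof is correct and matches the paper's (implicit) argument: the corollary is presented there without proof, as an immediate consequence of Theorem~\ref{decomposition} together with the factorization of the Levi terms in Theorem~\ref{reduction theorem}, and your reading-off from the decomposition is the intended one. Your weight bookkeeping is right: each tensor factor in a Levi summand indexed by $(i_k,j_k)$ is a direct summand of the main term $M^{I_k}_{i_k+j_k}$ for $\xx^0_{\gamma_{I_k}}$, so if it is pure of weight $-(i_k+j_k)$ then the tensor product has weight $-\sum_k(i_k+j_k)=-(i-2h_{I_\bullet})$, and the Tate twist by $(h_{I_\bullet})$ shifts this to $-i$ as required; conversely, by choosing $i_k=j_k=0$ for all but one $k$ one recovers each individual summand of each $M^{I_k}_m$, so the purity of the Levi contributions is genuinely equivalent to the purity of the Levi main terms.

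Two small remarks. First, the induction on the number of branches is valid but not strictly necessary: once one accepts that the condition ``main term pure'' is quantified over all regular semisimple $\gamma$ (or at least over $\gamma$ and all of its Levi reductions $\gamma_{I_k}$, $\gamma_{I_k I_{k'}}$, etc.), both directions follow by a single application of the direct-sum decomposition and Theorem~\ref{reduction theorem}, with no recursion. Your inductive formulation is a harmless repackaging. Second, your observation that the equivalence holds ``within the inductive framework'' because the GKM hypothesis ranges over all $\gamma$ is in fact an essential point, not a cosmetic one: as stated literally, for a single fixed $\gamma$ the backward implication does not follow from the main-term purity for that $\gamma$ alone, because the Levi contributions involve the main terms for the $\gamma_{I_k}$, about which nothing is asserted. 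You are right to read the corollary as a reformulation of the full GKM hypothesis, with a universal quantifier over $\gamma$ understood on both sides; this is what makes the statement correct, and it would be worth saying explicitly.
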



\begin{rem}

We prefer the reformulation in terms of homology because the target of the morphism $j_{!*}\BigWedge^{i'}\cf^{\vee}$ is defined on $\cb$ while for cohomology the target $j_{!*}\BigWedge^{i'}(\cf/\ce^{\perp})$ is only locally defined. Note that the target of the morphism is actually $H_{i}(\overline{P}_{C_{\gamma}}, \bq)$ by the support theorem of Ng\^o, hence the formula can be regarded as a conjectural intrinsic characterization of the pure part of $H_{i}(\overline{P}_{C_{\gamma}}, \bq)$.

\end{rem}

\bigskip
\small
\noindent
\begin{tabular}{ll}
&Zongbin {\sc Chen} \\ 
\\
&School of mathematics, Shandong University\\
&JiNan, 250100,\\
&Shandong, China \\
&email: {\tt zongbin.chen@email.sdu.edu.cn}

\end{tabular}

\end{document}